\documentclass[reqno,dvipsnames]{amsart}
\usepackage[
	left=3.5cm,
	right=3.5cm
]{geometry}

\usepackage{amsmath}
\usepackage{amsfonts}
\usepackage{amssymb}
\usepackage{graphicx}
\usepackage{color}
\usepackage{tikz}
\usepackage{tikz-cd}
\usepackage{tikz-layers}
\usepackage{caption}
\usepackage{float}
\usepackage{appendix}
\usetikzlibrary{backgrounds}
\usetikzlibrary{calc}
\usetikzlibrary{decorations.pathmorphing}
\usepackage{mathtools}
\usetikzlibrary{graphs,graphs.standard,quotes}
\usepackage{url}
\usepackage[hidelinks]{hyperref}

\usepackage{csquotes}
\usepackage[
    backend = biber,        
    style = numeric,        % numeric citation style
    sorting = nyt,          % sort by name-year-title
    doi = true,             
    url = true,              
    giveninits = true,      % show only initials of first names
    maxnames = 4            % maximal number of authors shown per paper
]{biblatex}
\usepackage{amsthm, thmtools}
\usepackage{verbatim} 
\usepackage[hidelinks]{hyperref}

\newtheorem{theorem}{Theorem}[section]
\theoremstyle{plain}

\newtheorem{corollary}[theorem]{Corollary}

\newtheorem{lemma}[theorem]{Lemma}

\newtheorem{proposition}[theorem]{Proposition}

\numberwithin{equation}{section}
\theoremstyle{definition}
\newtheorem{definition}[theorem]{Definition}
\newtheorem{example}[theorem]{Example}
\newtheorem{remark}[theorem]{Remark}

\usepackage{enumitem}
\setlist[itemize,1]{label=\textbullet}   % erste Ebene: Bullet
\setlist[itemize,2]{label=\textbullet}	 % zweite Ebene: Bullet

\setlist[itemize]{
	itemsep=3pt,
	topsep=6pt,
	leftmargin=25pt
}
\makeatletter
\renewcommand{\l@subsection}{\@tocline{2}{2pt}{3em}{3.5em}{}}
\makeatother

\DeclareMathOperator{\Act}{Act}
\DeclareMathOperator{\degree}{deg}
\DeclareMathOperator{\Image}{Im}
\DeclareMathOperator{\Int}{Int}
\DeclareMathOperator{\Sp}{Sp}
\DeclareMathOperator{\Aut}{Aut}

\begin{document}
	
	\title{Network Realignment Complexes over General Graphs}

	% Authors
	\author{Fabienne Klatt}
	\author{Iason Papadopoulos}
	\author{Marc Raffelsiefen}

	% Shared affiliation
	\address[F. Klatt, I. Papadopoulos, and M. Raffelsiefen]{\newline
	Department of Mathematics\\
	University of Bremen\\
	Bibliothekstraße 5\\
	28359 Bremen\\
	Germany
	}

	% Email addresses
	\email[F. Klatt]{faklatt@uni-bremen.de}
	\email[I. Papadopoulos]{iason@uni-bremen.de}
	\email[M. Raffelsiefen]{mraffels@uni-bremen.de}

	\begin{abstract}
	Network realignment complexes were introduced by Kozlov \cite{Kozlov25}. We generalise their definition to arbitrary connected base graphs.

	For a connected graph $G$, we characterise the connected components of the associated network realignment complex $X_G$ and show that $X_G$ admits an $\Aut(G)$-equivariant strong deformation retraction onto the disjoint union of a complete graph and a discrete $\Aut(G)$-space.	For the complete base graph $K_n$, we study the metric structure of the network realignment graph $\mathcal{G}_n$ and obtain explicit upper and lower bounds for its diameter. Finally, we prove that $X_n$ is a cubical flag complex and that every automorphism of $X_n$ is induced by a relabelling of the underlying vertex set. In particular, $\Aut(X_n)\cong S_n$ for all $n\geq 5$.
	\end{abstract}

	\maketitle

\vspace*{-5mm}
\begin{center}
	\normalfont\scshape Introduction
\end{center}\vspace*{-2mm} 

Reconfiguration problems study how one configuration can be transformed into another through a sequence of small changes while maintaining validity at every step. This is modelled by a reconfiguration graph, whose vertices represent configurations and whose edges correspond to elementary transitions, called reconfiguration steps, between them \cite{Ito11}.

A fundamental question in reconfiguration concerns connectivity \cite{Nishimura18}. Can any configuration be transformed into any other by a sequence of elementary transitions? If not, which configurations can be reached from a given starting configuration? Equivalently, how can the connected components of the reconfiguration graph be characterised? 

This naturally leads to a second fundamental question: given two configurations in the same connected component, what is the length of a shortest sequence of elementary transitions between them? Further, can the distance between any two connected realignments be bounded? In the case where the reconfiguration graph is connected, this corresponds to determining its diameter \cite{Reconfiguration}.

In this paper, the configurations are spanning trees of a fixed base graph. The reconfiguration step is given by the \textit{leaf slide}, that is a leaf is detached from its parent and reconnected to a vertex adjacent to that parent. Two spanning trees are adjacent in the reconfiguration graph if they differ by a leaf slide. The resulting reconfiguration graph is called \textit{network realignment graph}.

A natural extension of the reconfiguration framework is to allow multiple reconfiguration steps to be performed simultaneously, provided that they are independent of each other (or, more suggestively, the corresponding operations commute) \cite{Ghrist07}. This idea results in a cubical complex whose 1-skeleton is the reconfiguration graph. In the setting of network realignments, this introduces higher-dimensional cubes to the network realignment graph, leading to the \textit{network realignment complex}.

This viewpoint naturally shifts the focus from connectivity of the reconfiguration graph to higher connectivity, i.e. the topology of the associated network realignment complex. In particular, one may ask whether any two sequences of leaf slides connecting the same pair of spanning trees can themselves be transformed into one another.

\bigskip

\noindent\textbf{Related Work.} Ito et al. \cite{Ito11} gave a systematic complexity-theoretic treatment of reconfiguration problems. Nishimura \cite{Nishimura18} surveys both the structural and algorithmic aspects of reconfiguration, including reachability, connectivity, shortest-transformation problems, and the diameter of reconfiguration graphs. These works provide the general framework in which the network realignment graph may be viewed, although their results do not directly address the particular transformation of spanning trees considered here.

More closely related are reconfiguration graphs whose states are the spanning trees of a fixed connected graph $G$. The tree graph $\mathcal{T}(G)$ has one vertex for each spanning tree of $G$, with two spanning trees adjacent whenever one is obtained from the other by deleting one edge and inserting another. Cummins \cite{Cummins66} studied Hamiltonian cycles in tree graphs. Their vertex-connectivity was subsequently investigated by Liu \cite{Liu88}, while Estivill-Castro, Noy, and Urrutia \cite{Estivill00} studied the chromatic numbers of tree graphs and of a restricted variant.

Several restrictions of the ordinary edge-exchange move have also been considered. In the adjacency tree graph, the deleted and inserted edges are additionally required to be adjacent in $G$. Liu \cite{Liu87Adjacent} studied the vertex-connectivity of adjacency tree graphs. An even more restrictive transformation gives the leaf-exchange spanning tree graph: the deleted and inserted edges must be incident with the same vertex, which is required to be a leaf in both spanning trees. Broersma and Li \cite{Broersma96} characterised the connected graphs whose leaf-exchange spanning tree graph is connected and studied its connectivity. Kaneko and Yoshimoto \cite{Kaneko99} continued this investigation for $2$-connected base graphs.

Every leaf slide considered in the present paper is a leaf-exchange, but the converse need not hold. In addition to preserving the same leaf, a leaf slide requires the old and new parents of that leaf to be joined by an edge of the current spanning tree. Thus, for a fixed connected base graph $G$, these graphs form the following chain of inclusions.
\[
	\text{tree graph}\supseteq\text{adjacency tree graph} \supseteq \text{leaf-exchange graph} \supseteq \text{network realignment graph}
\] 
The tree graph, adjacency tree graph, leaf-exchange graph, and related constructions are reviewed by Ozeki and Yamashita \cite{OzekiYamashita11}.

A related line of research considers graphs with a fixed number of vertices and edges, where a local move replaces one edge by another through an edge move, edge rotation, or edge slide. Goddard and Swart \cite{Goddard96} and Jarrett \cite{Jarrett97} study the metrics on the resulting reconfiguration graphs.

Another approach considers topological spaces constructed from graphs. Lovász \cite{Lovasz78} introduced the neighbourhood complex of a graph and used its connectivity to obtain lower bounds for the chromatic number. Babson and Kozlov \cite{Babson07} studied the more general complexes $\operatorname{Hom}(H,G)$, whose cells are graph multihomomorphisms from $H$ to $G$. Jonsson \cite{Jonsson} surveys simplicial complexes arising from graph families closed under edge deletion.

The construction most closely related to network realignment complexes is the state complex of a reconfigurable system. Ghrist and Peterson \cite{Ghrist07} associate a cubical complex to a system governed by local moves, where vertices represent states, edges represent individual moves, and higher-dimensional cubes encode collections of independent local moves. They study structural properties of these state complexes and spaces of reconfiguration paths using tools from CAT$(0)$ geometry. For a later treatment of state complexes as a class of cube complexes, see Peterson~\cite{Peterson15}.

The direct predecessor of the present work is Kozlov's network realignment complex \cite{Kozlov25}, which corresponds to the case of the complete base graph $K_n$. Using discrete Morse theory, Kozlov proves that the resulting complex $X_n$ is homotopy equivalent to the complete graph $K_n$.

\bigskip

\noindent\textbf{Our Contribution.} The main objective of this work is to investigate the topology and geometry of the network realignment graph and the associated cubical complex, with particular emphasis on the symmetries induced by automorphisms of the underlying base graph. Our results address these aspects from several complementary perspectives.

We begin with the topology of the network realignment complex $X_G$ associated with an arbitrary connected base graph $G$. In contrast to the complete graph case, the resulting reconfiguration graph need not be connected. We completely characterise the connected components of $X_G$, and show that it admits an $\Aut(G)$-equivariant strong deformation retraction onto the disjoint union of a complete graph and a discrete $\Aut(G)$-space, where the vertices of the complete graph are indexed by those spanning trees of $G$ that are star trees.

Next, we investigate the geometry of the network realignment graph. We restrict to the complete graph $K_n$ and study the metric structure of $\mathcal{G}_n$. In particular, we establish general upper and lower bounds for its diameter, partially answering an open problem posed in \cite{Kozlov25}. Our analysis also reveals several structural properties of the metric, including the role of star trees as central configurations and a characterisation of extremal trees. We also show that $\mathcal{G}_n$ is bipartite if and only if $n$ is even.

Finally, we determine the automorphism group of the network realignment complex $X_n$. As an intermediate result, we prove that $X_n$ is a cubical flag complex, i.e. is completely determined by its 1-skeleton, the network realignment graph $\mathcal{G}_n$. This allows us to show that every automorphism of $X_n$ is induced by a permutation of the underlying vertex set, and hence
\[
\Aut(X_n)\cong S_n
\]
for all $n\geq 5$.

The paper is organised as follows. In \autoref{sec:NRDef}, we introduce the network realignment graph and the associated cubical complex. \autoref{TopoChapter} is devoted to the topology of the network realignment complex over an arbitrary base graph. After introducing the necessary tools from equivariant generalised Morse theory, we characterise the connected components of $X_G$ and determine their $\Aut(G)$-equivariant homotopy types. We conclude the section with several illustrative examples. \autoref{sec:Geometry} investigates the geometry of the network realignment graph associated with the complete graph, establishing bounds on its diameter together with further structural properties of the induced metric. Finally, in \autoref{sec:automorphisms}, we determine the automorphism group of the network realignment complex $X_n$.

\bigskip

\noindent\textbf{Methodology.} The proofs combine techniques from graph theory, discrete Morse theory, and combinatorial optimisation. The topological results rely on the equivariant generalised discrete Morse theory of Freij \cite{Freij}, building on Forman's discrete Morse theory \cite{Forman}. A key technical ingredient is an extension of Freij's equivariant generalised discrete Morse theory to the present setting, developed in \autoref{sec:equivTheory}, allowing us to construct $\Aut(G)$-equivariant strong deformation retractions of the network realignment complex.

\bigskip

\noindent\textbf{Acknowledgement.} We are grateful to Professor Feichtner-Kozlov for proposing this research topic and for many insightful comments and valuable suggestions provided throughout this project.

\newpage

\tableofcontents

\section{Network Realignment Complexes over General Base Graphs}\label{sec:NRDef}

In this section, we introduce the network realignment complex over general base graphs. Before presenting the definition, we fix some basic graph-theoretic notation.

Let $G$ be a finite simple undirected connected graph on $n\geq 3$ vertices. Then $V(G)$ denotes the set of vertices and $E(G)\subseteq \{(v,w)\mid v,w\in V(G)\}$ the set of edges. In the following, we will refer to $G$ as the \textit{base graph}. Let $G\setminus \{v\}$ denote the graph obtained from $G$ by deleting the vertex $v$ and all incident edges, and let $G\setminus \{e\}$ denote the graph obtained from $G$ by deleting the edge $e$. Given two subgraphs $H_1,H_2$, write $H_1\cup H_2 = (V(H_1) \cup V(H_2), E(H_1)\cup E(H_2))$. Let $K_n$ denote the \textit{complete graph} on $n$ vertices, and let $s_i$ denote the \textit{star tree} with unique interior vertex $i$ and all other vertices adjacent to $i$. For a vertex $v$, let $\mathcal{N}_G(v)$ denote the \textit{neighbourhood} of $v$, given by the set of adjacent vertices. By $P=(p_1,\dots,p_k)$ we denote a \textit{path} of length $k-1$ in $G$, where $p_1,\dots,p_k$ are pairwise distinct but adjacent vertices of $G$. A tree consisting of a path is called a \textit{path tree}. For further graph theoretic basics, we refer to \cite{Diestel}.

We now introduce the cells of the network realignment complex. A 1-dimensional network realignment corresponds to a single leaf slide, whereas a $d$-dimensional network realignment is defined by a collection of $d$ parallel leaf slides. Note that the concepts formalised in this section generalise the definitions provided in \cite{Kozlov25} where the base graph is always given by the complete graph $G=K_n$. 

\begin{definition}
	A \textit{network realignment} $N = (T, r)$ over $G$ consists of a partition of the vertices $V(G) = A\cup B$ with $|A| \geq 2$ such that
	\begin{enumerate}[label=\roman*), itemsep=3pt, topsep=6pt]
		\item $T$ is a subtree of $G$ with vertex set $A$,
		\item $r\colon B\to E(T)$ is a map satisfying $(b,v),(b,w) \in E(G)$ for all $b\in B$ with $r(b) = (v,w)$.
	\end{enumerate}
	We call the vertices of $T$ the \textit{specified} vertices of $N$, and the elements of $B$ the \textit{unspecified} vertices of $N$.
\end{definition}

As an example, consider the complete graph on eight vertices with three edges erased, $G=K_8\setminus \{(1,2),(2,3),(4,5)\}$, as base graph and the network realignment in \autoref{fig:NetworkRealignment} (a), denoted by $N_1=(T_1,r_1)$. The figure depicts the vertex 3 in $B_1$ as connected to the edge $r_1(3)=(7,5)$ by a grey triangle. We notice that $T_1$ is a subtree of $G$ and that the edges $(3,7),(3,5)$ are in $E(G)$, hence $N_1$ is a network realignment over the base graph $G$. 

Next, we define the realignment step which is the operation leading from one network realignment to another:

\begin{figure}[tbp]
	\centering
	\begin{tikzpicture}[dot/.style={circle, draw, fill, inner sep=0pt, minimum size=4pt}]

		\newcommand{\diagramScale}{0.7}

		\begin{scope}[scale=\diagramScale]
			% vertices
			\node[dot, label={left:{\scriptsize $1$}}] (u1) at (2,1) {};
			\node[dot, label={below:{\scriptsize $2$}}] (u2) at (1,0) {};
			\node[dot, label={right:{\scriptsize $3$}}] (u3) at (2.5,1) {};
			\node[dot, label={left:{\scriptsize $4$}}] (u4) at (4,1) {};
			\node[dot, label={below:{\scriptsize $5$}}] (u5) at (3,0) {};
			\node[dot, label={below:{\scriptsize $6$}}] (u6) at (0,0) {};
			\node[dot, label={below:{\scriptsize $7$}}] (u7) at (2,0) {};
			\node[dot, label={below:{\scriptsize $8$}}] (u8) at (4,0) {};
			\node () at (2,-1.2) {(a)};
			% arrows
			\draw (u6) -- (u2) -- (u7) -- (u5) -- (u8);
			\draw (u1) -- (u7);
			\draw (u4) -- (u8);
			\draw[dashed] (u7) -- (u3) -- (u5);
		\end{scope}

		\begin{scope}[scale=\diagramScale, xshift = 6cm]
			% vertices
			\node[dot, label={left:{\scriptsize $1$}}] (v1) at (2,1) {};
			\node[dot, label={below:{\scriptsize $2$}}] (v2) at (1,0) {};
			\node[dot, label={left:{\scriptsize $3$}}] (v3) at (3,1) {};
			\node[dot, label={left:{\scriptsize $4$}}] (v4) at (4,1) {};
			\node[dot, label={below:{\scriptsize $5$}}] (v5) at (3,0) {};
			\node[dot, label={below:{\scriptsize $6$}}] (v6) at (0,0) {};
			\node[dot, label={below:{\scriptsize $7$}}] (v7) at (2,0) {};
			\node[dot, label={below:{\scriptsize $8$}}] (v8) at (4,0) {};
			\node () at (2,-1.2) {(b)};
			% arrows
			\draw (v6) -- (v2) -- (v7) -- (v5) -- (v8);
			\draw (v1) -- (v7);
			\draw (v4) -- (v8);
			\draw (v3) -- (v5);
		\end{scope}

		\begin{scope}[scale=\diagramScale, xshift=12cm]
			% vertices
			\node[dot, label={left:{\scriptsize $1$}}] (w1) at (1.5,1) {};
			\node[dot, label={below:{\scriptsize $2$}}] (w2) at (1,0) {};
			\node[dot, label={left:{\scriptsize $3$}}] (w3) at (2.5,1) {};
			\node[dot, label={right:{\scriptsize $4$}}] (w4) at (3,1) {};
			\node[dot, label={below:{\scriptsize $5$}}] (w5) at (3,0) {};
			\node[dot, label={below:{\scriptsize $6$}}] (w6) at (0,0) {};
			\node[dot, label={below:{\scriptsize $7$}}] (w7) at (2,0) {};
			\node[dot, label={below:{\scriptsize $8$}}] (w8) at (4,0) {};
			\node () at (2,-1.2) {(c)};
			% arrows
			\draw (w6) -- (w2) -- (w7) -- (w5) -- (w8);
			\draw (w4) -- (w5);
			\draw[dashed] (w2) -- (w1) -- (w7) -- (w3) -- (w5);
		\end{scope}

		\begin{scope}[on behind layer]
            \fill[lightgray] (u7.center) -- (u3.center) -- (u5.center) -- cycle;
            \fill[lightgray] (w2.center) -- (w1.center) -- (w7.center) -- cycle;
			\fill[lightgray] (w7.center) -- (w3.center) -- (w5.center) -- cycle;
        \end{scope}
	\end{tikzpicture}
	\vspace{-3mm}
	\caption{(a) A network realignment $N_1$ over $G$, (b) a realignment specification $N_1^{3\to 5}$, (c) A non-example of a network realignment over $G$}
	\label{fig:NetworkRealignment}
\end{figure}

\begin{definition}
	Let $N=(T,r)$ be a network realignment over $G$, $b\in B$ and $v\in r(b)$. We define a new network realignment, which we denote by $N^{b\to v} = (T^{b\to v},r^{b\to v})$, called \textit{realignment specification}, by the following data:
	\begin{itemize}
		\item $A^{b\to v}= A\cup \{b\}, \> B^{b\to v}= B\setminus \{b\}$,
		\item $T^{b\to v}= (A^{b\to v}, E(T)\cup (v,b))$,
		\item $r^{b\to v}=r\vert_{B^{b\to v}}\colon B^{b\to v} \to E(T^{b\to v})$.
	\end{itemize}
\end{definition}

\begin{definition}
	Let $N=(T,r)$ be a network realignment over $G$. Let $b$ be a leaf of $T$ and $e = (p_T(b), v)\in E(T)$ with $v\neq b$. Assume $(b,p_T(b))\notin \Image(r)$ and $(v,b) \in E(G)$. We define a new network realignment, which we denote by $N^{b\to e} = (T^{b\to e},r^{b\to e})$, called \textit{realignment cospecification}, by the following data:
	\begin{itemize}
		\item $A^{b\to e}= A\setminus \{b\}, \> B^{b\to e}= B\cup \{b\}$,
		\item $T^{b\to e}= (A^{b\to e}, E(T)\setminus \{(b, p_T(b))\})$,
		\item $r^{b\to e}\colon B^{b\to e} \to E(T^{b\to e}), \> r^{b\to e}\vert _B = r, \> r(b) = e$.
	\end{itemize}
\end{definition}

\begin{remark}\label{rem:notation}
	We fix the following notation for $N=(T,r)$, which will be used throughout whenever the context is clear. Unless stated otherwise, $u,v,w$ denote vertices of $G$, and $e$ denotes an edge of $G$. Furthermore, $(u,w)$ denotes an edge of the spine $\Sp(N)$. If $v\in B$, then $r(v)=(u,w)$. Otherwise, $v\in T\setminus \Sp(N)$. Further, $b$ will always be an unspecified vertex.
\end{remark}

In \autoref{fig:NetworkRealignment} (b) we see a realignment specification of $N_1$ in (a), where the node $3$ of $B_1$ is specified to $5$ in $r_1(3)$. Let us denote $N_2=N_1^{3\to 5}$, then in the same manner $N_1$ is a realignment cospecification of $N_2$, namely $N_1=N_2^{3\to (7,5)}$.

In (c) we see a network realignment $N_3 = (T_3, r_3)$ over $K_8$ that is not a network realignment over $G$, since $(4,5)\in E(T_3)$ but not in $E(G)$ and $2\in r_3(1)$ but $(1,2)\notin E(G)$.

\begin{definition}
	We define $P_G$, called the \textit{poset of network realignments} over $G$, by the following:
	\begin{itemize}
		\item the elements of $P_G$ are all network realignments over $G$,
		\item the partial order relation is generated by the covering relation $N \succ N^{b\to v}$ for all network realignments $N$ and legal choices of vertices $b,v$.
	\end{itemize}
\end{definition}

Considering the order complex $\Delta (P)$ and taking the geometric realisation yields a simplicial complex $|\Delta (P_G)|$. As shown in \cite{Kozlov25}, the lower ideals $\Delta(P_G^{\leq N})$ for any network realignment $N$ over $G$ are barycentric subdivisions of cubes. This yields a cubical structure on $|\Delta (P_G)|$, where the cubes are indexed by the network realignments over $G$. 

\begin{definition}
	For a connected graph $G$, we let $X_G$ denote the \textit{network realignment complex}, given by the cubical structure on the geometric realisation $|\Delta (P_G)|$.
\end{definition}

	The action of the automorphism group $\Aut(G)$ of the base graph $G$ on $G$ translates to an action of $\Aut(G)$ on the set of spanning trees of $G$. This in turn induces an action of $\Aut(G)$ on the poset $P_G$. To be precise, for any $g\in \Aut(G)$ and any network realignment $N = (T, r)$, the network realignment $gN = (gT,gr)$ is given by the following data. The tree $gT$ has vertex set $gA$ and $(gv,gw)$ is an edge if and only if $(v,w)$ is an edge in $T$. The map $gr\colon gB\to E(gT)$ is given by $gr(gb) = g(r(b))$. By functoriality the action of $\Aut(G)$ on $P_G$ induces an action on $X_G$.
	
	In the following, we will not distinguish between a cubical complex and its face poset and use the same notation for both.

\begin{remark}\label{FiltrOfComplexes}
	If $G$ is a connected, spanning subgraph of $G'$ and $N$ is a network realignment over $G$, then it is also a network realignment over $G'$. Thus, $P_G$ is a full subposet of $P_{G'}$, and $X_G$ is a subcomplex of $X_{G'}$.
\end{remark}

\begin{figure}[t]%projektion NRC n=5
	\begin{tikzpicture}
		[scale = 0.75, every node/.style={circle,fill=black,scale=0.5}]
		
		%gray cube horizontal between (A) and (D)
		\node[draw = black, fill=white] (A) at (8,0) {};
		\node (B) at (5.33,0) {};
		\node (C) at (2.67,0.853) {};
		\node[draw = black, fill=white] (D) at (0,0) {};
		\node (E) at (2.67,-0.853) {};
		\node (F) at ($(B)+(0,-0.853)$) {};
		\node (G) at ($(B)+(0,0.853)$) {};
		
		\begin{pgfonlayer}{background}
			\filldraw[fill=gray!70, draw=black] (B.center) -- (C.center) -- (D.center) -- (E.center) -- cycle;
			\filldraw[fill=gray!40, draw=black] (A.center) -- (B.center) -- (E.center) -- (F.center) -- cycle;
			\filldraw[fill=gray!40, draw=black] (A.center) -- (B.center) -- (C.center) -- (G.center) -- cycle;
		\end{pgfonlayer}	
		
		%gray left cube between (D) and (D2)		
		\node[draw = black, fill=white] (D2) at (4,6.928) {};
		\node (B2) at (1.32,2.287) {};
		\coordinate (B2n) at (-2.287,1.32) {};
		\node (F2) at ($(B2)-0.319*(B2n)$) {};
		\node (E2) at ($(F2)+(B2)$) {};
		\node (G2) at ($(B2)+0.319*(B2n)$) {};
		\node (C2) at ($(G2)+(B2)$) {};
		
		\begin{pgfonlayer}{background}
			\filldraw[fill=gray!70, draw=black] (B2.center) -- (C2.center) -- (D2.center) -- (E2.center) -- cycle;
		\end{pgfonlayer}	
		\begin{pgfonlayer}{background}
			\filldraw[fill=gray!40, draw=black] (D.center) -- (B2.center) -- (C2.center) -- (G2.center) -- cycle;
		\end{pgfonlayer}
		\begin{pgfonlayer}{background}
			\filldraw[fill=gray!40, draw=black] (D.center) -- (B2.center) -- (E2.center) -- (F2.center) -- cycle;
		\end{pgfonlayer}
		
		%gray right cube between (D2) and (A)
		\node (B3) at (5.332,4.62) {};
		\coordinate (B3n) at ($0.188*(-4.62,-2.668)$);
		\node (G3) at ($(B3)-0.853*(B3n)$) {};
		\node (C3) at ($(G3)-0.5*(B3)+0.5*(A)$) {};
		\node (F3) at ($(B3)+0.853*(B3n)$) {};
		\node (E3) at ($(F3)-0.5*(B3)+0.5*(A)$) {};
		
		\begin{pgfonlayer}{background}
			\filldraw[fill=gray!70, draw=black] (B3.center) -- (C3.center) -- (A.center) -- (E3.center) -- cycle;
		\end{pgfonlayer}	
		\begin{pgfonlayer}{background}%oben
			\filldraw[fill=gray!40, draw=black] (D2.center) -- (B3.center) -- (C3.center) -- (G3.center) -- cycle;
		\end{pgfonlayer}
		\begin{pgfonlayer}{background}%unten
			\filldraw[fill=gray!40, draw=black] (D2.center) -- (B3.center) -- (E3.center) -- (F3.center) -- cycle;
		\end{pgfonlayer}
		
		%the violet split-cubes at (D)
		\node (U) at (0.216,-1.23) {};
		\node (V) at (-0.407,-2.307) {};
		\node (W) at (-0.623,-1.08) {};
		\node (X) at (-1.2465,0) {};
		\node (Y) at (-2.202,0.801) {};
		\node (Z) at (-0.955,0.801) {};
		
		\begin{pgfonlayer}{background}
			\filldraw[fill=violet!50, draw=black] (Z.center) -- (Y.center) -- (X.center) -- (D.center) -- cycle;
			\filldraw[fill=violet!50, draw=black] (W.center) -- (V.center) -- (U.center) -- (D.center) -- cycle;
		\end{pgfonlayer}
		
		%split-cubes star tree (A):
		\node (U2) at (7.784,-1.23) {};
		\node (V2) at (8.407,-2.307) {};
		\node (W2) at (8.623,-1.08) {};
		\node (X2) at (9.2465,0) {};
		\node (Y2) at (10.202,0.801) {};
		\node (Z2) at (8.955,0.801) {};
		
		\begin{pgfonlayer}{background}
			\filldraw[violet!50, draw=black] (U2.center) -- (V2.center) -- (W2.center) -- (A.center) -- cycle;
			\filldraw[violet!50, draw=black] (X2.center) -- (Y2.center) -- (Z2.center) -- (A.center) -- cycle;
		\end{pgfonlayer}
		
		%split-cubes star tree (D2):
		\node (U3) at (5.171,7.354) {};
		\node (V3) at (5.79,8.433) {};
		\node (W3) at (4.623,8.001) {};
		\node (X3) at (3.377,8.001) {};
		\node (Y3) at (2.206,8.433) {};
		\node (Z3) at (2.829,7.354) {};
		
		\begin{pgfonlayer}{background}
			\filldraw[fill=violet!50, draw=black] (U3.center) -- (V3.center) -- (W3.center) -- (D2.center) -- cycle;
			\filldraw[fill=violet!50, draw=black] (X3.center) -- (Y3.center) -- (Z3.center) -- (D2.center) -- cycle;
		\end{pgfonlayer}
		
		%Antennae
		\node (NV) at ($(V)-(0.25,0.433)$) {};
		\node (NV') at ($(V)+(0.087,-0.49)$) {};
		\node (NY) at ($(Y)+(-0.5,0)$) {};
		\node (NY') at ($(Y)+(-0.383,0.321)$) {};
		
		\node (NV2) at ($(V2)+(0.25,-0.433)$) {};
		\node (NV2') at ($(V2)-(0.087,0.49)$) {};
		\node (NY2) at ($(Y2)-(-0.5,0)$) {};
		\node (NY2') at ($(Y2)+(0.383,0.321)$) {};
		
		\node (NV3) at ($(V3)+(0.47,0.17)$) {};
		\node (NV3') at ($(V3)+(0.25,0.433)$) {};
		\node (NY3) at ($(Y3)+(-0.25,0.433)$) {};
		\node (NY3') at ($(Y3)+(-0.47,0.17)$) {};

		\draw[thick, Red] (V)--(NV);
		\draw[thick, Red] (V)--(NV');
		\draw[thick, Red] (Y)--(NY);
		\draw[thick, Red] (Y)--(NY');
		
		\draw[thick, Red] (V2)--(NV2);
		\draw[thick, Red] (V2)--(NV2');
		\draw[thick, Red] (Y2)--(NY2);
		\draw[thick, Red] (Y2)--(NY2');
		
		\draw[thick, Red] (V3)--(NV3);
		\draw[thick, Red] (V3)--(NV3');
		\draw[thick, Red] (Y3)--(NY3);
		\draw[thick, Red] (Y3)--(NY3');
		
		%Type 1 squares
		\node (XW) at ($(W)+(X)$) {};
		\node (XW2) at ($(W2)+(X2)-(8,0)$) {};
		\node (XW3) at ($(W3)+(X3)-(D2)$) {};
		
		\coordinate (MI1) at ($(C)!0.5!(F2)$) {};
		\node (I1) at ($(D)!1.5!(MI1)$) {};
		\coordinate (MI2) at ($(E3)!0.5!(G)$);
		\node (I2) at ($(A)!1.5!(MI2)$) {};
		\coordinate (MI3) at ($(E2)!0.5!(F3)$);
		\node (I3) at ($(D2)!1.5!(MI3)$) {};
		
		\node (UE) at ($(U)+0.5*(E)$) {};
		\node (ZG2) at ($(Z)+0.5*(G2)$) {};
		\node (U2F) at ($(U2)+0.5*(F)-0.5*(8,0)$) {};
		
		\node (Z2C3) at ($(Z2)+0.5*(C3)-0.5*(8,0)$) {};
		\node (U3G3) at ($(U3)+0.5*(G3)-0.5*(D2)$) {};
		\node (Z3C2) at ($(Z3)+0.5*(C2)-0.5*(D2)$) {};
		
		\begin{pgfonlayer}{background}
			\filldraw[fill=blue!20, draw=black] (XW.center) -- (W.center) -- (D.center) -- (X.center) -- cycle;
			\filldraw[fill=blue!20, draw=black] (XW2.center) -- (W2.center) -- (A.center) -- (X2.center) -- cycle;
			\filldraw[fill=blue!20, draw=black] (XW3.center) -- (W3.center) -- (D2.center) -- (X3.center) -- cycle;
			
			\filldraw[fill=blue!20, draw=black] (I1.center) -- (F2.center) -- (D.center) -- (C.center) -- cycle;
			\filldraw[fill=blue!20, draw=black] (I2.center) -- (G.center) -- (A.center) -- (E3.center) -- cycle;
			\filldraw[fill=blue!20, draw=black] (I3.center) -- (F3.center) -- (D2.center) -- (E2.center) -- cycle;
			
			\filldraw[fill=blue!20, draw=black] (UE.center) -- (U.center) -- (D.center) -- (E.center) -- cycle;
			\filldraw[fill=blue!20, draw=black] (ZG2.center) -- (Z.center) -- (D.center) -- (G2.center) -- cycle;
			\filldraw[fill=blue!20, draw=black] (U2F.center) -- (U2.center) -- (A.center) -- (F.center) -- cycle;
			
			\filldraw[fill=blue!20, draw=black] (Z2C3.center) -- (Z2.center) -- (A.center) -- (C3.center) -- cycle;
			\filldraw[fill=blue!20, draw=black] (U3G3.center) -- (U3.center) -- (D2.center) -- (G3.center) -- cycle;
			\filldraw[fill=blue!20, draw=black] (Z3C2.center) -- (Z3.center) -- (D2.center) -- (C2.center) -- cycle;
		\end{pgfonlayer}
		
	\end{tikzpicture} 
	\caption{Projection of the network realignment complex  $X_G$ with $G=K_5\setminus~\{e\}$.}
	\label{fig:NRCn=5}
\end{figure}

As an example of a network realignment complex $X_G$ see \autoref{fig:NRCn=5}, where the base graph is $G=K_5\setminus \{e\}$ with $e=(1,2)$. In \autoref{sec:examples}, \autoref{1EdgeM} we treat the case where the base graph is $K_n\setminus e$. The vertices of $X_G$ are indexed by the 75 spanning trees of $G$. There is only one connected component in which three 3-cubes, depicted in grey, intersect in those vertices indexed by the white star trees $s_3,s_4$ and $s_5$. The smaller violet squares represent the facets of the 3-cubes in $X_5$ between the star trees $s_i$ and $s_j$, where $i\in\{1,2\}$ and $j\in\{3,4,5\}$, viewing $X_G$ as a subcomplex of $X_5$. One can easily see from the picture that $X_G$ is homotopy equivalent to the complete graph $K_3$.
		
	\section{Equivariant Topology of the Network Realignment Complex}\label{TopoChapter}

In this section, we investigate the topology of the network realignment complex over a base graph $G$ in a way that is compatible with the symmetries of $G$. The automorphism group $\Aut(G)$ acts on $X_G$ by relabelling each network realignment, and we seek deformation retracts which do not depend on choices of particular vertex labels. This is reflected in the generalised Morse matchings below. Active vertices of a realignment are defined by structural features such as its spine and occupied leaves. Then matched partners are obtained by (co-)specifying all active vertices simultaneously, rather than choosing a preferred labelled vertex. This construction is completely independent of the labelling and yields $\Aut(G)$-equivariant strong deformation retracts.

The following theorem gives the resulting equivariant homotopy type of $X_G$. It shows that, up to an $\Aut(G)$-equivariant deformation retraction, $X_G$ has at most one non-contractible component, which is a complete graph on the vertices of $G$ that support star trees in $X_G$.

\begin{theorem} 
	Let $G$ be a connected graph on $n\geq 3$ vertices, and let $\mathcal S_G=\{v\in V(G) \mid \deg_G(v)=n-1\}$. The action of $\Aut(G)$ on $G$ restricts to an action on $\mathcal S_G$, and hence induces an action of $\Aut(G)$ on the complete graph $K_{\mathcal S_G}$ with vertex set $\mathcal S_G$. 
	
	There exists an $\Aut(G)$-equivariant embedding $K_{\mathcal S_G}\hookrightarrow X_G$ and an $\Aut(G)$-equivariant strong deformation retraction 
	\[
	X_G \simeq K_{\mathcal S_G} \sqcup D_G, 
	\]
	where $D_G$ is a discrete $\Aut(G)$-space. In particular, after forgetting the $\Aut(G)$-action, the only component of $X_G$ with possibly non-trivial homotopy is homotopy equivalent to the complete graph $K_{\mathcal S_G}$. If $k=|\mathcal S_G|\geq 2$, then this is a wedge of $(k-1)(k-2)/2$ circles.
\end{theorem}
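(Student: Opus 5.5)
The plan is to build an $\Aut(G)$-invariant generalised acyclic matching on the face poset $P_G$ and collapse along it. For this we use the equivariant version of Freij's generalised discrete Morse theory set up in \autoref{sec:equivTheory}. That theory lets us collapse intervals $[N_{\min},N_{\max}]$ of $P_G$ (Boolean intervals, since lower ideals are cubes) simultaneously, provided the collection of intervals is acyclic and permuted by $\Aut(G)$. The result is an equivariant strong deformation retraction onto the subcomplex of critical cells. Everything therefore reduces to choosing the intervals canonically, i.e.\ through labelling-free data of $N$ only, so that invariance under $\Aut(G)$ is automatic.

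\textbf{Step 1 (the matching).} For a network realignment $N=(T,r)$, define its spine $\Sp(N)$ as the minimal subtree of $T$ containing all edges in $\Image(r)$ together with the structure forced by them. For a spanning tree $T$ (i.e.\ $B=\emptyset$) this is a longest ``core'', obtained by repeatedly stripping leaves. Call a vertex \emph{active} if it is unspecified, or if it is a leaf of $T$ outside the spine that could legally be cospecified onto a spine edge. Match $N$ with the realignment obtained by specifying or cospecifying \emph{all} active vertices at once. Concretely, the interval is
\[
[\,N_{\text{all active specified to a fixed endpoint}},\,N_{\text{all active cospecified}}\,],
\]
and it should be a Boolean interval whose atoms are the individual active vertices. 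Since the spine and the set of active vertices are defined structurally, $gN$ has active set $g(\text{active set of }N)$, so the family of intervals is $\Aut(G)$-invariant. Acyclicity would be checked with a potential function, for instance the pair (size of spine, number of specified vertices off the spine) under lexicographic order, which should strictly decrease along any alternating path.

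\textbf{Step 2 (critical cells).} Next, determine the critical cells, those not covered by a nontrivial interval. The expectation is two kinds:
\begin{itemize}
\item spanning trees with no legal leaf slide onto the spine, which give the isolated points $D_G$;
\item the star trees $s_v$ with $v\in\mathcal S_G$, together with the edges joining them.
\end{itemize}
For the edges, observe that $s_v$ can exist only if $\deg_G(v)=n-1$. For $u,v\in\mathcal S_G$, the $1$-cell with $T=(u,v)$ and every other $b$ mapped to $r(b)=(u,v)$ is legal because both $u$ and $v$ are adjacent to everything. Its spine is the edge $(u,v)$ and it has no active vertex to uncollapse, so it is critical. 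Its two endpoints are $s_u$ and $s_v$, up to the collapse. The map sending the edge $uv$ of $K_{\mathcal S_G}$ to this cell is visibly $\Aut(G)$-equivariant and gives the embedding $K_{\mathcal S_G}\hookrightarrow X_G$. Strictly, the critical cells are the maximal cubes between stars, and these retract onto a segment. It then remains to check that the critical subcomplex is exactly $K_{\mathcal S_G}\sqcup D_G$: every other component contains no critical cell of dimension $\ge 1$ and exactly one critical vertex, which gives both contractibility and the description of the components.

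\textbf{Step 3 (the count).} The final count of circles is the Euler characteristic of a connected graph with $k$ vertices and $\binom k2$ edges. Its first Betti number is $\binom k2-k+1=(k-1)(k-2)/2$, and a connected graph is homotopy equivalent to a wedge of that many circles.

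\textbf{Main obstacle.} The hardest step is Step 1 for a general $G$. Cospecifying all active leaves simultaneously must remain legal over $G$: each leaf needs adjacency in $G$ to both endpoints of the target spine edge. This can fail when $G$ is not complete, and a single failing leaf could break the Boolean structure of the interval. The first fix to try is to redefine ``active'' so that it includes only leaves whose cospecification is $G$-legal. One then has to reprove that this restricted active set is still stable under specifying or cospecifying, so that intervals are disjoint, and still $\Aut(G)$-invariant. Acyclicity under this refined definition is where the genuine case analysis lies.
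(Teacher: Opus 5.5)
\textbf{Gap in Step 2.} No $\Aut(G)$-equivariant matching can do what you claim there. You claim that, apart from $K_{\mathcal S_G}$, each component contains no critical cell of positive dimension and exactly one critical vertex. Equivariance would force that vertex to be fixed by the stabiliser of its component, and such stabilisers need not fix any vertex.

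Take $G=K_5\setminus\{(1,2),(2,3),(3,4)\}$ and $g=(1\,4)(2\,3)\in\Aut(G)$. Let $N$ be the $1$-cell with tree edges $(1,4),(1,3),(4,2)$ and $r(5)=(1,4)$. Then $g$ fixes $N$ but swaps its endpoints $N^{5\to 1}$ and $N^{5\to 4}$. Its spine $(1,4)$ has both leaves occupied, by $3$ and $2$.

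Every leaf slide preserves the property ``$(1,4)\in E(T)$, with $3$ on the $1$-side and $2$ on the $4$-side''. Indeed, $1$ and $4$ are never leaves, and $3$, $2$ cannot slide across $(1,4)$ because $(3,4),(1,2)\notin E(G)$. Hence every spanning tree in the component of $N$ contains $(1,4)$. On the other hand, the only $g$-fixed spanning trees of $G$ are $s_5$ and the paths $3,1,5,4,2$ and $1,3,5,2,4$, and none of these contains $(1,4)$.

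So this component has no $g$-fixed vertex. Its equivariant retract must be a non-vertex point; the paper's $D_G$ contains the midpoint of $N$. Your description of $D_G$ as a set of spanning trees is therefore false.

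The same cell also breaks Step 1. The vertex $5$ is unspecified over $(1,4)$, and ``specify to a fixed endpoint'' has no labelling-free meaning there, so your intervals are not $g$-invariant. Separately, the spine is $\Int(T)\cup\Image(r)$: for a spanning tree one strips leaves once, not repeatedly.

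\textbf{How the paper proceeds.} The paper splits the argument into three parts.

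First, its initial matching activates only vertices at \emph{unoccupied} leaves of the spine. A spine leaf $u$ is occupied if some leaf of $T$ hanging at $u$ has $p_{\Sp(N)}(u)$ as a barrier. Once $u$ is occupied, nothing at $u$ is active, not merely the illegal leaves. This is the correct form of the fix you suggest. It keeps spine and active set constant along intervals, and it makes the critical set, the minimal spine complex $Y_G$, a subcomplex.

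Second, a further matching collapses the main component $M_G$ onto $\Sigma_G$, which is then retracted onto the diagonals, as in your sketch.

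Third, the residual components are not reduced by Morse theory at all. They are classified by spine and spine assignment and identified equivariantly with products of trees $\prod_{v\in Z_\Gamma}c_\Gamma(v)$. These are then contracted linearly onto the product of geometric centres (a centroid, or the midpoint of the edge joining two centroids). This is what produces fixed points in the interior of cells such as $N$.

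The product-of-trees description and the centroid contraction are the missing ideas. Your Step 3 is fine.
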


The proof of this theorem occupies the rest of this section. After developing the necessary framework of equivariant discrete Morse theory for cubical complexes in \autoref{sec:equivTheory}, we introduce the minimal spine complex $Y_G$, an equivariant deformation retract of $X_G$, in \autoref{sec:MinimalSpineComplex}. In \autoref{sec:MainCpt} we identify a distinguished connected component of $Y_G$, the main component $M_G$.  We determine the topology of $M_G$ by showing that it $\Aut(G)$-equivariantly deformation retracts onto $K_{\mathcal S_G}$. In \autoref{sec:residual} we classify the residual connected components of $Y_G$ and show that they are contractible. Finally, in \autoref{sec:examples}, we illustrate the theory by examining the complexes for several classes of base graphs.

\subsection{Equivariant Generalised Morse Theory}\label{sec:equivTheory}

Constructing large equivariant Morse matchings is often difficult. Indeed, if there exists a group element that fixes a maximal cell $\tau$ while permuting all of its facets, then $\tau$ must be critical. In the setting of network realignments, the Morse matching given in \cite{Kozlov25} requires a choice of an active vertex by its label to determine the matched partner. Such a choice is generally incompatible with equivariance. 

Generalised Morse theory circumvents this issue by allowing a cell $\tau$ to be matched with any face $\sigma$ that is preserved whenever $\tau$ is preserved. Since we do not distinguish between the cubical complex $X$ and its poset, the interval is given by $[\sigma,\tau] = \{\alpha\in X\mid \sigma\leq \alpha\leq \tau\}$. A matched interval $[\sigma, \tau]$ then deformation retracts onto the part of $\partial \tau$ not containing $\sigma$. In the setting of network realignments, this amounts to simultaneously specifying (respectively cospecifying) all active vertices to determine the matched partner. Consequently, no active vertex needs to be chosen.

Freij introduced this equivariant generalisation of discrete Morse theory \cite{Freij}. We adapt his results for our purposes. First, simplicial complexes are replaced by cubical complexes. Further, we restrict to generalised Morse matchings whose critical cells form a subcomplex. This yields the stronger result that the subcomplex of critical cells is a $\Lambda$-equivariant deformation retract. The proof of this result is given in \autoref{AppA}.

\begin{definition}
	Let $X$ be a cubical $\Lambda$-complex, and let $\sim$ be an equivalence relation on $X$ whose equivalence classes are intervals in $X$. Then $\sim$ is called a \textit{generalised Morse matching} if there exists no cycle
	\[
	\sigma_1 < \tau_1 > \sigma_2 < \tau_2 > \ldots > \sigma_k < \tau_k > \sigma_{k+1} = \sigma_1
	\]
	with $k\geq 2$ and $\sigma_i \sim \tau_i \nsim \sigma_{i+1}$ for $1\leq i\leq k$. A cell is called \textit{critical} if it is alone in its equivalence class under $\sim$.
	
	Additionally, the matching $\sim$ is called \textit{$\Lambda$-equivariant} if, whenever $\sigma \sim \tau$, it follows that $g\sigma \sim g\tau$ for all $g\in \Lambda$.
\end{definition}

\begin{restatable}{theorem}{GeneralizedMorse} \label{Freij-generalization}
	Let $\sim$ be a $\Lambda$-equivariant generalised Morse matching on a finite cubical $\Lambda$-complex $X$. Assume that the critical cells of $\sim$ form a subcomplex $U$. Then the subcomplex $U$ is a $\Lambda$-equivariant strong deformation retract of $X$.
\end{restatable}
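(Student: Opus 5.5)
We argue by induction on the number of non-critical equivalence classes, removing one matched interval at a time by an equivariant elementary collapse. Since $\sim$ is $\Lambda$-equivariant, $\Lambda$ permutes the equivalence classes, so it is natural to remove a whole $\Lambda$-orbit of intervals simultaneously. First we use acyclicity to order the classes. Define a relation on the classes by $[\alpha]\to[\beta]$ if some cell of $[\alpha]$ is a face of some cell of $[\beta]$ and $[\alpha]\neq[\beta]$. The absence of the alternating cycles in the definition, combined with each class being an interval $[\sigma,\tau]$, should give that this relation has no directed cycles. The translation is this: a face relation between two classes can be moved down to the bottom element of the lower class and up to the top element of the upper class, producing exactly the zig-zag $\sigma_i<\tau_i>\sigma_{i+1}$. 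We then take a linear extension of this order. Because $\Lambda$ preserves the relation, we may choose the extension so that it is compatible with orbits, for instance by ranking orbits through the length of the longest chain above them.

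Next we peel off a maximal non-critical orbit. Let $[\sigma,\tau]$ be a non-critical class with no class strictly above it, among those not yet removed. Then every cell of $[\sigma,\tau]$ is maximal among the remaining cells except for faces of $\tau$. Hence removing the open interval $[\sigma,\tau]$ leaves a subcomplex $X'$, and $X'\cap\tau$ is precisely the union of the faces of the cube $\tau$ not containing $\sigma$. The key local fact is that a cube $\tau$ strongly deformation retracts onto the subcomplex $\partial\tau\setminus\operatorname{St}(\sigma)$ whenever $\sigma\neq\tau$. Writing $\tau=I^d$ and $\sigma$ as the face $\{x_1=\dots=x_j=0\}$ with $j<d$ (after reorienting coordinates), the complementary faces are those with some $x_i=1$ for $i\le j$, or with a free coordinate $x_i\in\{0,1\}$ for $i>j$. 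This union is a cone-like region, and a radial projection from a point beyond the centre of $\sigma$ gives the retraction.

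To make the retraction $\Lambda$-equivariant, we use that the stabiliser $\Lambda_\tau$ of $\tau$ also stabilises $\sigma$, since the class $[\sigma,\tau]$ is mapped to a class with top $g\tau$. So the stabiliser acts by cube symmetries fixing the face $\sigma$, that is, by affine isometries preserving the barycentres of $\sigma$ and $\tau$. Choosing the projection centre on the line through these barycentres makes the retraction commute with $\Lambda_\tau$. We then transport the retraction to the other intervals of the orbit via $g$; this is well defined exactly because of the stabiliser compatibility. Distinct intervals in the same orbit are both maximal, so their interiors are disjoint and the retractions glue. Their intersections lie in $X'$, where the homotopy is constant, so we obtain a $\Lambda$-equivariant strong deformation retraction of the current complex onto the smaller $\Lambda$-subcomplex.

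Iterating until only critical cells remain yields $U$; the assumption that $U$ is a subcomplex guarantees that the final stage is a genuine subcomplex. Composing finitely many equivariant strong deformation retractions gives the claim. The main obstacle is the acyclicity step: converting the zig-zag condition into a genuine partial order on intervals while correctly handling face relations between non-extremal cells of intervals. We must also check that removing a top interval really yields a subcomplex. That requires that no other remaining cell contains a cell of $[\sigma,\tau]$ other than through $\tau$, which is exactly the maximality in the order.
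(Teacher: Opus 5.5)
Your proposal is correct and follows the paper's proof essentially step for step. Acyclicity turns the classes into a poset, you remove a top $\Lambda$-orbit of intervals at a time by radial projection from a point just beyond the barycentre of $\sigma$ (the paper's $y=(1+\varepsilon)b_\sigma$), this is equivariant because the stabiliser of $\tau$ fixes $\sigma$, and you induct; your ranking of orbits by height only replaces the paper's horizontality lemma and quotient poset $\Lambda\backslash(X\setminus U)/\sim$. One small slip: for $\sigma=\{x_1=\dots=x_j=0\}$ the condition should be $j\geq 1$, not $j<d$, since $\sigma$ may be a vertex, and the radial projection handles that case unchanged.
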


\subsection{Collapse onto the Minimal Spine Complex \texorpdfstring{$Y_G$}{YG}} \label{sec:MinimalSpineComplex}

The goal of this section is to construct a perfect, $\Aut(G)$-equivariant generalised Morse matching on $X_G\setminus Y_G$, where $Y_G$ denotes the subcomplex that consists of the network realignments with minimal spines. Consequently, $Y_G$ is an $\Aut(G)$-equivariant strong deformation retract of $X_G$.

The central tool for defining this matching is the spine of a network realignment. It was first defined in \cite{Kozlov25}.

\begin{definition}
	Let $N = (T,r)$ be a network realignment over $G$. The \textit{spine} of $N$, denoted by $\Sp(N)$, is the subtree of $T$ consisting of $\Int(T)$ and $\Image(r)$. The size of the spine $|\Sp(N)|$ is the number of the edges of $\Sp(N)$.
\end{definition}

Similar to the matching given in \cite{Kozlov25}, the matching we construct in this subsection is designed to reduce the size of the spine. For example, given a vertex in $X_G\setminus Y_G$, we match it to the coface obtained by cospecifying all leaves whose parent is a leaf of the spine to the corresponding leaf edge of the spine. This coface lies in $X_G$ if and only if each of the cospecifications is well-defined over the base graph $G$. 

The obstruction arises from edges missing in $G$, which prevent leaves from being slid freely along the underlying tree. For example, the vertex 2 restricts the possible slides of the leaf 1 in any network realignment over $G = K_8 \setminus \{(1,2),(2,3),(4,5)\}$. \autoref{fig:BarrierOccupied} (a) shows a network realignment over $G$. As $p_T(1) = 7$, the vertex 1 cannot be realigned to the vertex 6, since the network realignment in (b) is not a network realignment over $G$. The red dotted lines depict all of the connections that are not permitted for network realignments over $G$.

\begin{definition}
	Let $v,w \in V(G)$. Then $v$ is called a \textit{barrier} of $w$ if $(v,w) \notin E(G)$.
\end{definition}

This obstruction is particularly interesting if a leaf $v$ is attached to a leaf $u$ of the spine and cannot be slid into the interior of the spine, since $p_{\Sp(N)}(u)$ is a barrier of $v$. In this case, any sequence of realignments not involving $p_{\Sp(N)}(u)$ preserves the property that the unique path from $v$ to $p_{\Sp(N)}(u)$ passes through $u$. Therefore, $u$ remains an interior vertex of the resulting network and, in particular, stays in the spine. 

\begin{figure}[tbp]
	\centering
	\begin{tikzpicture}[dot/.style={circle, draw, fill, inner sep=0pt, minimum size=4pt}]
		
		\newcommand{\diagramScale}{0.7}
		
		\begin{scope}[scale=\diagramScale]
			% vertices
			\node[dot, label={left:{\scriptsize $1$}}] (u1) at (2,1) {};
			\node[dot, label={below:{\scriptsize $2$}}] (u2) at (1,0) {};
			\node[dot, label={right:{\scriptsize $3$}}] (u3) at (2.5,1) {};
			\node[dot, label={right:{\scriptsize $4$}}] (u4) at (3.5,1) {};
			\node[dot, label={below:{\scriptsize $5$}}] (u5) at (3,0) {};
			\node[dot, label={below:{\scriptsize $6$}}] (u6) at (0,0) {};
			\node[dot, label={below:{\scriptsize $7$}}] (u7) at (2,0) {};
			\node[dot, label={below:{\scriptsize $8$}}] (u8) at (4,0) {};
			\node () at (2,-1.2) {(a)};
			% arrows
			\draw (u6) -- (u2) -- (u7) -- (u5) -- (u8);
			\draw (u1) -- (u7);
			\draw (u4) -- (u8);
			\draw[dashed] (u7) -- (u3) -- (u5);
			\draw[red, densely dotted, thick] (u1) -- (u2) -- (u3);
			\draw[red, densely dotted, thick] (u4) -- (u5);
		\end{scope}
		
		\begin{scope}[scale=\diagramScale, xshift = 5.5cm]
			% vertices
			\node[dot, label={left:{\scriptsize $1$}}] (v1) at (1.5,1) {};
			\node[dot, label={below:{\scriptsize $2$}}] (v2) at (1,0) {};
			\node[dot, label={left:{\scriptsize $3$}}] (v3) at (2.5,1) {};
			\node[dot, label={right:{\scriptsize $4$}}] (v4) at (3.5,1) {};
			\node[dot, label={below:{\scriptsize $5$}}] (v5) at (3,0) {};
			\node[dot, label={below:{\scriptsize $6$}}] (v6) at (0,0) {};
			\node[dot, label={below:{\scriptsize $7$}}] (v7) at (2,0) {};
			\node[dot, label={below:{\scriptsize $8$}}] (v8) at (4,0) {};
			\node () at (2,-1.2) {(b)};
			% arrows
			\draw (v6) -- (v2) -- (v7) -- (v5) -- (v8);
			\draw (v4) -- (v8);
			\draw[dashed] (v1) -- (v7) -- (v3) -- (v5);
			\draw[red, densely dotted, thick] (v1) -- (v2) -- (v3);
			\draw[red, densely dotted, thick] (v4) -- (v5);
		\end{scope}
		
		\begin{scope}[scale=\diagramScale, xshift=11cm]
			% vertices
			\node[dot, label={left:{\scriptsize $1$}}] (w1) at (0.5,1) {};
			\node[dot, ForestGreen, label={below:{\scriptsize $2$}}] (w2) at (1,0) {};
			\node[dot, label={left:{\scriptsize $3$}}] (w3) at (2.5,1) {};
			\node[dot, label={right:{\scriptsize $4$}}] (w4) at (3.5,1) {};
			\node[dot, ForestGreen, label={below:{\scriptsize $5$}}] (w5) at (3,0) {};
			\node[dot, ForestGreen, label={below:{\scriptsize $6$}}] (w6) at (0,0) {};
			\node[dot, ForestGreen, label={below:{\scriptsize $7$}}] (w7) at (2,0) {};
			\node[dot, ForestGreen, label={below:{\scriptsize $8$}}] (w8) at (4,0) {};
			\node () at (2,-1.2) {(c)};
			% arrows
			\draw[ForestGreen] (w6) -- (w2) -- (w7) -- (w5) -- (w8);
			\draw (w4) -- (w8);
			\draw (w1) -- (w6);
			\draw[dashed] (w7) -- (w3) -- (w5);
			\draw[red, densely dotted, thick] (w1) -- (w2) -- (w3);
			\draw[red, densely dotted, thick] (w4) -- (w5);
		\end{scope}
		
		\begin{scope}[on behind layer]
			\fill[lightgray] (u7.center) -- (u3.center) -- (u5.center) -- cycle;
			\fill[lightgray] (v2.center) -- (v1.center) -- (v7.center) -- cycle;
			\fill[lightgray] (v7.center) -- (v3.center) -- (v5.center) -- cycle;
			\fill[lightgray] (w7.center) -- (w3.center) -- (w5.center) -- cycle;
		\end{scope}
	\end{tikzpicture}
	\vspace*{-3mm}
	\caption{Consider the base graph $G = K_8 \setminus \{(1,2),(2,3),(4,5)\}$. The missing edges of $G$ are drawn in red. (a) A network realignment over $G$, (b) a non-example of a network realignment over $G$, since 2 is a barrier of 1, and in (c), the vertices 6 and 8 are occupied by 1 and 4, respectively. Hence, the spine, drawn in green, cannot be reduced.}
	\label{fig:BarrierOccupied}
\end{figure}

\begin{definition}
	Let $N$ be a network realignment over $G$. A leaf $u$ of $\Sp(N)$ is called \textit{occupied} if there exists a vertex $v\in V(T)\setminus V(\Sp(N))$ such that $p_T(v) = u$ and $p_{\Sp(N)} (u)$ is a barrier of $v$. In this situation, we say that $v$ \textit{occupies} $u$. If no such vertex exists, we say that $u$ is \textit{unoccupied}.
\end{definition}

Given any two occupied leaves, then both must remain interior vertices, and thus the unique path between them remains in the interior. This provides a lower bound on the spine under any sequence of realignments. In \autoref{fig:BarrierOccupied} (c), there are two occupied leaves, namely 6 and 8. Every realignment sequence must keep the path between them in the spine. Moreover, if all leaves of the spine are occupied, no sequence of realignments can reduce the size of the spine.

\begin{definition}
	The spine $\Sp(N)$ of a network realignment $N=(T,r)$ is called \textit{minimal} if $|\Sp(N)|\leq 1$ or all leaves are occupied. We denote by $Y_G$ the subset of $X_G$ consisting of all network realignments with minimal spines.
\end{definition}

As we will see in \autoref{lem:MSCisSubcomplex}, the subset $Y_G$ is a subcomplex of $X_G$. To prove this we need the following lemmas. 

\begin{lemma} \label{lem:OccupiedLeaves}
	If $\Sp(N) = \Sp(N^{b\to u})$, then the sets of occupied leaves of the spines coincide.
\end{lemma}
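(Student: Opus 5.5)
We compare $N=(T,r)$ with $N'=N^{b\to u}=(T',r')$, where $T'=T\cup\{(u,b)\}$, $b$ is now a leaf of $T'$ with $p_{T'}(b)=u$, and $r'=r|_{B\setminus\{b\}}$. Following \autoref{rem:notation}, write $r(b)=(u,w)$. The statement concerns occupied leaves, which depend on three pieces of data: the leaf set of the spine, the parent map $p_T$ on vertices outside the spine, and the spine parents $p_{\Sp(N)}$ of spine leaves. The hypothesis $\Sp(N)=\Sp(N')$ gives the first and third directly. So the plan is to control $p_T$ versus $p_{T'}$ on non-spine vertices, and then compare the two definitions leaf by leaf.

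First we show that $b\notin V(\Sp(N'))$. Suppose instead that $b\in V(\Sp(N'))=V(\Sp(N))$. Since $b$ is unspecified in $N$, it is not a vertex of $T\supseteq \Sp(N)$, a contradiction. Hence $b$ is a non-spine vertex of $T'$. Because $u\in r(b)\subseteq \Sp(N)$, the vertex $u$ is also a spine vertex of $N'$.

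Next we compare non-spine vertices and their parents. The non-spine vertices of $T'$ are those of $T$ together with $b$. Every other vertex $v\in V(T)\setminus V(\Sp(N))$ has $p_{T'}(v)=p_T(v)$, since the only new edge is $(u,b)$ and $b$ is a leaf of $T'$. It follows that for every spine leaf $x$ and every $v\neq b$, "$v$ occupies $x$" holds in $N$ if and only if it holds in $N'$, because the barrier condition refers only to $v$ and $p_{\Sp}(x)$, both unchanged. Thus the occupied leaves of $N$ form a subset of those of $N'$. The only possible extra occupied leaf in $N'$ is $u$, occupied by $b$.

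It remains to exclude that $b$ occupies $u$ in $N'$. This requires $u$ to be a leaf of $\Sp(N')$ and $p_{\Sp(N')}(u)$ to be a barrier of $b$. Since $(u,w)=r(b)$ is an edge of $\Sp(N)=\Sp(N')$ and $u$ is a leaf, its unique spine neighbour is $w$. Therefore $p_{\Sp(N')}(u)=w$. By the definition of a network realignment, $(b,w)\in E(G)$, so $w$ is not a barrier of $b$. Hence $b$ occupies nothing, and the two sets coincide.

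The main obstacle is this last step, namely observing that the edge constraint in $r$ rules out $b$ becoming an occupier. The rest is bookkeeping. One should also check the degenerate case where $|\Sp|\le 1$ and "parent in the spine" is still well defined. In that case the spine is a single edge $(u,w)$, both endpoints are leaves, and the argument above applies verbatim.
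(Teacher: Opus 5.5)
Your proof is correct and follows essentially the same route as the paper's. Vertices other than $b$ keep their parents because $T = T^{b\to u}\setminus\{b\}$, so they occupy exactly the same spine leaves, and $b$ cannot occupy $u$ because $p_{\Sp(N)}(u) = w \in r(b)$ forces $(b,w)\in E(G)$. You also spell out two points the paper leaves implicit: that $b$ is not a spine vertex of $N^{b\to u}$, and the single-edge-spine case.
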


\begin{proof}
	A vertex $v\in V(T)\setminus V(\Sp(N))$ occupies a leaf of $\Sp(N)$ if and only if it occupies the same leaf of $\Sp(N^{b\to u})$, since $T= T^{b\to u} \setminus \{b\}$. Moreover, if $u = p_{T^{b\to u}} (b)$ is a leaf of the spine, then $b$ does not occupy $u$ in $\Sp(N^{b\to u})$, because $p_{\Sp(N)}(u)\in r(b)$, and thus $(b, p_{\Sp(N)} (u))\in E(G)$, meaning that $p_{\Sp(N)}(u)$ is not a barrier of $b$. Hence, the occupied leaves coincide.
\end{proof}

\begin{lemma}\label{lem:SpineInclusion}
	Let $N = (T,r)$ be a network realignment over $G$. Then $\Sp(N^{b\to u}) \subseteq \Sp(N)$ for all legal choices $b,u$.
\end{lemma}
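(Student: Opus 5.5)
Write $N^{b\to u}=(T',r')$ with $T'=T\cup\{(u,b)\}$, where $r(b)=(u,w)$, and $r'=r|_{B\setminus\{b\}}$. We show that every edge of $\Sp(N^{b\to u})$ lies in $\Sp(N)$. Since $\Sp(N)$ is a subtree of $T$, it is enough to check two things: every vertex of $\Int(T')$ is a vertex of $\Sp(N)$, and every edge of $T'$ joining two such vertices, or lying in $\Image(r')$, is an edge of $\Sp(N)$. Both spines are subtrees, so no extra connectivity argument is needed once the edge inclusion is established.

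First we handle the edges coming from $r'$. We have $\Image(r')=r(B\setminus\{b\})\subseteq\Image(r)\subseteq E(\Sp(N))$, so these edges are immediate.

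Next we handle the interior of $T'$. The new vertex $b$ is a leaf of $T'$, because its only edge is $(u,b)$. Consequently $b\notin\Int(T')$ and the edge $(u,b)$ is not in $\Sp(N^{b\to u})$. The one exception would be a degenerate case forcing it in, but $b\notin B'$ rules this out. The degree of $u$ increases by one, so possibly $u\in\Int(T')\setminus\Int(T)$. This is the only delicate point. However, $u$ is an endpoint of the edge $r(b)=(u,w)\in\Image(r)$, so $u$ is a vertex of $\Sp(N)$ and the edge $(u,w)$ lies in $\Sp(N)$. Every other vertex has the same degree in $T$ and $T'$, so $\Int(T')\setminus\{u\}\subseteq\Int(T)$.

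It remains to treat the edges of $\Sp(N^{b\to u})$ that are spanned by interior vertices. If $\Sp(N)$ is defined as the minimal subtree containing $\Int(T)$ and $\Image(r)$, we argue as follows. Every edge of $\Sp(N^{b\to u})$ lies on a path in $T'\setminus\{b\}=T$ between vertices of $\Int(T)\cup\{u\}$ or endpoints of $\Image(r')$. All of these vertices lie in the subtree $\Sp(N)$, so the connecting paths lie in $\Sp(N)$ by convexity of subtrees. Thus $\Sp(N^{b\to u})\subseteq\Sp(N)$.

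The main obstacle is the vertex $u$. It may become interior only after the specification, and it is handled by observing that $u$ was already in the spine through $r(b)$. The remaining steps are bookkeeping.
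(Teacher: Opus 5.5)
Your proof is correct, but it is not the route the paper takes. The paper gives no direct argument. It observes that a network realignment over $G$ is also one over $K_n$, and that the specification $N^{b\to u}$ and the spine are defined without reference to the base graph. It then invokes Kozlov's Proposition 4.7 for $K_n$ \cite{Kozlov25}.

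You instead verify the inclusion from scratch, in four steps:
\begin{enumerate}
\item $\Image(r')\subseteq \Image(r)$.
\item The new vertex $b$ is a leaf of $T'$ and is not an endpoint of any edge in $\Image(r')$.
\item The only possible new interior vertex is $u$, and it already lies in $\Sp(N)$ as an endpoint of $r(b)\in\Image(r)$.
\item Convexity of the subtree $\Sp(N)$ inside $T$ then captures every remaining edge of $\Sp(N^{b\to u})$, whichever reading of the definition of the spine is used.
\end{enumerate}

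What each approach buys: your version makes the lemma self-contained and shows exactly why it holds. It also makes visible that $G$ plays no role, which is the fact the paper's reduction silently relies on. The paper's citation is shorter, but asks the reader to accept that Kozlov's setting transfers verbatim.

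One small wording point. The reason the edge $(u,b)$ cannot lie in $\Sp(N^{b\to u})$ is that $\Image(r')\subseteq E(T)$ and $b\notin V(T)$, not that $b\notin B'$. Your earlier observation $\Image(r')\subseteq\Image(r)$ already gives this, so the vague ``degenerate case'' sentence can be replaced by that one line.
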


\begin{proof}
	Since $G\subseteq K_n$, $N$ is a network realignment over $K_n$, and the claim was proved for network realignments over $K_n$ in \cite[Proposition 4.7]{Kozlov25}.  
\end{proof}

\begin{lemma}\label{lem:MSCisSubcomplex}
	$Y_G$ is a subcomplex of $X_G$. 
\end{lemma}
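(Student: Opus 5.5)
Since the order on $P_G$ is generated by the covering relations $N \succ N^{b\to u}$, and the faces of the cube indexed by $N$ are exactly the elements below $N$, it suffices to prove a single-step statement: if $N\in Y_G$, then $N^{b\to u}\in Y_G$ for every legal choice of $b$ and $u$. Iterating this over chains of covering relations then shows that $Y_G$ is closed under passing to faces, i.e. it is a subcomplex. So we fix $N=(T,r)$ with minimal spine and a specification $N' = N^{b\to u}$, and split into cases according to how the spine changes.

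\emph{Case $|\Sp(N)|\leq 1$.} By \autoref{lem:SpineInclusion}, $\Sp(N')\subseteq \Sp(N)$, so $|\Sp(N')|\leq 1$ and $N'$ is minimal.

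\emph{Case $\Sp(N') = \Sp(N)$ with $|\Sp(N)|\geq 2$.} All leaves of $\Sp(N)$ are occupied. By \autoref{lem:OccupiedLeaves} they remain occupied in $\Sp(N')$, hence $N'$ is minimal.

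\emph{Case $\Sp(N')\subsetneq \Sp(N)$ with $|\Sp(N)|\geq 2$.} This is the main obstacle, and I would argue it cannot occur. The edge $r(b)$ lies in $\Sp(N)$, and after specifying $b$ to $u$, the vertex $u$ becomes interior in $T'$, while $r(b)$ is still an edge of $T'$. An edge of $\Sp(N)$ can leave the spine only if it stops being interior or image-of-$r$ data. Concretely, it must be a leaf edge $(x,y)$ of $\Sp(N)$ with leaf $x$ that was in the spine only because it lay in $\Image(r)$; in that case $x$ itself was not interior in $T$. I would check that this contradicts occupancy. If $x$ is an occupied leaf of the spine, then some $v\in V(T)\setminus V(\Sp(N))$ has $p_T(v)=x$, so $x$ has degree at least $2$ in $T$ and is therefore interior in $T$. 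Since specification only adds vertices and edges, $x$ stays interior in $T'$, so both $x$ and $y=p_{\Sp(N)}(x)$ remain interior and the edge $(x,y)$ stays in $\Sp(N')$. Hence when all leaves of $\Sp(N)$ are occupied, every leaf edge of the spine persists. Because the spine is a subtree containing all these persistent leaf edges, and it is spanned by the paths between its leaves, which are interior, the whole spine persists. So $\Sp(N')=\Sp(N)$ and we are back in the previous case.

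The delicate point is making the claim ``interior vertices of $T$ stay interior in $T^{b\to u}$'' and ``paths between interior vertices consist of interior vertices'' precise, so that $\Sp(N)\subseteq \Sp(N')$ follows; combined with \autoref{lem:SpineInclusion} this gives equality.
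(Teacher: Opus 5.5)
Your proof is correct and follows the paper's own argument. The paper also reduces to a single specification and handles $|\Sp(N)|\le 1$ via \autoref{lem:SpineInclusion}. In the occupied case it uses exactly your observation that occupied leaves have degree at least $2$ in $T$, obtaining the chain $\Sp(N^{b\to u})\subseteq\Sp(N)=\Int(T)\subseteq\Int(T^{b\to u})\subseteq\Sp(N^{b\to u})$, and then concludes with \autoref{lem:OccupiedLeaves}. Your case split on whether the spine shrinks is just an unfolded version of that chain of inclusions.
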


\begin{proof}
	If all leaves of the spine are occupied, then $\Sp(N) = \Int(T)$. Indeed, in this situation every leaf $u$ of $\Sp(N)$ lies in $\Int(T)$, because there exists a vertex $v\in V(T)\setminus V(\Sp(N))$ with $p_T(v) = u$, implying $\degree_T(u) \geq 2$. Since $\Sp(N)$ is a subtree of $T$, all interior vertices of $\Sp(N)$ are also interior vertices of $T$. This shows the claim if $|\Sp(N)|\geq 1$. If $\Sp(N)$ is a single vertex, then $B = \emptyset$, and again $\Sp(N) = \Int(T)$.
	
	It suffices to prove that $N\in Y_G$ implies $N^{b\to u}\in Y_G$ for all legal choices of vertices $b,u$. If $|\Sp(N)|\leq 1$, then $|\Sp(N^{b\to u})|\leq 1$, since $\Sp(N^{b\to u}) \subseteq \Sp(N)$ by \autoref{lem:SpineInclusion}. If all leaves of $\Sp(N)$ are occupied, then 
	\[
	\Sp(N^{b\to u}) \subseteq \Sp(N) = \Int(T) \subseteq \Int(T^{b\to u}) \subseteq \Sp(N^{b\to u}),
	\] 
	and thus $\Sp(N^{b\to u}) = \Sp(N)$. By \autoref{lem:OccupiedLeaves}, the sets of occupied leaves coincide. Thus, all leaves of $\Sp(N^{b\to u})$ are occupied and $N^{b\to u} \in Y_G$.
\end{proof}

\begin{definition}
	The set $Y_G$ is called the \textit{minimal spine complex} of $X_G$.
\end{definition}

\begin{figure}[tbp]
	\centering
	\begin{tikzpicture}[dot/.style={circle, draw, fill, inner sep=0pt, minimum size=4pt}]
		
		\newcommand{\diagramScale}{0.7}
		
		\begin{scope}[scale=\diagramScale]
			% vertices
			\node[dot, label={left:{\scriptsize $1$}}] (u1) at (0.5,1) {};
			\node[dot, ForestGreen, label={below:{\scriptsize $2$}}] (u2) at (1,0) {};
			\node[dot, label={right:{\scriptsize $3$}}] (u3) at (2.5,1) {};
			\node[dot, label={right:{\scriptsize $4$}}] (u4) at (3.5,1) {};
			\node[dot, ForestGreen, label={below:{\scriptsize $5$}}] (u5) at (3,0) {};
			\node[dot, ForestGreen, label={below:{\scriptsize $6$}}] (u6) at (0,0) {};
			\node[dot, ForestGreen, label={below:{\scriptsize $7$}}] (u7) at (2,0) {};
			\node[dot, ForestGreen, label={below:{\scriptsize $8$}}] (u8) at (4,0) {};
			\node () at (2,-1.2) {(a)};
			% arrows
			\draw[ForestGreen] (u6) -- (u2) -- (u7) -- (u5) -- (u8);
			\draw (u1) -- (u6);
			\draw (u4) -- (u8);
			\draw[dashed] (u7) -- (u3) -- (u5);
			\draw[red, densely dotted, thick] (u1) -- (u2) -- (u3);
			\draw[red, densely dotted, thick] (u4) -- (u5);
		\end{scope}
		
		\begin{scope}[scale=\diagramScale, xshift = 6cm]
			% vertices
			\node[dot, label={left:{\scriptsize $1$}}] (v1) at (0.5,1) {};
			\node[dot, ForestGreen, label={below:{\scriptsize $2$}}] (v2) at (1,0) {};
			\node[dot, label={above:{\scriptsize $3$}}] (v3) at (2.5,1) {};
			\node[dot, ForestGreen, label={right:{\scriptsize $4$}}] (v4) at (3.5,1) {};
			\node[dot, ForestGreen, label={below:{\scriptsize $5$}}] (v5) at (3,0) {};
			\node[dot, ForestGreen, label={below:{\scriptsize $6$}}] (v6) at (0,0) {};
			\node[dot, ForestGreen, label={below:{\scriptsize $7$}}] (v7) at (2,0) {};
			\node[dot, ForestGreen, label={below:{\scriptsize $8$}}] (v8) at (4,0) {};
			\node () at (2,-1.2) {(b)};
			% arrows
			\draw[ForestGreen] (v6) -- (v2) -- (v7) -- (v5) -- (v8) -- (v4);
			\draw (v1) -- (v6);
			\draw (v3) -- (v4);
			\draw[red, densely dotted, thick] (v1) -- (v2) -- (v3);
			\draw[red, densely dotted, thick] (v4) -- (v5);
		\end{scope}
		
		\begin{scope}[on behind layer]
			\fill[lightgray] (u7.center) -- (u3.center) -- (u5.center) -- cycle;
		\end{scope}
	\end{tikzpicture}
	\vspace{-3mm}
	\caption{(a) A network realignment in the minimal spine complex $Y_G$, where $G = K_8 \setminus \{(1,2),(2,3),(4,5)\}$, (b) a network realignment not contained in $Y_G$.}
	\label{fig:MSC}
\end{figure}

As an example, consider \autoref{fig:MSC}, where the base graph is again $G=K_8 \setminus \{(1,2),(2,3),(4,5)\}$. The spines are depicted in green. In (a), all leaves of the spine of the network realignment $N_1$, namely 6 and 8, are occupied, hence the spine is minimal and $N_1\in Y_G$. On the other hand, in (b) the leaf 4 of the spine of the network realignment $N_2$ is unoccupied. Hence, the spine is not minimal and $N_2$ is not in $Y_G$.

Moreover, $N_1$ and $N_2$ are connected in $X_G$ by a sequence of (co-)specifications of the vertex 3. In what follows, we will define an equivalence that matches realignments such as $N_2$, leaving only cells with minimal spines, such as $N_1$. We will then prove the equivalence relation to be an $\Aut(G)$-equivariant generalised Morse matching. 

We define the set of active vertices of a network realignment $N\in X_G\setminus Y_G$. The following is an adaptation of \cite[Definition 4.13]{Kozlov25} to the present restricted network realignment complex, taking into account the occupied leaves. 

\begin{definition}
	Let $N\in X_G\setminus Y_G$ be a network realignment over $G$. We define
	\begin{itemize}
		\item $\Act_Y^+(N) = \{ v\in B\mid r(v)=(u,w), \> u \text{ an unoccupied leaf of }\Sp(N)\}$,
		\item $\Act_Y^-(N) = \{ v\in A\mid v \text{ a leaf of } T, \> p_T(v) \text{ an unoccupied leaf of } \Sp(N)\}$,
	\end{itemize}
	and $\Act_Y(N)=\Act_Y^+(N)\cup \Act_Y^-(N)$, the set of \textit{active vertices}.
\end{definition}

\begin{lemma}\label{lem:ActNonempty}
	If $\Sp(N)$ is not minimal, then $\Act_Y(N) \neq \emptyset$. 
\end{lemma}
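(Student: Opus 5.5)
Since $\Sp(N)$ is not minimal, by definition $|\Sp(N)|\geq 2$ and at least one leaf $u$ of $\Sp(N)$ is unoccupied. We fix such a $u$ and show that some vertex lies in $\Act_Y^+(N)$ or $\Act_Y^-(N)$ because of $u$. The argument only uses the definition of the spine as the subtree spanned by $\Int(T)$ and $\Image(r)$, so the plan is a short case analysis on why $u$ belongs to $\Sp(N)$.

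Let $w$ be the unique neighbour of $u$ in $\Sp(N)$, so that $(u,w)$ is the leaf edge of the spine at $u$. Since $\Sp(N)$ is the smallest subtree of $T$ containing $\Int(T)$ and all edges in $\Image(r)$, and $|\Sp(N)|\geq 2$, the leaf $u$ together with its edge $(u,w)$ must be forced into the spine. Otherwise, deleting $u$ would still leave a subtree containing $\Int(T)\cup\Image(r)$. Hence either $(u,w)\in\Image(r)$, or $u\in\Int(T)$.

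In the first case there is $b\in B$ with $r(b)=(u,w)$, where $u$ is an unoccupied leaf of $\Sp(N)$, so $b\in\Act_Y^+(N)$. In the second case $\degree_T(u)\geq 2$ while $\degree_{\Sp(N)}(u)=1$, so $u$ has a $T$-neighbour $v\notin V(\Sp(N))$. We then pick $v$ to be a leaf of $T$ with $p_T(v)=u$. For this, we observe that the component of $T\setminus\{u\}$ containing $v$ avoids $\Sp(N)$. Any interior vertex of $T$ lies in $\Sp(N)$, so every vertex of this component other than those adjacent to $u$ is impossible: a vertex $x$ in the component with a further neighbour would make $v$ interior, hence $v\in\Sp(N)$, a contradiction. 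Thus $v$ itself is a leaf of $T$ with $p_T(v)=u$, and $v\in\Act_Y^-(N)$.

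The only delicate step is the minimality claim that a spine leaf $u$ must either be interior in $T$ or be an endpoint of an edge in $\Image(r)$. This follows because, if neither held, $\Sp(N)\setminus\{u\}$ would still be a subtree containing $\Int(T)$ and $\Image(r)$. Note that $|\Sp(N)|\geq 2$ ensures that removing a leaf leaves a nonempty tree. I would also double-check the degenerate situation where $\Sp(N)$ is a single edge; it is excluded here since $|\Sp(N)|\leq1$ counts as minimal.
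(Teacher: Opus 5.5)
Your proof is correct and follows the paper's argument. Like the paper, you split on whether the leaf edge $(u,w)$ at an unoccupied spine leaf $u$ lies in $\Image(r)$: this gives an element of $\Act_Y^+(N)$ in one case, and a leaf $v$ of $T$ with $p_T(v)=u$, hence in $\Act_Y^-(N)$, in the other. You also make explicit two points the paper leaves implicit: that $u\in\Int(T)$ when $(u,w)\notin\Image(r)$, and that any $T$-neighbour of $u$ off the spine is a leaf of $T$ because $\Int(T)\subseteq V(\Sp(N))$; the latter needs only that one line, not your component argument.
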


\begin{proof}
	Since $\Sp(N)$  is not minimal, $|\Sp(N)|\geq 2$ and there exists a leaf $u$ of $\Sp(N)$ that is not occupied. Set $w = p_{\Sp(N)}(u)$. If $(u,w) \in \Image(r)$, then $\Act_Y^+(N) \neq~\emptyset$. On the other hand, if $(u,w) \notin \Image(r)$, then there exists a vertex $v\in V(T)\setminus V(\Sp(N))$ with $p_T(v)=u$. Since $u$ is not occupied, $v \in \Act_Y^-(N)$. In both cases, $\Act_Y(N) \neq~\emptyset$.
\end{proof}

\begin{lemma} \label{lem:SpineEqualityIter}
	Let $N = (T,r)\in X_G\setminus Y_G$. Suppose that $N^\prime$ is obtained by
	\begin{itemize}
		\item a specification $N^\prime = N^{b\to u}$, where $b\in \Act_Y^+(N)$ and $u$ is a leaf of $\Sp(N)$, or
		\item a cospecification $N^\prime = N^{v\to e}$, where $v\in \Act_Y^-(N)$ and $e = (p_T(v), p_{\Sp(N)}(p_T(v)))$.
	\end{itemize}
	Then $\Sp(N) = \Sp(N^\prime)$ and $\Act_Y(N) = \Act_Y(N^\prime)$.
\end{lemma}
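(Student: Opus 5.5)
The argument splits into two cases, one for each kind of move, and uses the same scheme in both. First I show that $\Int$ and $\Image(r)$ change only in ways that leave the spine as a subtree unchanged. Then I check, leaf by leaf, that the occupied/unoccupied status of every spine leaf is the same for $N$ and $N'$. Equality of the active sets then follows by comparing definitions directly.

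\emph{Specification case.} Let $b\in\Act_Y^+(N)$ with $r(b)=(u,w)$, where $u$ is an unoccupied leaf of $\Sp(N)$ and $w=p_{\Sp(N)}(u)$. By \autoref{lem:SpineInclusion} we have $\Sp(N')\subseteq\Sp(N)$, so it suffices to show that every edge of $\Sp(N)$ survives. In $T'=T^{b\to u}$ the vertex $u$ has a new leaf child $b$, so $u\in\Int(T')$. Since $u$ is adjacent to $w$ in $T$, the path from $b$ through $u$ to $w$ puts the edge $(u,w)$ in $\Int(T')\cup\Image(r')$. More precisely, $w$ is interior in $T$: $|\Sp(N)|\geq 2$ by non-minimality, so $w$ is an interior vertex of the spine. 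Any other edge of $\Sp(N)$ lies either in $\Int(T)\subseteq\Int(T')$ or in $\Image(r|_{B\setminus\{b\}})$. The only edge that could be lost is $r(b)=(u,w)$ itself, and we have just shown it is recovered because $u$ has become interior. Hence $\Sp(N')=\Sp(N)$.

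By \autoref{lem:OccupiedLeaves}, the occupied leaves then coincide. For the active sets, the spine leaves and their occupied status agree, and $b$ moves from $\Act_Y^+$ to $\Act_Y^-$: it is a leaf of $T'$ whose parent $u$ is an unoccupied spine leaf. Every other vertex keeps its $r$-value, or its leaf status and parent in $T$, so membership is unchanged. I need to confirm that no vertex $v\in A$ that was a leaf stops being a leaf, which is clear because only $b$ is added, as a leaf. I also need that no parent changes, which is equally clear.

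\emph{Cospecification case.} Here $N=(N')^{v\to u}$, where $u=p_T(v)$ and $e=(u,w)$. So I intend to apply the specification case with the roles reversed, which requires showing $N'\notin Y_G$, $v\in\Act_Y^+(N')$, and that $u$ is an unoccupied leaf of $\Sp(N')$. First, $\Sp(N')\supseteq\Sp(N)$ by \autoref{lem:SpineInclusion} applied to $N'$. Conversely, removing the leaf $v$ can only make $u$ non-interior, and then $(u,w)=r'(v)$ keeps the edge. Also, $v$ is not in the spine, because its parent $u$ is a spine leaf and $(v,u)\notin\Image(r)$, as $v\in\Act_Y^-(N)$ is a leaf of $T$ outside the spine. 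Thus the spines are equal. Since $u$ is unoccupied in $N$, \autoref{lem:OccupiedLeaves} applied to $N=(N')^{v\to u}$ shows that $u$ is unoccupied in $N'$. So $N'\notin Y_G$, $v\in\Act_Y^+(N')$, and the first case gives $\Act_Y(N')=\Act_Y(N)$.

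The delicate step is the bookkeeping at the leaf $u$. We must verify that $(u,w)$ stays a spine edge even though $u$ may or may not be interior in $T'$, and that $u$ remains a leaf rather than an interior vertex of the spine. The latter needs $b\notin\Sp(N')$, i.e.\ $(b,u)\notin\Image(r')$, which holds because $b$ is newly specified and nothing is realigned onto its edge.
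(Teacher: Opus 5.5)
Your argument is correct in substance, but it takes a different route from the paper. The paper verifies nothing directly. It notes that $X_G\subseteq X_n$ and that $\Act_Y$ differs from Kozlov's active set only by vertices attached to occupied spine leaves. It then cites \cite[Lemma 4.18]{Kozlov25} for $\Sp(N)=\Sp(N^\prime)$, and uses \autoref{lem:OccupiedLeaves} to transfer occupancy so that Kozlov's argument for the active sets goes through.

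You instead give a self-contained check. You prove spine equality for a specification from $w\in\Int(T)$ (because $|\Sp(N)|\geq 2$) and $u\in\Int(T^\prime)$. You transfer occupancy with \autoref{lem:OccupiedLeaves} and do the active-set bookkeeping explicitly. You then reduce the cospecification case to the specification case by observing $N=(N^\prime)^{v\to u}$ with $v\in\Act_Y^+(N^\prime)$. Your route is independent of Kozlov's paper, makes the role of occupied leaves explicit, and halves the casework. The paper's route is shorter, but it leaves the reader to confirm that Kozlov's proof survives restricting to unoccupied leaves.

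Two justifications need repair; neither conclusion is affected.
\begin{itemize}
\item In the specification case you say no leaf of $T$ stops being a leaf. That is false: $u$ itself may be a leaf of $T$, with $w$ as its only neighbour, and it becomes interior in $T^\prime$. It is harmless only because $p_T(u)=w$ is an interior vertex of the spine. Hence $u\notin\Act_Y^-(N)$ and $u\notin\Act_Y^-(N^\prime)$, and you should say this explicitly.
\item In the cospecification case your reason for $v\notin\Sp(N)$ is circular, since it appeals to $v$ being a leaf ``outside the spine''. The correct reason is that $v$ is a leaf of $T$, hence not interior. Moreover, $(v,u)\notin\Image(r)$ is part of the definition of the cospecification $N^{v\to e}$. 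Alternatively, $(v,u)\in\Sp(N)$ would give $u$ spine-degree $2$, since $v\neq w$ because $w\in\Int(T)$.
\end{itemize}
Your ``conversely'' inclusion is also cleaner stated as follows: $\Int(T^\prime)\subseteq\Int(T)$ and $\Image(r^\prime)=\Image(r)\cup\{(u,w)\}$ with $(u,w)\in\Sp(N)$, which gives $\Sp(N^\prime)\subseteq\Sp(N)$.
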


\begin{proof}
	By \autoref{FiltrOfComplexes}, $X_G$ is a subcomplex of $X_n=X_{K_n}$. Furthermore, the definition of the set of active vertices given in the present work differs from the one given in \cite{Kozlov25} only by those vertices whose parent, respectively parent of their specification, is an occupied leaf of the spine. Hence, the proof that the spines of $N$ and $N^\prime$ are identical follows from the argument given in \cite[Lemma 4.18]{Kozlov25}. Now, \autoref{lem:OccupiedLeaves} implies that the sets of occupied leaves of $\Sp(N)$ and $\Sp(N^\prime)$ coincide, and consequently, the arguments used in \cite[Lemma 4.18]{Kozlov25} prove the claim.
\end{proof} 

The previous lemma implies that (co-)specifying all of the active vertices at once will result in network realignments with the same spines and active vertices.

In order to define the partition of cells into intervals for the generalised Morse matching, we first define maps $d_Y$ and $u_Y$ on the non-critical cells such that, for each $N$, the interval in the partition containing $N$ is $[d_Y(N),u_Y(N)]$.

\begin{definition}\label{def:interval_limits}
	We define the two maps $d_Y,u_Y\colon X_G\setminus Y_G\rightarrow X_G\setminus Y_G$ as follows. For $N=(T,r)\in X_G\setminus Y_G$, let $r(v)=(u_v,w_v)$ for each $v\in\Act_Y^+(N)$, where $u_v$ is a leaf of $\Sp(N)$. Then the network realignment $d_Y(N)=(d_Y(T),d_Y(r))$ is given by the data
	\begin{itemize}
		\item $d_Y(A)=A\cup \Act_Y^+(N)$, \> $d_Y(B)=B\setminus \Act_Y^+(N)$,
		\item $d_Y(T)=T\cup \bigcup\limits_{v\in \Act_Y^+(N)} (v,u_v)$,
		\item $d_Y(r) = r\vert_{d_Y(B)}$,
	\end{itemize}
	and the network realignment $u_Y(N)=(u_Y(T),u_Y(r))$ is defined by the data	
	\begin{itemize}
		\item $u_Y(A)=A\setminus \Act_Y^-(N)$, \> $u_Y(B) = B\cup \Act_Y^-(N)$,
		\item $u_Y(T) = T \setminus \Act_Y^-(N)$,
		\item  $u_Y(r)\vert_{B}=r$ and $u_Y(r)(v)=(p_T(v),p_{\Sp(N)}(p_T(v)))$ for all $v\in \Act_Y^-(N)$.
	\end{itemize}
\end{definition}

The network realignment $d_Y(N)$ is obtained by specifying every vertex in the set $\Act_Y^+(N)$ simultaneously. Accordingly, the network realignment $u_Y(N)$ is obtained by cospecifying every vertex in the set $\Act_Y^-(N)$ simultaneously. This implies that $d_Y(N)$ is a face of $N$ and $u_Y(N)$ is a coface. In conclusion, we have $d_Y(N)\leq N\leq u_Y(N)$. We now define the equivalence relation in the following way.

\begin{definition}
	Let $\sim$ be the collection of intervals in $X_G\setminus Y_G$ of the form
	\[
	[d_Y(N),u_Y(N)]
	\]
	for $N\in X_G\setminus Y_G$.
\end{definition}

\begin{lemma}\label{lem:firstMatchingWellDef}
	The collection $\sim$ is a partition of $X_G\setminus Y_G$. In particular, for a network realignment $\tilde{N}\in X_G\setminus Y_G$, we have
	\begin{equation}\label{Eq:NInInterv}
		N\in [d_Y(\tilde{N}),u_Y(\tilde{N})] \Rightarrow d_Y(N)=d_Y(\tilde{N}) \text{ and } u_Y(N)=u_Y(\tilde{N}).
	\end{equation}
\end{lemma}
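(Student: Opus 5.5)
The key observation is that every cell of the interval $[d_Y(\tilde N),u_Y(\tilde N)]$ is obtained from $d_Y(\tilde N)$ by cospecifying a subset $S$ of the active vertices. We therefore first describe the interval explicitly, and then show that $d_Y$ and $u_Y$ are constant on it. Write $\tilde N=(\tilde T,\tilde r)$ and let
\[
\Act_Y(\tilde N)=\Act_Y^+(\tilde N)\cup\Act_Y^-(\tilde N).
\]
This is a disjoint union, since the two sets consist of unspecified and specified vertices respectively.

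\textbf{Step 1: the endpoints have the same spine and active set as $\tilde N$.} By definition, $d_Y(\tilde N)$ is obtained from $\tilde N$ by specifying the vertices of $\Act_Y^+(\tilde N)$ one at a time. Each of these is a specification of an active vertex to the unoccupied leaf $u_v$ of the spine. Applying \autoref{lem:SpineEqualityIter} iteratively, each step preserves both the spine and the active set, so every intermediate realignment remains in $X_G\setminus Y_G$ and the lemma stays applicable at the next step. The same argument applies to the cospecifications defining $u_Y(\tilde N)$, where the edge used is $e=(p_T(v),p_{\Sp}(p_T(v)))$ as the lemma requires. Hence $d_Y(\tilde N)$, $\tilde N$ and $u_Y(\tilde N)$ all share the spine $\Sp(\tilde N)$ and the active set $\Act_Y(\tilde N)$.

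In $d_Y(\tilde N)$ every active vertex is specified, as a leaf attached to an unoccupied spine leaf. In $u_Y(\tilde N)$ every active vertex is unspecified and mapped to the corresponding leaf edge of the spine. Outside the active vertices, the two realignments agree.

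\textbf{Step 2: the interval consists exactly of the partial cospecifications.} A realignment $N$ with $d_Y(\tilde N)\le N\le u_Y(\tilde N)$ lies in the cube indexed by $u_Y(\tilde N)$. Its faces are obtained by specifying any subset of the unspecified vertices of $u_Y(\tilde N)$, each to one of the two endpoints of its image edge. The condition $N\ge d_Y(\tilde N)$ then forces two things. First, the vertices outside $\Act_Y(\tilde N)$ keep their status. Second, an active vertex that is specified in $N$ is specified to the spine leaf $u_v$ and not to $w_v$; the cube-face structure of a lower ideal, a product of intervals, makes this precise. Consequently the interval is in bijection with the subsets $S\subseteq\Act_Y(\tilde N)$, where $S$ records the active vertices that are unspecified in $N$.

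Each such $N$ is reached from $\tilde N$ by a sequence of moves of the two kinds in \autoref{lem:SpineEqualityIter}: specify the vertices in $\Act_Y^+(\tilde N)\setminus S$, and cospecify those in $\Act_Y^-(\tilde N)\cap S$. Iterating the lemma gives $\Sp(N)=\Sp(\tilde N)$ and $\Act_Y(N)=\Act_Y(\tilde N)$; in particular $N\notin Y_G$. It then follows that $\Act_Y^+(N)=S$, that $\Act_Y^-(N)=\Act_Y(\tilde N)\setminus S$, and that the leaves $u_v$ are unchanged. Substituting into \autoref{def:interval_limits} yields $d_Y(N)=d_Y(\tilde N)$ and $u_Y(N)=u_Y(\tilde N)$, which is \eqref{Eq:NInInterv}.

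\textbf{Step 3: the intervals form a partition.} Each $N\in X_G\setminus Y_G$ lies in its own interval $[d_Y(N),u_Y(N)]$. If $N$ lies in two intervals $[d_Y(\tilde N_1),u_Y(\tilde N_1)]$ and $[d_Y(\tilde N_2),u_Y(\tilde N_2)]$, then \eqref{Eq:NInInterv} shows that both coincide with $[d_Y(N),u_Y(N)]$. The intervals are therefore pairwise equal or disjoint, and $\sim$ is a partition.

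The main obstacle is Step 2. One must justify that no face of $u_Y(\tilde N)$ lying above $d_Y(\tilde N)$ specifies an active vertex to the other endpoint $w_v$, and that the moves stay inside $X_G$, meaning each cospecification respects the base graph. The latter holds because the realignments involved are already cells of $X_G$. For the former, I would argue using the product structure of the cube: $d_Y(\tilde N)$ pins each active coordinate to the $u_v$ side.
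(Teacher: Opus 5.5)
Your proposal is correct and follows essentially the paper's argument: you identify each cell of $[d_Y(\tilde N),u_Y(\tilde N)]$ as a partial cospecification of $d_Y(\tilde N)$ by a subset of $\Act_Y(\tilde N)$, iterate \autoref{lem:SpineEqualityIter} to keep the spine and active set fixed, read off $d_Y(N)=d_Y(\tilde N)$ and $u_Y(N)=u_Y(\tilde N)$, and conclude that the intervals are disjoint. Your coordinatewise (product-of-intervals) argument, showing that no active vertex can be specified to $w_v$, fills in a step the paper only asserts.
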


\begin{proof}
	First, we note that the intervals in $\sim$ really cover the whole set $X_G\setminus Y_G$. This follows from the existence of an interval $[d_Y(N),u_Y(N)]$ for each $N\in X_G\setminus Y_G$. Next, we turn to the implication (\ref{Eq:NInInterv}).
	
	Since $N\in  [d_Y(\tilde{N}),u_Y(\tilde{N})]$, we have 
	\begin{equation}\label{Eq:NInInterv2}
		d_Y(\tilde{N}) \leq N \leq u_Y(\tilde{N}).
	\end{equation}
	
	By definition, $u_Y(\tilde{N})$ is obtained from $d_Y(\tilde{N})$ by cospecifying every active vertex. Hence, by (\ref{Eq:NInInterv2}), $N$ must be obtained from $d_Y(\tilde{N})$ by cospecifying a subset $\mathcal{A}\subseteq \Act_Y(\tilde{N})$ to the corresponding leaf edge of the spine. By iterating \autoref{lem:SpineEqualityIter}, it follows that $\Act_Y(N)= \Act_Y(d_Y(\tilde{N}))$. This implies conversely that $d(\tilde{N})$ is obtained from $N$ by specifying every vertex in $\mathcal{A} = \Act_Y^+(N)$ to the corresponding leaf of the spine, and hence $d_Y(N)=d_Y(\tilde{N})$.
	
	On the other hand, $N$ is obtained from $u_Y(\tilde{N})$ by specifying a subset of $\Act_Y(\tilde{N})$. Similar arguments lead to $u_Y(N)=u_Y(\tilde{N})$, and hence the implication of (\ref{Eq:NInInterv}) is proved.
	
	This implies that the intervals of $\sim$ are disjoint, which means that $\sim$ is a partition.
\end{proof}

\begin{lemma} \label{lem:SpineEquality}
	Let $I$ be an interval in $\sim$ and $N,\tilde{N}\in I$. We have $\Sp(N) = \Sp(\tilde{N})$ and $\Act_Y(N) = \Act_Y(\tilde{N})$.
\end{lemma}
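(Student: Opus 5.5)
Let $I$ be an interval of $\sim$. By definition of $\sim$ there is some $\hat N\in X_G\setminus Y_G$ with $I=[d_Y(\hat N),u_Y(\hat N)]$. By \autoref{lem:firstMatchingWellDef}, every $N\in I$ satisfies $d_Y(N)=d_Y(\hat N)$. It therefore suffices to compare each $N\in I$ with the bottom element $D:=d_Y(\hat N)$. If $\Sp(N)=\Sp(D)$ and $\Act_Y(N)=\Act_Y(D)$ for all $N\in I$, then transitivity gives the claim for any pair $N,\tilde N\in I$.

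First, I compare $N$ with $d_Y(N)=D$. The realignment $D$ is obtained from $N$ by specifying the vertices of $\Act_Y^+(N)$ one at a time, each $v$ to the spine leaf $u_v$ with $r(v)=(u_v,w_v)$. I argue by induction on the number of vertices specified so far. At each stage the current realignment $N_j$ has, by the inductive hypothesis, the same spine and the same active set as $N$. In particular it lies outside $Y_G$, because $\Sp(N_j)=\Sp(N)$ is not minimal: its size is at least $2$, and its set of occupied leaves agrees with that of $N$ by \autoref{lem:OccupiedLeaves}. Also, the next vertex $v$ to be specified still lies in $\Act_Y^+(N_j)$, since it is still unspecified and maps to the same unoccupied leaf edge. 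The first case of \autoref{lem:SpineEqualityIter} then applies to the step $N_j\to N_j^{v\to u_v}$ and yields $\Sp(N_{j+1})=\Sp(N_j)$ and $\Act_Y(N_{j+1})=\Act_Y(N_j)$. Iterating gives $\Sp(D)=\Sp(N)$ and $\Act_Y(D)=\Act_Y(N)$.

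Alternatively, I could go upward from $D$ to $N$ using the cospecification case of \autoref{lem:SpineEqualityIter}, as in the proof of \autoref{lem:firstMatchingWellDef}. That lemma, however, is phrased in terms of the starting realignment $N$, i.e.\ a cospecification $N^{v\to e}$ with $v\in\Act_Y^-(N)$. So when going upward one applies it at each intermediate stage, with the vertex being active there. The downward direction from $N$ avoids this bookkeeping.

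The main obstacle is making sure \autoref{lem:SpineEqualityIter} is applicable at every intermediate step. This needs three things: the intermediate realignment is not in $Y_G$, the vertex being (co)specified is active for the intermediate realignment and not only for $N$, and the target leaf $u_v$ is a leaf of the current spine. All three follow from the inductive invariants that the spine and the active set are unchanged. So the induction must carry both invariants simultaneously, and also use \autoref{lem:OccupiedLeaves} to see that the occupied leaves, and hence non-minimality, persist.
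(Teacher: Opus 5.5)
Your proof is correct and is essentially the paper's argument: the paper's proof simply says to iterate \autoref{lem:SpineEqualityIter} as in the proof of \autoref{lem:firstMatchingWellDef}. The only difference is direction. You go downward from $N$ to $d_Y(N)$ by specifications and use \eqref{Eq:NInInterv} to identify all bottom elements, whereas the paper goes upward from $d_Y(\tilde N)$ by cospecifications. Your explicit check that each intermediate cell stays outside $Y_G$, with the next vertex still active, supplies bookkeeping the paper leaves implicit.
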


\begin{proof}
	As in the proof \autoref{lem:firstMatchingWellDef}, applying \autoref{lem:SpineEqualityIter} iteratively yields the claim.
\end{proof} 

\begin{lemma}\label{lem:ActiveDecreasing}
	Let $N$ be a network realignment over $G$ that is not minimal. If $\Sp(N) = \Sp(N^{b\to u})$, then $\Act_Y(N^{b\to u}) \subseteq \Act_Y(N)$.
\end{lemma}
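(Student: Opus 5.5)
Write $N' = N^{b\to u}$, where $b\in B$ and $u\in r(b)$. The plan is a direct comparison of the defining conditions of $\Act_Y^\pm$ for $N$ and $N'$, using three facts. First, the spines agree by hypothesis. Second, by \autoref{lem:OccupiedLeaves} the occupied leaves of the common spine coincide, so "unoccupied leaf of $\Sp(N)$" means the same thing for both. Third, $N'$ differs from $N$ only at $b$: we have $T' = T\cup\{(u,b)\}$, $B' = B\setminus\{b\}$ and $r' = r\vert_{B'}$. Since $N'$ is not minimal either (same spine, same occupied leaves), $\Act_Y(N')$ is defined. I take an arbitrary $x\in\Act_Y(N')$ and split into cases according to whether $x\in\Act_Y^+(N')$ or $x\in\Act_Y^-(N')$.

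\emph{Case $x\in\Act_Y^+(N')$.} Then $x\in B'\subseteq B$, and $r(x) = r'(x)$ has an unoccupied leaf endpoint of the common spine. Hence $x\in\Act_Y^+(N)$.

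\emph{Case $x\in\Act_Y^-(N')$, with $x\neq b$.} Then $x$ is a leaf of $T'$, so $x\in A$. Since $x$ and $b$ are distinct, $T$ is $T'$ with the leaf $b$ removed, and so $x$ is still a leaf of $T$. Its parent satisfies $p_T(x) = p_{T'}(x)$; the only exception would be $T'$ being the single edge $(x,b)$, which cannot occur because the spine has at least two edges. So $p_T(x)$ is an unoccupied leaf of the spine, and $x\in\Act_Y^-(N)$.

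\emph{Case $x = b$.} This is the main point. Here $b$ is a leaf of $T'$ with $p_{T'}(b) = u$, and $u$ is an unoccupied leaf of $\Sp(N)$. Write $r(b) = (u,w)$. Because $\Sp(N)$ contains $\Image(r)$, the edge $(u,w)$ is a spine edge. As $u$ is a leaf of the spine, $w = p_{\Sp(N)}(u)$. Therefore $r(b)$ has an unoccupied leaf $u$ of $\Sp(N)$ as an endpoint, which is exactly the condition $b\in\Act_Y^+(N)$.

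Combining the three cases gives $\Act_Y(N')\subseteq\Act_Y(N)$. I expect the case $x=b$, a vertex that changes type from unspecified to specified, to be the only real obstacle. What makes it work is that $r(b)$ is a spine edge incident to the leaf $u$, and the hypothesis $\Sp(N)=\Sp(N')$ guarantees that this leaf status does not change. The remaining cases are routine bookkeeping; the only care needed is that removing the leaf $b$ from $T'$ does not alter the leaf status or the parent of any other vertex.
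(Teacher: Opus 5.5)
Your proof is correct and follows the paper's argument essentially verbatim. It uses the same three-way split: $x\in\Act_Y^+(N')$; $x\in\Act_Y^-(N')$ with $x\neq b$; and $x=b$, which lands in $\Act_Y^+(N)$ because $r(b)$ is the spine edge at the unoccupied leaf $u$. In each case \autoref{lem:OccupiedLeaves} transfers unoccupied status from $N'$ to $N$, exactly as in the paper, and your checks that $N'$ is itself non-minimal and that removing the leaf $b$ preserves the leaf status and parents of the other vertices are details the paper leaves implicit.
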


\begin{proof}
	The following arguments are similar to the proof of \cite[Lemma 4.20]{Kozlov25}, but we have to take into account the relation to the occupied leaves in the set of potential active vertices.
	
	Let $v\in \Act_Y(N^{b\to u})$. We show that $v\in \Act_Y(N)$. If $v\in \Act_Y^+(N^{b\to u})$, then $r(v)$ is a leaf edge in $\Sp(N^{b\to u}) = \Sp(N)$. Since $v\in \Act_Y^+(N^{b\to u})$, both incident vertices to $r(v)$ are unoccupied in $\Sp(N^{b\to u})$, and thus also unoccupied in $\Sp(N)$ by \autoref{lem:OccupiedLeaves}. Hence, $v\in \Act_Y^+(N)$.
	
	Now, assume $v\in \Act_Y^-(N^{b\to u})$. If $v=b$, then $u$ is a leaf of $\Sp(N^{b\to u})$ and $r(b)$ is a leaf edge of $\Sp(N)=\Sp(N^{b\to u})$. Then $p_{T^{b\to u}}(b)=u$ is unoccupied in $N^{b\to u}$, and thus also unoccupied in $N$ by \autoref{lem:OccupiedLeaves}. Hence, $b\in \Act_Y^+(N)$.
	
	If $v\neq b$, then $v$ is a leaf of $T$, which is a subtree of $T^{b\to u}$, and $p_{T^{b\to u}}(v) = p_T(v)$ is a leaf of $\Sp(N^{b\to u}) = \Sp(N)$. Since $p_T(v)$ is not occupied in $N^{b\to u}$, and thus also unoccupied in $\Sp(N)$ by \autoref{lem:OccupiedLeaves}, we have $v\in \Act_Y^-(N)$.
\end{proof}

This leads us to the main result.

\begin{proposition}\label{equivMatchingProp}
	The equivalence relation $\sim$ is an $\Aut(G)$-equivariant generalised Morse matching on $X_G\setminus Y_G$.
\end{proposition}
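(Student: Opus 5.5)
Three things need checking: that $\sim$ is an equivalence relation whose classes are intervals, that it is $\Aut(G)$-equivariant, and that there is no cycle of the forbidden form.

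\emph{Intervals.} By \autoref{lem:firstMatchingWellDef}, the intervals $[d_Y(N),u_Y(N)]$ partition $X_G\setminus Y_G$. So $\sim$ is an equivalence relation whose classes are intervals.

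\emph{Equivariance.} Let $g\in\Aut(G)$. Then $g$ preserves adjacency and non-adjacency in $G$, so it sends barriers to barriers. It also maps $\Sp(N)$ to $\Sp(gN)$, leaves of the spine to leaves of the spine, and occupied leaves to occupied leaves. Hence $\Act_Y^\pm(gN)=g\Act_Y^\pm(N)$, and $gN\in Y_G$ if and only if $N\in Y_G$. Directly from \autoref{def:interval_limits} we get $d_Y(gN)=g\,d_Y(N)$ and $u_Y(gN)=g\,u_Y(N)$. Since $g$ acts as a poset automorphism, it maps $[d_Y(N),u_Y(N)]$ onto $[d_Y(gN),u_Y(gN)]$, and therefore $\sigma\sim\tau$ implies $g\sigma\sim g\tau$.

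\emph{Acyclicity.} This is the main step. Suppose there is a cycle $\sigma_1<\tau_1>\sigma_2<\dots>\sigma_{k+1}=\sigma_1$ with $\sigma_i\sim\tau_i\nsim\sigma_{i+1}$, all cells lying in $X_G\setminus Y_G$. I would attach to each cell the pair $(|\Sp|,|\Act_Y|)$ and show it weakly decreases along the cycle, strictly at some step.

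Within a class, \autoref{lem:SpineEquality} shows that $\sigma_i$ and $\tau_i$ have the same spine and the same active set.

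Passing from $\tau_i$ to $\sigma_{i+1}<\tau_i$ is an iterated specification. By \autoref{lem:SpineInclusion} the spine can only shrink, so $|\Sp(\sigma_{i+1})|\le|\Sp(\tau_i)|$. Since the pair must return to its starting value around the cycle, every one of these inequalities is an equality. So all spines in the cycle coincide, and every intermediate specification preserves the spine. Then iterating \autoref{lem:ActiveDecreasing} gives $\Act_Y(\sigma_{i+1})\subseteq\Act_Y(\tau_i)=\Act_Y(\sigma_i)$. Going around the cycle forces all active sets to be equal as well; call the common set $\mathcal A$.

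It remains to derive a contradiction from the fact that everything in the cycle has the same spine $S$ and the same active set $\mathcal A$. Specifying a vertex $b\notin\mathcal A$ ought to change the class, while specifying a vertex of $\mathcal A$ keeps it. I would make this precise by showing that all cells with spine $S$ and active set $\mathcal A$ are determined by their "non-active data" $(T\setminus\mathcal A,\ r|_{B\setminus\mathcal A})$, i.e.\ the tree and alignment map with all active vertices deleted. This data is constant on a class, since it is unchanged by (co)specifying active vertices. It can only lose freedom under specification of non-active vertices: each such specification strictly increases the number of specified non-active vertices.

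Along $\tau_i>\sigma_{i+1}$, either only active vertices are specified, or at least one non-active vertex is. In the first case $\sigma_{i+1}$ lies in $[d_Y(\tau_i),\tau_i]$ by \autoref{lem:firstMatchingWellDef}, so $\sigma_{i+1}\sim\tau_i$, which is excluded. Hence the number of specified non-active vertices strictly increases at every step $\tau_i>\sigma_{i+1}$ and stays constant within each class. That is impossible around a cycle with $k\ge2$.

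The delicate point, which I expect to be the real work, is verifying that the active set is literally constant under these specifications, so that "non-active" is well defined throughout. This is exactly where the equalities of spine and active set established above are used.
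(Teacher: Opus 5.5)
Your route is the paper's. Spines are forced to be constant around the cycle by \autoref{lem:SpineInclusion} and \autoref{lem:SpineEquality}, and active sets by \autoref{lem:ActiveDecreasing}. A monotone count then rules out the cycle: the paper uses $|B\setminus\Act_Y|$, which decreases exactly when your number of specified non-active vertices increases. Your partition and equivariance parts also match the paper.

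One step fails as written. Your dichotomy for $\tau_i>\sigma_{i+1}$ misses the case where an active vertex $b\in\Act_Y^+(\tau_i)$, with $r(b)=(u,w)$ and $u$ the unoccupied spine leaf, is specified to the other endpoint $w$. Then $d_Y(\tau_i)\not\leq\sigma_{i+1}$, so $\sigma_{i+1}\notin[d_Y(\tau_i),\tau_i]$. Since no non-active vertex was specified, your count does not move either. This is the paper's case (a), and you already have what is needed to exclude it. Because $|\Sp|\geq 2$, the vertex $w$ is interior in the common spine $S$. So in $\sigma_{i+1}$ the leaf $b$ hangs from a non-leaf of $S$, hence $b\notin\Act_Y(\sigma_{i+1})=\mathcal A$, a contradiction. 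Only after this does ``only active vertices are specified'' force them all onto spine leaves, giving $\sigma_{i+1}\sim\tau_i$.

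Drop the claim that cells with spine $S$ and active set $\mathcal A$ are determined by their non-active data. It is false: in $K_5$ the path trees $a,u_1,x,u_2,b$ and $b,u_1,x,u_2,a$ share spine, active set $\{a,b\}$ and non-active data. Your final counting never uses it.

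Finally, the paper also records that \autoref{lem:ActNonempty} gives $d_Y(N)\neq u_Y(N)$, so no class in $X_G\setminus Y_G$ is a singleton. This is what lets \autoref{Y_G-matching-thm} identify the critical cells with $Y_G$.
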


\begin{proof}
	Assume for a contradiction the existence of a cycle in $X_G\setminus Y_G$ of the form
	\begin{equation} \label{equivCycle}
		\tilde{N}_1 < N_1 > \tilde{N}_2 < N_2 > \cdots < N_k > \tilde{N}_1,
	\end{equation}
	where $k\geq 2$, $\tilde{N}_i \sim N_i$, and $N_i$ and $\tilde{N}_{i+1}$ belong to distinct equivalence classes for all $1\leq i\leq k$. For convenience, we set $\tilde{N}_{k+1}=\tilde{N}_1$.
	
	By \autoref{lem:SpineInclusion} and \autoref{lem:SpineEquality}, all cells appearing in \eqref{equivCycle} have the same spine, i.e. $\Sp(\tilde{N}) = \Sp(N)$ for all $\tilde{N},N\in\{\tilde{N}_1,\dots,\tilde{N}_k,N_1,\dots,N_k\}$. Furthermore, \autoref{lem:SpineEquality} and \autoref{lem:ActiveDecreasing} imply $\Act_Y(\tilde{N}) = \Act_Y(N)$.
	
	The relation $N_i>\tilde{N}_{i+1}$ means that $\tilde{N}_{i+1}$ is obtained from $N_i$ by specifying a non-empty subset of vertices in $B$ of $N_i$, not all of which are active and specified to the leaf of the spine. Thus one of the following occurs.
	\begin{itemize}
		\item[(a)] A vertex $v\in\Act_Y^+(N_i)$ is specified such that $\tilde{N}_{i+1} \not\sim N_i$. Hence, $v$ is attached to an interior vertex of the spine. However, since $\Sp(\tilde{N}_{i+1})=\Sp(N_i)$, it follows that $v\notin \Act_Y(\tilde{N}_{i+1})$, contradicting $\Act_Y(N_i) = \Act_Y(\tilde{N}_{i+1})$.
		\item[(b)] A vertex $v\notin \Act_Y(N_i)$, $v\in B$ is specified. Since $v\notin \Act_Y(\tilde{N}_i)$, it can never be cospecified later. Consequently, the sequence $\vert B\setminus \Act_Y\vert$ decreases along the cycle, preventing a return to $\tilde{N}_1$.
	\end{itemize}
	To conclude, this contradicts the existence of the cycle \eqref{equivCycle}. Hence, $\sim$ is a generalised Morse matching.
	
	By \autoref{lem:ActNonempty}, every $N\in X_G\setminus Y_G$ satisfies $\Act_Y(N)\neq \emptyset$, and therefore $d_Y(N)\neq u_Y(N)$. Thus, no interval of $\sim$ consists of a single cell, and the critical cells are precisely those of $Y_G$. Moreover, \autoref{lem:firstMatchingWellDef} implies that the matching is well-defined.
	
	Finally, it is easy to see that any automorphism $g$ of $G$ preserves the spine: $g\Sp(N)=\Sp(gN)$. Furthermore, since the adjacency of $K_n\setminus G$ is preserved, $g$ maps occupied leaves to occupied leaves and this implies $g\Act^+(N)=\Act^+(gN)$ as well as $g\Act^-(N)=\Act^-(gN)$. Hence, $gd_Y(N) = d_Y(gN)$ and $gu_Y(N) = u_Y(gN)$. Consequently,
	\[
	[gd_Y(N), gu_Y(N)] = [d_Y(gN), u_Y(gN)]
	\] 
	is matched in $\sim$ for any $N\in X_G\setminus Y_G$. It follows that $\sim$ is $\Aut(G)$-equivariant.
\end{proof}

\begin{proposition}\label{Y_G-matching-thm}
	The minimal spine complex $Y_G$ is an $\Aut(G)$-equivariant strong deformation retract of $X_G$.
\end{proposition}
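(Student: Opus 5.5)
The plan is to apply \autoref{Freij-generalization} to the whole complex $X_G$ with $\Lambda=\Aut(G)$. To do so, we extend the partition $\sim$ of $X_G\setminus Y_G$ to an equivalence relation on all of $X_G$. Each cell of $Y_G$ is declared to be alone in its class, and on $X_G\setminus Y_G$ we keep the intervals $[d_Y(N),u_Y(N)]$. By \autoref{lem:firstMatchingWellDef}, these intervals partition $X_G\setminus Y_G$. They are also intervals of $X_G$ itself, since both $d_Y(N)$ and $u_Y(N)$ lie in $X_G\setminus Y_G$ and any cell between them is obtained by (co-)specifying a subset of the active vertices, which preserves the spine by \autoref{lem:SpineEqualityIter}. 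Singletons are trivially intervals. So the extended relation is an equivalence relation whose classes are intervals of $X_G$. \autoref{lem:ActNonempty} gives $d_Y(N)\neq u_Y(N)$ for every $N\in X_G\setminus Y_G$, so the critical cells are exactly the cells of $Y_G$. By \autoref{lem:MSCisSubcomplex}, these form a subcomplex $U=Y_G$.

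Next we check acyclicity on all of $X_G$. Take a putative cycle $\sigma_1<\tau_1>\sigma_2<\cdots<\tau_k>\sigma_1$ with $\sigma_i\sim\tau_i\nsim\sigma_{i+1}$ and $k\ge 2$. Since $\sigma_i\sim\tau_i$ and $\sigma_i<\tau_i$ (so they are distinct), the class of $\sigma_i$ is not a singleton. Hence no $\sigma_i$ or $\tau_i$ is critical, and every cell of the cycle lies in $X_G\setminus Y_G$. Such a cycle is exactly the one excluded in \autoref{equivMatchingProp}. Equivariance transfers directly: $Y_G$ is $\Aut(G)$-invariant, because $g\Sp(N)=\Sp(gN)$ and $g$ maps occupied leaves to occupied leaves, as observed in the proof of \autoref{equivMatchingProp}. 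So $g$ permutes the singleton classes, and on the rest equivariance is that proposition.

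It remains to note that $X_G$ is a finite cubical $\Aut(G)$-complex. It has finitely many network realignments, and the action is by cubical automorphisms, since it is induced by a poset automorphism of $P_G$. \autoref{Freij-generalization} then yields that $Y_G$ is an $\Aut(G)$-equivariant strong deformation retract of $X_G$.

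The main obstacle is the first step: confirming that the classes $[d_Y(N),u_Y(N)]$, computed in $X_G\setminus Y_G$, are genuine intervals in the face poset of $X_G$. Concretely, we must check that no cell of $Y_G$ lies between $d_Y(N)$ and $u_Y(N)$. I would handle this via \autoref{lem:SpineEquality}: every cell of the interval shares the non-minimal spine of $N$ and has the same occupied leaves by \autoref{lem:OccupiedLeaves}, so none of them can lie in $Y_G$.
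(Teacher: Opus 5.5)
Your proposal is correct and follows the paper's argument, which proves this proposition in one line by feeding the matching of \autoref{equivMatchingProp} and the subcomplex $Y_G$ of \autoref{lem:MSCisSubcomplex} into \autoref{Freij-generalization}. Your extra bookkeeping makes explicit the passage from a matching on $X_G\setminus Y_G$ to one on all of $X_G$, which the paper leaves implicit: you extend $\sim$ by singleton classes on $Y_G$, you check via \autoref{lem:SpineEqualityIter} and \autoref{lem:OccupiedLeaves} that each $[d_Y(N),u_Y(N)]$ is a genuine interval of $X_G$ avoiding $Y_G$, and you note that any forbidden cycle must avoid critical cells.
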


\begin{proof}
	By \autoref{equivMatchingProp}, we know that $\sim$ is an $\Aut(G)$-equivariant generalised Morse matching whose set of critical cells $Y_G$ is a subcomplex of $X_G$ by \autoref{lem:MSCisSubcomplex}. Consequently, \autoref{Freij-generalization} gives the strong deformation retraction.
\end{proof}

\subsection{The Main Component of \texorpdfstring{$Y_G$}{Y_G}}\label{sec:MainCpt}

As discussed earlier, the path between any two occupied leaves remains contained in the spine under any sequence of network realignments over $G$. Conversely, if a network realignment with minimal spine has precisely one occupied leaf, then every vertex can be slid to that occupied leaf. The resulting network realignment is a star tree of $G$. 

We split the analysis of the topology of $Y_G$ into two parts. We begin with the main component $M_G$, which contains all the star trees of $G$ and is the only connected component that is not necessarily contractible. To show that $M_G$ $\Aut(G)$-equivariantly retracts onto $K_{\mathcal{S}_G}$, the complete graph whose vertices are the star trees of $G$, we first construct a further generalised Morse matching that collapses all network realignments with a single occupied leaf to the star tree with the occupied leaf as interior vertex. The resulting subcomplex contains precisely the star trees and the $(n-2)$-cubes connecting them. $K_{\mathcal{S}_G}$ is a strong deformation retract of this subcomplex.

\begin{definition}
	Let $M_G\subseteq Y_G$ consist of all cells $N$ of $Y_G$ whose spine has at most one occupied vertex in $\Sp(N)$. We call $M_G$ the \textit{main component} of $Y_G$.
\end{definition}

Note that $N\in M_G$ implies $|\Sp(N)|\leq 1$. Let $N$ be a network realignment with $|\Sp(N)|\geq 2$. Then its spine contains at least two leaves. Thus, if $N\in Y_G$, the spine would have at least two occupied vertices, and hence $N\notin M_G$.

Associated to each star tree $s_i$ of $G$ is a vertex of $M_G$, which we will also denote by $s_i$. Clearly, the star tree with center vertex $i$ is a tree of $G$ if and only if $\deg_G(i) = n-1$. The star trees are precisely the vertices of $X_G$ whose spine has size $0$.

\begin{proposition}\label{prop:MainComponent}
	$M_G$ is a connected component of $Y_G$. Moreover, $M_G$ is preserved by the action of $\Aut(G)$.
\end{proposition}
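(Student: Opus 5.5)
To prove that $M_G$ is a connected component of $Y_G$, we show three things: $M_G$ is a subcomplex, it is closed under taking cofaces inside $Y_G$, and it is connected. The first two give that $M_G$ is a union of components, since a subcomplex closed under cofaces within $Y_G$ is open and closed in $Y_G$. Connectedness then makes it exactly one component. Throughout we use that every $N\in M_G$ has $|\Sp(N)|\leq 1$, as observed right after the definition.

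\emph{Subcomplex.} Let $N\in M_G$ and let $N^{b\to u}$ be a facet. By \autoref{lem:SpineInclusion}, $\Sp(N^{b\to u})\subseteq\Sp(N)$, so the spine still has at most one edge and $N^{b\to u}\in Y_G$. It remains to see that no new occupied vertex appears.
\begin{itemize}
\item If the spine stays equal, \autoref{lem:OccupiedLeaves} shows the occupied leaves coincide.
\item If the spine shrinks from an edge $(u,w)$ to a single vertex, then the occupation notion $p_{\Sp}(u)$ is undefined, so the shrunken spine has no occupied leaf.
\end{itemize}
Before writing this down I will fix the convention for occupied leaves of one-vertex spines, and check it against the statement that star trees lie in $M_G$.

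\emph{Closed under cofaces in $Y_G$.} Suppose $N'\in Y_G$ has a face $N\in M_G$. Since faces only shrink spines, $\Sp(N)\subseteq\Sp(N')$. We aim to show $|\Sp(N')|\leq 1$.
\begin{itemize}
\item If $|\Sp(N')|\geq 2$, then $N'\in Y_G$ forces every leaf of $\Sp(N')$ to be occupied. By the earlier remark, the whole path between two occupied leaves then stays in the spine of every face.
\item Hence every face of $N'$ has spine containing $\Sp(N')$, of size at least $2$, contradicting $N\in M_G$.
\end{itemize}
Making the ``path stays in the spine'' remark rigorous along one specification step is the key point. Take $N'$ with leaves $u_1,u_2$ occupied by $v_1,v_2$. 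Each $v_i$ is a non-spine vertex of $T'$, so it remains in $T$ with $p_T(v_i)=u_i$, making $u_i$ interior in $T$. The path between $u_1$ and $u_2$ therefore consists of interior vertices of $T$, and lies in $\Sp(N)$.

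For $|\Sp(N')|\leq 1$ we still have to show at most one occupied vertex. When the spine is an edge $(u,w)$ with both ends occupied, both ends stay interior in every face, by the same argument. So $N'$ is not a coface of a cell with fewer occupied vertices unless $N$ also has two, which contradicts $N\in M_G$.

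\emph{Connectedness.} Every vertex of $M_G$ is a tree $T$ whose spine is one vertex or one edge with at most one occupied end. I will connect every such vertex to a star tree, and connect star trees to each other.
\begin{itemize}
\item If the spine is an edge $(u,w)$ and $u$ is unoccupied, every leaf $v$ at $u$ is adjacent in $G$ to $w$. Sliding these leaves one by one to $w$ through edges of $X_G$ gives the star $s_w$, and all intermediate trees stay in $M_G$.
\item Two star trees $s_i,s_j$ with $i,j\in\mathcal S_G$ are joined through the tree with spine $(i,j)$ and no occupied ends. Such a tree exists since $i$ and $j$ are adjacent to everything.
\item If $\mathcal S_G=\emptyset$, I must check that $M_G$ is still connected, or treat it as a point or empty. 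This is where I expect the main obstacle, together with the precise definition of occupation for one-vertex spines.
\end{itemize}

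\emph{$\Aut(G)$-invariance.} As in the proof of \autoref{equivMatchingProp}, $g\Sp(N)=\Sp(gN)$ and $g$ maps occupied leaves to occupied leaves. Hence $gY_G=Y_G$, and the number of occupied vertices is preserved, so $gM_G=M_G$.
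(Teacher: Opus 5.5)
Your proof is correct and has the same skeleton as the paper's. $M_G$ is a subcomplex by spine inclusion together with the lemma on occupied leaves. Every vertex of $M_G$ is joined to a star tree by sliding the leaves at the unoccupied spine end $u$ over to $w$. Star trees are joined through the cubes $C_{i,j}$.

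You diverge from the paper in two places.
\begin{itemize}
\item \emph{Closure under cofaces inside $Y_G$.} The paper checks a single cospecification $N^{v\to e}$ of a cell $N\in M_G$. Any spine leaf created this way has no children, so it is unoccupied. Hence the result either stays in $M_G$ or, when the spine grows to size two, leaves $Y_G$. You instead prove that $Y_G\setminus M_G$ is itself closed under faces: the vertices occupying the leaves of such a spine keep their parents under specification, which freezes both the spine and its occupied leaves. Your version handles faces of every codimension at once and makes the ``open and closed'' conclusion immediate. The paper's version is a one-step check along covering relations.
\item \emph{$\Aut(G)$-invariance.} You argue directly that $g\Sp(N)=\Sp(gN)$ and that $g$ transports occupied leaves. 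This is more direct than the paper's argument, which uses that $M_G$ is the component containing the star trees.
\end{itemize}

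One sentence needs repair. In the case $|\Sp(N')|=1$ with both ends occupied, your last sentence is garbled. Say explicitly that $\Sp(N)=\Sp(N')$, and that the occupying vertices $v_1,v_2$ still occupy the two ends in $N$.

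Neither point you flag as an obstacle is a real obstacle.
\begin{itemize}
\item A one-vertex spine has at most one occupied vertex under any convention. So star trees lie in $M_G$ automatically, and no convention needs to be fixed.
\item Your own first connectedness bullet ends at a star tree $s_w$ that is a spanning tree of $G$, which forces $\deg_G(w)=n-1$. Hence $M_G\neq\emptyset$ implies $\mathcal{S}_G\neq\emptyset$. If $\mathcal{S}_G=\emptyset$, then $M_G=\emptyset$ and there is nothing to check. The paper does not treat this degenerate case separately either.
\end{itemize}
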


\begin{proof}
	We start by showing that $M_G$ is a subcomplex of $Y_G$. Let $N\in M_G$. If $|\Sp(N)| = 0$, then $B = \emptyset$ and $T$ is a star tree. If $|\Sp(N)| = 1$, then either $|\Sp(N^{b\to u})| = 0$, and thus $N^{b\to u}\in M_G$, or $\Sp(N) = \Sp(N^{b\to u})$. Then, by \autoref{lem:OccupiedLeaves}, the occupied leaves of $\Sp(N)$ and $\Sp(N^{b\to u})$ coincide, and again $N^{b\to u}\in M_G$. Therefore, $M_G$ is a subcomplex of $Y_G$.
	
	To show that $M_G$ is connected, we show that any two vertices of $M_G$ are connected. Let $N_0 =(T_0, r_0)\in M_G$ be 0-dimensional with $|\Sp(N_0)| = 1$. Choose $u$ to be the unoccupied vertex of $\Sp(N_0)$, and let $w$ denote the other vertex. Let $w,v_1,\ldots,v_k$ denote the neighbours of $u$ in $T_0$. Define $N_i = (N_{i-1}^{v_i\to (u,w)})^{v_i\to w}$ for $1\leq i\leq k$, and consider the path
	\[
	N_0, N_1, \ldots, N_{k-1}, N_k
	\]
	in $Y_G$. We have 
	\[
	\Sp(N_0) = \Sp(N_0^{v_1\to (u,w)}) = \ldots = \Sp(N_{k-1}) = \Sp(N_{k-1}^{v_k\to (u,w)}),
	\]
	and $V(\Sp(N_k)) = \{w\}$. By \autoref{lem:OccupiedLeaves}, all network realignments of this sequence are contained in $M_G$. Thus, any 0-dimensional cell in $M_G$ is connected to a star tree. Clearly, any two star trees $s_i$ and $s_j$ that are contained in $Y_G$ are connected by the $(n-2)$-cubes. Therefore, $M_G$ is connected.
	
	To conclude the proof that $M_G$ is a connected component, it is enough to show that if $N\in M_G$, then $N^{v\to e}\in Y_G$ implies $N^{v\to e}\in M_G$. Given $N\in M_G$ and $N^{v\to e}\in Y_G$, we write $e = (u,w)$ with $p_T(v) = u$. If $|\Sp(N)| = 0$, then $|\Sp(N^{v\to e})| = 1$. Since $\degree_{T^{v\to e}} (w) = \degree_T(w) = 1$, it follows that $w$ is not occupied in $\Sp(N^{v\to e})$. Hence, $\Sp(N^{v\to e})$ has at most one occupied vertex and is contained in $M_G$. On the other hand, assume that $|\Sp(N)| = 1$. If $\Sp(N) = \Sp(N^{v\to e})$, then the occupied leaves coincide by \autoref{lem:OccupiedLeaves}. Therefore, $N\in M_G$ directly implies $N^{v\to e}\in M_G$. Otherwise, $|\Sp(N^{v\to e})| = 2$. We observe that $\mathcal{N}_{T^{v\to e}} (w) = \mathcal{N}_T(w) = \{u\}$, and thus $w$ is not occupied in $\Sp(N^{v\to e})$ and $N^{v\to e} \notin Y_G$.
	
	It remains to show that $gM_G = M_G$ for all $g\in \Aut(G)$. Since the $\Aut(G)$-action is induced by permuting the vertices of each network realignment, it preserves the isomorphism type of the underlying tree. In particular, star trees are mapped to star trees. As $M_G$ is a connected component containing all star trees of $G$, it follows that $gM_G = M_G$.
\end{proof}

Our goal is to retract $M_G$ onto the subcomplex $\Sigma_G$, which contains the following types of cells. Let $s_i$ and $s_j$ correspond to two star trees of $G$. Both are faces of the $(n-2)$-dimensional cell indexed by the network realignment $N=(T,r)$, where $A=\{i,j\}, \> B=V(G)\setminus\{i,j\}$, and $r$ is the constant map with value $(i,j)$. Let $C_{i,j}$ denote the subcomplex of $M_G$ containing $N$ and all of its specifications. $C_{i,j}$ is isomorphic to an $(n-2)$-cube. For the preceding, we also refer to \cite[Proposition 4.9]{Kozlov25}.

\begin{definition}
	Let $\mathcal{S}_G = \{v\in V(G) \mid \degree_G(v) = n-1\}$ be the set of vertices of $G$ that determine the star trees in $X_G$. If $\vert\mathcal{S}_G\vert\geq 2$, we define the subcomplex 
	\begin{equation*}
		\Sigma_G = \bigcup_{i\neq j\in \mathcal{S}_G}C_{i,j}.
	\end{equation*}
	If $\mathcal{S}_G=\{i\}$, then we define $\Sigma_G=\{s_i\}$ and, if $\mathcal{S}_G=\emptyset$, we define $\Sigma_G=\emptyset$.
\end{definition}

$\Sigma_G$ is a subcomplex, since it is the union of the cubes $C_{i,j}$. Moreover, $\Sigma_G$ is $\Aut(G)$-invariant, as it consists precisely of the $(n-2)$-cubes of $X_G$. Finally, if $N\in C_{i,j}$, then $V(\Sp(N)) \subseteq \{i,j\} \subseteq \mathcal{S}_G$, so every vertex of $\Sp(N)$ is unoccupied. Therefore, $\Sigma_G$ is contained in $M_G$. 

To determine the $\Aut(G)$-equivariant topology of the main component, we first show that $\Sigma_G$ is an $\Aut(G)$-equivariant strong deformation retract of $M_G$. For this, we introduce another generalised Morse matching.

Notice that a network realignment belongs to $\Sigma_G$ if and only if its spine consists either of a single point or of a single edge with none of its leaves occupied. In particular, the network realignments in $M_G\setminus \Sigma_G$ are those whose spine is a single edge with exactly one occupied vertex. An example of this is illustrated in \autoref{fig:MatchedPairsMain}~(a).

\begin{definition}
	Let $N \in M_G\setminus \Sigma_G$ be a network realignment. We denote its spine by $(u_N,w_N)$, where $u_N$ is the unique occupied vertex. We define
	\begin{itemize}
		\item $\Act_M^+(N) = B$,
		\item $\Act_M^-(N) = \mathcal{N}_T(w_N)\setminus\{u_N\}$ \quad  and
		\item $\Act_M(N) = \Act_M^+(N) \cup \Act_M^- (N)$.
	\end{itemize}
	The vertices in $\Act_M(N)$ are called \textit{active vertices}.
\end{definition}

We observe that the vertices which are not active are exactly $u_N$ together with its neighbourhood, precisely:
\begin{equation}\label{eq:2ndMNeighb}
	\mathcal{N}_T(u_N) = V(G)\setminus \{\Act_M(N)\cup u_N\}.
\end{equation}
The cardinality of the set of active vertices is a measure of the distance to the star tree $s_{u_N}$.

\begin{lemma} \label{lem:ActDetermined}
	Any network realignment $N\in M_G\setminus \Sigma_G$ has at least one active vertex. Moreover, $N$ is uniquely determined by its spine, $\Act_M^+(N)$ and $\Act_M^-(N)$. 
\end{lemma}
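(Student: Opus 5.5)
Both claims follow from the structure of $N\in M_G\setminus\Sigma_G$. By the remark after the definition of $M_G$, $|\Sp(N)|=1$, so $\Sp(N)$ is the single edge $(u_N,w_N)$. Since $N\notin\Sigma_G$, exactly one endpoint, $u_N$, is occupied. In particular $\Int(T)\subseteq\{u_N,w_N\}$: every vertex of $T$ other than $u_N,w_N$ is a leaf of $T$ attached to $u_N$ or to $w_N$. Every unspecified vertex $b\in B$ has $r(b)=(u_N,w_N)$, because this is the only edge of $\Sp(N)\supseteq\Image(r)$.

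\emph{Existence of an active vertex.} I will argue by contradiction. Assume $\Act_M(N)=\emptyset$, so $B=\emptyset$ and $\mathcal N_T(w_N)=\{u_N\}$. Then $T$ is a spanning tree whose only possible interior vertices are $u_N$ and $w_N$, and $w_N$ is a leaf hanging off $u_N$. Hence $\Int(T)=\{u_N\}$ and $\Image(r)=\emptyset$, so $\Sp(N)$ is the single vertex $u_N$. This contradicts $|\Sp(N)|=1$. Equivalently, by \eqref{eq:2ndMNeighb} all of $V(G)\setminus\{u_N\}$ would be adjacent to $u_N$, i.e.\ $T=s_{u_N}$ with spine of size $0$. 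Hence $\Act_M(N)\neq\emptyset$.

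\emph{Uniqueness.} Suppose $N,N'\in M_G\setminus\Sigma_G$ have the same spine, with the same distinguished occupied vertex $u_N$ and the same sets $\Act_M^\pm$. I will reconstruct $N=(T,r)$ from this data. The unspecified set is $B=\Act_M^+(N)$, and $r$ is forced to be the constant map to $(u_N,w_N)$. The vertex set is $A=V(G)\setminus B$. For the edges of $T$: $(u_N,w_N)$ is an edge, each $v\in\Act_M^-(N)$ is joined to $w_N$, and by \eqref{eq:2ndMNeighb} every remaining vertex of $A\setminus(\{u_N,w_N\}\cup\Act_M^-(N))$ is joined to $u_N$. Because $\Int(T)\subseteq\{u_N,w_N\}$, these are all the edges of $T$. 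So $N=N'$.

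The only subtle point is identifying $u_N$ from the spine. The spine alone is an unordered edge, so I must make sure which endpoint is occupied is recoverable. I expect to handle this by noting that the notation $(u_N,w_N)$ records the occupied vertex first, so the spine data is read as this ordered pair. If one insists on an unordered spine, two realignments with the same edge and the same active sets but swapped occupancy would both have $A\setminus(\{u,w\}\cup\Act_M^-)$ as neighbours of the occupied end. This is a genuine ambiguity unless the ordering is part of the data. I will state explicitly that the spine is taken together with its occupied vertex, which is how the lemma is used later.
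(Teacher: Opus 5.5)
Your existence argument, and your reconstruction of $N=(T,r)$ from the ordered pair $(u_N,w_N)$ together with $\Act_M^+(N)$ and $\Act_M^-(N)$, are both correct and agree with the paper. The gap is in your final paragraph. You assert that, if the spine is taken as an unordered edge, two realignments with the same edge and the same active sets but swapped occupancy form a ``genuine ambiguity''. You then avoid it by adding the occupied vertex to the data. That assertion is false, and ruling it out is exactly the substantive step in the paper's uniqueness proof. As written, you prove a weaker lemma than the one stated.

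The missing observation is that the occupied endpoint is determined by $G$ alone. Every vertex $x\neq u_N$ is adjacent to $u_N$ in $G$:
\begin{itemize}
\item $w_N$, through the spine edge;
\item each leaf on $u_N$, through $T$;
\item each $b\in B$, because $r(b)=(u_N,w_N)$ forces $(b,u_N)\in E(G)$;
\item each leaf on $w_N$, because $w_N$ is unoccupied, i.e.\ $u_N=p_{\Sp(N)}(w_N)$ is not a barrier of it.
\end{itemize}
Hence $\deg_G(u_N)=n-1$. Conversely, since $u_N$ is occupied, some leaf on $u_N$ has $w_N$ as a barrier, so $\deg_G(w_N)<n-1$. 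Thus $u_N$ is the unique endpoint of the spine edge with full degree in $G$. Your swapped configuration would need both $\deg_G(u)=n-1>\deg_G(w)$ and $\deg_G(w)=n-1>\deg_G(u)$, which is impossible. Insert this argument before your reconstruction and the lemma follows as stated.

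This point is also what the paper's later description $[d_M(N),u_M(N)]=\{N'\mid \Sp(N')=\Sp(N),\ \Act_M(N')=\Act_M(N)\}$ relies on, since there spines are compared as unordered edges. So your remark that the ordered version is ``how the lemma is used later'' is not accurate.
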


\begin{proof}
	Suppose that $N$ has no active vertices. $\Act_M^+(N) = \emptyset$ means that $N$ is a 0-dimensional realignment. It follows from $\Act_M^-(N) = \emptyset$ and $N\in M_G$ that $N$ is a star tree. But then $N$ lies in $\Sigma_G$.
	
	Let $N_1 = (T_1,r_1), N_2 = (T_2,r_2) \in M_G\setminus \Sigma_G$ with $\Sp(N_1) = \Sp(N_2), \> \Act_M^+(N_1) = \Act_M^+(N_2)$, and $\Act_M^-(N_1) = \Act_M^-(N_2)$. Since $w_{N_1}$ is not occupied, we have $(u_{N_1},v)\in E(G)$ for all $v\in \mathcal{N}_{T_1}(w_{N_1})\setminus \{u_{N_1}\}$ and any specified vertex of $G$ not contained in $\mathcal{N}_{T_1}(w_{N_1})\setminus \{u_{N_1}\}$ is adjacent to $u_{N_1}$ in $T_1$. Further, for an unspecified vertex $b\in B_1$, we have $r_1(b) = (u_{N_1}, w_{N_1})$, and thus $\degree_G(u_{N_1}) = n-1$. Since $u_{N_1}$ is occupied, there exists a vertex $v\in V(G)\setminus \{w_{N_1}\}$ with $(w_{N_1},v) \notin E(G)$, and thus $\degree_G(w_{N_1}) < n-1$. Similarly for $N_2$. Therefore, $u_{N_1} = u_{N_2}$ and $w_{N_1} = w_{N_2}$. By definition of the active vertices, it follows that $N_1 = N_2$.
\end{proof}

Let $N=(T,r)\in M_G\setminus \Sigma_G$, and $v\in \Act_M(N)$. Let $x$ be a leaf of $T$ that occupies $u_N$. If $v\in \Act_M^+(N)$, then $u_N$ is adjacent to $w_N$ and $x$, and $w_N$ is adjacent to $u_N$ and $v$ in $N^{v\to w_N}$, and thus $\Sp(N^{v\to w_N}) = \Sp(N)$. By \autoref{lem:OccupiedLeaves}, the occupied leaves remain unchanged. Consequently, $N^{v\to w_N} \in M_G\setminus \Sigma_G$. If $v\in \Act_M^-(N)$, then again $\Sp(N^{v\to (u_N,w_N)}) = \Sp(N)$ and $N^{v\to (u_N,w_N)} \in M_G\setminus \Sigma_G$.

This motivates the definition of the realignments $d_M(N)$ and $u_M(N)$, in which all active vertices are specified and cospecified, respectively. This corresponds to \autoref{def:interval_limits} of the interval limits in $X_G\setminus Y_G$.

\begin{definition}
	We define two maps $d_M, u_M\colon M_G\setminus \Sigma_G\rightarrow M_G\setminus \Sigma_G$. Let $N\in M_G\setminus \Sigma_G$. The network realignment $d_M(N)=(d_M(T),d_M(r))$ is given by
	\begin{itemize}
		\item $d_M(A) = A\cup \Act_M^+(N) = V(G), \quad d_M(B) = B\setminus \Act_M^+(N) = \emptyset$,
		\item $d_M(T) = T\cup \bigcup\limits_{v\in \Act_M^+(N)} (v,w_N)$,
	\end{itemize}
	and the network realignment $u_M(N)=(u_M(T),u_M(r))$ is given by
	\begin{itemize}
		\item $u_M(A) = A\setminus \Act_M^-(N), \quad u_M(B) = B\cup \Act_M^-(N)$,
		\item $u_M(T) = T \setminus \Act_M^-(N)$,
		\item  $u_M(r)(v) = (u_N,w_N)$ for all $v\in u_M(B)$.
	\end{itemize}
\end{definition}

\begin{figure}[tbp]
	\centering
	\begin{tikzpicture}[dot/.style={circle, draw, fill, inner sep=0pt, minimum size=4pt}]
		
		\newcommand{\diagramScale}{0.7}
		
		\begin{scope}[scale=\diagramScale]
			% vertices
			\node[dot, label={above:{\scriptsize $1$}}] (u1) at (2,1) {};
			\node[dot, label={above:{\scriptsize $2$}}] (u2) at (-.66,1) {};
			\node[dot, label={above:{\scriptsize $3$}}] (u3) at (.66,1) {};
			\node[dot, label={below:{\scriptsize $4$}}] (u4) at (0,0) {};
			\node[dot, label={above:{\scriptsize $5$}}] (u5) at (2.66,1) {};
			\node[dot, label={above:{\scriptsize $6$}}] (u6) at (1.33,1) {};
			\node[dot, label={below:{\scriptsize $7$}}] (u7) at (2,0) {};
			\node[dot, label={above:{\scriptsize $8$}}] (u8) at (0,1) {};
			\node () at (1,-1.2) {(a)};
			% arrows
			\draw (u4) -- (u7);
			\draw (u2) -- (u4) -- (u8);
			\draw (u1) -- (u7) -- (u5);
			\draw[dashed] (u4) -- (u3) -- (u7);
			\draw[dashed] (u4) -- (u6) -- (u7);
			%\draw[red, densely dotted, thick] (u1) -- (u2) -- (u3);
			\draw[red, densely dotted, thick] (u4) -- (u5);
		\end{scope}
		
		\begin{scope}[scale=\diagramScale, xshift = 5.5cm]
			% vertices
			\node[dot, label={above:{\scriptsize $1$}}] (v1) at (1.66,1) {};
			\node[dot, label={left:{\scriptsize $2$}}] (v2) at (-1.12,.54) {};
			\node[dot, label={above:{\scriptsize $3$}}] (v3) at (-.66,1) {};
			\node[dot, label={below:{\scriptsize $4$}}] (v4) at (0,0) {};
			\node[dot, label={above:{\scriptsize $5$}}] (v5) at (2.33,1) {};
			\node[dot, label={above:{\scriptsize $6$}}] (v6) at (0,1) {};
			\node[dot, label={below:{\scriptsize $7$}}] (v7) at (2,0) {};
			\node[dot, label={above:{\scriptsize $8$}}] (v8) at (0.66,1) {};
			\node () at (1,-1.2) {(b)};
			% arrows
			\draw (v4) -- (v7);
			\draw (v2) -- (v4) -- (v3);
			\draw (v6) -- (v4) -- (v8);
			\draw (v1) -- (v7) -- (v5);
			%\draw[red, densely dotted, thick] (v1) -- (v2) -- (v3);
			\draw[red, densely dotted, thick] (v4) -- (v5);
		\end{scope}
		
		\begin{scope}[scale=\diagramScale, xshift = 10cm]
			% vertices
			\node[dot, label={above:{\scriptsize $1$}}] (w1) at (2.66,1) {};
			\node[dot, label={above:{\scriptsize $2$}}] (w2) at (0,1) {};
			\node[dot, label={above:{\scriptsize $3$}}] (w3) at (0.66,1) {};
			\node[dot, label={below:{\scriptsize $4$}}] (w4) at (0,0) {};
			\node[dot, label={right:{\scriptsize $5$}}] (w5) at (3.12,.54) {};
			\node[dot, label={above:{\scriptsize $6$}}] (w6) at (1.33,1) {};
			\node[dot, label={below:{\scriptsize $7$}}] (w7) at (2,0) {};
			\node[dot, label={above:{\scriptsize $8$}}] (w8) at (2,1) {};
			\node () at (1,-1.2) {(c)};
			% arrows
			\draw (w4) -- (w7);
			\draw (w1) -- (w7) -- (w5);
			\draw[dashed] (w4) -- (w2) -- (w7);
			\draw[dashed] (w4) -- (w3) -- (w7);
			\draw[dashed] (w4) -- (w6) -- (w7);
			\draw[dashed] (w4) -- (w8) -- (w7);
			%\draw[red, densely dotted, thick] (w1) -- (w2) -- (w3);
			\draw[red, densely dotted, thick] (w4) -- (w5);
		\end{scope}
		
		\begin{scope}[on behind layer]
			\fill[lightgray] (u4.center) -- (u3.center) -- (u7.center) -- cycle;
			\fill[lightgray] (u4.center) -- (u6.center) -- (u7.center) -- cycle;
			\fill[lightgray] (w4.center) -- (w2.center) -- (w7.center) -- cycle;
			\fill[lightgray] (w4.center) -- (w3.center) -- (w7.center) -- cycle;
			\fill[lightgray] (w4.center) -- (w6.center) -- (w7.center) -- cycle;
			\fill[lightgray] (w4.center) -- (w8.center) -- (w7.center) -- cycle;
		\end{scope}
	\end{tikzpicture}
	\vspace{-3mm}
	\caption{Consider the base graph $G = K_8 \setminus \{(1,2),(2,3),(4,5)\}$. (a) A network realignment $N\in M_G\setminus \Sigma_G$ with occupied vertex 7, (b) the network realignment $d_M(N)$, and (c) the network realignment $u_M(N)$.}
	\label{fig:MatchedPairsMain}
\end{figure}

As an example, consider \autoref{fig:MatchedPairsMain}. (a) which shows a network realignment $N\in M_G\setminus \Sigma_G$. The unique occupied vertex of the spine is 7, since $(4,5)\notin E(G)$ and 7 has degree $n-1$ in $G$. The network realignment $d_M(N)$, shown in (b), is given by specifying all vertices in $\Act_M^+(N) = B =\{3,6\}$ to the unoccupied vertex 4, whereas $u_M(N)$ is given by cospecifying all vertices in $\Act_M^-(N) = \mathcal{N}_T(4)\setminus\{7\} = \{2,8\}$ to the edge $(4,7)$, as depicted in (c).

For each $N$, it is clear that $d_M(N)$ is a face of $N$ and $u_M(N)$ is a coface. Now, let $N,\tilde{N}\in M_G\setminus \Sigma_G$ such that $d_M(N)\leq\tilde{N}\leq u_M(N)$. Then, it holds that $\mathcal{N}_{\tilde{N}}(u_{\tilde{N}})=\mathcal{N}_N(u_N)$, and by (\ref{eq:2ndMNeighb}) equivalently $\Act_M(\tilde{N})=\Act_M(N)$.
Further, the spines of every cell in the interval $[d_M(N),u_M(N)]$ are identical, since specifying a vertex $v~\in~\Act_M^+(N)$ to $w_N$ and cospecifying a vertex $v\in \Act_M^-(N)$ to $(u_N,w_N)$, respectively, does not change the spine. Using additionally \autoref{lem:ActDetermined}, we then describe the interval in an explicit way:
\[
[d_M(N), u_M(N)] = \{N'\in M_G\setminus \Sigma_G \mid \Sp(N') = \Sp(N), \> \Act_M(N') = \Act_M(N) \}.
\]
This also implies that $d_M(\tilde{N}) = d_M(N)$ and $u_M(\tilde{N}) = u_M(N)$. By the preceding, these intervals give a well-defined partition.

\begin{definition}
	Let $\sim$ be the collection of intervals in $M_G\setminus \Sigma_G$ of the form
	\[
	[d_M(N),u_M(N)]
	\]
	for $N\in M_G\setminus \Sigma_G$.
\end{definition}

\begin{proposition} \label{prop:EquivariantMatchingII}
	The collection $\sim$ is an $\Aut(G)$-equivariant generalised Morse matching.
\end{proposition}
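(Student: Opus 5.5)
The argument has three parts, mirroring \autoref{equivMatchingProp}: the collection $\sim$ is a partition into intervals, there are no cycles, and the matching is equivariant. The partition property has already been established in the discussion preceding the definition. There we showed that every $\tilde N\in[d_M(N),u_M(N)]$ satisfies $\Sp(\tilde N)=\Sp(N)$ and $\Act_M(\tilde N)=\Act_M(N)$. Together with \autoref{lem:ActDetermined} this gives $d_M(\tilde N)=d_M(N)$ and $u_M(\tilde N)=u_M(N)$, so the intervals are disjoint and cover $M_G\setminus\Sigma_G$. Since $\Act_M(N)\neq\emptyset$ by \autoref{lem:ActDetermined}, each interval has at least two cells, so no cell of $M_G\setminus\Sigma_G$ is critical.

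For acyclicity, I would suppose a cycle
\[
\tilde N_1<N_1>\tilde N_2<N_2>\cdots<N_k>\tilde N_{k+1}=\tilde N_1
\]
with $k\geq 2$, $\tilde N_i\sim N_i$ and $N_i\not\sim\tilde N_{i+1}$. All cells lie in $M_G\setminus\Sigma_G$, so every spine is a single edge with exactly one occupied endpoint. By \autoref{lem:SpineInclusion}, passing to a face can only shrink the spine, so all spines along the cycle coincide; call the common spine $(u,w)$. The roles of $u$ and $w$ are also fixed: by the degree argument in the proof of \autoref{lem:ActDetermined}, $u$ is the unique endpoint of degree $n-1$ in $G$, or alternatively one can invoke \autoref{lem:OccupiedLeaves}. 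Consider the step $N_i>\tilde N_{i+1}$. It specifies a nonempty set of vertices of $B(N_i)=\Act_M^+(N_i)$, each to either $u$ or $w$. If every one went to $w$, then $\tilde N_{i+1}$ would lie in the interval of $N_i$, contradicting $N_i\not\sim\tilde N_{i+1}$. So at least one vertex is specified to $u$.

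I would use the monovariant $\phi(N)=|\mathcal N_T(u)|$, which by \eqref{eq:2ndMNeighb} equals $n-1-|\Act_M(N)|$. This quantity is constant on intervals. In each step $N_i>\tilde N_{i+1}$ it strictly increases, because the vertices attached to $u$ join $\mathcal N_T(u)$ and no neighbour of $u$ is lost, since specification only adds edges. Hence $\phi(\tilde N_{i+1})>\phi(N_i)=\phi(\tilde N_i)$ for every $i$, and going once around the cycle gives $\phi(\tilde N_1)>\phi(\tilde N_1)$, a contradiction. The main point I need to check is the claim that specifying to $w$ alone keeps the result in the same interval. This follows from the explicit description of the interval, because such a specification leaves both the spine and $\mathcal N_T(u)$ unchanged.

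For equivariance, let $g\in\Aut(G)$. Then $g$ maps the spine of $N$ to the spine of $gN$, and it maps occupied vertices to occupied vertices because it preserves non-adjacency in $G$. Hence $u_{gN}=gu_N$ and $w_{gN}=gw_N$. It follows that $gB=B(gN)$ and $g(\mathcal N_T(w_N)\setminus\{u_N\})=\mathcal N_{gT}(w_{gN})\setminus\{u_{gN}\}$, so $g\Act_M^{\pm}(N)=\Act_M^{\pm}(gN)$. Therefore $gd_M(N)=d_M(gN)$ and $gu_M(N)=u_M(gN)$, and $g$ maps the interval of $N$ onto the interval of $gN$.
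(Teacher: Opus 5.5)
Your proof is correct and follows essentially the same route as the paper. The paper also fixes the common spine $(u,w)$ along the cycle and shows that $\Act_M$ strictly shrinks at each step $N_i>\tilde N_{i+1}$ because $\mathcal{N}_T(u)$ strictly grows, which is exactly your monovariant $\phi=n-1-|\Act_M|$ via \eqref{eq:2ndMNeighb}. Your version is, if anything, more careful: you justify explicitly why $u$ and $w$ keep their roles and why some vertex must be specified to $u$, and you spell out the equivariance that the paper only asserts.
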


\begin{proof}
	We show that $\sim$ is a generalised Morse matching. Let
	\[
	\tilde{N}_1 < N_1 > \tilde{N}_2 < N_2 > \ldots > \tilde{N}_k < N_k > \tilde{N}_{k+1} = \tilde{N}_1
	\]
	be a cycle in $M_G\setminus \Sigma_G$ with $d_M(\tilde{N}_i) = d_M(N_i) \neq d_M(\tilde{N}_{i+1})$ for $1\leq i \leq k$. Because $\tilde{N}_{i+1} < N_i$ and $\vert\Sp(\tilde{N}_{i+1})\vert =1$, we have $\Sp(\tilde{N}_{i+1}) = \Sp(N_i)$. This implies $\Act_M(N_i) \neq \Act_M(\tilde{N}_{i+1})$ for $1\leq i \leq k$, and $u_{\tilde{N}_{i+1}} = u_{N_i}$. Hence, combining this with equation (\ref{eq:2ndMNeighb}), we find $\mathcal{N}_{T_i}(u_{N_i}) \subseteq \mathcal{N}_{\tilde{T}_{i+1}}(u_{\tilde{N}_{i+1}})$. It follows
	\begin{align*}
		\Act_M (\tilde{N}_{i+1}) &= V(G)\setminus (\{u_{\tilde{N}_{i+1}}\} \cup \mathcal{N}_{\tilde{T}_{i+1}}(u_{\tilde{N}_{i+1}}) ) \\
		&\subseteq V(G)\setminus (\{u_{N_i}\} \cup \mathcal{N}_{T_i}(u_{N_i})) = \Act_M (N_i).
	\end{align*}
	Therefore, $\Act_M(N_i) \supsetneq \Act_M(\tilde{N}_{i+1})$ and 
	\[
	\Act_M(\tilde{N}_1) = \Act_M(N_1) \supsetneq \ldots \supsetneq \Act_M(\tilde{N}_k) = \Act_M(N_k) \supsetneq \Act_M(\tilde{N}_1).
	\]
	Consequently, $k=1$ and $\sim$ is acyclic.
	
	Finally, the $\Aut(G)$-equivariance of the generalised Morse matching~$\sim$ is given since the construction is independent of the labelling of the vertices.
\end{proof}

\begin{proposition}\label{prop:MontoSigma}
	$\Sigma_G$ is an $\Aut(G)$-equivariant strong deformation retract of $M_G$. 
\end{proposition}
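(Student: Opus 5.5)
The plan is to apply \autoref{Freij-generalization} to the finite cubical $\Aut(G)$-complex $M_G$, exactly as in the proof of \autoref{Y_G-matching-thm}. First, $M_G$ must be a cubical $\Aut(G)$-complex. By \autoref{prop:MainComponent}, $M_G$ is a connected component, hence a subcomplex, of $Y_G$. It is also preserved by $\Aut(G)$. So it inherits a finite cubical $\Aut(G)$-structure from $X_G$.

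Next, we extend the partition $\sim$ on $M_G\setminus\Sigma_G$ to all of $M_G$ by declaring every cell of $\Sigma_G$ to be a singleton class. We must check that the cycle condition still holds. Any cycle $\sigma_1<\tau_1>\sigma_2<\cdots$ with $k\geq 2$ and $\sigma_i\sim\tau_i\nsim\sigma_{i+1}$ cannot pass through $\Sigma_G$. The reason is the following.
\begin{itemize}
\item $\Sigma_G$ is a subcomplex, so a face of a cell of $\Sigma_G$ lies in $\Sigma_G$.
\item Since singleton classes force $\sigma_i=\tau_i$, once some $\sigma_i$ or $\tau_i$ lies in $\Sigma_G$, all subsequent cells of the cycle lie in $\Sigma_G$.
\item Going around the cycle, every cell would then lie in $\Sigma_G$, with $\sigma_i=\tau_i>\sigma_{i+1}$. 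This is a strictly decreasing chain, which cannot close up.
\end{itemize}
Hence any cycle lies entirely in $M_G\setminus\Sigma_G$, and there \autoref{prop:EquivariantMatchingII} excludes it.

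The critical cells are exactly those of $\Sigma_G$. Indeed, by \autoref{lem:ActDetermined}, every $N\in M_G\setminus\Sigma_G$ has $\Act_M(N)\neq\emptyset$. If $\Act_M^+(N)\neq\emptyset$, then $d_M(N)\neq N$; otherwise $\Act_M^-(N)\neq\emptyset$ and $u_M(N)\neq N$. Either way, no interval $[d_M(N),u_M(N)]$ is a singleton.

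Equivariance of the extended relation follows from \autoref{prop:EquivariantMatchingII} together with the $\Aut(G)$-invariance of $\Sigma_G$, which was noted after its definition. Thus the critical cells form the $\Aut(G)$-invariant subcomplex $\Sigma_G$, and \autoref{Freij-generalization} yields the $\Aut(G)$-equivariant strong deformation retraction of $M_G$ onto $\Sigma_G$.

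The only delicate point is that the equivalence classes really are intervals of $M_G$, and not merely of $M_G\setminus\Sigma_G$. Specifically, every cell $N'$ with $d_M(N)\leq N'\leq u_M(N)$ in $M_G$ must already lie in $M_G\setminus\Sigma_G$. I would verify this via the explicit description of the interval established before the definition of $\sim$. Any such $N'$ is obtained from $d_M(N)$ by cospecifying a subset of active vertices to $(u_N,w_N)$, so it keeps the spine $(u_N,w_N)$. By \autoref{lem:OccupiedLeaves}, it also keeps $u_N$ as the unique occupied vertex, so $N'\notin\Sigma_G$.
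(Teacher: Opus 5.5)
Your proposal is correct and follows the paper's route. The paper's proof is the one-line application of \autoref{Freij-generalization} to the matching from \autoref{prop:EquivariantMatchingII}. You additionally make explicit the checks the paper leaves implicit: extending the matching by singleton classes on $\Sigma_G$, acyclicity of cycles meeting $\Sigma_G$, that the critical cells are exactly $\Sigma_G$, and that the classes are genuine intervals of $M_G$. All of these checks are sound.
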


\begin{proof}
	This is an immediate consequence of \autoref{prop:EquivariantMatchingII} and \autoref{Freij-generalization}. 
\end{proof}

It remains to determine the $\Aut(G)$-equivariant homotopy type of $\Sigma_G$. We extend Corollary 4.10 in \cite{Kozlov25} to general graphs in an $\Aut(G)$-equivariant setting. For this purpose, we introduce the subcomplex $K_{\mathcal{S}_G}$.

\begin{definition} 
	Let $\mathcal{S}_G = \{v\in V(G)\mid \deg(v) = n-1\}$ and let $K_{\mathcal S_G}$ denote the complete graph with vertex set $\mathcal S_G$. Since graph automorphisms preserve degrees, the action of $\Aut(G)$ on $G$ restricts to $\mathcal{S}_G$ and thus induces an action of $\Aut(G)$ on~$K_{\mathcal{S}_G}$. 
	
	We define an $\Aut(G)$-equivariant topological embedding 
	\[
	\iota\colon K_{\mathcal S_G}\hookrightarrow \Sigma_G 
	\]
	by sending the vertex $i\in\mathcal S_G$ to the star tree $s_i$ and the edge $(i,j)$ to the diagonal $d_{i,j}$ between $s_i$ and $s_j$ in $C_{i,j}\subseteq \Sigma_G$.
	
	Henceforth, we identify $K_{\mathcal S_G}$ with its image under $\iota$.
\end{definition}

\begin{proposition}\label{lem:graphinclusion}
	$K_{\mathcal{S}_G}$ is an $\Aut(G)$-equivariant strong deformation retract of~$\Sigma_G$.
\end{proposition}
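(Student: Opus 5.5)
We will build the retraction by hand, one cube $C_{i,j}$ at a time, compatibly on overlaps. The key point is how the cubes $C_{i,j}$, $i\neq j\in\mathcal S_G$, meet inside $\Sigma_G$. A cell of $C_{i,j}$ is a realignment whose spine is either $(i,j)$ with all of $B$ realigned to $(i,j)$, or a star tree ($s_i$ or $s_j$) when $B=\emptyset$. Indeed, every face of the top cell $N_{i,j}$ is obtained by specifying each $b$ to $i$ or to $j$. Such a vertex is a star tree only when all $b$ go to the same endpoint, and otherwise the spine is still the edge $(i,j)$. Hence a cell determines its spine, and two distinct cubes $C_{i,j}$ and $C_{k,l}$ intersect exactly in the common star trees: $\{s_i\}$ if $|\{i,j\}\cap\{k,l\}|=1$, and $\emptyset$ otherwise. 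So $\Sigma_G$ is a wedge-like union of cubes glued only at the vertices $s_i$, and $\iota(K_{\mathcal S_G})\cap C_{i,j}=d_{i,j}$. The cases $|\mathcal S_G|\le 1$ are trivial.

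Next we retract each cube onto its diagonal. Identify $C_{i,j}$ with $[0,1]^{B}$, $B=V(G)\setminus\{i,j\}$, via the coordinate $x_b$ recording whether $b$ is attached to $i$ (value $0$) or to $j$ (value $1$). Then $s_i=(0,\dots,0)$, $s_j=(1,\dots,1)$ and $d_{i,j}$ is the main diagonal. Let $m(x)=\frac1{|B|}\sum_b x_b$ and define
\[
H_{i,j}(x,t)=(1-t)\,x+t\,(m(x),\dots,m(x)).
\]
This is a straight-line homotopy inside the convex cube. It fixes the diagonal pointwise, and in particular fixes $s_i$ and $s_j$ for all $t$. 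Hence the $H_{i,j}$ agree on overlaps and glue, by the pasting lemma for the finite closed cover $\{C_{i,j}\}$, to a strong deformation retraction $H$ of $\Sigma_G$ onto $K_{\mathcal S_G}$.

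Equivariance is the step needing care. Let $g\in\Aut(G)$. It maps $C_{i,j}$ to $C_{gi,gj}$, and in coordinates it permutes the $B$-coordinates via $b\mapsto gb$. If $g$ swaps $i$ and $j$, it additionally applies $x\mapsto 1-x$, since attaching to $i$ corresponds to attaching to $gi=j$. Both operations are affine and commute with the formation of $m$: permuting coordinates preserves the mean, and $x\mapsto 1-x$ sends $m$ to $1-m$. Therefore $gH_{i,j}(x,t)=H_{gi,gj}(gx,t)$. It must also be checked that the cubical identification of $C_{i,j}$ with the geometric realisation of $\Delta(P_G^{\le N_{i,j}})$ is compatible with the action. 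This holds because the action is induced by a poset automorphism, and the standard homeomorphism from the barycentric subdivision of a cube to the cube commutes with cube symmetries. This compatibility is where I expect the main technical obstacle to lie. Finally, $\iota$ is equivariant because $g d_{i,j}=d_{gi,gj}$ with the orientation matching $g s_i=s_{gi}$.
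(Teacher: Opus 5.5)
Your proposal is correct and is essentially the paper's proof: the map $x\mapsto (m(x),\dots,m(x))$ is exactly the orthogonal projection of $[0,1]^B$ onto its main diagonal, so your homotopy is the paper's straight-line homotopy, glued at the star-tree vertices and shown to be equivariant in the same way. You additionally spell out details the paper only asserts: that distinct cubes $C_{i,j}$ meet only in common star trees, and that $g$ acts on coordinates by a permutation, composed with $x\mapsto 1-x$ whenever $g$ reverses the chosen ordering of the pair $\{i,j\}$ (more general than literally ``swapping $i$ and $j$'').
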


\begin{proof}
	For any $i\neq j\in \mathcal{S}_G$ the intersection of $C_{i,j}$ and the subspace $K_{\mathcal S_G}$ contains precisely the diagonal joining the two opposite vertices $s_i$ and $s_j$. We retract $C_{i,j}$ onto this diagonal by the straight-line homotopy from each point to its orthogonal projection onto the diagonal. This fixes the diagonal pointwise.
	
	These deformation retractions agree on intersections, because distinct $(n-2)$-cubes meet only in star tree vertices, where the homotopy is constant. Hence the contractions glue to a deformation retraction of $\Sigma_G$ onto $K_{\mathcal S_G}$.
	
	Any automorphism $g\in \Aut(G)$ sends $C_{i,j}$ to $C_{g i,g j}$, sends the diagonal $d_{i,j}$ to $d_{g i,g j}$, and preserves orthogonal projection to these diagonals. Therefore, the resulting strong deformation retraction is $\Aut(G)$-equivariant.
\end{proof}

\begin{proposition}\label{MainCptKn}
	$K_{\mathcal{S}_G}$ is an $\Aut(G)$-equivariant strong deformation retract of $M_G$.
\end{proposition}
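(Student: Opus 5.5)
The plan is to compose the two retractions already established. By \autoref{prop:MontoSigma}, there is an $\Aut(G)$-equivariant strong deformation retraction $H\colon M_G\times[0,1]\to M_G$ onto $\Sigma_G$. By \autoref{lem:graphinclusion}, there is an $\Aut(G)$-equivariant strong deformation retraction $H'\colon \Sigma_G\times[0,1]\to\Sigma_G$ onto $K_{\mathcal S_G}$. I will concatenate them. Set $F(x,t)=H(x,2t)$ for $t\in[0,\tfrac12]$ and $F(x,t)=H'(H(x,1),2t-1)$ for $t\in[\tfrac12,1]$.

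The verification proceeds in three steps. First, $F$ is well defined and continuous. At $t=\tfrac12$ both formulas give $H(x,1)$, because $H'(\cdot,0)$ is the identity on $\Sigma_G$ and $H(x,1)\in\Sigma_G$. Continuity then follows from the pasting lemma on the two closed pieces. Second, $F(\cdot,0)=\mathrm{id}$, and $F(\cdot,1)$ has image in $K_{\mathcal S_G}$. Third, $F$ fixes $K_{\mathcal S_G}$ pointwise for all $t$. Since $K_{\mathcal S_G}\subseteq\Sigma_G$ and $H$ fixes $\Sigma_G$ pointwise, the first half is constant on $K_{\mathcal S_G}$, and $H'$ fixes $K_{\mathcal S_G}$, so the second half is constant there too.

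Equivariance holds on each half. We have $F(gx,t)=gF(x,t)$ because $H$ and $H'$ are equivariant, and $\Sigma_G$ and $K_{\mathcal S_G}$ are $\Aut(G)$-invariant.

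The degenerate cases $|\mathcal S_G|\le 1$ need a brief separate check, since there \autoref{lem:graphinclusion} is trivial: $\Sigma_G$ is a single star tree or empty, and $K_{\mathcal S_G}=\Sigma_G$. In that situation the statement is exactly \autoref{prop:MontoSigma}. If $\mathcal S_G=\emptyset$, one should note that then $M_G\setminus\Sigma_G=M_G$, and the matching collapses everything. This only makes sense if $M_G$ is empty or the empty retract is interpreted appropriately, so I would simply remark that this case is vacuous when $M_G=\emptyset$.

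There is no real obstacle; the only point needing care is that the composite is a genuine strong deformation retraction, i.e.\ the compatibility at $t=\tfrac12$ and the pointwise fixing of $K_{\mathcal S_G}$, both of which are immediate.
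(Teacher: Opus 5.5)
Your proposal is correct and follows the paper's proof, which just composes the retraction of $M_G$ onto $\Sigma_G$ from Proposition~\ref{prop:MontoSigma} with the retraction of $\Sigma_G$ onto $K_{\mathcal S_G}$ from Proposition~\ref{lem:graphinclusion}; your explicit concatenation spells out what the paper leaves implicit. Your hedge about $\mathcal S_G=\emptyset$ is unnecessary: every cell of $M_G$ is either a star tree or has a one-edge spine with an unoccupied endpoint, and the other endpoint then has degree $n-1$ in $G$ (as in the proof of Lemma~\ref{lem:ActDetermined}), so $\mathcal S_G=\emptyset$ forces $M_G=\emptyset$.
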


\begin{proof}
	This follows from \autoref{lem:graphinclusion} and \autoref{prop:MontoSigma}. 
\end{proof}

\subsection{The Residual Components of \texorpdfstring{$Y_G$}{Y_G}}\label{sec:residual}

The preceding subsection gives a complete description of the topology of the main component $M_G$. Our next goal is to determine the topology of the remaining connected components of $Y_G$. 

\begin{definition}
	By $R_G$ we denote the complement of $M_G$ in $Y_G$. The connected components of $R_G$ are called \textit{residual components}.
	
	Since, by \autoref{prop:MainComponent}, the $\Aut(G)$-action on $Y_G$ restricts to the main component, it also restricts to its complement $R_G$.
\end{definition}

We proceed by classifying the residual components by their spines and spine assignments. We then give a cubical description of the residual components as products of subtrees of their spines. This description is compatible with the action of $\Aut(G)$ and allows us to contract $R_G$ equivariantly to a discrete $\Aut(G)$-space.

\begin{definition}
	Let $N\in X_G$ be a network realignment. Let $Z_N = V(G)\setminus V(\Sp(N))$. The \textit{spine assignment} $c_N$ of $N$ assigns to every vertex $v\in Z_N$ a connected component $c_N(v)$ of $\Sp(N)\setminus \{w\in \Sp(N) \mid w \text{ is a barrier of }v\}$ as follows:
	\begin{itemize}
		\item if $v \in B$, then $c_N(v)$ is the component containing $r(v)$,
		\item if $v\in A$, then $c_N(v)$ is the component containing $p_T(v)$.
	\end{itemize}
\end{definition}

\begin{example}%better example: in A(G)
	\begin{figure}[tbp]
		\centering
		\begin{tikzpicture}[dot/.style={circle, draw, fill, inner sep=0pt, minimum size=4pt}]
			
			\newcommand{\diagramScale}{0.7}
			
			\begin{scope}[scale=\diagramScale]
				% vertices
				\node[dot, label={left:{\scriptsize $1$}}] (u1) at (0.5,1) {};
				\node[dot, label={below:{\scriptsize $2$}}] (u2) at (1,0) {};
				\node[dot, label={right:{\scriptsize $3$}}] (u3) at (2.5,1) {};
				\node[dot, label={right:{\scriptsize $4$}}] (u4) at (3.5,1) {};
				\node[dot, cyan, label={below:{\scriptsize $5$}}] (u5) at (3,0) {};
				\node[dot, label={below:{\scriptsize $6$}}] (u6) at (0,0) {};
				\node[dot, cyan, label={below:{\scriptsize $7$}}] (u7) at (2,0) {};
				\node[dot, cyan, label={below:{\scriptsize $8$}}] (u8) at (4,0) {};
				\node () at (2,-1.2) {(a)};
				% arrows
				\draw (u6) -- (u2) -- (u7);
				\draw[cyan] (u7) -- (u5) -- (u8);
				\draw (u1) -- (u6);
				\draw (u4) -- (u8);
				\draw[dashed] (u7) -- (u3) -- (u5);
				\draw[red, densely dotted, thick] (u1) -- (u2) -- (u3);
				\draw[red, densely dotted, thick] (u4) -- (u5);
			\end{scope}
			
			\begin{scope}[scale=\diagramScale, xshift = 6.5cm]
				% vertices
				\node[dot, label={left:{\scriptsize $1$}}] (v1) at (-.3,1) {};
				\node[dot, label={below:{\scriptsize $2$}}] (v2) at (1,0) {};
				\node[dot, label={right:{\scriptsize $3$}}] (v3) at (.3,1) {};
				\node[dot, label={right:{\scriptsize $4$}}] (v4) at (3.5,1) {};
				\node[dot, label={below:{\scriptsize $5$}}] (v5) at (3,0) {};
				\node[dot, cyan, label={below:{\scriptsize $6$}}] (v6) at (0,0) {};
				\node[dot, label={below:{\scriptsize $7$}}] (v7) at (2,0) {};
				\node[dot, label={below:{\scriptsize $8$}}] (v8) at (4,0) {};
				\node () at (2,-1.2) {(b)};
				% arrows
				\draw (v6) -- (v2) -- (v7) -- (v5) -- (v8);
				\draw (v4) -- (v8);
				\draw (v1) -- (v6);
				\draw (v3) -- (v6);
				\draw[red, densely dotted, thick] (v1) -- (v2) -- (v3);
				\draw[red, densely dotted, thick] (v4) -- (v5);
			\end{scope}
			
			\begin{scope}[on behind layer]
				\fill[lightgray] (u7.center) -- (u3.center) -- (u5.center) -- cycle;
			\end{scope}
		\end{tikzpicture}
		\vspace{-3mm}
		\caption{Two network realignments in the minimal spine complex $Y_G$, where $G = K_8 \setminus \{(1,2),(2,3),(4,5)\}$ with the spine assignment of 3 highlighted in blue.}
		\label{fig:SpineAss}
	\end{figure}
	As an example of two different spine assignments, consider \autoref{fig:SpineAss}. Take $G=K_8 \setminus \{(1,2),(2,3),(4,5)\}$. The network realignments depicted in \autoref{fig:SpineAss} certainly have the same spine. However, since $2$ is a barrier of $3$, the movement of $3$ is restricted to either the left hand side or the right hand side of $2$. This condition is expressed by the spine assignment. In (a), the spine assignment assigns to $3$ the subtree $(7,5,8)$ and in (b) it assigns to $3$ the single vertex subtree $(6)$. As stated in the following proposition, this means that the two network realignments are contained in different connected components of $Y_G$.
\end{example}

\begin{proposition}\label{prop:ResidualComponents}
	Any two network realignments in $R_G$ lie in the same connected component of $R_G$ if and only if they have the same spine and the same spine assignment.
\end{proposition}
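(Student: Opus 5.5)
We prove both directions. The forward direction rests on an invariance statement, and the converse on an explicit path to a normal form.

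For the forward direction it suffices to show that spine and spine assignment are constant along every covering relation $N\succ N^{b\to u}$ with both cells in $R_G$. Connectivity of $R_G$ is generated by such face relations, since $R_G$ is a union of components of the subcomplex $Y_G$ by \autoref{prop:MainComponent}.

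First we show the spine is constant. Every $N\in R_G$ has at least two occupied leaves, and $\Sp(N)=\Int(T)$ (proof of \autoref{lem:MSCisSubcomplex}). The chain of inclusions in that proof gives $\Sp(N^{b\to u})=\Sp(N)$. Hence $Z_N$ is unchanged, and the set of barrier vertices of each $v\in Z_N$ inside $\Sp(N)$ is unchanged.

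Next we show the assignment is constant. Only $b$ changes status. Here $u\in r(b)$ and $r(b)\subseteq\Sp(N)$. If $u\in\Sp(N)$, then $p_{T^{b\to u}}(b)=u$ is an endpoint of $r(b)$, and both endpoints are non-barriers of $b$ because $(b,u),(b,w)\in E(G)$. So the component containing $u$ equals the component containing the edge $r(b)$. The case $u\notin \Sp(N)$ cannot occur, since $r(b)$ is an edge of the spine and both its endpoints lie in it. All other vertices keep their $r$-value, or their parent in $T$, so $c_N=c_{N^{b\to u}}$.

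For the converse we fix a spine $S$ and an assignment $c$ realised by some cell of $R_G$. We show that every $0$-cell $N$ with these data is connected to a canonical $0$-cell $N_{S,c}$. Since every cell is connected to its vertices, and those vertices share the data by the forward argument, this suffices.

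To build $N_{S,c}$, choose for each occupied leaf $\ell$ of $S$ one fixed occupier. For each $v\in Z_N$, let $N_{S,c}$ attach $v$ to a distinguished vertex of $c(v)$, chosen by a rule we fix below.

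The moves are leaf slides. Given $v\in Z_N$, which is a leaf of $T$ with parent $x=p_T(v)\in c(v)$, and an edge $(x,y)$ of $S$ inside $c(v)$, we cospecify $v$ to $(x,y)$ and then specify it to $y$. This stays in $X_G$ because neither $x$ nor $y$ is a barrier of $v$. It also stays in $Y_G$, and indeed in $R_G$, because the spine remains $S$: it contains $\Int(T)$ restricted appropriately, and every occupied leaf keeps an occupier.

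Cospecification requires $(v,x)\notin\Image(r)$, which holds at $0$-cells. The one delicate point is when $v$ is the unique occupier of a leaf $\ell$ of $S$. Moving $v$ away would make $\ell$ an unoccupied leaf, or a leaf of $T$ with the spine shrinking, and this may leave $Y_G$.

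To handle this we first normalise the occupiers. Since the target $N_{S,c}$ must itself have $\ell$ occupied, some vertex $v'$ with $c(v')=\{\ell\}$-containing component and $p_S(\ell)$ a barrier of $v'$ must exist. Any occupier $v'$ of $\ell$ has $p_S(\ell)$ as a barrier, so the component $c(v')$ does not cross into $S$ beyond $\ell$; if it is just $\{\ell\}$, then $v'$ is forced to stay at $\ell$.

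So we order the moves as follows. First slide every non-occupier to its target, walking along the path inside $c(v)$; such a path exists because $c(v)$ is a subtree of $S$. Then, for each occupied leaf $\ell$, all vertices assigned a component containing $\ell$ and having $p_S(\ell)$ as barrier can be permuted only among positions at $\ell$. These are precisely the occupiers, so they are already at $\ell$ in both cells.

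The main obstacle is the converse, specifically guaranteeing that the spine never drops and no leaf becomes unoccupied during the walk. I would handle it by the ordering above: always keep at least one occupier on each occupied leaf, and move extra occupiers only after placing replacements. The facts that $\Sp(N)=\Int(T)$ and that every vertex of $S$ is interior with at least one neighbour outside the walked vertex should make each intermediate cell lie in $Y_G$. Membership in $R_G$ rather than $M_G$ then follows because the spine has at least two occupied leaves throughout.
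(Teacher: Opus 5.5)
Your proof is correct and follows the paper's route. For the forward direction you show spine and spine assignment are invariant under specification; you get $\Sp(N^{b\to u})=\Sp(N)$ from the inclusion chain in \autoref{lem:MSCisSubcomplex}, whereas the paper argues that a lost spine vertex would be an unoccupied leaf. For the converse you slide each $v\in Z_N$ along the path inside $c(v)$, which the paper does directly from $N$ to $N'$ rather than through a normal form.

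Your observation that the occupiers of a leaf $\ell$ are exactly the $v$ with $c(v)=\{\ell\}$, and hence never move, is what keeps every intermediate cell in $R_G$; the paper leaves this implicit. You can therefore drop the hedging about ``replacements'' and ``if it is just $\{\ell\}$''. Since $\ell$ is a leaf of $S$ whose only $S$-neighbour $p_S(\ell)$ is a barrier of its occupier, $c(v)=\{\ell\}$ always holds.
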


\begin{proof}
	For $N=(T,r)\in R_G$, spine and spine assignments are stable under specification. Suppose $V(\Sp(N)) \setminus V(\Sp(N^{b\to w})) = \{u\}$. Then $r(b) = (u,w)$ and $\degree_T(u)=1$. Hence, $u$ is not occupied in $\Sp(N)$, contradicting $N\in R_G$. It follows that $\Sp(N) = \Sp(N^{b\to w})$. Since $p_T(v) = p_{T^{b\to w}} (v)$ if $v\in V(T)\setminus V(\Sp(N))$ and $r(v) = r^{b\to w}(v)$ if $v\in B\setminus\{b\}$, we have $c_N(v) = c_{N^{b\to w}} (v)$ for all $v\neq b$. Because $p_{T^{b\to w}} (b) = w\in r(b)$, also $c_N(b) = c_{N^{b\to w}}(b)$. Thus, $c_N = c_{N^{b\to w}}$, proving the claim.
	
	It remains to show that $N = (T,r)$ and $N' = (T',r')$ lie within the same connected component if their spine and spine assignments coincide. It suffices to consider 0-dimensional realignments $N,N'\in R_G$ with $\Sp(N) = \Sp(N')$ and $c_N = c_{N'}$. Consider $v\in Z_N = Z_{N^\prime}$. Then $p_T(v)$ and $p_{T'}(v)$ are contained within the same connected component $c_N (v) = c_{N'} (v)$. Let $p_T(v) = u_0, u_1,\ldots, u_k = p_{T'}(v)$ be the unique path between $p_T(v)$ and $p_{T'}(v)$ in $\Sp(N)$. Realigning $v$ along this path and doing this iteratively for all vertices in $Z_N$ yields a sequence of network realignments connecting $N$ and $N'$.
\end{proof}

\begin{definition}
	Let $\Gamma$ be a residual component. By \autoref{prop:ResidualComponents}, $\Sp(N)$, $Z_N$ and $c_N$ are independent of the choice of $N\in \Gamma$. We define $\Sp(\Gamma) = \Sp(N)$, $Z_\Gamma = Z_N$ and $c_\Gamma = c_N$ for any choice of $N\in \Gamma$.
\end{definition}

\begin{lemma}\label{lem:spineeqivariant}
	The spine and spine assignment respect the $\Aut(G)$-action in the following sense. Let $g\in \Aut(G)$ and let $\Gamma$ be a connected component of $R_G$, then $\Sp(g\Gamma)= g\Sp(\Gamma)$. Further, $Z_{g\Gamma} = gZ_\Gamma$, and for any $w\in Z_{g\Gamma}$ we have $c_{g\Gamma}(w) = g c_{\Gamma}(g^{-1}w)$.
\end{lemma}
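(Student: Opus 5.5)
The plan is to verify each identity at the level of a single network realignment $N\in\Gamma$ and then pass to components via \autoref{prop:ResidualComponents}.

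First, $g$ maps $R_G$ to itself, as noted after the definition of $R_G$. Since $g$ acts by a homeomorphism of $X_G$ preserving $R_G$, it maps connected components of $R_G$ to connected components. Hence $g\Gamma$ is a residual component, and $gN\in g\Gamma$ for any $N\in\Gamma$. It therefore suffices to show, for a fixed $N=(T,r)\in\Gamma$, that
\[
\Sp(gN)=g\Sp(N),\qquad Z_{gN}=gZ_N,\qquad c_{gN}(w)=g\,c_N(g^{-1}w).
\]
By \autoref{prop:ResidualComponents} these quantities are independent of the representative.

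For the spine, $g$ induces a graph isomorphism $T\to gT$, so it maps interior vertices to interior vertices: $\Int(gT)=g\Int(T)$. By definition of $gr$, we also have $\Image(gr)=g\Image(r)$. The spine is the subtree of $T$ spanned by $\Int(T)$ and $\Image(r)$, so transporting along this isomorphism gives $\Sp(gN)=g\Sp(N)$. This was also used in the proof of \autoref{equivMatchingProp}. Then
\[
Z_{gN}=V(G)\setminus gV(\Sp(N))=g\bigl(V(G)\setminus V(\Sp(N))\bigr)=gZ_N,
\]
since $g$ is a bijection of $V(G)$.

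For the spine assignment, let $w\in Z_{gN}$ and put $v=g^{-1}w\in Z_N$. The key observation is that $g\in\Aut(G)$ preserves non-adjacency, so $x$ is a barrier of $v$ if and only if $gx$ is a barrier of $w$. Consequently, $g$ restricts to a graph isomorphism
\[
\Sp(N)\setminus\{\text{barriers of }v\}\;\longrightarrow\;\Sp(gN)\setminus\{\text{barriers of }w\},
\]
and this isomorphism carries connected components to connected components. Two cases remain.

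\emph{Case $v\in B$.} Then $w\in gB$ and $gr(w)=g(r(v))$. So the component of the target containing $gr(w)$ is $g$ applied to the component containing $r(v)$, that is, $g\,c_N(v)$.

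\emph{Case $v\in A$.} Then $p_{gT}(w)=g\,p_T(v)$, because $g$ is a tree isomorphism mapping the spine to the spine, and the parent is the neighbour towards the spine. The same argument gives $c_{gN}(w)=g\,c_N(v)$.

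The only point needing care is the compatibility of parents, $p_{gT}(gv)=g\,p_T(v)$. This follows because $p_T$ is defined intrinsically from $T$ and its spine, both of which $g$ transports. Everything else is a direct transport of structure.
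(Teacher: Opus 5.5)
Your proposal is correct and follows the paper's proof. You transport the spine via $\Int(T)$ and $\Image(r)$, and then use that $g$ preserves barriers to get an isomorphism $\Sp(N)\setminus\{\text{barriers of }v\}\to\Sp(gN)\setminus\{\text{barriers of }gv\}$ that carries components to components. Your case split ($v\in B$ versus $v\in A$, with $p_{gT}(gv)=g\,p_T(v)$) and your remark that $g\Gamma$ is again a residual component only make explicit details the paper leaves implicit.
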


\begin{proof}
	Let $N = (T,r)\in \Gamma$ and $g\in \Aut(G)$. Since $g$ maps interior vertices of $T$ to interior vertices of $gT$, and maps $\Image(r)$ to $\Image(gr)$, we have $\Sp(gN)=g\Sp(N)$, and hence $\Sp(g\Gamma)=g\Sp(\Gamma)$. In particular, $Z_{g\Gamma} = gZ_\Gamma$.
	
	Let $w\in Z_{g\Gamma}$ and set $v=g^{-1}w\in Z_\Gamma$. Let $B_v=\{x\in V(\Sp(\Gamma)) \mid x \text{ is a barrier of } v\}$. Then, because $g$ preserves barriers, $gB_v = \{y\in V(\Sp(g\Gamma)) \mid y \text{ is a barrier of } w\}$. Thus, $g$ induces an isomorphism
	\[
	\Sp(\Gamma)\setminus B_v
	\to
	\Sp(g\Gamma)\setminus gB_v.
	\]
	This isomorphism maps connected components to connected components. Therefore, the connected component assigned to $v$ is sent to the connected component assigned to $w$. This implies that 
	\[
	c_{g\Gamma}(w)=g\,c_\Gamma(v)=g\,c_\Gamma(g^{-1}w),
	\]
	which proves the claim.
\end{proof}

Consider the topological space 
\[
\coprod_{\Gamma\in \pi_0(R_G)}\prod_{v\in Z_\Gamma} c_\Gamma(v).
\]
Since the connected components of this space are products of trees, it has a cubical structure, where a face $f$ is a collection $(f_v)_{v\in Z_\Gamma}$, such that for each $v\in Z_\Gamma$, $f_v$ is either a vertex or an edge of $c_\Gamma(v)$. 

For each $g\in \Aut (G)$ and $w\in gZ_\Gamma$, we define
\[
(gf)_{w} = g(f_{g^{-1}w})\in c_{g\Gamma}(w).
\]
By \autoref{lem:spineeqivariant}, the resulting family $gf$ is a well-defined element of $\prod_{w\in Z_{g\Gamma}}c_{g\Gamma}(w)$. It is easy to see that this defines a group action of $\Aut(G)$.

\begin{proposition}\label{cptscontractible}
	There is an $\Aut(G)$-equivariant cubical isomorphism
	\begin{equation}\label{Eq:CubProduct}
		\varphi\colon R_G \to \coprod_{\Gamma\in \pi_0(R_G)}\prod_{v\in Z_\Gamma} c_\Gamma(v).
	\end{equation}
\end{proposition}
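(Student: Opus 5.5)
We define $\varphi$ componentwise and on cells. Fix a residual component $\Gamma$ and a cell $N=(T,r)\in\Gamma$. By \autoref{prop:ResidualComponents}, $\Sp(N)=\Sp(\Gamma)$ and $c_N=c_\Gamma$. Every $v\in Z_\Gamma$ is either specified with parent $p_T(v)$ in the spine, or unspecified with $r(v)$ an edge of the spine. The vertices of $T$ outside the spine are leaves of $T$, since $\Sp(N)=\Int(T)$ for $N\in Y_G$ (shown in the proof of \autoref{lem:MSCisSubcomplex}). Set
\[
\varphi(N)_v=\begin{cases} p_T(v), & v\in A\cap Z_\Gamma,\\ r(v), & v\in B.\end{cases}
\]
By definition of the spine assignment, $\varphi(N)_v$ is a vertex or edge of the component $c_\Gamma(v)$. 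An edge $r(v)=(x,y)$ lies in $c_\Gamma(v)$ because both $(v,x),(v,y)\in E(G)$, so neither endpoint is a barrier of $v$. Hence $\varphi(N)$ is a face of $\prod_{v\in Z_\Gamma}c_\Gamma(v)$, of dimension $|B|=\dim N$.

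Next we check that $\varphi$ is a bijection onto the faces of the product.

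\emph{Injectivity.} $N$ is recovered from $\Sp(\Gamma)$ and $\varphi(N)$. The tree $T$ is the spine together with the pendant edges $(v,\varphi(N)_v)$ for those $v$ with $\varphi(N)_v$ a vertex. The map $r$ is read off from the coordinates that are edges.

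\emph{Surjectivity.} Given a face $f=(f_v)$, build $N_f$ this way. It is a tree in $G$, since $f_v$ avoids the barriers of $v$, so the pendant edges exist in $G$. The map $r$ is legal for the same reason. It remains to show $N_f\in\Gamma$.
\begin{itemize}
\item $\Sp(N_f)=\Sp(\Gamma)$. We need $\Int(T_f)\cup\Image(r_f)$ to equal the spine. Every leaf $u$ of $\Sp(\Gamma)$ is occupied in some $N\in\Gamma$ by a vertex $x$ with $p_{\Sp}(u)$ a barrier of $x$. Then $c_\Gamma(x)$ is contained in the branch at $u$ cut off by $p_{\Sp}(u)$. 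When $u$ is a leaf of the spine, that branch is $\{u\}$, so $f_x=u$ is forced. Hence $u$ stays interior in $T_f$ and occupied.
\item Interior vertices of $\Sp(\Gamma)$ are interior in $T_f$, and $\Image(r_f)\subseteq E(\Sp(\Gamma))$.
\item Every leaf is occupied, so $N_f\in Y_G$. There are at least two occupied leaves, since $|\Sp|\ge 2$ in $R_G$, so $N_f\notin M_G$.
\item Its spine assignment is $c_\Gamma$ by construction, so \autoref{prop:ResidualComponents} puts $N_f$ in $\Gamma$.
\end{itemize}

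Then check that $\varphi$ respects the face relation. A specification $N\mapsto N^{b\to u}$ with $u\in r(b)$ replaces the coordinate $f_b=r(b)$ by its endpoint $u$ and leaves the others unchanged. This is exactly the face relation in a product of trees. The spine is unchanged, as shown in the proof of \autoref{prop:ResidualComponents}. Conversely, every face of $\varphi(N)$ arises this way by surjectivity. So $\varphi$ is a poset isomorphism, and hence a cubical isomorphism.

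Equivariance: $\varphi(gN)_{w}=g\bigl(\varphi(N)_{g^{-1}w}\bigr)$, because $p_{gT}(gv)=g\,p_T(v)$ and $gr(gv)=g\,r(v)$. This matches the action defined above via \autoref{lem:spineeqivariant}.

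The main obstacle is the surjectivity step, namely that every face of the product gives a realignment with exactly the spine $\Sp(\Gamma)$ and with all spine leaves occupied. The argument above relies on the occupying vertex of each spine leaf having its component forced to be that single leaf. I would verify this carefully, including the degenerate case where the barrier set splits the spine so that $c_\Gamma(x)=\{u\}$.
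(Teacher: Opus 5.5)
Your proof is correct and follows the paper's argument. It uses the same coordinate map $\varphi^\Gamma$ (parent or assigned edge), the same inverse built from $\Sp(\Gamma)$ plus pendant edges $(v,f_v)$ with $r_f(v)=f_v$, and the same face-relation and equivariance checks. Your observation that an occupying vertex $x$ of a spine leaf $u$ has $c_\Gamma(x)=\{u\}$ supplies the detail behind the paper's one-line claim that $\Int(T_f)=\Sp(\Gamma)$.

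One small slip: residual components can have spine of size one (e.g.\ the component with spine $(1,4)$ in \autoref{cor:3-pathdeleted}), so the claim $|\Sp|\geq 2$ on $R_G$ is false. Justify $N_f\notin M_G$ instead by noting that every cell of $R_G$ has $|\Sp|\geq 1$ with all spine leaves occupied, which your forcing argument preserves.
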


\begin{proof}
	Since both sides decompose into coproducts indexed by $\pi_0(R_G)$, we may construct the isomorphism $\varphi$ on the individual connected components of $R_G$. Let $\Gamma$ be a connected component.
	
	Let 
	\[
	\varphi^\Gamma\colon \Gamma \to \prod_{v\in Z_\Gamma} c_\Gamma(v)
	\] 
	be the following map. For any network realignment $N\in \Gamma$, we define
	\[
	\varphi^\Gamma(N)_v = \begin{cases}
		r(v) \quad 		&\text{if }v\in B,\\
		p_T(v) \quad 	&\text{if }v\in A.
	\end{cases}
	\] 
	This is a well-defined face of the product, because in both cases $\varphi^\Gamma(N)_v$ is a face of $c_\Gamma(v)$.
	
	Conversely, let 
	\[
	\psi^\Gamma\colon \prod_{v\in Z_\Gamma} c_\Gamma(v)\to \Gamma 
	\] 
	be the following map. For a face $f$ of the product, let $D_f$ be the vertices $v$ in $Z_\Gamma$ for which $f_v$ is a vertex and let $B_f$ be the ones for which $f_v$ is an edge. We define $\psi^\Gamma(f)$ to be the network realignment $(T_f,r_f)$, where 
	\[
	T_f = \Sp(\Gamma) \cup \bigcup_{v\in D_f}\{(v,f_v)\}
	\]
	and 
	\[
	r_f\colon B_f \to E(T_f),\qquad v\mapsto f_v.
	\]
	To see that the resulting realignment $\psi^\Gamma(f) = (T_f, r_f)$ lies in the component $\Gamma$, we first verify that it has the correct spine. Using the fact that every leaf of $\Sp(\Gamma)$ is occupied, we see that the interior of $T_f$ is exactly $\Sp(\Gamma)$. Further, since the image of $r_f$ is contained in $\Sp(\Gamma)$, we know that $\Sp(\psi^\Gamma(f)) = \Int(T_f) = \Sp(\Gamma)$.
	
	We also have to verify that $\psi^\Gamma(f)$ has the correct spine assignment, i.e. that $c_{\psi^\Gamma(f)} = c_\Gamma$. Let $v\in Z_\Gamma$. Then $c_{\psi^\Gamma(f)}(v)$ is by construction the component of $\Sp(\Gamma)\setminus \{\text{barriers of }v\}$ containing $f_v$. This means that $c_{\psi^\Gamma(f)}(v) = c_\Gamma(v)$, because $f_v$ is a cell of $c_\Gamma(v)$. Consequently, $\psi^\Gamma(f) \in \Gamma$.
	
	We claim that the two maps $\varphi^\Gamma$ and $\psi^\Gamma$ are mutual inverses. Let $N = (T,r)$ be a network realignment in $\Gamma$, then
	\[
	T_{\varphi^\Gamma(N)} = \Sp(\Gamma) \cup \bigcup_{v\in D_{\varphi^\Gamma(N)}}\{(v,\varphi^\Gamma(N)_v)\} = \Sp(N) \cup \bigcup_{v\in V(T)\setminus V(\Sp(N))}\{(v,p_T(v))\} = T.
	\]
	For $b\in B = B_{\varphi^\Gamma(N)}$, we have
	\[
	r_{\varphi^\Gamma(N)}(b) = \varphi^\Gamma(N)_b = r(b).
	\]
	This shows that $\psi^\Gamma(\varphi^\Gamma(N)) = N$.
	
	Now, let $f$ be a face of the product and $v\in Z_\Gamma$. If $v\in B_f$, then 
	\[
	\varphi^\Gamma(\psi^\Gamma(f))_v = r_f(v) = f_v.
	\] 
	If $v\in D_f$, then
	\[
	\varphi^\Gamma(\psi^\Gamma(f))_v = p_{T_f}(v).
	\]
	But in $T_f$, $v$ is a leaf with parent $f_v$, so $\varphi^\Gamma(\psi^\Gamma(f))_v = f_v$. This shows that $\varphi^\Gamma(\psi^\Gamma(f)) = f$. We conclude that $\varphi^\Gamma$ and $\psi^\Gamma$ are mutual inverses.
	
	We now verify that $\varphi^\Gamma$ and $\psi^\Gamma$ are maps of posets. Consider a network realignment $N$ and a specification $N^{b\to u}$ of $N$. We must show that $\varphi^\Gamma(N^{b\to u})$ is a face of $\varphi^\Gamma(N)$. Let $v$ be a vertex in $Z_\Gamma$. If $v\neq b$, then $\varphi^\Gamma(N)_v = \varphi^\Gamma(N^{b\to u})_v$ and if $v = b$, then $\varphi^\Gamma(N^{b\to u})_v = u$, which is a face of $\varphi^\Gamma(N)_v = r(v)$. Hence, $\varphi^\Gamma(N^{b\to u})$ is a face of $\varphi^\Gamma(N)$, which shows that $\varphi^\Gamma$ is order preserving. 
	
	The proof that the inverse map $\psi^\Gamma$ is order preserving is similar. Replacing an edge $f_v$ with a vertex corresponds exactly to specifying $v$.
	
	We denote by 
	\[
	\varphi\colon R_G \to \coprod_{\Gamma\in \pi_0(R_G)}\prod_{v\in Z_\Gamma} c_\Gamma(v).
	\]
	the resulting isomorphism assembled from the isomorphisms $\varphi_\Gamma$ on the individual connected components. It remains to show that $\varphi$ is equivariant.
	
	Let $g\in \Aut(G)$ and let $N = (T,r)\in \Gamma$ for some connected component $\Gamma$ of $R_G$. Then, for any $w\in Z_{g\Gamma}$, if $w$ is unspecified in $gN$, we have
	\[
	\varphi(gN)_w = (gr)(w) = g(r(g^{-1}w)) = g (\varphi(N)_{g^{-1}w}) = (g\varphi(N))_w,
	\]
	and if $w$ is specified in $gN$, we have
	\[
	\varphi(gN)_w = p_{gT}(w) = g(p_{T}(g^{-1}w)) = g(\varphi(N)_{g^{-1}w}) = (g\varphi(N))_w.
	\]
	This shows that $\varphi$ is equivariant.
\end{proof}

\begin{definition}\label{def:centroid}
	A vertex $c$ of a tree $T$ with $n$ vertices is called a \textit{centroid} if all connected components of $T\setminus \{c\}$ have at most $n/2$ vertices.
\end{definition}

A theorem of Jordan \cite{Centroid} states that every finite tree has either a unique centroid or exactly two adjacent centroids. 

\begin{lemma}[\cite{Centroid}]\label{existenceofcentral}
	Let $T$ be a tree on $n$ vertices. If $n$ is odd, $T$ has a unique centroid. If $n$ is even, then either $T$ has exactly one centroid, or two adjacent ones.
\end{lemma}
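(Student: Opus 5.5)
This is Jordan's centroid theorem, and I will prove it by an orientation argument on the tree $T$.

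\emph{Setup.} For each vertex $c$ of $T$, let $m(c)$ denote the maximum number of vertices of a component of $T\setminus\{c\}$, so that $c$ is a centroid exactly when $m(c)\le n/2$. For each edge $(x,y)$ of $T$, removing the edge splits $T$ into two components, $T_x\ni x$ and $T_y\ni y$, with $|T_x|+|T_y|=n$. The key observation is that the components of $T\setminus\{x\}$ are precisely $T_y$ together with the components of $T_x\setminus\{x\}$. The latter all lie inside $T_x$, so each has at most $|T_x|-1$ vertices.

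\emph{Existence.} Start at any vertex $c$. If $m(c)>n/2$, let $C$ be the unique component of $T\setminus\{c\}$ with more than $n/2$ vertices; it is unique because two such components would exceed $n$ vertices in total. Let $c'$ be the neighbour of $c$ in $C$, and move to $c'$. The component of $T\setminus\{c'\}$ containing $c$ has $n-|C|<n/2$ vertices. Every other component of $T\setminus\{c'\}$ lies in $C\setminus\{c'\}$, so it has at most $|C|-1$ vertices. Hence the quantity $|C|$ strictly decreases at each step. The walk therefore terminates at a vertex $c$ with $m(c)\le n/2$, which is a centroid.

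\emph{At most two centroids, and adjacency.} Suppose $c_1\ne c_2$ are both centroids, and let $(c_1,x)$ be the first edge on the path from $c_1$ to $c_2$. Since $c_1$ is a centroid, $|T_x|\le n/2$, and therefore $|T_{c_1}|\ge n/2$. If $c_2\ne x$, then $T_{c_1}\cup\{x\}$ is contained in a single component of $T\setminus\{c_2\}$. That component has at least $n/2+1$ vertices, contradicting that $c_2$ is a centroid. So any two centroids are adjacent. Three pairwise adjacent vertices would form a triangle, which a tree cannot contain, so there are at most two centroids, and if there are two they are adjacent.

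\emph{Parity.} If $c_1,c_2$ are adjacent centroids, the argument above gives $|T_{c_2}|\le n/2$ and $|T_{c_1}|\le n/2$. Since these sum to $n$, both equal $n/2$, so $n$ is even. Hence for odd $n$ the centroid is unique.

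The main obstacle is only the bookkeeping in the existence step, namely confirming that the size of the heavy component strictly decreases. The observation about the components of $T\setminus\{x\}$ handles this.
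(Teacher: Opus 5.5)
Your proof is correct and complete. The descent step works because the component of $T\setminus\{c'\}$ containing $c$ has $n-|C|<n/2$ vertices, so any new heavy component must lie inside $C\setminus\{c'\}$ and is strictly smaller. The adjacency step correctly uses that $c_2$ lies on the $x$-side of the edge $(c_1,x)$. Hence the connected set $T_{c_1}\cup\{x\}$ avoids $c_2$ and produces a component of size at least $n/2+1$. The parity step follows from $|T_{c_1}|+|T_{c_2}|=n$ with both summands at most $n/2$. Your notation $T_x$ depends on the chosen edge and not only on $x$, but it is unambiguous in each use.

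The paper does not prove this lemma at all; it simply cites Jordan's theorem \cite{Centroid}. Your argument is therefore a self-contained replacement for the citation rather than a competing route. Writing it out makes two facts the paper uses later fall out along the way. The first is the uniqueness of a component with more than $n/2$ vertices, which is asserted without comment in \autoref{auxiliary}. The second is that two centroids split $T$ into two halves of exactly $n/2$ vertices. The paper derives this in \autoref{lem:MaxComponentCentroid} after first invoking this lemma for adjacency. Your parity step is literally that computation, so with your proof in place \autoref{lem:MaxComponentCentroid} becomes an immediate corollary. The citation is shorter, while your version costs a few lines but removes the dependence on the external source.
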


The main feature of centroids that interests us is that the set of centroids is preserved by every tree isomorphism. This allows us to construct a point in the geometric realisation of a tree that is fixed by any group action on the tree.

\begin{definition}
	The \textit{geometric center} $z_\mathrm{geom}(T)$ of a tree $T$ is defined to be the centroid of $T$, viewed as a point in the geometric realisation of $T$, if $T$ has a unique centroid, or the central point on the edge connecting the two centroids, if $T$ has two adjacent centroids.
\end{definition}

\begin{proposition}
	The residual components $R_G$ admit an $\Aut(G)$-equivariant strong deformation retraction onto a discrete $\Aut(G)$-subspace $D_G$.
\end{proposition}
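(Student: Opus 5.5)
By \autoref{cptscontractible}, it suffices to construct an $\Aut(G)$-equivariant strong deformation retraction of
\[
P:=\coprod_{\Gamma\in \pi_0(R_G)}\prod_{v\in Z_\Gamma} c_\Gamma(v)
\]
onto a discrete invariant subspace $D'$. We then set $D_G=\varphi^{-1}(D')$ and transport the homotopy back along $\varphi$. The plan is to contract each tree factor $c_\Gamma(v)$ onto its geometric center $z_\mathrm{geom}(c_\Gamma(v))$. We take $D'$ to be the set of points $z_\Gamma=(z_\mathrm{geom}(c_\Gamma(v)))_{v\in Z_\Gamma}$, one for each $\Gamma\in\pi_0(R_G)$. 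This set is discrete because each residual component contributes exactly one point.

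The construction has three steps. First, for a finite tree $S$ with center $z=z_\mathrm{geom}(S)$, we define $H_S\colon |S|\times[0,1]\to|S|$ by moving each point $x$ along the unique geodesic from $x$ to $z$ in the geometric realisation, at constant speed proportional to the path metric (each edge of length one), so that $H_S(x,t)$ is the point at distance $(1-t)\,d(x,z)$ from $z$ on that geodesic. This is continuous, since geodesics in a metric tree vary continuously with their endpoints. It fixes $z$ for all $t$, so it is a strong deformation retraction onto $\{z\}$. Second, on each component $\prod_{v\in Z_\Gamma} c_\Gamma(v)$ we take the product homotopy $H_\Gamma=\prod_v H_{c_\Gamma(v)}$. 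We then take the disjoint union $H$ over all $\Gamma$, which fixes every point $z_\Gamma$ throughout.

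Third, we verify equivariance, which is the main obstacle. Let $g\in\Aut(G)$. By \autoref{lem:spineeqivariant}, $g$ restricts to a graph isomorphism $c_\Gamma(g^{-1}w)\to c_{g\Gamma}(w)$, since it is the restriction of the isomorphism $\Sp(\Gamma)\setminus B_v\to\Sp(g\Gamma)\setminus gB_v$. Tree isomorphisms preserve the set of centroids (the sizes of the components of $T\setminus\{c\}$ are invariant), and hence preserve the geometric center. Because it is simplicial, the isomorphism is also an isometry of the path metrics, so it maps geodesics to geodesics and satisfies $g\,H_{c_\Gamma(v)}(x,t)=H_{c_{g\Gamma}(gv)}(gx,t)$. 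Recall that the action on the product is given by $(gf)_w=g(f_{g^{-1}w})$. The coordinatewise identity therefore gives $gH(p,t)=H(gp,t)$ and $g z_\Gamma=z_{g\Gamma}$, so $D'$ is $\Aut(G)$-invariant. Since $\varphi$ is an equivariant cubical isomorphism, it induces an equivariant homeomorphism of geometric realisations, and $\varphi^{-1}\circ H\circ(\varphi\times\mathrm{id})$ is the desired $\Aut(G)$-equivariant strong deformation retraction of $R_G$ onto $D_G$. The one point that needs care is that the geometric realisation of a product of trees, with the cubical structure described above, is the topological product of the realisations. This ensures that the coordinatewise homotopy is well defined on $|P|$.
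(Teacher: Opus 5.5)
Your proposal is correct and follows essentially the same route as the paper. You transport the problem along the equivariant cubical isomorphism $\varphi$ of \autoref{cptscontractible}, contract each tree factor $c_\Gamma(v)$ along geodesics onto its geometric center, and form the coordinatewise product homotopy on each component; you then check equivariance using \autoref{lem:spineeqivariant} together with the fact that tree isomorphisms are isometries fixing geometric centers. Your additional remarks, on continuity of the geodesic contraction and on identifying the realisation of the product cubical structure with the topological product of trees, make explicit details the paper leaves implicit.
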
	

\begin{proof}
	It suffices to give an equivariant homotopy on 
	\[
	\coprod_{\Gamma\in \pi_0(R_G)}\prod_{v\in Z_\Gamma} c_\Gamma(v)
	\]
	between the identity and a map that collapses every connected component to a point. For a connected component $\Gamma$, we choose this point to be $(z_\mathrm{geom}(c_\Gamma(v)))_{v\in Z_\Gamma}$. Thus, under the identification from \autoref{cptscontractible}, the discrete $\Aut(G)$-space is 
	\[
	D_G = \{(z_\mathrm{geom}(c_\Gamma(v)))_{v\in Z_\Gamma}\mid \Gamma \in \pi_0(R_G)\}.
	\]
	Let $T$ be a tree and let $z_\mathrm{geom}(T)$ be its geometric center. By $H^T\colon T\times I\to T$ we denote the homotopy that linearly collapses $T$ to $z_\mathrm{geom}(T)$. Since this homotopy is defined purely in terms of the metric on the geometric realisation of $T$, and tree isomorphisms preserve the metric, this homotopy is also preserved by tree isomorphisms. This means that if $f\colon T\to T^\prime$ is a tree isomorphism, then for any $t\in I$ and $x\in T$, we have 
	\begin{equation}
		\label{isominv}
		f(H^T(x,t)) = H^{T^\prime}(f(x), t).
	\end{equation}
	Now, for any connected component $\Gamma$ of $R_G$, we define 
	\[
	H^\Gamma \colon \left(\prod_{v\in Z_\Gamma} c_\Gamma(v) \right)\times I\to \prod_{v\in Z_\Gamma} c_\Gamma(v)
	\]
	via $H^\Gamma(x,t)_v = H^{c_\Gamma(v)}(x_v,t)$, whenever $x\in \prod_{v\in Z_\Gamma} c_\Gamma(v)$ and $t\in I$. The homotopies $H^\Gamma$ on $\prod_{v\in Z_\Gamma} c_\Gamma(v)$ combine to give a homotopy
	\[
	H\colon \left(\coprod_{\Gamma\in \pi_0(R_G)}\prod_{v\in Z_\Gamma} c_\Gamma(v)\right)\times I \to \coprod_{\Gamma\in \pi_0(R_G)}\prod_{v\in Z_\Gamma} c_\Gamma(v).
	\]
	The homotopy $H$ is $\Aut(G)$-equivariant, because for any $g\in \Aut(G)$ and $x\in\prod_{v\in Z_\Gamma} c_\Gamma(v)$ we have
	\[
	\begin{aligned}
		H^{g\Gamma} (gx,t)_w
		&= H^{c_{g\Gamma}(w)}\bigl((gx)_w,t\bigr) \\
		&= H^{c_{g\Gamma}(w)}\bigl(g(x_{g^{-1}w}),t\bigr) \\
		&= H^{g c_\Gamma(g^{-1}w)}\bigl(g(x_{g^{-1}w}),t\bigr) \\
		&\overset{(\ref{isominv})}{=} g\left(H^{c_\Gamma(g^{-1}w)}(x_{g^{-1}w},t)\right) \\
		&= g\left(H^\Gamma(x,t)_{g^{-1}w}\right) \\
		&= \bigl(gH^\Gamma(x,t)\bigr)_w.
	\end{aligned}
	\]
	Hence, we have produced an equivariant strong deformation retraction onto a discrete $\Aut(G)$-space.
\end{proof}

	\subsection{Examples}\label{sec:examples}
In this section we examine several examples of network realignment complexes for natural and relatively symmetric classes of base graphs. These examples already exhibit the phenomenon of having multiple connected components. Throughout, we apply the techniques and terminology developed in the previous sections, such as the spine and spine assignments of the connected components. We also investigate the action of the automorphism group of the base graph on the corresponding complex and determine the resulting orbit structure. For general base graphs, however, the enumeration of the connected components or even their orbits under the group action remains open, and the following examples will illustrate some of the challenges involved. 

The first example is given by Kozlov \cite{Kozlov25}, who showed that $X_{K_n} \simeq K_n$. The present results strengthen this statement by taking the symmetry of the complex into account. The automorphism group of the base graph is given by $\Aut(K_n)=S_n$. Hence, we obtain the equivariant homotopy equivalence $X_{K_n} \simeq_{S_n}~K_n$. 

In the following examples, the base graph we consider is $G = K_n\setminus E(P)$, where $P$ is a path of length $k$ and $0<k<n$. An automorphism of $G$ can either fix or reverse $P$ and can freely relabel the vertices not contained in $P$. Hence, $\Aut(G) \cong S_{n-k-1}\times C_2$, where $C_2$ is the cyclic group with two elements.

\begin{corollary}\label{1EdgeM}
	Let $n\geq 3$, and let $G=K_n\setminus \{e\}$ for $e\in E(K_n)$. Then $X_G$ admits an $(S_{n-2}\times C_2)$-equivariant strong deformation retraction onto $K_{n-2}$.
\end{corollary}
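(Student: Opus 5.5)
The plan is to specialise the main theorem of this section to $G=K_n\setminus\{e\}$, writing $e=(1,2)$ without loss of generality. The main theorem already gives an $\Aut(G)$-equivariant strong deformation retraction $X_G\simeq K_{\mathcal S_G}\sqcup D_G$, so two things remain: identify $\mathcal S_G$ and $\Aut(G)$, and show that $D_G=\emptyset$.

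\emph{Identifying $\mathcal S_G$ and $\Aut(G)$.} In $G$ the vertices $1$ and $2$ have degree $n-2$ and every other vertex has degree $n-1$. Hence $\mathcal S_G=V(G)\setminus\{1,2\}$, so $K_{\mathcal S_G}\cong K_{n-2}$. Any automorphism must preserve the unique non-edge $\{1,2\}$. It may swap $1$ and $2$ and permute the remaining $n-2$ vertices freely, so $\Aut(G)\cong S_{n-2}\times C_2$. The induced action on $K_{\mathcal S_G}$ is the permutation action of the $S_{n-2}$ factor, with $C_2$ acting trivially.

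\emph{Showing $D_G=\emptyset$.} By construction $D_G$ is the image of the residual part $R_G=Y_G\setminus M_G$, with one point for each residual component. It therefore suffices to show $R_G=\emptyset$, that is, no $N\in Y_G$ has two occupied vertices in its spine. The key observation is that a barrier relation holds only between $1$ and $2$. So if a leaf $u$ of $\Sp(N)$ is occupied by $v$, then $\{v,p_{\Sp(N)}(u)\}=\{1,2\}$. Moreover $v\notin V(\Sp(N))$ while $p_{\Sp(N)}(u)\in V(\Sp(N))$. We split into two cases.

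If $|\Sp(N)|=1$, say with spine $(u,w)$ and both endpoints occupied, then $u$ is occupied by some $v$ with $\{v,w\}=\{1,2\}$, and $w$ is occupied by some $v'$ with $\{v',u\}=\{1,2\}$. Then $u,w\in\{1,2\}$ forces $v=u$, which contradicts $v\notin V(\Sp(N))$.

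If $|\Sp(N)|\geq 2$, take two distinct occupied leaves $u_1,u_2$ with occupying vertices $v_1,v_2$. The spine vertex in each pair $\{v_i,p_{\Sp(N)}(u_i)\}=\{1,2\}$ is determined, because exactly one element of the pair lies in the spine. This forces $p_{\Sp(N)}(u_1)=p_{\Sp(N)}(u_2)$ and $v_1=v_2$. But $v_1$ is a leaf of $T$ with parent $u_1$, so $u_1=u_2$, a contradiction.

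Hence every cell of $Y_G$ lies in $M_G$, so $R_G=\emptyset$ and $D_G=\emptyset$. The retraction $X_G\simeq K_{\mathcal S_G}$ from the main theorem (via \autoref{Y_G-matching-thm} and \autoref{MainCptKn}) is then the asserted $(S_{n-2}\times C_2)$-equivariant strong deformation retraction onto $K_{n-2}$.

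\emph{Main obstacle and degenerate cases.} The only real content is the case analysis excluding residual components, and the delicate point there is that the occupying vertex lies off the spine while its barrier lies on it. The degenerate cases need a short remark. For $n=3$ we have $K_{\mathcal S_G}=K_1$, a single star tree. For $n=4$ we have $K_2$, an edge, and the wedge-of-circles count simply gives zero circles.
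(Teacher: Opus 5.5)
Your proof is correct and follows essentially the same route as the paper. You identify $\mathcal S_G=[n]\setminus\{1,2\}$, show that no network realignment over $G$ has two occupied leaves (so $R_G=\emptyset$ and $Y_G=M_G$), and conclude via \autoref{Y_G-matching-thm} and \autoref{MainCptKn}; your case analysis also supplies the justification for the ``at most one occupied leaf'' claim, which the paper only asserts.
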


\begin{proof} 
	Let $e=(u,w)$. In the cubical complex $X_G$, the vertices $u$ and $w$ form a pair of mutual barriers. Since every network realignment contains at most one occupied leaf, no connected component other than the main component $M_G$ can arise. The set of star trees is given by $\mathcal{S}_G = \lbrack n \rbrack \setminus\{u,w\}$, and it follows from \autoref{MainCptKn} that
	\begin{equation*}
		X_G \simeq_{(S_{n-2}\times C_2)} K_{\mathcal S_G} \cong K_{n-2},
	\end{equation*}
	which proves the claim.
\end{proof}

A projection of the above complex $X_G$ for $n=5$ is shown in \autoref{fig:NRCn=5}. Let $e=(1,2)$. The three grey 3-cubes connect the white star trees $s_1,s_2$ and $s_3$. The violet squares attached to the star trees represent the cells with spine size one and a single occupied leaf. The following maximal cells do not have minimal spine: The blue 2-cubes correspond to cells with spine size two and no occupied leaf, and the red edges correspond to network realignments with spine size two and one occupied leaf. $X_G$ admits a strong deformation retraction onto the main component $M_G$, which does not contain the blue and the red cells. The diagonals of the three grey 3-cubes connect the star trees and form the complete graph $K_3$, which is a strong deformation retract of $X_G$. The symmetric group $S_3$ acts by permuting the star trees.

\begin{corollary}\label{2PathM}
	Assume $n\geq 5$. Let $P$ be a path of length two, and let $G=K_n\setminus E(P)$. Then we have the following $(S_{n-3}\times C_2)$-equivariant strong deformation retraction of $X_G$:
	\begin{equation*}
		X_G \simeq_{(S_{n-3}\times C_2)} K_{n-3} \cup \coprod\limits_{i=1}^{(n-3)(n-4)} \{\ast\},
	\end{equation*}
	where there are two orbits of connected components under the action of $\Aut(G)$.
\end{corollary}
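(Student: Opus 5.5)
The plan is to specialise the main theorem of this section to $G=K_n\setminus E(P)$ with $P=(a,m,b)$, so $E(P)=\{(a,m),(m,b)\}$. Since the theorem holds for arbitrary connected $G$, only three things remain to be identified: $\mathcal S_G$, the set $\pi_0(R_G)$ of residual components, and the $\Aut(G)$-orbits on these pieces. First, $\mathcal S_G=V(G)\setminus\{a,m,b\}$, because exactly $a,m,b$ lose a neighbour. Hence, by \autoref{MainCptKn}, $M_G$ retracts $(S_{n-3}\times C_2)$-equivariantly onto $K_{\mathcal S_G}\cong K_{n-3}$, and $M_G$ is a single $\Aut(G)$-invariant component by \autoref{prop:MainComponent}.

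The main step is the classification of $R_G$. I will show that the residual components correspond bijectively to ordered pairs $(u_1,u_2)$ of distinct vertices of $\mathcal S_G$.

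A cell of $R_G$ has spine of size at least two, all of whose leaves are occupied. If a leaf $u$ with spine-parent $x$ is occupied by $v$, then $\{v,x\}$ is a non-edge of $G$, so $\{v,x\}\in\{\{a,m\},\{m,b\}\}$.

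Case 1: some spine-parent equals $m$. Then $m\in\Sp(N)$, so $m$ cannot be an occupier, since occupiers lie outside the spine. Every leaf therefore has parent $m$ and occupier $a$ or $b$. Distinct leaves need distinct occupiers, each being a leaf of $T$ with a unique parent. Hence there are exactly two spine leaves, and the spine is the path $(u_1,m,u_2)$ with $a$ attached to $u_1$ and $b$ attached to $u_2$.

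Case 2: no spine-parent equals $m$. Then every occupier is $m$, which is impossible for two distinct leaves.

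Moreover, $u_1,u_2\neq a,b$: the vertices $a,b$ are occupiers and lie outside the spine. Also $(u_i,m)\in E(G)$, which again excludes $a$ and $b$. Hence $u_1,u_2\in\mathcal S_G$.

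Conversely, for any ordered pair of distinct $u_1,u_2\in\mathcal S_G$, the following tree is a valid $0$-cell of $R_G$: the path $u_1$--$m$--$u_2$, with $a$ attached to $u_1$, $b$ attached to $u_2$, and all other vertices attached to $u_1$. This needs $n\ge 5$ for $|\mathcal S_G|\ge2$.

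By \autoref{prop:ResidualComponents}, components are determined by the spine together with the spine assignment. The spine assignment is forced by the ordered pair:
\begin{itemize}
\item for $v\notin\{a,b,m\}$ there are no barriers, so $c(v)$ is the whole spine;
\item the only spine vertex that is a barrier of $a$ or $b$ is $m$, and $c(a)=\{u_1\}$, $c(b)=\{u_2\}$ record which side each of them lies on.
\end{itemize}
So there are exactly $(n-3)(n-4)$ residual components. By \autoref{cptscontractible} and the subsequent proposition, each retracts equivariantly to a point of $D_G$, which gives the displayed decomposition.

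Finally, I count orbits. $S_{n-3}$ acts on $\mathcal S_G$ and is transitive on ordered pairs of distinct elements. The generator of $C_2$ swaps $a$ and $b$, which sends the component for $(u_1,u_2)$ to the one for $(u_2,u_1)$; this lies in the same orbit anyway. So the residual components form one orbit, and together with $M_G$ there are exactly two orbits.

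I expect the main obstacle to be the case analysis showing that every residual spine is exactly a length-two path through $m$. The delicate points are ruling out spines with three or more leaves and ensuring that occupiers are distinct. I would handle these by using that each occupier is a leaf of $T$ with a unique parent, and that the barrier pairs are only $\{a,m\}$ and $\{m,b\}$.
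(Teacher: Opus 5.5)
Your proposal is correct and follows the paper's route. You collapse to $Y_G$, identify $\mathcal S_G$ and the main component, and use the barrier pairs $\{a,m\},\{m,b\}$ to force every residual spine to be $(u_1,m,u_2)$ with $a,b$ as occupiers. You then count the $(n-3)(n-4)$ ordered pairs via \autoref{prop:ResidualComponents} and observe that they form a single $\Aut(G)$-orbit.

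One small correction: cells of $R_G$ need not have spine size at least two in general. A single-edge spine with both endpoints occupied can occur, as for the component with spine $(1,4)$ in \autoref{cor:3-pathdeleted}. However, your case analysis only uses that the spine has at least two leaves, all occupied, and it already rules out that situation for this $G$.
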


\begin{proof}
	Since the argument is independent of the labelling, we denote $P=(1,2,3)$. After collapsing the complex $\Aut(G)$-equivariantly onto its minimal spine complex $Y_G$ by \autoref{Y_G-matching-thm}, it follows from \autoref{prop:ResidualComponents} that every connected component distinct from $M_G$ is uniquely determined by its spine and spine assignment.
	
	We show that the $\Aut(G)$-action on $Y_G$ has precisely two orbits of connected components, one of them containing only $M_G$. Let $\Gamma$ be any connected component of the other orbit. Its spine is of the form $\Sp(\Gamma)=(x,2,y)$ with $x,y\in \mathcal{S}_G$ being the occupied leaves. Indeed, a realignment can only contain two occupied leaves if the vertex 2 serves as a barrier for both 1 and 3. Without loss of generality, the spine assignment is given by $c_{\Gamma}(1)=\{x\},c_{\Gamma}(3)=\{y\}$ and $c_{\Gamma}(v)=\Sp(\Gamma)$ for every $v\in [n]\setminus\{1,2,3,x,y\}$, since none of those vertices admits a barrier. As $x$ and $y$ may be chosen independently from $\mathcal{S}_G=\{4,\dots,n\}$, which has cardinality $(n-3)$, the orbit contains $(n-3)(n-4)$ components. By \autoref{cptscontractible}, the residual components $R_G$ admit an $\Aut(G)$-equivariant strong deformation retract onto a discrete space. Furthermore, \autoref{MainCptKn} yields $M_G\simeq_{\Aut(G)} K_{n-3}$, which finishes the proof.
\end{proof}

\begin{figure}[tbp]
	\centering
	\begin{tikzpicture}[dot/.style={circle, draw, fill, inner sep=0pt, minimum size=4pt}]
		
		\newcommand{\diagramScale}{0.7}
		
		\begin{scope}[scale=\diagramScale]
			% vertices
			\node[dot, label={left:{\scriptsize $3$}}] (w1) at (0,1) {};
			\node[dot, label={below:{\scriptsize $4$}}] (w2) at (1,0) {};
			\node[dot, label={right:{\scriptsize $2$}}] (w4) at (1,1) {};
			\node[dot, label={below:{\scriptsize $1$}}] (w6) at (0,0) {};
			\node () at (0.5,-1.2) {$\Gamma^1$};
			% arrows
			\draw (w6) -- (w2);
			\draw (w4) -- (w2);
			\draw (w1) -- (w6);
			\draw[red, densely dotted, thick] (w4) -- (w6);
			\draw[red, densely dotted, thick] (w1) -- (w2);
		\end{scope}
		
		\begin{scope}[scale=\diagramScale, xshift=3cm]
			% vertices
			\node[dot, label={left:{\scriptsize $\gamma_1$}}] (w1) at (0.5,1) {};
			\node[dot, label={below:{\scriptsize $\gamma_2$}}] (w2) at (1,0) {};
			\node[dot, label={right:{\scriptsize $\gamma_3$}}] (w4) at (1.5,1) {};
			\node[dot, label={below:{\scriptsize $u$}}] (w6) at (0,0) {};
			\node[dot, label={below:{\scriptsize $w$}}] (w7) at (2,0) {};
			\node () at (1,-1.2) {$(\Gamma^2_i)$};
			% arrows
			\draw (w2) -- (w7);
			\draw (w6) -- (w2);
			\draw (w4) -- (w7);
			\draw (w1) -- (w6);
			\draw[red, densely dotted, thick] (w4) -- (w2);
			\draw[red, densely dotted, thick] (w1) -- (w2);
		\end{scope}
		
		\begin{scope}[scale=\diagramScale, xshift=6.75cm]
			% vertices
			\node[dot] (w1) at (0.5,1) {};
			\node[dot, label={below:{\scriptsize $\alpha$}}] (w2) at (1,0) {};
			\node[dot] (w4) at (2.5,1) {};
			\node[dot, label={below:{\scriptsize $y$}}] (w5) at (3,0) {};
			\node[dot, label={below:{\scriptsize $x$}}] (w6) at (0,0) {};
			\node[dot, label={below:{\scriptsize $\beta$}}] (w7) at (2,0) {};
			\node () at (1.5,-1.2) {$(\Gamma^3_i)$};
			% arrows
			\draw[
			decorate,
			decoration={
				zigzag,
				amplitude=0.5mm,
				segment length=2mm
			}
			] (w2) -- node[below, yshift=-1mm]{\scriptsize $Q$} (w7);
			\draw (w6) -- (w2);
			\draw (w5) -- (w7);
			\draw (w4) -- (w5);
			\draw (w1) -- (w6);
			\draw[red, densely dotted, thick] (w4) -- (w7);
			\draw[red, densely dotted, thick] (w1) -- (w2);
		\end{scope}
		
		\begin{scope}[scale=\diagramScale, xshift=11.5cm]
			% vertices
			\node[dot, label={left:{\scriptsize $1$}}] (w1) at (0.5,1) {};
			\node[dot, label={below:{\scriptsize $2$}}] (w2) at (1,0) {};
			\node[dot, label={right:{\scriptsize $4$}}] (w4) at (3.5,1) {};
			\node[dot, label={below:{\scriptsize $3$}}] (w5) at (3,0) {};
			\node[dot, label={below:{\scriptsize $x$}}] (w6) at (0,0) {};
			\node[dot, label={[label distance=0.5mm]below:{\scriptsize $\hat{Q}$}}] (w7) at (2,0) {};
			\node[dot, label={below:{\scriptsize $y$}}] (w8) at (4,0) {};
			\node () at (2,-1.2) {$(\Gamma^4_i)$};
			% arrows
			\draw[
			decorate,
			decoration={
				zigzag,
				amplitude=0.5mm,
				segment length=2mm
			}
			] (w2) -- (w7) -- (w5);
			\draw (w6) -- (w2);
			\draw (w5) -- (w8);
			\draw (w4) -- (w8);
			\draw (w1) -- (w6);
			\draw[red, densely dotted, thick] (w4) -- (w5);
			\draw[red, densely dotted, thick] (w1) -- (w2);
		\end{scope}
	\end{tikzpicture}
	\vspace{-3mm}
	\caption{The spines of the components in each orbit type of \autoref{cor:3-pathdeleted} together with the occupying vertices.}
	\label{fig:3-pathOrbits}
\end{figure}

In the next example, the base graph is obtained from the complete graph by deleting a path of length three, where the different orbit types are depicted in \autoref{fig:3-pathOrbits}.

\begin{corollary}\label{cor:3-pathdeleted}
	Assume $n\geq 5$. Let $P$ be a path of length three, and let $G=K_n\setminus E(P)$. Then we have the following $\Aut(G)$-equivariant strong deformation retraction of $X_G$:
	\begin{equation*}
		X_G \simeq_{(S_{n-4}\times C_2)} K_{n-4} \cup \coprod\limits_{i=1}^{f(n)} \{\ast\},
	\end{equation*}
	where
	\begin{equation*}
		f(n)=4(n-4)(n-5)\left( \sum\limits_{k=0}^{n-6} \binom{n-6}{k}k! \right) + (n-4)(n-3) +1,
	\end{equation*}
	and the union of points forms a discrete $(S_{n-4}\times C_2)$-space containing $(3n-13)$ orbits.
\end{corollary}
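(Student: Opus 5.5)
We work with the labelling $P=(1,2,3,4)$, so the missing edges are $(1,2),(2,3),(3,4)$. The barrier relations are then exactly $1\leftrightarrow 2$, $2\leftrightarrow 3$ and $3\leftrightarrow 4$. We have $\mathcal S_G=\{5,\dots,n\}$, and $\Aut(G)\cong S_{n-4}\times C_2$, where $C_2$ acts by $1\leftrightarrow 4$, $2\leftrightarrow 3$.

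By \autoref{Y_G-matching-thm}, \autoref{MainCptKn} and the contraction of $R_G$ onto $D_G$, we already have $X_G\simeq_{\Aut(G)} K_{n-4}\sqcup D_G$. Here $D_G$ has one point for each residual component. By \autoref{prop:ResidualComponents}, residual components correspond bijectively to admissible pairs (spine, spine assignment). So everything reduces to a combinatorial enumeration of such pairs, followed by an orbit count.

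\textbf{Step 1: which spines occur.} A residual component has a spine $S$ with $|S|\geq 1$ and at least two leaves, all occupied. Take an occupied leaf $u$ with occupier $v$. Then $v\in\{1,2,3,4\}$, $v\notin S$, $(v,u)\in E(G)$, and $p_S(u)$ is a barrier of $v$. Each occupier is used for only one leaf, and barriers exist only inside $P$. Hence $S$ is a path with exactly two leaves.

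We would list all ways to choose two such leaf--occupier--neighbour triples. The triples are $(v,u,p_S(u))$ with $v,p_S(u)$ consecutive on $P$, $u$ adjacent in $G$ to both $v$ and $p_S(u)$, and $v\notin S$. Every spine vertex must be adjacent in $G$ to its spine neighbours, so $S$ contains no edge of $P$.

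This produces the four types in \autoref{fig:3-pathOrbits}:
\begin{itemize}
\item $\Gamma^1$: the spine is the edge $(1,4)$, with occupiers $3$ at $1$ and $2$ at $4$.
\item $\Gamma^2$: the spine is $u\!-\!\gamma_2\!-\!w$ with $\gamma_2\in\{2,3\}$, occupied by the two barriers of $\gamma_2$.
\item $\Gamma^3$ and $\Gamma^4$: longer spines $x\!-\!\alpha\!-\!Q\!-\!\beta\!-\!y$. The inner path $Q$ (respectively $\hat Q$) runs through the remaining vertices, and the end-neighbours $\alpha,\beta$ are drawn from $P$.
\end{itemize}
For each type we would record the constraints, namely $u,x,y\in\mathcal S_G$ or forced small cases such as $u=4$. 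We then count. The interior paths through $k$ freely chosen and ordered vertices from a pool of $n-6$ contribute $\sum_k\binom{n-6}{k}k!$.

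\textbf{Step 2: spine assignments.} A vertex of $Z_\Gamma$ outside $P$ has no barriers, so its assignment is all of $S$. A vertex of $P$ not in $S$ has its barriers on $S$ cutting $S$ into at most two or three pieces. Its assignment must be the piece containing its occupied leaf, if it is an occupier. Otherwise it is an arbitrary piece that still leaves every leaf occupied, which is what gives the factor $4$ and the counting in the first term of $f(n)$. Here we must check carefully that every resulting realignment really has both leaves occupied, i.e.\ lies in $R_G$ rather than $M_G$. Summing the counts gives the three terms $4(n-4)(n-5)\sum_k\binom{n-6}{k}k!$, $(n-4)(n-3)$ and $1$.

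\textbf{Step 3: orbits.} The group $S_{n-4}$ acts freely on the labels in $\mathcal S_G$, and $C_2$ reverses $P$. So an orbit is determined by the type, by the length $k$ of the free part of $Q$ (which ranges over $0,\dots,n-6$), and by the $C_2$-pairing of the $P$-decorations. Counting $1$ orbit for $\Gamma^1$, a bounded number for $\Gamma^2$, and a number linear in $n-6$ for $\Gamma^3\cup\Gamma^4$ should give $3n-13$.

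The main obstacle is Step 1/2 for $\Gamma^3,\Gamma^4$. One must decide exactly which vertices of $P$ may lie on the spine, and which of the $P$-vertices off the spine may be attached on either side without destroying occupation. One must also avoid double counting under reversal of the spine, and check the small boundary cases $n=5,6$, where the free pool is empty.
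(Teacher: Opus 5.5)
Your overall route is the paper's: collapse to $Y_G$, retract $M_G$ onto $K_{n-4}$, contract each residual component to a point, then enumerate residual components by spine and spine assignment in increasing spine size. Your descriptions of $\Gamma^1$ and $\Gamma^2$ are correct. The gap is in the part you defer, and where you do commit to a mechanism it is wrong.

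For $|\Sp(\Gamma)|\geq 3$, the two spine-neighbours of the leaves, $\alpha\neq\beta$, and the two occupiers $a\neq b$ are four distinct vertices of $P$ with $\{a,\alpha\},\{b,\beta\}\in E(P)$. Hence they form the perfect matching $\{1,2\},\{3,4\}$ of $P$. This forces $x$, $y$ and all interior vertices into $\mathcal S_G$. Orienting the spine so that $\alpha\in\{1,2\}$, it leaves exactly four end-pairs:
\begin{itemize}
\item $(1,3)$, $(1,4)$ and $(2,4)$, each admitting $k\geq 0$ ordered interior vertices from the remaining $n-6$;
\item $(2,3)$, which requires $k\geq 1$ because $(2,3)\notin E(G)$.
\end{itemize}
In particular, no vertex of $P$ lies off the spine without being an occupier, so the spine assignment is forced. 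The same holds in $\Gamma^2$, where the leftover vertex ($4$, resp.\ $1$) has no barrier on the spine. So there is no ``arbitrary piece'' freedom anywhere. The factor $4$ in $f(n)$ counts the four end-pairs. The term $(n-4)(n-3)$ is not a separate type: it is $|\Gamma^2|=2(n-4)^2$ minus the missing $k=0$ summand $(n-4)(n-5)$ of the $(2,3)$-family. Here $|\Gamma^2|$ counts ordered choices of the two occupied leaves, including the leaf $4$ occupied by $1$ (resp.\ the leaf $1$ occupied by $4$). If you pursue your Step~2 as written, you will be looking for a multiplicity that does not exist.

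The orbit count also cannot be read off from a uniform range $k\in\{0,\dots,n-6\}$ for all long spines; that would give $3n-12$. You need three facts:
\begin{itemize}
\item $S_{n-4}$ acts transitively on injective sequences $(x,Q,y)$ in $\mathcal S_G$ of fixed length.
\item $C_2$, after reversing the spine, interchanges the $(1,3)$- and $(2,4)$-families and preserves the $(1,4)$- and $(2,3)$-families.
\item $\Gamma^2$ splits into exactly two orbits, according to whether the leaf occupied by $1$ (resp.\ $4$) is $4$ (resp.\ $1$) or lies in $\mathcal S_G$.
\end{itemize}
This gives $1+2+2(n-5)+(n-6)=3n-13$ for $n\geq 6$. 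The case $n=5$ must be checked by hand: there is one $\Gamma^1$-component and one $C_2$-orbit of two $\Gamma^2$-components.

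Finally, your Step~1 conclusion that the spine is a path needs one more line. Three or more occupied leaves would use that many occupiers off the spine, leaving at most one vertex of $P$ on the spine. That vertex would have to be the common spine-neighbour of all those leaves, and hence a barrier of at least three distinct vertices. This is impossible, since every vertex has at most two barriers.
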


\begin{proof}
	First assume $n\geq 6.$ For convenience, let $P=(1,2,3,4)$ denote the deleted path. As in the previous examples, by \autoref{Y_G-matching-thm} we replace $X_G$ with its $\Aut(G)$-equivariantly retracted minimal spine complex $Y_G$ and analyse the connected components of $Y_G$, whose network realignments contain at least two occupied leaves. By \autoref{prop:ResidualComponents}, each such component is uniquely determined by its spine and spine assignment. We analyse the possible spines and spine assignments in increasing order of spine size. The corresponding spines together with the vertices occupying the occupied leaves are illustrated in \autoref{fig:3-pathOrbits}.
	
	There exists a single residual component with spine size one, namely $\Gamma_1$. The spine is given by $\Sp(\Gamma_1)=(1,4)$ and the vertices 1 and 4 are occupied by 3 and 2, respectively. For every vertex $v\in \mathcal{S}_G$, one has $c_{\Gamma_1}(v)=\Sp(\Gamma_1)$.
	
	Next, we analyse the connected components of network realignments in $Y_G$ with spine size two, denoted by $\Gamma^2$. Let $(u,\gamma_2, w)$ denote the spine of such a $\Gamma\in\Gamma^2$. Since the spine is minimal, $\gamma_2$ is a barrier of the vertices $\gamma_1$ and $\gamma_3$, which occupy $u$ and $w$, respectively, for any realignment in $\Gamma$. Hence, $(\gamma_1, \gamma_2, \gamma_3)$ is a subpath of $P$. Without loss of generality, we may assume $\gamma_1 \in \{1,4\}$. If $\gamma_1 = 1$, then $w\in [n]\setminus \{\gamma_1,\gamma_2,\gamma_3, 4\}$ and $u\in [n]\setminus \{\gamma_1,\gamma_2,\gamma_3, w\}$. The same holds with 1 and 4 interchanged. Now, the spine assignments are uniquely determined. Therefore, we enumerate
	\begin{equation*}
		\vert\Gamma^2\vert = 2(n-4)^2.
	\end{equation*}
	All connected components with $u \notin \{1,4\}$ belong to the same orbit, denoted $\Gamma^2_1$. Further, all connected components with $u \in \{1,4\}$ belong to a second orbit, denoted by $\Gamma^2_2$, since the $C_2$-action interchanges the subpaths $(1,2,3)$ and $(2,3,4)$ as well as $1$ and $4$.
	
	Finally, we analyse the connected components of network realignments in $Y_G$ with spine size greater than two. Note that no vertex is a barrier of more than two vertices. Hence, the only possibility for the spine to have all leaves occupied is to be path-shaped. We distinguish two classes of network realignments: those in which both vertices 2 and 3 lie in the spine, and those in which they do not.
	
	We further divide the first class into families of connected components according to spine size $i+2$, denoted $(\Gamma_i^3)$. A component $\Gamma$ lies in $\Gamma^3_i$ if $\Sp(\Gamma)=(x,\alpha,Q,\beta,y)$, where $x,y\in \mathcal{S}_G$ are occupied leaves, $\alpha,\beta\in V(P)$ satisfy $(\alpha,\beta)\in E(G)$ and $(\alpha,Q,\beta)$ is a path of length $i\in\{1,\dots, n-5\}$ whose interior vertices lie in $\mathcal{S}_G \setminus \{x,y\}$. Assuming that $x$ and $y$ are occupied leaves, the spine assignments of all vertices are uniquely determined. 
	
	To enumerate the components of $(\Gamma_i^3)$, we first choose the pair $(\alpha,\beta)$ from $\{(1,3),(1,4),(2,4)\}$, then one selects two vertices in $\mathcal{S}_G$ for $x$ and $y$ as well as $i-1$ further vertices from $\mathcal{S}_G$ to serve as the vertices of $Q$ and orders the latter along a path. Consequently,
	\begin{equation*}
		\left\vert\bigcup\limits_{i=1}^{n-5}\Gamma_i^3\right\vert = 3(n-4)(n-5)\left(\sum\limits_{k=0}^{n-6} \binom{n-6}{k}k! \right).
	\end{equation*}
	Since the action of $S_{n-4}$ is given by permuting vertices of $\mathcal{S}_G$, the orbit of a network realignment in $\Gamma_i^3$ depends solely on the values of $\alpha$ and $\beta$. $C_2$ acts on $G$ by swapping the edges $(1,3)$ and $(2,4)$ and fixing $(1,4)$. Consequently, there are $2(n-5)$ orbits under the action of $\Aut(G)$ for a choice of the pair $(\alpha,\beta)$ and each path length $i$.
	
	Similarly, we divide the second class into families of connected components according to spine size $i+2$, denoted $(\Gamma_i^4)$. For any $\Gamma\in\Gamma^4_i$, the spine is of the form $\Sp(\Gamma)=(x,2,\hat{Q},3,y)$, where $x,y\in \mathcal{S}_G$ are occupied leaves and $(2,\hat{Q},3)$ is a path of length $i\in\{2,\dots,n-5\}$ with $V(\hat{Q})\subseteq \mathcal{S}_G\setminus \{x,y\}$. Again, the spine assignments of all vertices are uniquely determined. To enumerate the components of $(\Gamma^4_i)$, one chooses $x$ and $y$ from $\mathcal{S}_G$. Next, one selects the $i-1$ vertices of $\hat{Q}$ and orders them along the path. Thus,
	\begin{equation*}
		\left\vert\bigcup\limits_{i=2}^{n-5} \Gamma_i^4\right\vert = (n-4)(n-5) \left( \sum\limits_{k=1}^{n-6} \binom{n-6}{k}k! \right).
	\end{equation*}
	Then there are $(n-6)$ orbits, one for each path length $i$, because $C_2$ interchanges the edges $(1,2)$ and $(3,4)$ and $S_{n-4}$ permutes $\mathcal{S}_G$. 
	
	Overall, we sum up and find
	\begin{equation*}
		\vert\Gamma^1\vert + \left\vert\bigcup\limits_{i=1}^2\Gamma_i^2\right\vert + \left\vert\bigcup\limits_{i=1}^{n-5}\Gamma_i^3\right\vert + \left\vert\bigcup\limits_{i=2}^{n-5}\Gamma_i^4\right\vert = f(n).
	\end{equation*} 
	Again, by \autoref{cptscontractible} the counted components admit an $\Aut(G)$-equivariant strong deformation retraction onto a discrete space and we sum up the number of orbits:
	\begin{equation*}
		1+2+2(n-5)+(n-6) =  3n-13.
	\end{equation*}
	The topology of the main component is given by \autoref{MainCptKn}.
	
	For $n = 5$, besides the main component, there is one connected component of type $\Gamma^1$ and two connected components of type $\Gamma^2$, which belong to the same orbit.
\end{proof}

The preceding case already illustrates the rapid growth of (orbits of) connected components as the number of omitted edges increases, as well as the difficulty of enumerating them. While the components in the present example are still characterised by path-shaped spines, this feature is no longer preserved once more than three edges are removed or when three deleted edges do not themselves form a path. The authors leave this as an open problem.

\section{Geometry of the Network Realignment Graph \texorpdfstring{$\mathcal{G}_n$}{Gn}} \label{sec:Geometry}

In the previous section we studied the equivariant topology of the network realignment complex $X_G$ over an arbitrary connected base graph $G$. We now specialise to the complete base graph $K_n$ and turn from topology to geometry. More precisely, instead of studying the homotopy type of the cubical complex $X_n$, we study the graph metric on its $1$-skeleton. This graph records the possible single leaf slides between labelled spanning trees, and its metric measures the minimal number of such slides needed to pass from one tree to another.

\begin{definition}
	The \textit{network realignment graph} $\mathcal{G}_n$ is the $1$-skeleton of the network realignment complex $X_n=X_{K_n}$, where $K_n$ has vertex set $[n]$. Equivalently, the vertices of $\mathcal{G}_n$ are the spanning trees of $K_n$, or labelled trees on $[n]$, and two vertices are adjacent if the corresponding trees differ by a single leaf slide.
\end{definition}

The study of the geometry of the network realignment graph is motivated by Kozlov's work on network realignment complexes. Kozlov asks in \cite{Kozlov25} whether one can give a formula for the diameter of $\mathcal{G}_n$. We recall the relevant notions.

\begin{definition}
	We write $d_{\mathcal{G}_n}$ for the graph distance on $\mathcal{G}_n$. Thus $d_{\mathcal{G}_n}(T_1,T_2)$ is the smallest number of leaf slides needed to transform $T_1$ into $T_2$. The \textit{diameter} of $\mathcal{G}_n$ is given by
	\[
	\operatorname{diam}(\mathcal{G}_n) = \max_{T_1,T_2\in V(\mathcal{G}_n)} d_{\mathcal{G}_n}(T_1,T_2).
	\]
\end{definition}

The main result of this section is a pair of explicit upper and lower bounds. They are summarised in the following theorem.

\begin{theorem}
	Let $n\geq 3$. The diameter of $\mathcal{G}_n$ satisfies 
	\[
	\left\lfloor \frac{3n^2-4n}{8} \right\rfloor \leq \operatorname{diam}(\mathcal{G}_n)\leq \frac{n^2}{2}-n,
	\]
	if $n$ is even and 
	\[
	\left\lfloor \frac{3n^2-6n}{8} \right\rfloor \leq \operatorname{diam}(\mathcal{G}_n)\leq \frac{n^2-1}{2}-n,
	\]
	if $n$ is odd.
\end{theorem}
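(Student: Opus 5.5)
The plan is to reduce everything to distances between arbitrary trees and star trees, for which there is an exact formula. For a labelled tree $T$ and a vertex $c\in[n]$, let $W_c(T)=\sum_{v\in[n]} d_T(v,c)$ be the total distance to $c$. The first step is the identity
\[
d_{\mathcal{G}_n}(T,s_c)=W_c(T)-(n-1).
\]
For the upper bound $d_{\mathcal{G}_n}(T,s_c)\leq W_c(T)-(n-1)$, root $T$ at $c$ and repeatedly slide a deepest leaf $b$ (at depth $\geq 2$) from $p_T(b)$ to $p_T(p_T(b))$. This is a legal leaf slide, and it lowers $W_c$ by exactly one. The process stops precisely at $s_c$, where $W_c=n-1$.

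For the matching lower bound, observe that a single leaf slide of $b$ changes only $d(b,c)$, and changes it by exactly one. So $W_c$ changes by at most one per step. Since $W_c(s_c)=n-1$, any route from $T$ to $s_c$ needs at least $W_c(T)-(n-1)$ slides.

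Upper bound. Two star trees $s_c,s_{c'}$ are opposite vertices of the $(n-2)$-cube $C_{c,c'}$ from \autoref{TopoChapter}, so $d_{\mathcal{G}_n}(s_c,s_{c'})\leq n-2$. Given $T_1,T_2$, choose $c$ a centroid of $T_1$ and $c'$ a centroid of $T_2$ (\autoref{existenceofcentral}). The triangle inequality through $s_c$ and $s_{c'}$ gives
\[
d_{\mathcal{G}_n}(T_1,T_2)\leq W_c(T_1)+W_{c'}(T_2)-2(n-1)+(n-2).
\]
It then remains to prove the classical fact that $W$ at a centroid is at most $\lfloor n^2/4\rfloor$, with the maximum attained by the path. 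I would argue this by induction or by a branch-counting argument. With centroid depths, the best case is the path hanging from its middle, and the worst distance sum is $\lfloor n^2/4\rfloor$. Plugging in yields $n^2/2-n$ for even $n$ and $(n^2-1)/2-n$ for odd $n$, which matches the statement exactly. This confirms that the intended route passes through two star trees.

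Lower bound. I would take $T_1$ to be the path $(1,2,\dots,n)$ and $T_2$ a path whose labelling scrambles $T_1$, for instance interleaving the two halves of $[n]$ so that $T_1$-neighbours are far apart in $T_2$. I would then seek a potential $\Phi$ on trees that changes by at most one under a leaf slide and satisfies $\Phi(T_1)-\Phi(T_2)\geq\lfloor(3n^2-4n)/8\rfloor$, respectively $\lfloor(3n^2-6n)/8\rfloor$ for odd $n$.

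The first potential I would try mimics the star argument: take $\min_c\bigl(W_c(T)+W_c(T_2)\bigr)$-type quantities, or sums of $d_T(v,\cdot)$ over a suitably chosen matching of vertices. The aim is to show that any route must come close to some star in the middle, or else pay in another way. Alternatively, I would directly analyse how the endpoints of a path must migrate.

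I expect this lower bound to be the main obstacle. A single slide of $b$ changes up to $n-1$ pairwise distances $d(b,x)$, so naive pairwise potentials do not work. The challenge is to find a potential that is $1$-Lipschitz under leaf slides and still captures the roughly $3n^2/8$ cost of reversing the structure. That cost should decompose as about $n^2/4$ to collapse one half plus $n^2/8$ to rebuild a shorter scrambled piece. Guided by this count, I would attempt an explicit pair of trees and verify the bound for small $n$ computationally before proving it in general.
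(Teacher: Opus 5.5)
\textbf{The lower bound is missing.} You give only a search plan, and the obstacle you name is exactly what the paper's key device removes. You mention sums over a matching in passing, but then discard pairwise potentials because sliding $b$ changes up to $n-1$ distances $d(b,x)$.

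The fix is to sum only over a \emph{matching} $\mathcal{M}$, i.e.\ a set of pairwise disjoint pairs. Then:
\begin{itemize}
\item $b$ lies in at most one pair;
\item distances between vertices other than $b$ are unchanged, since $b$ is a leaf before and after the slide;
\item $d(b,x)$ changes by at most one, because the new parent of $b$ is adjacent to the old one.
\end{itemize}
Hence $T\mapsto\sum_{\{i,j\}\in\mathcal{M}}|d_T(i,j)-d_{T_2}(i,j)|$ is $1$-Lipschitz on $\mathcal{G}_n$ and vanishes at $T_2$. This gives $d_{\mathcal{G}_n}(T_1,T_2)\ge\delta_{\mathcal{M}}(T_1,T_2)$.

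It then suffices to exhibit one good pair of trees and one matching. For $n=4k$ the paper takes $L=(1,\dots,4k)$,
\[
S=(k+1,k+3,\dots,3k-1,\;1,4k,2,4k-1,\dots,k,3k+1,\;3k,3k-2,\dots,k+2),
\]
and $\mathcal{M}=\bigl\{\{i,4k+1-i\}:1\le i\le k\bigr\}\cup\bigl\{\{k+2i-1,k+2i\}:1\le i\le k\bigr\}$.
\begin{itemize}
\item The pairs $\{i,4k+1-i\}$ are at distance $4k+1-2i$ in $L$ but adjacent in $S$.
\item The pairs $\{k+2i-1,k+2i\}$ are adjacent in $L$ but at distance $4k+1-2i$ in $S$.
\end{itemize}
So $\delta_{\mathcal{M}}(L,S)=2\sum_{i=1}^k(4k-2i)=6k^2-2k=(3n^2-4n)/8$. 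The other residues mod $4$ use blocks of $\lfloor\lfloor n/2\rfloor/2\rfloor$ and $\lceil\lfloor n/2\rfloor/2\rceil$ pairs, leaving one vertex unmatched when $n$ is odd. Your path-versus-interleaved-path guess is in this spirit, but without the matching potential you have no proof.

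\textbf{Upper bound.} This half follows the paper's route. $W_c(T)-(n-1)$ is exactly the paper's $\Phi_c(T)$, and the paper also routes through $s_c,s_{c'}$ for centroids $c,c'$, using $d_{\mathcal{G}_n}(s_c,s_{c'})\le n-2$. Two points need correcting.

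First, your claimed identity $d_{\mathcal{G}_n}(T,s_c)=W_c(T)-(n-1)$ is false. The step ``a slide of $b$ changes only $d(b,c)$'' fails for $b=c$: sliding the leaf $c$ along $(p,q)$ changes $d(x,c)$ for every $x$. For a path with $c$ an endpoint, $W_c(T)-(n-1)=\binom{n-1}{2}$. Yet sliding $c$ to the middle and then collapsing takes only about $n^2/4+n/2$ slides. Only the inequality $\le$ holds, which is all you actually use. The valid universal lower bound is $\Psi(T)=\sum_{e}(\sigma_T(e)-1)$, and it equals $\Phi_c(T)$ exactly when $c$ is a centroid.

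Second, the centroid bound $W_c(T)\le\lfloor n^2/4\rfloor$ still needs an argument. Let the branches of $T\setminus\{c\}$ have sizes $d_i\le n/2$. Each branch contributes at most $d_i(d_i+1)/2$ to $W_c$, the extreme case being a path hanging from its attaching vertex. Maximising $\sum_i d_i(d_i+1)/2$ subject to $\sum_i d_i=n-1$ is then a convexity argument. It forces two branches, of sizes $\lfloor n/2\rfloor$ and $n-1-\lfloor n/2\rfloor$, and gives the value $\lfloor n^2/4\rfloor$.
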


The proof of the upper bound occupies the next three subsections. We first introduce two quantities, $\Phi_c$ and $\Psi$, which respectively give an upper and a lower estimate for the distance from a tree to a star tree. We then characterise the vertices for which these estimates agree in terms of centroids. The upper bound for the diameter is then obtained by choosing, for any pair of trees, a distance-minimising star tree for each of the two given trees and constructing a path that passes through both star trees.

The lower bound is proved in the last subsection. We introduce the matching distance, an auxiliary metric on $\mathcal{G}_n$, which bounds the graph distance from below, and determine the exact diameter of the network realignment graph with respect to this distance. This gives us the desired lower bound.

Computations suggest that the upper bound is sharp for all $n\geq 6$. We have verified that it agrees with the exact diameter for all $6\leq n\leq 9$. However, exact computations quickly become infeasible, because by Cayley's formula, $\mathcal{G}_n$ has $n^{n-2}$ vertices, and hence its size grows superexponentially in $n$.

Beyond the diameter bounds, the methods developed in this section also reveal additional structure in the geometry of the realignment graph. In particular, the characterisation of nearest star trees is an interesting structural consequences of the theory developed in this section. Some aspects of this metric rigidity will also play a role in the next section, where we determine the automorphism group of $X_n$.

\subsection{Distances to Star Trees}

The invariant $\Phi_c(T)$ depends on a chosen vertex $c$ and measures how far $T$ is from the star tree $s_c$ by summing the excess distances from $c$. The invariant $\Psi(T)$ measures, edge by edge, how balanced the cuts of $T$ are. 

The main point of this subsection is that $\Psi(T)$ gives a lower bound for the distance from $T$ to any star tree, while $\Phi_c(T)$ gives an upper bound for the distance from $T$ to the specific star tree $s_c$. In the next subsection, we determine exactly when these two bounds agree (cf. \autoref{centraleq}).

\begin{definition}
	Let $T\in V(\mathcal{G}_n)$ be a tree. Let $c\in [n]$ be any vertex of $T$. We define
	\[
	\Phi_c(T) = \sum_{x\in V(T)\setminus \{c\}} (d_{T}(x,c)-1).
	\]
	For every very edge $e$ of $T$, removing $e$ from $T$ leaves us with a forest consisting of exactly two trees $A_e$ and $B_e$. We define 
	\[\sigma_T(e) = \min(|V(A_e)|,|V(B_e)|)
	\]
	and 
	\[
	\Psi(T) = \sum_{e\in E(T)}(\sigma_T(e)-1).
	\]
\end{definition}

\begin{lemma}
	Let $T\in V(\mathcal{G}_n)$ be a spanning tree. Let $c\in [n]$ be any vertex of $T$. Then $T$ is a star tree if and only if $\Psi(T) = 0$. Further, $T$ is a star tree centered at $c$ if and only if $\Phi_c(T) = 0$.
\end{lemma}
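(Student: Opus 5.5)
Both equivalences follow from the fact that all summands in $\Phi_c(T)$ and $\Psi(T)$ are non-negative, so each quantity vanishes exactly when every summand vanishes. We then read off what that condition says about $T$.

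For $\Phi_c$, every $x\neq c$ satisfies $d_T(x,c)\geq 1$, so each term $d_T(x,c)-1$ is non-negative. Hence $\Phi_c(T)=0$ if and only if $d_T(x,c)=1$ for all $x\neq c$, that is, every other vertex is adjacent to $c$. Since $T$ has $n-1$ edges, these $n-1$ edges $(x,c)$ are all of $E(T)$, so $T=s_c$. Conversely, for $T=s_c$ every term vanishes.

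For $\Psi$, each component $A_e,B_e$ obtained by deleting an edge contains at least one endpoint of $e$, so $\sigma_T(e)\geq 1$ and every term is non-negative. Thus $\Psi(T)=0$ if and only if $\sigma_T(e)=1$ for every edge $e$, meaning every edge of $T$ has an endpoint that is a leaf of $T$. If $T=s_c$, every edge is incident to a leaf, so $\Psi(s_c)=0$. For the converse we need one step of argument: we show that a tree in which every edge meets a leaf is a star.
\begin{itemize}
\item Suppose $T$ has two distinct interior vertices $u,w$. The path between them lies in the subtree $\Int(T)$, so some edge joins two interior vertices. Neither endpoint of that edge is a leaf, so it has $\sigma_T(e)\geq 2$.
\item Hence $\Int(T)$ has at most one vertex. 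Since $n\geq 3$, $T$ is not a single edge, so it has exactly one interior vertex $c$.
\item All other vertices are leaves, and each is adjacent to an interior vertex because $n\geq 3$. Therefore every leaf is adjacent to $c$, and $T=s_c$.
\end{itemize}

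The only step with any content is the converse for $\Psi$, that every edge meeting a leaf forces a star. It is settled by the observation that the interior of a tree is connected, which the argument above uses.
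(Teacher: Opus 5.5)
Your proof is correct and takes the same approach as the paper, which simply states that the lemma follows immediately from the definitions. Your argument supplies the omitted details: every summand of $\Phi_c(T)$ and $\Psi(T)$ is non-negative, and any edge joining two interior vertices has $\sigma_T(e)\geq 2$. Your use of $n\geq 3$ matches the paper's standing assumption, and the cases $n\leq 2$ are trivial anyway.
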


\begin{proof}
	This follows immediately from the definitions.
\end{proof}

\begin{lemma}\label{iter}
	Let $N = (T,r)$ be a 1-dimensional realignment (i.e. leaf slide). We write $B = \{v\}$ and  $r(v) = (u,w)$. Let $c\in V(T) = [n]\setminus\{v\}$.

	In this situation, the leaf slide changes the values of $\Psi$ by at most one. To be precise, we have
	\begin{equation}\label{Psiiter}
		\begin{split}
			|\Psi(N^{v\to u})-\Psi(N^{v\to w})|&  = 1\text{, if $n$ is even, and}\\
			|\Psi(N^{v\to u})-\Psi(N^{v\to w})|&  \leq 1\text{, if $n$ is odd.}
		\end{split}
	\end{equation}
	The behaviour of $\Phi$ depends on whether the slide is performed towards or away from $c$. Let $\gamma$ be the unique path connecting $w$ and $c$ in $T$, then
	\begin{equation}\label{Phiiter}
		\Phi_c(N^{v\to u}) = 
		\begin{cases}
			\Phi_c(N^{v\to w}) - 1, &\text{if $u$ lies on $\gamma$,}\\
			\Phi_c(N^{v\to w}) + 1, &\text{otherwise.}\\
		\end{cases}
	\end{equation}
\end{lemma}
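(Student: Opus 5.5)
The plan is to compare the two spanning trees $T_u = T^{v\to u}$ and $T_w = T^{v\to w}$ directly. They share all edges of $T$ and differ only in the leaf edge of $v$: it is $(v,u)$ in $T_u$ and $(v,w)$ in $T_w$. Since $v$ is a leaf in both trees, every distance and every edge cut not involving $v$ is the same as in $T$. So both invariants change only through a few explicitly identifiable terms.

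For $\Psi$, I would go through the edges one at a time.

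First, the leaf edges $(v,u)$ and $(v,w)$ have $\sigma = 1$ and contribute $0$ to $\Psi$ in their respective trees.

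Second, take an edge $e\in E(T)$ with $e\neq (u,w)$. Removing $e$ from $T$ leaves $u$ and $w$ in the same component, because they are joined by the edge $(u,w)$ of $T$. So $v$ joins that same component in both $T_u$ and $T_w$, and hence $\sigma_{T_u}(e) = \sigma_{T_w}(e)$.

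Third, only the edge $(u,w)$ remains. Let $a$ be the number of vertices of $T$ on the $u$-side of $(u,w)$ and $b$ the number on the $w$-side, so $a+b = n-1$. Then
\[
\sigma_{T_u}(u,w) = \min(a+1,b), \qquad \sigma_{T_w}(u,w) = \min(a,b+1).
\]
If $a=b$, which forces $n$ odd, the two values agree. If, say, $a>b$, the values are $b$ and $b+1$; the case $a<b$ is symmetric. The difference is therefore $0$ or $1$. When $n$ is even, $a+b$ is odd, so $a\neq b$ and the difference is exactly $1$. This gives \eqref{Psiiter}.

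For $\Phi_c$, note that $c\neq v$ and $v$ is a leaf, so $d_{T_u}(x,c) = d_{T_w}(x,c) = d_T(x,c)$ for all $x\neq v$. The only changing summand is that of $v$, with
\[
d_{T_u}(v,c) = d_T(u,c)+1, \qquad d_{T_w}(v,c) = d_T(w,c)+1.
\]
Since $u$ and $w$ are adjacent in the tree $T$, we have $d_T(u,c) = d_T(w,c)\pm 1$. The sign is $-1$ exactly when $u$ lies on the path $\gamma$ from $w$ to $c$, and $+1$ otherwise. This case includes $u=c$, in which case $c$ is on $\gamma$ and the sign is $-1$. This yields \eqref{Phiiter}.

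There is no real obstacle here. The only care needed is the bookkeeping for the cut at $(u,w)$, and checking that the edge indexing of $\Psi$ is consistent between the two trees, since the leaf edges of $v$ differ.
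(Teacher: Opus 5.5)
Your proof is correct and follows the paper's argument essentially step for step: for $\Psi$ you isolate the single cut edge $(u,w)$, and for $\Phi_c$ you isolate the single summand belonging to $v$. The only cosmetic differences are that you get exactness for even $n$ from $a\neq b$ (since $a+b=n-1$ is odd) rather than from the paper's parity-of-the-difference argument, and that you treat both signs in the $\Phi_c$ formula at once instead of reducing one case to the other by swapping $u$ and $w$.
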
 

\begin{remark}
	It is important to note that, if the leaf $v$ that is being slid is equal to $c$, then $\Phi_c$ may change by more than one under a leaf slide.
\end{remark}

\begin{proof}[Proof of \autoref{iter}]
	We first show (\ref{Psiiter}). Given any edge $e$ of any tree $T$, we want to compare the connected components $A_e$ and $B_e$ that are obtained by removing $e$ from $T$, for different choices of $T$. To keep the notation consistent while considering different trees, we fix an arbitrary orientation on the edges of $K_n$. Now, we can redefine $A_e^T$ to be the connected component of $T\setminus \{e\}$, that contains the source of $e$ and $B_e^T$ to be the connected component that contains the sink of $e$.

	To simplify notation, we write $T_u = T^{v\to u}$ and $T_w = T^{v\to w}$. Consider the edges $(v,u)$ in $T_u$ and $(v,w)$ in $T_w$. Since both of these edges are leaf edges, they do not contribute to $\Psi(T_u)$ and $\Psi(T_w)$, respectively.

	The other edges of $\Psi(T_u)$ are precisely those which the two trees have in common. Let $e$ be any of these shared edges other than $(u,w)$. Then the leaf slide clearly preserves the vertices in $A_e$ and $B_e$. Hence, we see that $\sigma_{T_u}(e) = \sigma_{T_w}(e)$. This shows that 
	\[
	\begin{split}
		|\Psi(T_u)-\Psi(T_w)| 	&= \left| \sum_{e\in E(T_u)}\sigma_{T_u}(e) - \sum_{e\in E(T_w)}\sigma_{T_w}(e)\right|\\
		&= |\sigma_{T_u}(u,w)-\sigma_{T_w}(u,w)|.
	\end{split}
	\]
	Without loss of generality, we may assume that the edge $e = (u,w)$ is oriented from $u$ to $w$. Hence,  we have $V(A_e^{T_w}) = V(A_e^{T_u})\setminus \{v\}$ and $V(B_e^{T_w}) = V(B_e^{T_u})\cup \{v\}$. It follows that 
	\[
	\begin{multlined}
		|\sigma_{T_u}(u,w)-\sigma_{T_w}(u,w)|  = \left|\min(|V(A_e^{T_u})|,|V(B_e^{T_u})|) - \min(|V(A_e^{T_w})|,|V(B_e^{T_w})|)\right|\\
		 = \left|\min(|V(A_e^{T_u})|,|V(B_e^{T_u})|) - \min(|V(A_e^{T_u})|-1,|V(B_e^{T_u})|+1)\right| \leq1
	\end{multlined}
	\]
	This proves the inequality in (\ref{Psiiter}). The equality for even $n$ can be seen as follows. 

	Let $a = |V(A_e^{T_u})|$, then $|V(B_e^{T_u})| = n-a$. Notice that since $n$ is even, both of $a$ and $n-a$ have the same parity and both of $a-1$ and $n-a+1$ have the opposite parity to that of $a$. This means that 
	\[
	|\Psi(T_u)-\Psi(T_w)| = |\min(a,n-a)-\min(a-1,n-a+1)|
	\]
	must be odd, which is only possible if it is equal to one. This concludes the proof of (\ref{Psiiter}).

	To prove (\ref{Phiiter}), it suffices to consider the case where $u$ lies on $\gamma$. The other case follows by reversing the roles of $u$ and $w$. Hence, our goal is to show that 
	\[
	\Phi_c(T_w) - \Phi_c(T_u) = 1
	\]
	under the assumption that $u$ lies on $\gamma$.

	Let $x$ be any vertex other than $v$. Since $v$ is a leaf in both $T_u$ and $T_w$, the unique path connecting $x$ to $c$ in either of the two trees does not contain $v$. Since $T_w \setminus \{v\} = T_u \setminus \{v\}$, this means that the two paths are identical, so $d_{T_u}(x,c) = d_{T_w}(x,c)$. This shows that
	\[
	\begin{split}
		\Phi_c(T_w) - \Phi_c(T_u) &= \sum_{x\in V(T_w)}d_{T_w}(x,c) - \sum_{x\in V(T_u)}d_{T_u}(x,c)\\
		&= d_{T_w}(v,c) - d_{T_u}(v,c).
	\end{split}
	\]
	Since $u$ lies on $\gamma$ and $w$ is connected to $u$, we know that in both trees $d(w,c) = d(u,c) +1$. Moreover, $d_{T_w}(v,c) = d_{T_w}(w,c) + 1$ and $d_{T_u}(v,c) = d_{T_u}(u,c) +1$, because $v$ is a leaf of $T_w$ (respectively $T_u$) that is attached to $w$ (respectively $u$). We conclude that 
	\[
	\begin{split}
		d_{T_w}(v,c) - d_{T_u}(v,c) &= d_{T_w}(w,c) + 1 - ( d_{T_u}(u,c) + 1)\\
		&= d_{T_u}(u,c) + 1 - d_{T_u}(u,c)\\
		&= 1,
	\end{split}
	\]
	which concludes the proof of (\ref{Phiiter}).
\end{proof}

\begin{proposition}
	Let $n\geq 2$. $\mathcal{G}_n$ is bipartite if and only if $n$ is even.
\end{proposition}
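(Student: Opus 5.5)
For even $n$, the plan is to use \autoref{iter} to produce a proper $2$-colouring. By \eqref{Psiiter}, every edge of $\mathcal{G}_n$ changes $\Psi$ by exactly one. Here an edge of $\mathcal{G}_n$ is a $1$-dimensional realignment $N=(T,r)$, and its endpoints are $N^{v\to u}$ and $N^{v\to w}$. So the parity of $\Psi(T)$ differs at the two endpoints of every edge. Colouring each labelled tree by $\Psi(T)\bmod 2$ then gives a bipartition, and $\mathcal{G}_n$ is bipartite. The case $n=2$ is trivial, since $\mathcal{G}_2$ is a single vertex; it is also covered by the same argument.

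For odd $n$, the plan is to exhibit an odd closed walk in $\mathcal{G}_n$. For $n=3$, the three labelled trees are the three star trees $s_1,s_2,s_3$. Any two of them differ by a single leaf slide. For example, in $s_1$ with edges $(1,2),(1,3)$, sliding the leaf $3$ along $(1,2)$ to $2$ gives $s_2$. So $\mathcal{G}_3=K_3$, which is a triangle.

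For general odd $n\ge 3$, I would try to transplant this triangle. Start with the tree $T$ obtained by attaching the $n-3$ extra vertices $4,\dots,n$ as leaves to vertex $1$, together with a triangle-like configuration on $\{1,2,3\}$. The catch is that sliding a leaf in $s_1$ on three vertices moves the centre, and extra leaves hanging off $1$ interfere with that.

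A cleaner route is parity counting via $\Phi_c$, which I will use as a guide. By \eqref{Phiiter}, a slide of any leaf $v\ne c$ changes $\Phi_c$ by exactly $\pm1$. Any parity invariant would therefore have to come from slides of $c$ itself. So an odd cycle must involve a slide whose sliding leaf is $c$ for every choice of $c$, and star trees are the natural candidates.

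Concretely, I would use the cube $C_{1,2}$, whose cell $N$ has $A=\{1,2\}$, $r\equiv(1,2)$, and which connects $s_1$ and $s_2$ at distance $n-2$. Going around $s_1\to s_2\to s_3\to s_1$ through the cubes $C_{1,2},C_{2,3},C_{3,1}$ gives a closed walk of length $3(n-2)$, and this is odd when $n$ is odd. The remaining check is that this walk is genuinely closed and consists of edges of $\mathcal{G}_n$. That holds because each cube $C_{i,j}$ contains $s_i$ and $s_j$ as opposite vertices, a fact recalled before $\Sigma_G$ via \cite[Proposition 4.9]{Kozlov25}. Along its $1$-skeleton we move from $s_i$ to $s_j$ in $n-2$ leaf slides, one per unspecified vertex. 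A closed walk of odd length contains an odd cycle, so $\mathcal{G}_n$ is not bipartite.

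The main obstacle is only the identification of $s_i$ and $s_j$ as antipodal in $C_{i,j}$, which is the $K_n$ case of the structure already used for $\Sigma_G$. I expect no further difficulty there.
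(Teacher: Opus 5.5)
Your proof is correct and essentially the paper's: for even $n$ the colouring $T\mapsto \Psi(T) \bmod 2$ is proper by \eqref{Psiiter}, and for odd $n$ the closed walk $s_1\to s_2\to s_3\to s_1$ along the $1$-skeleta of $C_{1,2},C_{2,3},C_{3,1}$ is the paper's concatenation $\gamma_{a,b}*\gamma_{b,c}*\gamma_{c,a}$ of length $3(n-2)$. Finishing with ``an odd closed walk contains an odd cycle'' spares you the paper's unchecked claim that this walk is a simple cycle, and the exploratory detours via $\Phi_c$ and the transplanted triangle can be dropped.
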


\begin{proof}
	Let $n$ be even. Consider the map 
	\[
	V(\mathcal{G}_n)\to \{\pm 1\}, \quad T\mapsto (-1)^{\Psi(T)}.
	\] 
	It is an immediate consequence of \autoref{iter} that this is a 2-coloring on $\mathcal{G}_n$.

	Now, let $n\geq 3$ be odd. We will construct a cycle of odd length to show that $\mathcal{G}_n$ is not bipartite.

	For any two distinct $a,b\in [n]$, let $\gamma_{a,b}$ be a path between $s_a$ and $s_b$ in $\mathcal{G}_n$ consisting of the $n-2$ leaf slides for each of which a vertex $x\in [n]\setminus \{a,b\}$ is slid along the edge $(a,b)$, in an arbitrary order.

	For three pairwise distinct points $a,b,c$, the concatenation $\gamma_{a,b} * \gamma_{b,c} * \gamma_{c,a}$ is a cycle of length $3(n-2)$, which is odd.
\end{proof}

\begin{lemma}\label{boundineq}
	Let $T\in V(\mathcal{G}_n)$ be a spanning tree. Let $c\in V(T)$ be any vertex of $T$. Then 
	\[
	\Psi(T) \leq d_{\mathcal{G}_n}(T, s_c) \leq \Phi_c(T).
	\]
\end{lemma}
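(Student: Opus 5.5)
The two inequalities are handled separately, and both come from \autoref{iter}.

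\emph{Lower bound.} By \autoref{iter}, a single leaf slide changes $\Psi$ by at most one. Take a shortest path $T=T_0,T_1,\dots,T_m=s_c$ in $\mathcal{G}_n$, with $m=d_{\mathcal{G}_n}(T,s_c)$. Each consecutive pair is of the form $N^{v\to u},N^{v\to w}$ for a one-dimensional realignment $N$. Telescoping gives
\[
\Psi(T)=\Psi(T)-\Psi(s_c)\leq \sum_{i=1}^{m}|\Psi(T_{i-1})-\Psi(T_i)|\leq m,
\]
using $\Psi(s_c)=0$ from the preceding lemma. This part is routine.

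\emph{Upper bound.} We induct on $\Phi_c(T)$. If $\Phi_c(T)=0$, then $T=s_c$ and the distance is $0$. If $\Phi_c(T)>0$, we produce a single leaf slide that lowers $\Phi_c$ by exactly one; iterating then reaches $s_c$ in $\Phi_c(T)$ steps. The slid leaf must be different from $c$, because of the remark that $\Phi_c$ can jump by more than one when $c$ itself moves. Since $T\neq s_c$, some vertex lies at distance at least $2$ from $c$. Choose a vertex $v$ maximising $d_T(v,c)$; it is a leaf, $v\neq c$, and $d_T(v,c)\geq 2$. Let $w=p_T(v)$, which differs from $c$, and let $u$ be the neighbour of $w$ on the path from $w$ to $c$.

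Now form the $1$-dimensional realignment $N$ with $B=\{v\}$, spanning tree $T\setminus\{v\}$, and $r(v)=(u,w)$. Then $N^{v\to w}=T$. Over $K_n$ every such $N$ is legal, so $N^{v\to u}$ is a neighbour of $T$ in $\mathcal{G}_n$. The vertex $u$ lies on the path $\gamma$ from $w$ to $c$ in $T\setminus\{v\}$, and $c\neq v$. Hence \eqref{Phiiter} applies and gives $\Phi_c(N^{v\to u})=\Phi_c(T)-1$. By induction, $d_{\mathcal{G}_n}(T,s_c)\leq 1+\Phi_c(N^{v\to u})=\Phi_c(T)$.

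The main point needing care is the choice of the slid leaf, so that $v\neq c$ and $u$ genuinely lies on $\gamma$. Taking a vertex at maximal distance from $c$ secures both conditions. Strictly, any leaf other than $c$ at distance at least $2$ from $c$ would do. Everything else follows directly from \autoref{iter}.
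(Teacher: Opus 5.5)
Your proof is correct and follows the paper's argument. The lower bound applies $\Psi$ along a shortest path to $s_c$, using that it changes by at most one per slide. The upper bound repeatedly slides a leaf at distance at least $2$ from $c$ one step towards $c$, which lowers $\Phi_c$ by exactly one by \eqref{Phiiter}. Choosing a vertex at maximal distance from $c$ makes explicit why such a leaf exists, whereas the paper just picks an arbitrary one.
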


\begin{proof}
	We first show the inequality $\Psi(T) \leq d_{\mathcal{G}_n}(T, s_c)$. Let $k = d_{\mathcal{G}_n}(T,s_c)$. Then there exists a sequence of $T = T_0,T_1,\dots,T_k = s_c$, such that each two consecutive trees are related via a single leaf slide. 

	Applying $\Psi$ to this sequence gives a sequence of integers $\Psi(T_0),\dots, \Psi(T_k)$, which starts at $\Psi(T)$ and ends at $\Psi(s_c) = 0$. Since any two consecutive trees are related by a leaf slide, it follows from \autoref{iter} that this sequence changes by at most one each step. Hence, the sequence must have length $k\geq \Psi(T)$.

	Now, suppose that $U$ is any spanning tree on $n$ vertices. We define a new spanning tree $f(U)$ as follows. If $U = s_c$, then $f(U) = U$. For any tree $U$ other than $s_c$, pick an arbitrary leaf $v$, which has distance at least $2$ from $c$. Let $(v,w,u ,\dots, c)$ be the unique path between $v$ and $c$ in $U$. We now define $f(U)$ to be the tree obtained by sliding the leaf $v$ along the edge $(w,u)$.

	Notice that in the case where $U\neq s_c$, the vertex $w$ is not equal to $c$, because $d_U(v,c)\geq 2$. Hence, $u$ lies on the unique path connecting $w$ and $c$ in $U$, so it follows from \autoref{iter} that $\Phi_c(f(U)) = \Phi_c(U) - 1$. 

	Applying $f$ repeatedly to our tree $T$ yields a sequence $T = T_0,\dots ,T_l = s_c$ starting at $T$ and ending at $s_c$, such that $T_{i+1} = f(T_i)$ for each $i$. In particular, consecutive trees in this sequence are related via a leaf slide, so the sequence describes a path in $\mathcal{G}_n$. This path must have length $\Phi_c(T)$, because $\Phi_c$ decreases by exactly one at each step and $\Phi_c(s_c) = 0$. We conclude that $d_{\mathcal{G}_n}(T,s_c)\leq \Phi_c(T)$.

\end{proof}

\begin{remark}
	Notice that this argument does not show that $d_{\mathcal{G}_n}(T,s_c) =  \Phi_c(T)$, because a leaf slide of $c$ may decrease $\Phi_c$ by more than one.

	In fact, equality in the above equation does not hold in general. A counterexample is obtained by considering a path tree $T$, and choosing $c$ to be one of its two leaves. For large $n$, it is more efficient to first slide the leaf to the middle of the path tree and then slide the rest of the tree towards $c$, instead of sliding the entire tree towards $c$ immediately.
\end{remark}

\subsection{Centroids and Nearest Star Trees}

The above counterexample depends on $T\setminus \{c\}$ having a large component. Sliding $c$ toward this component decreases the distance to each of its vertices by one while increasing the distance to every other vertex by one. If the component contains more than half of the vertices, $\Phi_c(T)$ decreases by more than one. Conversely, if no such component exists, then every leaf slide of $c$ increases $\Phi_c(T)$. This observation yields a criterion for equality in the preceding inequality.

\begin{definition}
	Let $T\in V(\mathcal{G}_n)$ be a spanning tree and let $c\in [n]$. Removing $c$
	from $T$ leaves a forest. We define
	\[
	F_c(T)
	=
	\max\{|V(C)| \mid C \text{ is a connected component of } T\setminus \{c\}\}.
	\]
\end{definition}
	Recall from \autoref{def:centroid}, that $c$ is a centroid of $T$ if and only if 
	\[
	F_c(T)\leq \frac n2.
	\]

\begin{lemma}\label{PhiPsiDifference}
	Let $T\in V(\mathcal{G}_n)$ be a spanning tree and let $c\in V(T)$. For each edge $e\in E(T)$, let $A_e^c$ be the connected component of $T\setminus \{e\}$ which does not contain $c$, and write $a_e^c = |V(A_e^c)|$. Then
	\[
	\Phi_c(T)-\Psi(T) =	\sum_{e\in E(T)}\max(0,2a_e^c-n).
	\]
	In particular, $\Phi_c(T)=\Psi(T)$ if and only if $c$ is a centroid of $T$.
\end{lemma}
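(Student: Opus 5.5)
Rewrite both quantities edge by edge. For $\Phi_c$ I will use the standard double-counting identity
\[
\sum_{x\neq c} d_T(x,c)=\sum_{e\in E(T)} a_e^c .
\]
It holds because each edge $e$ lies on the path from $x$ to $c$ exactly when $x\in A_e^c$. Since $T$ has $n-1$ edges and there are $n-1$ vertices $x\neq c$, this gives
\[
\Phi_c(T)=\sum_{x\neq c}(d_T(x,c)-1)=\sum_{e\in E(T)}(a_e^c-1).
\]
Similarly,
\[
\Psi(T)=\sum_{e\in E(T)}(\sigma_T(e)-1),
\]
and the two components of $T\setminus\{e\}$ have sizes $a_e^c$ and $n-a_e^c$, so $\sigma_T(e)=\min(a_e^c,n-a_e^c)$.

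Subtracting termwise,
\[
\Phi_c(T)-\Psi(T)=\sum_e\bigl(a_e^c-\min(a_e^c,n-a_e^c)\bigr)=\sum_e\max(0,2a_e^c-n).
\]
This uses the elementary identity $a-\min(a,n-a)=\max(0,2a-n)$, which I will check by the two cases $a\le n/2$ and $a>n/2$. This proves the displayed formula.

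For the characterisation, every summand is nonnegative. Hence $\Phi_c(T)=\Psi(T)$ if and only if $a_e^c\le n/2$ for every edge $e$. It remains to show this is equivalent to $F_c(T)\le n/2$.

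For one direction, the components of $T\setminus\{c\}$ are exactly the sets $A_e^c$ for the edges $e$ incident to $c$. So the condition for all edges in particular gives $F_c(T)\le n/2$.

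For the other direction, take any edge $e$. Then $A_e^c$ is contained in the component of $T\setminus\{c\}$ on its side, namely $A_{e'}^c$ for the edge $e'$ at $c$ on the path from $c$ to $e$. Hence $a_e^c\le a_{e'}^c\le F_c(T)\le n/2$.

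I expect no real obstacle. The only point needing care is the double-counting identity, which needs the orientation convention that $A_e^c$ is the side away from $c$ and the count of $n-1$ edges against $n-1$ vertices.
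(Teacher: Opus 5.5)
Your proposal is correct and follows essentially the same route as the paper: the double-counting identity $\Phi_c(T)=\sum_{e}(a_e^c-1)$, the termwise subtraction using $\sigma_T(e)=\min(a_e^c,n-a_e^c)$, and then the observation that $F_c(T)=\max_e a_e^c$, because the components of $T\setminus\{c\}$ are the $A_e^c$ for edges at $c$ and every other $A_e^c$ sits inside one of them. Your explicit check of both directions of the centroid equivalence just spells out slightly more carefully what the paper states in one line.
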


\begin{proof}
    We first show that 
    \[
    \Phi_c(T) = \sum_{e\in E(T)}(a_e^c-1).
    \]
    Since $T$ has $n-1$ edges, this equality is equivalent to 
    \[
    \sum_{e\in E(T)}a_e^c(T) = \sum_{x\in V(T)} d_T(x,c).
    \]
    Now, consider any vertex $x\neq c$. For any given $e\in E(T)$, $x$ lies in $A_e^c$ if and only if $e$ lies on the unique path connecting $x$ and $c$. Hence, the vertex $x$ contributes one to the left hand side for each edge on the unique path between $x$ and $c$. This path contains exactly $d_T(x,c)$ edges, so the equality follows.

    The two connected components of $T\setminus \{e\}$ have sizes $a_e^c$ and $n- a_e^c$, so $\sigma_T(e) = \min(a_e^c, n- a_e^c)$. Thus
    \[
    \Phi_c(T)-\Psi(T) = \sum_{e\in E(T)} \left(a_e^c(T) - \min(a_e^c, n- a_e^c) \right)= \sum_{e\in E(T)}\max(0,2a_e^c-n).
    \]
    It remains to characterise when the right hand side vanishes. The connected	components of $T\setminus \{c\}$ are exactly the subtrees $A_e^c$ for those	edges $e$ incident to $c$. Every other subtree $A_e^c$ is contained in one of these components. Hence
	\[
	F_c(T)=\max_{e\in E(T)}a_e^c.
	\]
	Therefore, $\Phi_c(T)=\Psi(T)$ if and only if $a_e^c\leq n/2$ for every edge $e$. This is equivalent to $F_c(T) \leq n/2$, that is, $c$ is a centroid. 
\end{proof}

\begin{lemma}\label{auxiliary}
	Let $c\in [n]$. If $T_1$ and $T_2$ are related by a leaf slide and
	$F_c(T_1)>\frac{n}{2}$ but $F_c(T_2)< F_c(T_1)$, then
	\[
	    \Psi(T_2) \geq \Psi(T_1).
	\]
\end{lemma}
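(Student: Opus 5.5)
The approach is to use the fact that a leaf slide changes $\Psi$ only through the single edge along which the leaf is slid, as in the proof of \autoref{iter}, and to show that the hypothesis on $F_c$ forces this slide to move a leaf out of the large component of $T_1\setminus\{c\}$, directly onto $c$.

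\textbf{Setup.} Write the slide as a $1$-dimensional realignment $N=(T,r)$ with $B=\{v\}$ and $r(v)=(u,w)$. Then $\{T_1,T_2\}=\{N^{v\to u},N^{v\to w}\}$. By the computation in the proof of \autoref{iter},
\[
\Psi(T_2)-\Psi(T_1)=\sigma_{T_2}(u,w)-\sigma_{T_1}(u,w),
\]
so it suffices to compare the two cut sizes of the edge $(u,w)$.

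\textbf{Forcing the configuration.}
\begin{enumerate}[label=(\roman*)]
\item Suppose $v=c$. Then $c$ is a leaf in both trees, so $T_i\setminus\{c\}$ is a single tree on $n-1$ vertices. Hence $F_c(T_1)=F_c(T_2)=n-1$, contradicting $F_c(T_2)<F_c(T_1)$. So $v\neq c$.
\item Suppose $c\notin\{u,w\}$. Since $u$ and $w$ are adjacent, they lie in the same component of $T\setminus\{c\}$. The trees $T_1\setminus\{c\}$ and $T_2\setminus\{c\}$ are obtained from $T\setminus\{c\}$ by attaching the leaf $v$ to $u$ or to $w$. They therefore have components of identical sizes, so $F_c$ is unchanged, again a contradiction. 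So $c\in\{u,w\}$; say $u=c$.
\item Let $C$ be the component of $T_1\setminus\{c\}$ with $|V(C)|=F_c(T_1)>n/2$. It is unique, because two such components would have more than $n$ vertices in total. In $T_2$ every component other than the one containing $v$ is unchanged. If $v$ were attached to $c$ in $T_1$, then $v$ would be a singleton component of $T_1\setminus\{c\}$ and would merely join $w$'s component in $T_2$. That cannot decrease the maximum. Hence $v$ is attached to $w\in C$ in $T_1$, and $T_2$ attaches $v$ to $c$.
\end{enumerate}

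\textbf{Computing the cut sizes.} Put $a=|V(C)|$, so $a>n/2$, which for integers means $2a\geq n+1$.
\begin{itemize}
\item In $T_1$, removing the edge $(c,w)$ separates exactly $C$, which contains $v$, from the rest. Thus $\sigma_{T_1}(c,w)=\min(a,n-a)=n-a$.
\item In $T_2$, the side of $(c,w)$ containing $w$ is $C\setminus\{v\}$, of size $a-1$. Thus $\sigma_{T_2}(c,w)=\min(a-1,\,n-a+1)$.
\end{itemize}
From $2a\geq n+1$ we get $a-1\geq n-a$, and trivially $n-a+1\geq n-a$. Hence $\sigma_{T_2}(c,w)\geq n-a=\sigma_{T_1}(c,w)$, which gives $\Psi(T_2)\geq\Psi(T_1)$.

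The main obstacle is the case analysis showing that $F_c$ can only drop when $v$ leaves the big component $C$ across $c$. Once that configuration is pinned down, the final inequality reduces to the integrality of $a>n/2$.
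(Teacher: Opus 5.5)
Your proof is correct and follows the paper's argument. You show that the slide must move $v$ out of the unique component $C$ with $|V(C)|>n/2$ directly onto $c$, and then compare $\sigma$ on the single edge $(c,w)$ using $a-1\geq n-a$. Your case analysis (the cases $v=c$, $c\notin\{u,w\}$, and $v$ attached to $c$ in $T_1$) just spells out the step the paper asserts in one sentence.
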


\begin{proof}
	Let $T_2$ be obtained from $T_1$ by sliding the leaf edge
	$(v,u)\in E(T_1)$ along the edge $e=(u,w)$. Since
	$F_c(T_1)>n/2$, there is a unique component $C$ of
	$T_1\setminus\{c\}$ with $|V(C)|>n/2$.

	The only way for the leaf slide to decrease $F_c$ is for the leaf
	$v$ to be reattached from the component $C$ directly to $c$. Hence,
	$w=c$ and $u\in V(C)$.

	As in the proof of \autoref{iter}, the only edge whose contribution
	to $\Psi$ changes is $e=(u,c)$. Set $q=|V(C)|$. Then
	\[
	    \sigma_{T_1}(e)=n-q.
	\]
	After the slide, the two components of $T_2\setminus\{e\}$ have
	sizes $q-1$ and $n-q+1$. Therefore,
	\[
	    \sigma_{T_2}(e)=\min(q-1,n-q+1)\geq n-q=\sigma_{T_1}(e).
	\]
	Hence, $\Psi(T_2)\geq \Psi(T_1)$.
\end{proof}

\begin{proposition}\label{centraleq}
    Let $T\in V(\mathcal{G}_n)$ be a spanning tree and let $c\in V(T)$. Then the following are equivalent.
	\begin{enumerate}
		\item $c$ is a centroid of $T$. \label{central}
		\item $\Psi(T) = d_{\mathcal{G}_n}(T, s_c) = \Phi_c(T)$. \label{equalities}
		\item $s_c$ has minimal distance from $T$ in $\mathcal{G}_n$ among all star trees. \label{minstar}
	\end{enumerate}
\end{proposition}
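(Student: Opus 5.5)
We will prove the cycle of implications (1)$\Rightarrow$(2)$\Rightarrow$(3)$\Rightarrow$(1).

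\textbf{(1)$\Rightarrow$(2).} If $c$ is a centroid, then \autoref{PhiPsiDifference} gives $\Phi_c(T)=\Psi(T)$. The sandwich $\Psi(T)\leq d_{\mathcal{G}_n}(T,s_c)\leq \Phi_c(T)$ of \autoref{boundineq} then collapses to equalities.

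\textbf{(2)$\Rightarrow$(3).} By \autoref{boundineq}, $\Psi(T)\leq d_{\mathcal{G}_n}(T,s_{c'})$ for every vertex $c'$. Under (2) the value $\Psi(T)$ is attained by $s_c$, so $s_c$ is a nearest star tree.

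\textbf{(3)$\Rightarrow$(1).} We argue by contraposition: assume $c$ is not a centroid, so $F_c(T)>n/2$. We want a star tree strictly closer to $T$ than $s_c$. Let $c'$ be a centroid of $T$, which exists by \autoref{existenceofcentral}. By the first step, $d(T,s_{c'})=\Psi(T)$. It therefore suffices to show $d_{\mathcal{G}_n}(T,s_c)>\Psi(T)$.

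Take a shortest path $T=T_0,T_1,\dots,T_k=s_c$. Since $F_c(s_c)=1\leq n/2$ and $F_c(T_0)>n/2$, there is a first index $i$ with $F_c(T_{i+1})<F_c(T_i)$ and $F_c(T_i)>n/2$. Concretely, choose the first step at which $F_c$ decreases; the $F_c$ values before it are at least $F_c(T_0)>n/2$, and a single leaf slide changes $F_c$ by at most one. \autoref{auxiliary} gives $\Psi(T_{i+1})\geq\Psi(T_i)$ at that step. Every other step decreases $\Psi$ by at most one, by \autoref{iter}. Hence $k\geq \Psi(T)+1$, because one of the $k$ steps contributes no decrease. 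This gives $d(T,s_c)>\Psi(T)=d(T,s_{c'})$, contradicting (3).

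\textbf{Main obstacle.} The delicate point is the claim that along the path $F_c$ must at some moment strictly decrease from a value exceeding $n/2$. This holds because $F_c$ is an integer that must go from above $n/2$ down to $1$. We need to check that a leaf slide not involving $c$ as the moved leaf changes $F_c$ by at most one, so that the first decrease happens while $F_c>n/2$. If the slid leaf is $c$ itself, the components of $T\setminus\{c\}$ can change drastically. In that case $c$ is a leaf, so $F_c=n-1$, and after the slide $c$ is still a leaf, so $F_c=n-1$ again; thus $F_c$ does not decrease there. Otherwise a slide moves one vertex between components or merges nothing, giving a change of at most one. The hypothesis of \autoref{auxiliary} only needs $F_c(T_i)>n/2$ together with a strict decrease, and choosing the first decreasing step guarantees exactly that.
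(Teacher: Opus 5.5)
Your proof is correct and follows the paper's argument essentially verbatim. You use the same chain of implications via \autoref{PhiPsiDifference} and \autoref{boundineq}, then take a shortest path from $T$ to $s_c$ on which \autoref{auxiliary} forces one step without a decrease of $\Psi$, giving length at least $\Psi(T)+1$. The ``main obstacle'' you raise is not really one: before the first step at which $F_c$ decreases, $F_c$ is non-decreasing and so still exceeds $n/2$, so you need neither a bound on how much a leaf slide changes $F_c$ nor special treatment of sliding $c$ itself.
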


\begin{proof}
    The equivalence of $(\ref{equalities})$ and $(\ref{central})$ follows immediately from \autoref{PhiPsiDifference} and \autoref{boundineq}.

	Suppose $(\ref{equalities})$ holds. By \autoref{boundineq}, $\Psi(T)\leq d_{\mathcal{G}_n}(T,s_a)$ for each star tree $s_a$. On the other hand, $(\ref{equalities})$ states that $d_{\mathcal{G}_n}(T,s_c)=\Psi(T)$. Thus, $s_c$ has minimal distance from $T$ among all star trees. This proves $(\ref{equalities})\Rightarrow(\ref{minstar})$.

	It remains to prove $(\ref{minstar})\Rightarrow(\ref{central})$. Choose a centroid $d$ of $T$, which exists by \autoref{existenceofcentral}. By the already proved implication $(\ref{central})\Rightarrow(\ref{equalities})$, we have $d_{\mathcal{G}_n}(T,s_d)=\Psi(T)$. \autoref{boundineq} gives $\Psi(T)$ as a lower bound for the distance from $T$ to any star tree. Hence, every distance-minimising star tree has distance exactly $\Psi(T)$ from $T$.

	Now, assume that $s_c$ is distance-minimising, and suppose that $c$ is not a centroid of $T$. Then
	\[
	F_c(T)> \frac{n}{2}.
	\]
	Let	$T=T_0,T_1,\dots,T_k=s_c$ be a shortest path from $T$ to $s_c$ in $\mathcal{G}_n$. Since $s_c$ is distance-minimising, we have $k=d_{\mathcal{G}_n}(T,s_c)=\Psi(T)$.

	By \autoref{iter}, the value of $\Psi$ changes by at most one under each leaf slide. Since $\Psi(T_0)=\Psi(T)$ and $\Psi(T_k)=\Psi(s_c)=0$, and since	the path has length $\Psi(T)$, the value of $\Psi$ must decrease by exactly one	at each step.

	Conversely, $F_c(T_0)> \frac{n}{2}$ and $F_c(T_k)=F_c(s_c)=1\leq \frac{n}{2}$. Hence, there exists an index $i$ such that $F_c(T_i)> \frac n2$ and $F_c(T_{i+1})<F_c(T_i)$.
	By \autoref{auxiliary}, this implies that
	\[
	\Psi(T_{i+1})\geq\Psi(T_i),
	\]
	contradicting the fact that $\Psi$ decreases by exactly one at each step. Therefore, $c$ is a centroid of $T$. 
\end{proof}

\subsection{An Upper Bound for the Diameter}

The previous analysis showed that it suffices to study the quantity $\Phi_c(T)$ for centroids $c$ of $T$ to determine the distance to the closest star tree. To bound $\Phi_c(T)$ from above, we introduce two optimisation problems: the first one analyses the structure of the connected components of $T\setminus \{c\}$, and the second one enumerates their number. This upper bound allows us to bound the distance between any two network realignments by passing through their respective closest star trees.

\begin{lemma}\label{linearphibound}
	For any vertex $c$ of $T$, we have
	\[
	\Phi_c(T) \leq \frac{(n-2)(n-1)}{2}.
	\]
	Equality holds if and only if $T$ is a path tree and $c$ is a leaf of $T$.
\end{lemma}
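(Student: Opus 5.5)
The idea is to bound each term of $\Phi_c(T)=\sum_{x\neq c}(d_T(x,c)-1)$ using only the fact that distinct vertices need distinct ``witnesses'' at each smaller distance from $c$. Concretely, order the $n-1$ vertices $x\neq c$ by increasing distance from $c$, say $x_1,\dots,x_{n-1}$ with $d_T(x_1,c)\leq\dots\leq d_T(x_{n-1},c)$. The key claim is that $d_T(x_k,c)\leq k$ for every $k$. Granting this, summing gives
\[
\Phi_c(T)\leq\sum_{k=1}^{n-1}(k-1)=\frac{(n-2)(n-1)}{2}.
\]

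To prove the claim, let $m=d_T(x_k,c)$. The path from $c$ to $x_k$ in $T$ contains a vertex at distance $j$ from $c$ for every $j=1,\dots,m$. These $m$ vertices are distinct, and each has distance at most $m$ from $c$. So at least $m$ vertices other than $c$ have distance at most $m$. Since the vertices are sorted by distance, all vertices at distance strictly less than $m$ come before $x_k$, and this yields $m\leq k$. The only thing to watch is ties at distance $m$. I would handle these by noting that the path vertices at distances $1,\dots,m-1$ are strictly closer than $x_k$, so they precede $x_k$. That gives at least $m-1$ predecessors of $x_k$, hence $k\geq m$. An equivalent route is to use the layer decomposition and note that each layer $1,\dots,\max_x d_T(x,c)$ is nonempty.

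For the equality case, equality in the sum forces $d_T(x_k,c)=k$ for all $k$. Then each distance $1,\dots,n-1$ is attained by exactly one vertex. A vertex at distance $j\geq 2$ has its parent at distance $j-1$, and that parent must be $x_{j-1}$. So $T$ is the path $(c,x_1,\dots,x_{n-1})$ and $c$ is a leaf. Conversely, if $T$ is a path tree with leaf $c$, the distances are exactly $1,\dots,n-1$, and the sum equals $(n-2)(n-1)/2$.

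There is no real obstacle here. The only delicate point is the tie-handling in the inequality $d_T(x_k,c)\leq k$, and it is resolved by counting only the strictly closer path vertices.
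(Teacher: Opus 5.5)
Your proof is correct, but it takes a different route from the paper's. The paper encodes the tree rooted at $c$ by its layer sizes $\alpha_i=|\{x : d_T(x,c)=i\}|$. It then treats this as an integer optimisation problem: maximise $\sum_i (i-1)\alpha_i$ subject to $\sum_i\alpha_i=n-1$ and $\alpha_i=0\Rightarrow\alpha_{i+1}=0$. An exchange move (replace some $\alpha_k>1$ by $1$ and put the surplus $\alpha_k-1$ into a new outermost layer) shows that every optimum has all $\alpha_i\in\{0,1\}$, so the unique optimum is the path with $c$ as a leaf.

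You instead prove the pointwise bound $d_T(x_k,c)\le k$ on the sorted distance sequence and sum it. This rests on the same structural fact the paper's constraint encodes: every distance layer up to the eccentricity of $c$ is nonempty. Your tie-handling, which counts only the strictly closer vertices on the path to $x_k$, is exactly what is needed.

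The advantage of your version is that the equality case comes for free. Tightness of the sum forces $d_T(x_k,c)=k$ for every $k$, and you read off the path directly, without reasoning about the optima of an auxiliary problem. The paper's optimisation formulation is chosen to parallel the second optimisation problem over component sizes in \autoref{phibound}, where this lemma is applied to each component of $T\setminus\{c\}$.
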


\begin{proof}
	For a fixed $n\in \mathbb{N}$, consider the following optimisation problem.
	\begin{alignat*}{4}
		\max_{\alpha_1,\alpha_2,\dots}	&\qquad	&&\sum_{i\geq 1} (i-1) \alpha_i  &\qquad&&\\
		\text{subject to:} 				&\qquad	&&\alpha_i\in \mathbb{Z}_{\geq 0} &\qquad&&\forall i\geq 1\\
		&\qquad	&&\sum_{i\geq 1}\alpha_i = (n-1) &\qquad&&\\
		&\qquad	&& \alpha_i = 0 \Rightarrow \alpha_{i+1} = 0 &\qquad&&\forall i\geq 1
	\end{alignat*}
	Solutions $(\alpha_i)_{i\geq 1}$ to this optimisation problem can be interpreted as the number of vertices  of distance $i$ from the root $c$ in a rooted tree $T$ with $n$ vertices. In this situation, we get
	\[
		\Phi_c(T) = \sum_{i\geq 1} (i-1) \alpha_i.
	\]
	Suppose that $(\alpha_i)_{i\geq 1}$ is a solution to this problem with $\alpha_k>1$ for some $k$. The constraints of the problem force there to be a $j\geq 1$ for which $\alpha_j \neq 0$ and $\alpha_i = 0$ for all $i>j$. From the given solution we obtain a new solution by replacing $\alpha_k$ with $1$ and $\alpha_{j+1}$ with $\alpha_k-1$. This is a strictly better solution.

	Hence, any optimal solution is one which contains only ones and zeroes. But the only such solution is $\alpha_1 = \dots = \alpha_{n-1} = 1$ and $\alpha_i = 0$ for $i\geq n$. This corresponds to path trees rooted at one of their leaves. 
	
	The value attained by this unique optimal solution is
	\[
	\sum_{i= 1}^{n-1} (i-1) = \frac{(n-2)(n-1)}{2}.
	\]
	This proves the assertion.
\end{proof}

\begin{lemma}\label{phibound}
	Let $c$ be a centroid of $T$. Then 
	\[
	\Phi_c(T) \leq \left\lfloor \frac{n^2}{4}\right\rfloor-n+1.
	\]
	Equality holds if and only if $T$ is a path tree.
\end{lemma}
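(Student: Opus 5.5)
Writing $T\setminus\{c\}$ as a forest of components $C_1,\dots,C_m$ of sizes $k_1,\dots,k_m$, the quantity $\Phi_c(T)$ splits as a sum over these components. If $x\in C_j$ and $c_j$ is the neighbour of $c$ in $C_j$, then $d_T(x,c)-1=d_{C_j}(x,c_j)$. Hence
\[
\Phi_c(T)=\sum_{j=1}^m \sum_{x\in V(C_j)} d_{C_j}(x,c_j)=\sum_{j=1}^m\Bigl(\Phi_{c_j}(C_j^+)\Bigr),
\]
where $C_j^+$ denotes $C_j$ with $c_j$ regarded as root. More precisely, $\sum_{x\in C_j} d_{C_j}(x,c_j)=\Phi_{c_j}(C_j)+(k_j-1)$.

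For each component, \autoref{linearphibound} applied to the tree $C_j$ on $k_j$ vertices gives $\Phi_{c_j}(C_j)\le (k_j-1)(k_j-2)/2$. Adding $k_j-1$, we get $\sum_{x\in C_j}d(x,c_j)\le k_j(k_j-1)/2$. Equality holds exactly when $C_j$ is a path with $c_j$ as an endpoint. (For $k_j=1$ the bound is trivially $0$.) Consequently
\[
\Phi_c(T)\le \sum_{j=1}^m \binom{k_j}{2},\qquad \sum_j k_j=n-1,\quad k_j\le \tfrac n2,
\]
where the constraint $k_j\le n/2$ is exactly the centroid condition.

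The second step is the discrete optimisation of the convex function $\sum\binom{k_j}{2}$ over partitions of $n-1$ into parts of size at most $\lfloor n/2\rfloor$. By convexity, merging or transferring mass into a larger part increases the sum. So the maximum is attained with as few parts as possible, filled greedily: one part $\lfloor n/2\rfloor$ and the remainder $n-1-\lfloor n/2\rfloor=\lceil n/2\rceil-1$, which is at most $\lfloor n/2\rfloor$. Concretely, if some two parts $k_i\ge k_j>0$ exist with $k_i<\lfloor n/2\rfloor$ and there are at least three parts (or two parts not of the optimal shape), moving one vertex from $k_j$ to $k_i$ strictly increases $\binom{k_i}{2}+\binom{k_j}{2}$ by $k_i-k_j+1>0$. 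This shows the optimum is uniquely (up to order) $\{\lfloor n/2\rfloor,\lceil n/2\rceil-1\}$. A short computation gives $\binom{\lfloor n/2\rfloor}{2}+\binom{\lceil n/2\rceil-1}{2}=\lfloor n^2/4\rfloor-n+1$: for $n=2m$ it is $\binom m2+\binom{m-1}2=(m-1)^2=m^2-2m+1$, and for $n=2m+1$ it is $2\binom m2=m^2-m=\lfloor n^2/4\rfloor-n+1$.

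For the equality case, equality forces both steps to be tight. There must be exactly two components (for small $n$, the degenerate case with a part of size $1$ needs to be checked separately but behaves the same). Each component must be a path attached to $c$ at an endpoint, so $T$ is a path with $c$ in the middle position. Conversely, every path tree has a centroid at the middle, and then equality holds. The phrase ``equality holds iff $T$ is a path tree'' should be read with $c$ a centroid: for even $n$ both centroids of a path give split $(m,m-1)$. The main obstacle is the careful handling of the exchange argument at the boundary. When $k_i=\lfloor n/2\rfloor$ already, the move must go to a different part. One has to argue that any configuration other than the two-part optimum admits an improving move respecting $k\le\lfloor n/2\rfloor$, which works because the total $n-1$ is less than $2\lfloor n/2\rfloor+1$, so at most one part can be saturated besides the remainder.
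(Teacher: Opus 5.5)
Your proof is correct and follows the paper's argument: the same decomposition of $\Phi_c(T)$ over the components of $T\setminus\{c\}$, the bound from \autoref{linearphibound} giving $\sum_j \binom{k_j}{2}$, and the same one-unit exchange argument showing that the unique optimum is the split $\{\lfloor n/2\rfloor, \lceil n/2\rceil -1\}$. Your hedged ``degenerate case'' with parts of size $1$ needs no separate treatment, because the exchange gain $k_i-k_j+1\geq 1$ still holds when a part of size $1$ is emptied. Your equality analysis (each component a path attached at an endpoint, hence $T$ a path with $c$ in a middle position) is slightly more precise than the paper's.
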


\begin{proof}
	The assertion is immediate for $n=2$, so assume $n\geq 3$. We first rewrite $\Phi_c(T)$ in terms of the connected components of $T\setminus \{c\}$. Let $v_1,\ldots, v_k$ be the neighbours of $c$, and let $V_i$ denote the connected component of $T\setminus\{c\}$ that contains~$v_i$. For any vertex $x\in V_i$, the unique path from $x$ to $c$ in $T$ must contain $v_i$. Thus, $d_T(x,c) = d_{V_i}(x,v_i) + 1$. It follows that
	\begin{align*}
		\Phi_c(T) &= \sum_{x\in V(T)\setminus\{c\}} (d_T(x,c) - 1) \\
			&= \sum_{i=1}^k \sum_{x\in V_i} d_{V_i}(x,v_i) \\
			&= n-k-1 + \sum_{i=1}^k \sum_{x\in V_i\setminus \{v_i\}} (d_{V_i}(x,v_i) - 1) \\
			&= n-k-1 + \sum_{i=1}^k \Phi_{v_i} (V_i).
	\end{align*}
	By \autoref{linearphibound}, we get
	\[
		\Phi_{v_i} (V_i) \leq \frac{(d_i-1) (d_i-2)}{2},
	\]
	where $d_i$ is the number of vertices of $V_i$. 
	
	Now, we consider another optimisation problem to analyse the number and structure of the connected components that maximise $\Phi_c(T)$. For $d = (d_i)_{i\geq 1}$, we define
	\[
		K(d) = \sum_{i\geq 1} \frac{d_i (d_i-1)}{2} = \sum_{i\geq 1} \left(\frac{(d_i-1) (d_i-2)}{2} + d_i-1 \right).
	\]
	The shift ensures that indices with $d_i=0$ do not contribute to $K(d)$, allowing to sum over all $i\geq 1$. For a fixed $n\in \mathbb{N}$, consider the following optimisation problem.
	\begin{alignat*}{4}
		\max_{d = (d_1,d_2,\dots)}	&\qquad	&& K(d) &\qquad&&\\
		\text{subject to:} 				&\qquad	&&d_i\in \mathbb{Z}_{\geq 0} &\qquad&&\forall i\geq 1\\
		&\qquad	&&d_i\leq \frac{n}{2} &\qquad&&\forall i\geq 1\\
		&\qquad	&& d_i\leq d_{i-1} &\qquad&&\forall i\geq 2\\
		&\qquad	&&\sum_{i\geq 1}d_i = (n-1) &\qquad&&\\
	\end{alignat*}
	Note that the non-zero $d_i$'s of a solution can be interpreted as the sizes of the connected components $V_i$ of $T\setminus\{c\}$ of a rooted tree $(T,c)$, ordered decreasingly. In this situation, we get
	\[	
		K(d) \geq \Phi_c(T) 
	\]
	using \autoref{linearphibound}. Equality holds if and only if all $V_i$'s are path trees. Consequently, to maximise $\Phi_c(T)$, each connected component of $T\setminus\{c\}$ must be a path tree.

	We claim that the unique optimal solution to this problem is given by $d_1 = \lfloor n/2 \rfloor$, $d_2 = (n-1)- d_1$, and $d_i = 0$ for $i\geq 3$. This corresponds to trees for which $T\setminus \{c\}$ has only two connected components. Consequently, the value of $\Phi_c(T)$ is maximal if and only if $T$ is a path tree. The value of the unique optimal solution is 
	\[
	K(d) = \frac{1}{2}\left\lfloor \frac{n}{2} \right\rfloor\left( \left\lfloor \frac{n}{2} \right\rfloor -1 \right) +\frac{1}{2} \left(n- \left\lfloor \frac{n}{2} \right\rfloor-1\right) \left(n-\left\lfloor \frac{n}{2} \right\rfloor -2\right)  = \left\lfloor\frac{n^2}{4}\right\rfloor -n+1.
	\]

	Let $d$ be a solution that is not of the claimed form. Then there exist indices $i<j$ with $d_i<\lfloor n/2\rfloor$ and $d_j>0$. Choose such $i,j$ with $i<j$, and move one unit from $d_j$ to $d_i$. After reordering, the resulting sequence again satisfies the constraints, and the value of $K$ increases by $d_i-d_j+1\geq 1$. Hence, $d$ is not optimal.
\end{proof}

\begin{lemma}\label{stardist}
	Let $a,b\in [n]$ be two distinct numbers. Then $d_{\mathcal{G}_n}(s_a,s_b) = n-2$.
\end{lemma}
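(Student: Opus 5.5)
We prove $d_{\mathcal{G}_n}(s_a,s_b)=n-2$ by a matching upper and lower bound.

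For the upper bound we use the explicit path $\gamma_{a,b}$ from the proof of the bipartiteness proposition. Starting from $s_a$, we take each of the $n-2$ vertices $x\in[n]\setminus\{a,b\}$ in turn and slide it along the edge $(a,b)$ from $a$ to $b$. Each step is a legal leaf slide, for the following reasons.
\begin{itemize}
\item Throughout, $a$ and $b$ stay adjacent.
\item Every vertex other than $a$ and $b$ remains a leaf attached to $a$ or to $b$.
\item The vertex $x$ being moved is a leaf whose parent $a$ is joined by a tree edge to $b$.
\end{itemize}
After all $n-2$ slides every vertex other than $a$ is attached to $b$, and $a$ is too, so we have reached $s_b$. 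Hence $d_{\mathcal{G}_n}(s_a,s_b)\le n-2$.

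For the lower bound we use \autoref{boundineq}, but with a potential adapted to the target. By \autoref{boundineq}, $d(s_a,s_b)\ge\Psi(s_a)=0$, which is useless, so we take $\Phi_b$ instead. Here $\Phi_b(s_a)=\sum_{x\neq b}(d_{s_a}(x,b)-1)$. The vertex $a$ contributes $0$ and each of the other $n-2$ vertices contributes $1$, so $\Phi_b(s_a)=n-2$. We need to show that each leaf slide decreases $\Phi_b$ by at most one, and \autoref{iter} almost gives this.

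By \autoref{iter}, a leaf slide whose moving leaf $v$ is not $b$ changes $\Phi_b$ by exactly $\pm1$. The remaining case, where the moving leaf is $b$ itself, is the main obstacle, and the remark after \autoref{iter} notes that $\Phi_b$ can then drop by more than one. To avoid handling this case, we switch to a different invariant: the number $L_b(T)$ of vertices $x\ne b$ that are not adjacent to $b$ in $T$. Then $L_b(s_a)=n-2$ and $L_b(s_b)=0$, so it suffices to show that $L_b$ drops by at most one per leaf slide.

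A leaf slide removes one edge $(v,u)$ and adds one edge $(v,w)$. Consider first the case $v\neq b$. Then the only vertex whose adjacency to $b$ can change is $v$, so $L_b$ changes by at most one.

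Now consider the case $v=b$. Then $b$ is a leaf both before and after the slide, so it has exactly one neighbour in each tree. Hence $L_b=n-2$ both before and after, and $L_b$ does not change at all.

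Therefore every leaf slide changes $L_b$ by at most one, so any path from $s_a$ to $s_b$ has length at least $n-2$. Combined with the upper bound, this gives equality.
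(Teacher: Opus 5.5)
Your proof is correct and is essentially the paper's argument. The upper bound uses the same path: up to reversal, it is the path produced by \autoref{boundineq} from $\Phi_a(s_b)=n-2$. Your invariant $L_b(T)=|E(s_b)\setminus E(T)|$ mirrors the paper's count of the $n-2$ edges $(x,a)$ of $s_a$, each of which must be removed; both rest on the fact that a leaf slide exchanges exactly one edge, so the abandoned $\Phi_b$ detour can be deleted.
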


\begin{proof}
	A quick calculation shows that $\Phi_{a}(s_b) = n-2$. Thus by \autoref{boundineq} we have $d_{\mathcal{G}_n}(s_a,s_b) \leq n-2$.

	Conversely, since every $x\in [n]\setminus\{a,b\}$ defines a leaf edge $(x,a)$ in $s_a$ and none of these edges exist in $s_b$, any sequence of leaf slides connecting the two star trees must slide the edge $(x,a)$ at least once. Hence, $d_{\mathcal{G}_n}(s_a,s_b) \geq n-2$.
\end{proof}

\begin{theorem}
	\label{upper diam bound}
	For any $n\geq2$ the diameter of $\mathcal{G}_n$ is bounded from above by $\left\lfloor \frac{n^2}{2}\right\rfloor -n$.
\end{theorem}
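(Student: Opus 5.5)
The plan is to bound the distance between two arbitrary trees by routing a path through two well-chosen star trees, as announced at the start of this section. Fix $T_1,T_2\in V(\mathcal{G}_n)$. By \autoref{existenceofcentral}, each tree has at least one centroid. Choose a centroid $c_1$ of $T_1$ and a centroid $c_2$ of $T_2$. The triangle inequality for the graph metric then gives
\[
d_{\mathcal{G}_n}(T_1,T_2)\leq d_{\mathcal{G}_n}(T_1,s_{c_1})+d_{\mathcal{G}_n}(s_{c_1},s_{c_2})+d_{\mathcal{G}_n}(s_{c_2},T_2).
\]

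Next, each of the three terms is bounded separately. For the outer terms, \autoref{centraleq} (or directly the upper bound in \autoref{boundineq}) gives $d_{\mathcal{G}_n}(T_i,s_{c_i})\leq\Phi_{c_i}(T_i)$. Since $c_i$ is a centroid, \autoref{phibound} bounds this by $\lfloor n^2/4\rfloor-n+1$. For the middle term, \autoref{stardist} gives $d_{\mathcal{G}_n}(s_{c_1},s_{c_2})=n-2$ if $c_1\neq c_2$, and the term is $0$ if $c_1=c_2$; in either case it is at most $n-2$. Adding the three bounds yields
\[
d_{\mathcal{G}_n}(T_1,T_2)\leq 2\left\lfloor\frac{n^2}{4}\right\rfloor-2n+2+n-2=2\left\lfloor\frac{n^2}{4}\right\rfloor-n.
\]

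It remains to check that $2\lfloor n^2/4\rfloor=\lfloor n^2/2\rfloor$, by parity of $n$. For $n$ even, both sides equal $n^2/2$. For $n$ odd, $\lfloor n^2/4\rfloor=(n^2-1)/4$, so the left side is $(n^2-1)/2=\lfloor n^2/2\rfloor$. Since $T_1,T_2$ were arbitrary, taking the maximum gives the stated bound.

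The degenerate case $n=2$ should be checked separately. There $\mathcal{G}_2$ has a single vertex, so the diameter is $0=\lfloor 4/2\rfloor-2$; alternatively, the argument above still applies, because \autoref{phibound} covers $n=2$ and the middle term is at most $0$. There is no real obstacle, since all the substantive work is contained in \autoref{phibound} and \autoref{centraleq}. The one point needing care is that the bound of \autoref{phibound} may only be applied at a centroid, which is exactly why the intermediate star trees are chosen at centroids.
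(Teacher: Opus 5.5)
Your proposal is correct and matches the paper's proof: you route through the star trees at centroids, bound the outer legs by $\Phi_{c_i}(T_i)\leq\lfloor n^2/4\rfloor-n+1$ using \autoref{boundineq} and \autoref{phibound}, and bound the middle leg by $n-2$ using \autoref{stardist}. Your extra checks (the case $c_1=c_2$, the identity $2\lfloor n^2/4\rfloor=\lfloor n^2/2\rfloor$, and $n=2$) only make explicit what the paper leaves implicit.
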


\begin{proof}
	Let $T_1$ and $T_2$ be arbitrary spanning trees. By \autoref{existenceofcentral} there exists a centroid $c_1\in V(T_1)$ (respectively $c_2\in V(T_2)$). Using Lemmas \ref{boundineq}, \ref{phibound}, \ref{stardist} and the triangle inequality, we find that
	\[
	\begin{split}
		d_{\mathcal{G}_n}(T_1,T_2)	&\leq d_{\mathcal{G}_n}(T_1,s_{c_1}) + d_{\mathcal{G}_n}(s_{c_1},s_{c_2}) + d_{\mathcal{G}_n}(s_{c_2},T_2)\\
		&\leq \Phi_{c_1}(T_1) + (n-2) + \Phi_{c_2}(T_2)\\
		&\leq 2\left(\left\lfloor \frac{n^2}{4}\right\rfloor-n+1\right)+ n-2\\
		&= \left\lfloor \frac{n^2}{2}\right\rfloor -n.
	\end{split}
	\] 
	This concludes the proof.
\end{proof}

\subsection{Matching Distance and Lower Bounds}

For the lower bound, we introduce an auxiliary metric that bounds the graph distance from below. Unlike the graph distance, this metric does not depend on the structure of $\mathcal{G}_n$. Instead, it solely compares distances between certain vertices within the considered trees. In this metric, the distance between certain network realignments can be calculated explicitly, yielding a lower bound.

\begin{definition}
	A matching $\mathcal{M}$ on $[n]$ is a set of disjoint 2-element subsets of $[n]$. Given a \textit{matching} $\mathcal{M}$ on $[n]$, we associate to $\mathcal{M}$ a pseudometric $\delta_\mathcal{M}$ on the network realignment graph $\mathcal{G}_n$ via
	\[
	\delta_\mathcal{M}(T_1,T_2) = \sum_{\{i,j\}\in \mathcal{M}} |d_{T_1}(i,j)-d_{T_2}(i,j)|.
	\] 
	The \textit{matching distance} of two trees is defined to be
	\[
	\delta(T_1,T_2) = \max_{\mathcal{M} \text{ matching on }[n]} \delta_\mathcal{M}(T_1,T_2).
	\]
\end{definition}

\begin{lemma}\label{lem:matchingdist}
	The matching distance $\delta$ is a metric and 
	\[
	\delta(T_1,T_2)\leq d_{\mathcal{G}_n}(T_1,T_2).
	\]
	In particular, $\operatorname{diam}_\delta(\mathcal{G}_n)\leq \operatorname{diam}(\mathcal{G}_n)$, where $\operatorname{diam}_\delta(\mathcal{G}_n)$ denotes the diameter of $\mathcal{G}_n$ with respect to the matching distance.
\end{lemma}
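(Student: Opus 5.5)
We need to show that $\delta$ is a metric and that $\delta(T_1,T_2)\leq d_{\mathcal{G}_n}(T_1,T_2)$. The diameter statement then follows by taking the maximum over all pairs.

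\emph{Pseudometric and triangle inequality.} For a fixed matching $\mathcal{M}$, the function $\delta_\mathcal{M}$ is a pseudometric. It is the $\ell^1$-distance between the vectors $(d_{T}(i,j))_{\{i,j\}\in\mathcal{M}}$, so it is symmetric, vanishes on the diagonal, and satisfies the triangle inequality. A pointwise maximum of finitely many pseudometrics is again a pseudometric. Indeed, for any $\mathcal{M}$ we have
\[
\delta_\mathcal{M}(T_1,T_3)\leq \delta_\mathcal{M}(T_1,T_2)+\delta_\mathcal{M}(T_2,T_3)\leq \delta(T_1,T_2)+\delta(T_2,T_3),
\]
and we then take the maximum over $\mathcal{M}$ on the left-hand side.

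\emph{Definiteness.} Suppose $T_1\neq T_2$. Then there is an edge $\{i,j\}\in E(T_1)\setminus E(T_2)$, so $d_{T_1}(i,j)=1$ while $d_{T_2}(i,j)\geq 2$. Taking $\mathcal{M}=\{\{i,j\}\}$ gives $\delta(T_1,T_2)\geq 1>0$.

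\emph{Comparison with the graph distance.} By the triangle inequality for $\delta$, it suffices to treat adjacent trees. Summing over a shortest path $T_1=U_0,\dots,U_k=T_2$ then gives $\delta(T_1,T_2)\leq k$. So let $U'$ be obtained from $U$ by sliding the leaf $v$ from $u$ to $w$, where $(u,w)\in E(U)$; in the notation of \autoref{iter}, $U=N^{v\to u}$ and $U'=N^{v\to w}$. We must show that $\delta_\mathcal{M}(U,U')\leq 1$ for every matching $\mathcal{M}$.

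As in the proof of \autoref{iter}, $U\setminus\{v\}=U'\setminus\{v\}$, and $v$ is a leaf in both trees. Hence any path between two vertices other than $v$ avoids $v$, so $d_U(i,j)=d_{U'}(i,j)$ whenever $v\notin\{i,j\}$. The matching $\mathcal{M}$ contains at most one pair containing $v$, say $\{v,x\}$. For that pair,
\[
d_U(v,x)=1+d_{U}(u,x) \quad\text{and}\quad d_{U'}(v,x)=1+d_{U'}(w,x)=1+d_U(w,x),
\]
where the last equality holds because $w\neq v$ and $x\neq v$. Since $u$ and $w$ are adjacent in $U\setminus\{v\}$, the distances $d(u,x)$ and $d(w,x)$ differ by exactly one. (If $x=u$ or $x=w$ the same computation still holds, with one of the two distances equal to zero.) Therefore every term of $\delta_\mathcal{M}(U,U')$ is zero except possibly the one for $\{v,x\}$, which equals $1$. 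Hence $\delta_\mathcal{M}(U,U')\leq 1$ for all $\mathcal{M}$, and so $\delta(U,U')\leq 1$.

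\emph{Conclusion.} Since $\delta\leq d_{\mathcal{G}_n}$ pointwise, taking the maximum over all pairs of trees yields $\operatorname{diam}_\delta(\mathcal{G}_n)\leq\operatorname{diam}(\mathcal{G}_n)$.

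The only step needing care is the single-slide estimate, specifically that a matching contains at most one pair involving $v$. This is exactly why the definition uses matchings rather than arbitrary sets of pairs.
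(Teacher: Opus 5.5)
Your proof is correct and follows essentially the same route as the paper. Definiteness comes from a pair whose distances differ, and the bound $\delta\le d_{\mathcal{G}_n}$ comes from the fact that a single leaf slide changes $\delta$ by at most one, which the paper asserts by analogy with \autoref{iter}. Your version makes explicit what the paper leaves implicit: the triangle inequality, obtained as a maximum of pseudometrics, and the fact that only the unique matched pair containing the slid leaf $v$ can change, and by exactly one.
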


\begin{proof}
	To see that $\delta$ is a metric, observe that for any two distinct trees $T_1$ and $T_2$, there must exist vertices $i,j$, such that $d_{T_1}(i,j)\neq d_{T_2}(i,j)$. Hence, if $\mathcal{N}$ is any matching containing the pair $\{i,j\}$, then $\delta$ satisfies
	\[
	1\leq \delta_\mathcal{N}(T_1,T_2)\leq \delta(T_1,T_2).
	\]
	Thus,
	$\delta(T_1,T_2) = 0$ if and only if $T_1 = T_2$. The other axioms of a metric are obviously satisfied by $\delta$.

	To see that for any two trees $\delta(T_1,T_2)\leq d_{\mathcal{G}_n}(T_1,T_2)$, it suffices to observe that $\delta(T_2,T_2) = 0$ and that the value of $\delta(T_1,T_2)$ changes by at most one, when a leaf slide is applied to $T_1$. The proof of this is identical to the proof that $\Phi_c$ changes by at most one under a leaf slide, despite sliding the chosen vertex $c$, in \autoref{iter}.
\end{proof}

\begin{lemma}\label{lem:bestmatching}
	Let $\mathcal{M}$ be a matching on $[n]$, then for any tree $T\in V(\mathcal{G}_n)$ we have
	\[
	\sum_{\{i,j\}\in \mathcal{M}} d_T(i,j) \leq na-a^2, 
	\]
	where $a = |\mathcal{M}|$.
\end{lemma}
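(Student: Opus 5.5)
We plan to use the standard edge-counting trick already used in the proof of \autoref{PhiPsiDifference}. For each edge $e\in E(T)$, removing $e$ splits $T$ into two components of sizes $a_e$ and $n-a_e$. A pair $\{i,j\}$ contributes to $d_T(i,j)$ exactly one unit for every edge $e$ on the unique $i$--$j$ path, that is, for every edge $e$ that separates $i$ from $j$. Exchanging the order of summation therefore gives
\[
\sum_{\{i,j\}\in \mathcal{M}} d_T(i,j) = \sum_{e\in E(T)} m_e,
\]
where $m_e$ denotes the number of pairs of $\mathcal{M}$ separated by $e$.

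Next we bound each $m_e$ individually. The pairs separated by $e$ are disjoint, and each has exactly one endpoint in each component, so $m_e\leq \min(a_e,n-a_e)=\sigma_T(e)$. Trivially $m_e\leq a=|\mathcal{M}|$ as well. Summing these per-edge bounds alone is too weak, since $\sum_e \sigma_T(e)$ can be of order $n^2/4$ for a path. The statement's bound $na-a^2=a(n-a)$ must therefore come from a global counting argument, not an edge-by-edge one.

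For the global bound we count endpoints. Let $M=\bigcup\mathcal{M}$ be the set of the $2a$ matched vertices. For an edge $e$ with sides $S_e$ and $S_e^c$, a pair separated by $e$ has one endpoint in each side. Hence $m_e\leq \min(|S_e\cap M|,|S_e^c\cap M|)$. Orient each edge away from a centroid-like root and write $m_e\leq |S_e\cap M|$ for the side $S_e$ not containing the root. Summing then gives $\sum_{x\in M} d_T(x,\text{root})$, which is still too crude. Instead we compare with an extremal configuration. We would show that the sum is maximised when $T$ is a path and the matching pairs are nested symmetrically around the middle, i.e. $i_k$ and $j_k$ occupy positions $k$ and $n+1-k$. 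In that configuration the sum is $\sum_{k=1}^{a}(n+1-2k)=a(n+1)-a(a+1)=na-a^2$, which matches the claimed bound.

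To prove that this configuration is extremal, we use the double-counting form $\sum_e m_e$ together with the bound $m_e\leq \min(a,\sigma_T(e))$. We then argue that in any tree with $n-1$ edges, the number of edges $e$ with $\sigma_T(e)\geq k$ is at most $n+1-2k$. This holds because the edges with $\sigma_T(e)\geq k$ form a subtree, each of whose leaves carries at least $k-1$ further vertices hanging off it beyond that subtree; a subtree with $L\geq 2$ leaves and $E'$ edges then has $E'+1+L(k-1)\leq n$ vertices. Granting this, we obtain
\[
\sum_e \min(a,\sigma_T(e))=\sum_{k=1}^{a}\#\{e:\sigma_T(e)\geq k\}\leq \sum_{k=1}^{a}(n+1-2k)=na-a^2.
\]
The main obstacle is the counting claim $\#\{e:\sigma_T(e)\geq k\}\leq n+1-2k$. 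The delicate cases are when the subtree of such edges is a single vertex or empty, and the bookkeeping that the hanging parts at distinct leaves are disjoint. We would handle both by induction on $k$, or by removing $k-1$ vertices from each end of a longest balanced path.
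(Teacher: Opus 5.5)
Your final argument is correct and is essentially the paper's proof. It has the same steps: the identity $\sum_{\{i,j\}\in\mathcal{M}} d_T(i,j)=\sum_{e\in E(T)} m_e$, the bound $m_e\le \min(a,\sigma_T(e))$, the layer-cake rewriting, and the count of at most $n+1-2k$ edges with $\sigma_T(e)\ge k$, obtained from the subtree formed by these edges and the disjoint sides of size at least $k$ hanging off two of its leaves.

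The detour through a ``global'' extremal-configuration argument is unnecessary, since the truncated edge-by-edge bound already suffices. The degenerate cases you worry about are trivial, because $k\le a\le n/2$ forces $n+1-2k\ge 1$.
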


\begin{proof}
	For an edge $e$ of $T$, we denote by $\tau_\mathcal{M}(e)$ the number of pairs $\{i,j\}$ in $\mathcal{M}$ for which $e$ lies on the unique path connecting $i$ and $j$ in $T$. Then
	\[
	\sum_{\{i,j\}\in \mathcal{M}} d_T(i,j) = \sum_{e\in E(T)}\tau_\mathcal{M}(e).
	\] 
	Further, we denote by $A_e$ and $B_e$ the connected components of $T\setminus \{e\}$. If $e$ is an edge of $T$ and $\{i,j\}\in \mathcal{M}$, then $e$ lies on the unique path connecting $i$ and $j$ if and only if $i$ and $j$ lie in different connected components of $T\setminus \{e\}$. In particular, $\tau_\mathcal{M}(e) \leq \min(|V(A_e)|,|V(B_e)|)$. Additionally, it follows immediately from the definition of $\tau_\mathcal{M}$ that $\tau_\mathcal{M}(e)\leq a$. Thus,
	\[
	\begin{split}
	\sum_{e\in E(T)}\tau_\mathcal{M}(e) &= \sum_{k = 1}^{a} |\{e\in E(T)\mid \tau_\mathcal{M}(e)\geq k\}|\\
	&\leq \sum_{k = 1}^{a}|\{e\in E(T)\mid \min(|V(A_e)|,|V(B_e)|)\geq k\}|.\\
	\end{split}
	\]
	Since 
	\[
	\sum_{k = 1}^{a} n-2k +1 = na-a^2,
	\]
	all that is left to show is that for any $k$ there are at most $n-2k + 1$ edges $e$ in $T$ for which $\min(|V(A_e)|,|V(B_e)|)\geq k$. 

	To see this, notice first that the set of edges satisfying this condition form a connected subgraph $H_k$ of $T$. If this subgraph is empty or consists of a single edge, then there is nothing to prove. Otherwise, $H_k$ is a tree with at least two vertices.

	Let $e_1$ and $e_2$ be two distinct leaf edges of $H_k$. Let $i\in \{1,2\}$. Without loss of generality, we may assume that $H_k \setminus \{e_i\}\subseteq A_{e_i}$. Further, since $e_i$ lies in $H_k$, we have $|B_{e_i}|\geq k$.

	We conclude that $H_k\setminus \{e_1,e_2\} \subseteq T\setminus (B_{e_1}\cup B_{e_2})$, and thus $|V(H_k)|-2 \leq n-2k $, because $B_{e_1}\cap B_{e_2}$ is empty. But this means that there are at most $n-2k + 1$ edges in $H_k$, which concludes the proof.
\end{proof}

\begin{proposition}\label{prop:deltadiam}
	The diameter of $\mathcal{G}_n$ with respect to the matching distance is
	\[
	\operatorname{diam}_{\delta}(\mathcal{G}_n)=
	\begin{cases}
		6k^2-2k, & n=4k,\\
		6k^2, & n=4k+1,\\
		6k^2+4k, & n=4k+2,\\
		6k^2+6k+1, & n=4k+3.
	\end{cases}
	\]
\end{proposition}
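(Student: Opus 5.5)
The plan is to prove matching upper and lower bounds on $\max_{T_1,T_2}\delta(T_1,T_2)$. For the upper bound, fix two trees $T_1,T_2$ and a matching $\mathcal{M}$. Each distance $d_{T}(i,j)$ is at least $1$, so for every pair
\[
|d_{T_1}(i,j)-d_{T_2}(i,j)|\leq \max\bigl(d_{T_1}(i,j),d_{T_2}(i,j)\bigr)-1 .
\]
Split $\mathcal{M}=\mathcal{M}_1\sqcup\mathcal{M}_2$, where $\mathcal{M}_1$ consists of the pairs with $d_{T_1}(i,j)\geq d_{T_2}(i,j)$. Write $a=|\mathcal{M}_1|$ and $b=|\mathcal{M}_2|$, so that $a+b\leq\lfloor n/2\rfloor$. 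Applying \autoref{lem:bestmatching} to $\mathcal{M}_1$ in $T_1$ and to $\mathcal{M}_2$ in $T_2$ gives
\[
\delta_{\mathcal{M}}(T_1,T_2)\leq g(a)+g(b),\qquad g(x)=(n-1)x-x^2 .
\]
It then remains to maximise $g(a)+g(b)$ over nonnegative integers with $a+b\leq\lfloor n/2\rfloor$.

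For the maximisation, note that $g$ is increasing for $x\leq (n-1)/2$. Hence the constraint $a+b\leq\lfloor n/2\rfloor$ is attained with equality at an optimum. By concavity of $g$ the optimum is the balanced split, $\{a,b\}=\{\lfloor m/2\rfloor,\lceil m/2\rceil\}$ with $m=\lfloor n/2\rfloor$. Evaluating in the four residue classes gives the following values, which are exactly the claimed ones:
\begin{itemize}
\item $n=4k$: $a=b=k$, value $2(4k^2-k^2-k)=6k^2-2k$.
\item $n=4k+1$: $a=b=k$, value $6k^2$.
\item $n=4k+2$: $a=k$, $b=k+1$, value $(3k^2+k)+(3k^2+3k)=6k^2+4k$.
\item $n=4k+3$: $a=k$, $b=k+1$, value $(3k^2+2k)+(3k^2+4k+1)=6k^2+6k+1$.
\end{itemize}

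For the lower bound, I will construct two path trees for which every inequality above is an equality. Take the optimal $a,b$ and fix disjoint pairs: $\mathcal{M}_1=\{\{x_i,y_i\}\}_{i\leq a}$ and $\mathcal{M}_2=\{\{z_j,w_j\}\}_{j\leq b}$. Let $T_1$ be the path
\[
x_1,x_2,\dots,x_a,\; z_1,w_1,z_2,w_2,\dots,z_b,w_b,\;(\text{remaining vertices}),\; y_a,\dots,y_1 .
\]
In $T_1$ the pairs of $\mathcal{M}_1$ are nested symmetrically around the middle, so $d_{T_1}(x_i,y_i)=n+1-2i$, and these distances sum to $na-a^2$. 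Each pair of $\mathcal{M}_2$ is an edge of $T_1$. Let $T_2$ be the analogous path with the roles of the two matchings exchanged: the $\mathcal{M}_2$ pairs nested at the ends, and the $\mathcal{M}_1$ pairs adjacent in the middle. Such a path exists since $n-2a\geq 2b$ and $n-2b\geq 2a$. Each pair then contributes exactly $\max(d_{T_1},d_{T_2})-1$, so $\delta_{\mathcal{M}}(T_1,T_2)=g(a)+g(b)$ and the upper bound is attained.

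The main obstacle is mostly bookkeeping. One must check that \autoref{lem:bestmatching} may be applied to the submatchings $\mathcal{M}_1$ and $\mathcal{M}_2$ separately; this holds since each is itself a matching. One must also verify that the integer optimisation, including the boundary cases with small $k$, yields exactly the balanced split. The construction itself is explicit, and its distance sums match the lemma's bound exactly.
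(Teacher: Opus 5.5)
Your proposal is correct and follows the paper's proof. The upper bound splits $\mathcal{M}$ according to which tree realises the larger distance, applies \autoref{lem:bestmatching} to each part, and subtracts one per pair. The lower bound uses the same pair of path trees as the paper: one matching's pairs nested symmetrically from the ends and the other's pairs adjacent in the middle, with the roles exchanged in the second tree. Your explicit maximisation of $g(a)+g(b)$ over $a+b\le\lfloor n/2\rfloor$ spells out the ``elementary calculation'' and the block sizes $p,q$ that the paper leaves to the reader for the residue classes other than $n=4k$.
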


\begin{proof}
	We give the proof in the case $n=4k$, where the notation is simplest. The other congruence classes are obtained by the same argument, replacing the two blocks of $k$ matching pairs by blocks of sizes
	\[
	p=\left\lfloor \frac{\lfloor n/2\rfloor}{2}\right\rfloor,
	\qquad
	q=\left\lceil \frac{\lfloor n/2\rfloor}{2}\right\rceil,
	\]
	and, when \(n\) is odd, leaving one vertex unmatched. This gives the formula above. Let $n = 4k$. We give a matching $\mathcal{M}$, as well as trees $T_1$ and $T_2$, such that 
	\[
	\delta_\mathcal{M}(T_1,T_2) = 6k^2-2k.
	\]
	Let $L$ be the path tree with labels $(1,\dots,4k)$, and let $S$ be the path tree with labels
	\[
	(k+1,k+3,\dots,3k-1,1,4k,2,4k-1,\dots,k,3k+1,3k,3k-2,\dots,k+4,k+2).
	\]
	To maximise the distance between $L$ and $S$, consider the matching $\mathcal{M} = \mathcal{M}_1\cup\mathcal{M}_2$, where 
	\[
	\mathcal{M}_1 = \left\{\{i,4k-i+1\} \mid i = 1,\dots,k\right\}
	\]
	and 
	\[
	\mathcal{M}_2 = \{\{k+2i-1,k+2i\} \mid i = 1,\dots,k\}.
	\]
	We get
	\begin{align*}
		\delta_\mathcal{M}(L,S) &= \sum_{\{i,j\}\in\mathcal{M}_1}|d_L(i,j)-d_S(i,j)| + \sum_{\{i,j\}\in\mathcal{M}_2}|d_L(i,j)-d_S(i,j)|\\
		&= 2\sum_{i = 1}^{k} 4k-2i \\
		&= 6k^2 - 2k 
	\end{align*}
	We now show that this lower bound for $\operatorname{diam}_\delta(\mathcal{G}_n)$ is also an upper bound. For any matching $\mathcal{M}$ and arbitrary trees $T_1,T_2$, let 
	\begin{align*}
		\mathcal{M}_1 &= \{\{i,j\}\in \mathcal{M} \mid d_{T_1}(i,j)\leq d_{T_2}(i,j)\},\\
		\mathcal{M}_2 &= \{\{i,j\}\in \mathcal{M} \mid d_{T_1}(i,j)> d_{T_2}(i,j)\}.
	\end{align*}
	We write $t_i = |\mathcal{M}_i|$ for $i = 1,2$. Since $\mathcal{M} = \mathcal{M}_1\cup\mathcal{M}_2$, we have $t_1 + t_2 \leq 2k$. Using \autoref{lem:bestmatching}, we get
	\begin{align*}
		\delta_\mathcal{M}(T_1,T_2) &= \sum_{\{i,j\}\in\mathcal{M}_1}d_{T_2}(i,j)-d_{T_1}(i,j) + \sum_{\{i,j\}\in\mathcal{M}_2}d_{T_1}(i,j)-d_{T_2}(i,j)\\
		&\leq  4kt_2-t_2^2 - t_1 + 4kt_1-t_1^2 - t_2\\
		&= (4k-1)(t_1+t_2) - (t_1^2+t_2^2).  
	\end{align*}
	An elementary calculation shows that, under the assumption $k\geq 1$ and $t_1 + t_2 \leq 2k$, we have
	\[
	(4k-1)(t_1+t_2) - (t_1^2+t_2^2) \leq 6k^2-2k
	\]
	It follows that $\delta_\mathcal{M}(T_1,T_2)\leq 6k^2-2k$, which proves the assertion.
\end{proof}

\begin{corollary}\label{cor:lowerbound}
	For all $n\geq 3$ we have
	\[
	\operatorname{diam}(\mathcal{G}_n) \geq \operatorname{diam}_{\delta}(\mathcal{G}_n)=
	\begin{cases}
		\displaystyle
		\left\lfloor \frac{3n^2-4n}{8} \right\rfloor,
		& \text{if $n$ is even},\\[3mm]
		\displaystyle
		\left\lfloor \frac{3n^2-6n}{8} \right\rfloor,
		& \text{if $n$ is odd}.
	\end{cases}
	\]
\end{corollary}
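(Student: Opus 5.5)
The plan is to combine \autoref{lem:matchingdist} with \autoref{prop:deltadiam} and then check, class by class modulo $4$, that the piecewise values of $\operatorname{diam}_\delta(\mathcal{G}_n)$ are the stated floor expressions.

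\emph{The inequality.} By \autoref{lem:matchingdist}, $\delta(T_1,T_2)\leq d_{\mathcal{G}_n}(T_1,T_2)$ for all trees $T_1,T_2$. Taking the maximum over all pairs gives $\operatorname{diam}(\mathcal{G}_n)\geq\operatorname{diam}_\delta(\mathcal{G}_n)$. This needs no further work.

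\emph{The closed form.} I take the value of $\operatorname{diam}_\delta(\mathcal{G}_n)$ from \autoref{prop:deltadiam} and rewrite it uniformly, checking each residue of $n$ modulo $4$ separately.
\begin{itemize}
\item For $n=4k$: $3n^2-4n=48k^2-16k$, so $(3n^2-4n)/8=6k^2-2k$ exactly.
\item For $n=4k+2$: $3n^2-4n=48k^2+32k+4$, so $(3n^2-4n)/8=6k^2+4k+\tfrac12$, with floor $6k^2+4k$.
\item For $n=4k+3$: $3n^2-6n=48k^2+48k+9$, with floor of the quotient by $8$ equal to $6k^2+6k+1$.
\end{itemize}
In these three classes the floor expression agrees with \autoref{prop:deltadiam}. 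Since $n\geq 3$, the degenerate small cases ($k=0$ for $n=2,3$) are either excluded or covered by the same computation.

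\emph{The main obstacle: $n=4k+1$.} Here $3n^2-6n=48k^2-3$, so $\lfloor(3n^2-6n)/8\rfloor=6k^2-1$, whereas \autoref{prop:deltadiam} gives $\operatorname{diam}_\delta(\mathcal{G}_n)=6k^2$. So the literal equality in the display does not hold in this class; it would hold with $\lceil(3n^2-6n)/8\rceil=6k^2$ instead. I would first recheck the construction in \autoref{prop:deltadiam} for this case. It uses blocks $p=q=k$, one unmatched vertex, and the same bound $(n-1)(t_1+t_2)-(t_1^2+t_2^2)$ with $n-1=4k$ and $t_1+t_2\leq 2k$, which is maximised at $t_1=t_2=k$ with value $6k^2$. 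If that value stands, I would state the corollary's conclusion as
\[
\operatorname{diam}(\mathcal{G}_n)\geq\operatorname{diam}_\delta(\mathcal{G}_n)\geq\left\lfloor\frac{3n^2-6n}{8}\right\rfloor,
\]
with equality in the second step for $n\equiv 3\pmod 4$ and a gap of exactly one for $n\equiv 1\pmod 4$. The lower bound $\operatorname{diam}(\mathcal{G}_n)\geq\lfloor(3n^2-6n)/8\rfloor$ claimed in the section's main theorem then follows for every odd $n$, and the exact equalities for even $n$ hold as written.
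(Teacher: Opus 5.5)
Your argument is the paper's own: the corollary has no separate proof there and is exactly \autoref{lem:matchingdist} combined with \autoref{prop:deltadiam}. Your residue-class arithmetic is correct, including the discrepancy you found.

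You can drop the hedge for $n=4k+1$, because the value $6k^2$ is attained. For $n=5$, take the paths $(1,3,4,5,2)$ and $(3,1,2,5,4)$ with $\mathcal{M}=\{\{1,2\},\{3,4\}\}$; they give $\delta_\mathcal{M}=3+3=6>5=\lfloor 45/8\rfloor$. In general, place the $k$ pairs of one block nested at the two ends of one path and adjacent in the middle of the other, and do the reverse for the second block.

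So the displayed equality genuinely fails for $n\equiv 1\pmod 4$. A uniform correct odd-case value is $\lfloor 3(n-1)^2/8\rfloor$; the ceiling version is right only in that class and would be wrong for $n\equiv 3\pmod 4$. As you say, the lower bound $\operatorname{diam}(\mathcal{G}_n)\geq\lfloor(3n^2-6n)/8\rfloor$ in the main theorem remains valid.
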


\section{Automorphisms of \texorpdfstring{$X_n$}{Xn}}\label{sec:automorphisms}

		Each permutation in the symmetric group $S_n$ induces an automorphism of $X_n$ by relabelling the vertices of each network realignment. In this chapter, we prove that, for $n\geq 5$, every automorphism of $X_n$ arises in this way. To achieve this, we first show that $X_n$ is fully determined by its 1-skeleton. We then identify an $\Aut(X_n)$-invariant subcomplex $H$ whose automorphism group is the symmetric group, and prove that the natural map
		\[
			\Aut (X_n) \hookrightarrow \Aut(H)
		\]
		given by restriction is injective. The proof relies on some of the quantities introduced in \autoref{sec:Geometry}.

		The condition $n\geq 5$ is necessary. Indeed, $X_4 = \Sigma_{K_4}$, and thus
		\[
			\Aut(X_4) = \Aut(\Sigma_{K_4}) = S_2^6 \rtimes S_4.
		\]
		An explanation of the second equality is given in \autoref{sec:RigidSubcomplex}. Although $\Aut(X_3) = \Aut(K_3) = S_3$, the argument for $n\geq5$ does not apply in this case.

	\subsection{\texorpdfstring{$X_n$}{Xn} is a Cubical Flag Complex}

		First, we show that $X_n$ is determined by its 1-skeleton. Consequently, every automorphism is uniquely determined by its restriction to the 1-skeleton, implying
		\[
			\Aut(X_n) \cong \Aut(\mathcal{G}_n).
		\]
		Let $I$ denote an interval and $I^k$ a $k$-cube. We write $(I^k)_1$ for its 1-skeleton and $\partial I^k$ for its boundary.

		\begin{definition}
			A cubical complex $K$ is called \textit{cubical flag complex}, or determined by its 1-skeleton, if any injective cellular map $(I^k)_1 \to K$ admits a unique extension $I^k\to K$, meaning that the diagram
			\begin{center}
				\begin{tikzcd}
					I^k \arrow[r, dashed, "\exists!"] & K \\
					(I^k)_1 \arrow[ur, hook] \arrow[u, hook]
				\end{tikzcd}
			\end{center}
			admits a unique extension for all $k\geq2$.
		\end{definition}

		Note that the aforementioned property is equivalent to requiring that any injective cellular map $\partial I^k \to K$ admits a unique extension $I^k \to K$ for all $k\geq 2$. This easily follows from an inductive argument.

		\begin{lemma} \label{lem:FourCycles}
			Any injective cellular map $\partial I^2 \to X_n$ admits a unique extension $I^2 \to X_n$. In particular, 4-cycles in $\mathcal{G}_n$ are uniquely determined by any three of its vertices.
		\end{lemma}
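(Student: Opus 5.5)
Our approach is to classify 4-cycles $T_0,T_1,T_2,T_3$ in $\mathcal{G}_n$ (with $T_0\neq T_2$, $T_1\neq T_3$) by the two leaf slides $T_0\to T_1$ and $T_0\to T_3$. We then show that, in every case, $T_2$ is forced and the four trees are the vertices of a 2-cell of $X_n$. Uniqueness of the extension $I^2\to X_n$ is automatic once existence is shown, because a 2-cell $N=(T,r)$ with $|B|=2$ is determined by its four vertices: $T$ is the common subtree after deleting the two moving leaves, and each $r(b)$ is the edge along which $b$ slides. So the content of the lemma is the statement that every 4-cycle bounds a square, and that any three consecutive vertices determine the fourth.

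We encode a leaf slide by the moving leaf $v$ and the edge $(u,w)$ along which it moves. Let the slide $T_0\to T_1$ move leaf $a$ along $e$, and let the slide $T_0\to T_3$ move leaf $b$ along $f$. The proof splits into two cases.

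\textbf{Case $a\neq b$.} We check whether $a$ and $b$ can slide independently. The required conditions are that $b$ is not the parent endpoint of $a$'s edge $e$ (and vice versa), and that neither leaf edge $(a,p(a))$ nor $(b,p(b))$ is the sliding edge of the other slide. If these hold, then $N=(T_0\setminus\{a,b\},\,r)$ with $r(a)=e$ and $r(b)=f$ is a network realignment. The realignment is well defined since $|A|\geq 2$ for $n\geq 4$; $n=3$ is trivial. Its four specifications are the $T_i$, so we must show that the fourth vertex of the cycle is exactly the tree obtained by performing both slides. Here we use a counting argument: $T_2$ differs from $T_1$ by one slide and from $T_3$ by one slide. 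Comparing edge sets, $T_1$ and $T_3$ differ in exactly two edges on each side, so $T_2$ must contain the new edges of both $a$ and $b$. This forces $T_2$ to be the tree with $a$ and $b$ both moved, or $T_2=T_0$, which is excluded.

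If the independence conditions fail, for instance when $b=p_{T_0}(a)$ and $f$ involves the edge $(a,b)$, we show directly that no 4-cycle exists. We do this via the edge-set difference: $|E(T_1)\triangle E(T_3)|=4$, and no single tree can lie one slide from both except $T_0$. Note that each slide replaces exactly one edge incident to the moving leaf.

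\textbf{Case $a=b$.} Here $T_1$ and $T_3$ are both obtained by moving the same leaf $a$. Then $T_1$ and $T_3$ agree on $T_0\setminus\{a\}$ and differ only in the attachment of $a$. Any $T_2\neq T_0$ adjacent to both must be obtained by another move of $a$, since any other slide changes an edge not incident to $a$ and cannot be undone in one step. This would make $T_0,T_1,T_2,T_3$ four distinct attachments of the leaf $a$ to the fixed tree $T_0\setminus\{a\}$, pairwise adjacent along the cycle. Adjacent attachment points must be adjacent vertices of $T_0\setminus\{a\}$. Four distinct attachment points, consecutively adjacent, would form a cycle of length 4 in a tree, which is impossible. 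Hence this case yields no injective 4-cycle.

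The main obstacle is the careful case analysis in the case $a\neq b$, namely determining exactly when two slides commute and excluding the degenerate configurations. For this we would rely on the symmetric-difference bookkeeping of edge sets, in the spirit of the proofs of \autoref{iter} and \autoref{stardist}. The second statement of the lemma then follows: three consecutive vertices $T_1,T_0,T_3$ determine the two slides and hence $T_2$.
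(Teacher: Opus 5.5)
Your strategy is sound, but it is organised differently from the paper's. The paper walks around the cycle:
\begin{itemize}
\item It first shows that consecutive edges slide different leaves. If $v_k=v_{k+1}$, the two attachment points of $v_k$ are at distance $2$ in a tree, so $N_{k+1}$ is the only common neighbour of $N_k$ and $N_{k+2}$. This is exactly your case $a=b$.
\item It then argues that the leaf moved on one edge must be moved back on the opposite edge, giving $v_1=v_3$ and $v_2=v_4$ with reversed endpoints.
\item It declares the commutation conditions $v_i\notin\{u_j,w_j\}$ ``straightforward'' and gets uniqueness from the fact that cells of a cubical complex meet in faces.
\end{itemize}
You instead fix $T_0$ and compute all common neighbours of $T_1$ and $T_3$ from $E(T_1)\triangle E(T_3)$. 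In the commuting case the edge sets at symmetric-difference distance $2$ from both are four: $E(T_0)$, the doubly moved tree, and two sets that isolate $a$ or $b$. Your route proves the ``in particular'' statement directly and forces the degenerate configurations into the open, which the paper passes over. The paper's route directly delivers what \autoref{lem:FlagComplex} uses: opposite edges of a $4$-cycle slide the same leaf along the same edge.

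One subcase needs to be redone: $a\neq b$ with non-commuting slides.
\begin{itemize}
\item Your example $b=p_{T_0}(a)$ cannot occur, because $a$ and $b$ are both leaves of $T_0$ and $n\geq 3$.
\item Commutation fails exactly when $a,b$ are siblings with common parent $p$ and $a$ slides along $(p,b)$, or $b$ slides along $(p,a)$.
\item If both happen, then $E(T_1)\triangle E(T_3)=\{(p,a),(p,b)\}$ has size $2$, not $4$. For $n=3$, $T_1$ and $T_3$ are even adjacent. So your counting claim is false there.
\item Even when the size is $4$, counting alone does not exclude a second common neighbour, since two of the candidate edge sets are trees.
\end{itemize}
The correct reason is the leaf structure. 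Say $a$ slides onto $b$. In $T_1$ neither $b$ nor, for $n\geq 4$, $p$ is a leaf. Hence every slide from $T_1$ other than the one back to $T_0$ keeps both edges $(a,b)$ and $(b,p)$. There are two possibilities for $b$:
\begin{itemize}
\item If $b$ slides to some $q\neq a$, then $T_3$ contains neither of these edges, so it is not adjacent to such a $T_2$.
\item If $b$ slides onto $a$, then $T_3$ lacks $(b,p)$. The only slide of $T_3$ creating $(b,p)$ moves $b$ off $a$, which deletes $(a,b)$.
\end{itemize}
A similar small check is missing in your case $a=b$. The claim that any other slide ``cannot be undone in one step'' overlooks a slide, from $T_1$, of the new parent of $a$ in $T_3$ onto $a$. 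This is excluded because that vertex is adjacent to the old parent $p$ in $T_1$, and $p$ is not adjacent to $a$ there.
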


		\begin{proof}
			An injective cellular map $\partial I^2\to X_n$ corresponds to a 4-cycle $N_1,N_2,N_3,N_4, N_1$ in $\mathcal{G}_n$. We take all indices modulo 4. Let $v_i$ be the vertex that is realigned along the edge from $N_i$ to $N_{i+1}$. Set $u_i = p_{T_i}(v_i)$ and $w_i = p_{T_{i+1}} (v_i)$. We have to determine the unique 2-cell with vertices $N_1,N_2,N_3,N_4$.

			We claim that $v_i \neq v_{i+1}$ for all $i$. Suppose, to the contrary, that $v_k = v_{k+1}$ for some $k$. Then 
			\[
				w_k = p_{T_{k+1}}(v_k) = p_{T_{k+1}}(v_{k+1}) = u_{k+1}.
			\] 
			Since $N_k \neq N_{k+2}$, we have $u_k \neq w_{k+1}$. Moreover, as $T_k\setminus\{v_k\} = T_{k+2}\setminus \{v_k\}$, it follows that $d_{T_k}(u_k, w_{k+1}) = 2$. Consequently, the unique geodesic between $N_k$ and $N_{k+2}$ is $N_k, N_{k+1}, N_{k+2}$, contradicting that $N_1,N_2,N_3,N_4,N_1$ is a 4-cycle. Therefore, $v_i\neq v_{i+1}$ for all $i$.

			Since $p_{T_1}(v_1) = u_1 \neq w_1 = p_{T_2}(v_1)$, the vertex $v_1$ has to be realigned along the path $N_1, N_4, N_3, N_2$. Hence, $v_1$ has to be realigned twice along the 4-cycle $N_1,N_2,N_3,N_4, N_1$. Because $v_1\neq v_2,v_4$, we must have $v_1 = v_3, \> u_1 = w_3$, and $w_1 = u_3$. Similarly, $v_2 = v_4, \> u_2 = w_4$ and $w_2 = u_4$. It is straightforward to verify that $v_i \neq u_j,w_j$ for all $i,j$. Thus, 
			\[
				(N_1^{v_1\to (u_1,w_1)})^{v_2\to (u_2,w_2)}
			\] 
			is well-defined, and its four vertices coincide exactly with the realignments $N_1,N_2,N_3$, and $N_4$. In particular, $v_1,v_2,u_1,u_2,w_1$, and $w_2$ are uniquely determined by the realignments $N_1,N_2,N_3$. Hence, these three realignments already fully determine the 2-dimensional realignment, and hence the 4-cycle.

			Since the intersection of any two cells in a cubical complex is empty or a common face, a 2-cell is uniquely determined by any three of its vertices. This carries over to 4-cycles, as each 4-cycle is the boundary of a 2-cell.
		\end{proof}

		\begin{lemma} \label{lem:FlagComplex}
			$X_n$ is a cubical flag complex.
		\end{lemma}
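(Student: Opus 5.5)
By the remark following the definition of cubical flag complexes, it suffices to show that every injective cellular map $\partial I^k\to X_n$ extends uniquely to $I^k\to X_n$ for all $k\geq 2$. The case $k=2$ is \autoref{lem:FourCycles}. I would argue by induction on $k$, working directly with the combinatorics of network realignments.

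For the inductive step, take $k\geq 3$ and an injective cellular map $f\colon\partial I^k\to X_n$. Fix a vertex $x$ of $I^k$ and let $N_0=f(x)=(T_0,\emptyset)$ be the corresponding spanning tree. The $k$ edges of $I^k$ at $x$ map to $k$ leaf slides of $N_0$. For $1\le i\le k$, slide $i$ moves the leaf $v_i$ from $u_i=p_{T_0}(v_i)$ to $w_i$, where $(u_i,w_i)\in E(T_0)$. Every $2$-face of $I^k$ through $x$ lies in $\partial I^k$ (because $k\geq 3$), and its image is a genuine $2$-cell. By the proof of \autoref{lem:FourCycles}, this $2$-cell is a $2$-dimensional realignment
\[
(N_0^{v_i\to(u_i,w_i)})^{v_j\to(u_j,w_j)}.
\]
Hence, for each pair $i\neq j$, we have $v_i\neq v_j$ and $v_i\notin\{u_j,w_j\}$. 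Injectivity also gives $v_i\neq v_j$ directly.

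These pairwise conditions are exactly what is needed for simultaneous cospecification. I would therefore define $N$ as the realignment with $B=\{v_1,\dots,v_k\}$, tree $T=T_0\setminus\{v_1,\dots,v_k\}$ and $r(v_i)=(u_i,w_i)$. This is well defined: no $v_i$ is an endpoint of another edge $(u_j,w_j)$, each $v_i$ is a leaf of $T_0$, and removing leaves keeps the edges $(u_i,w_i)$ inside $T$. It remains to check that $T$ has at least two vertices, which holds since it contains an edge. The faces of $N$ containing $N_0$ form a $k$-cube $C$. Its vertices are obtained by choosing, for each $i$, whether $v_i$ sits at $u_i$ or at $w_i$.

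The main obstacle is to show that $f(\partial I^k)$ coincides with $\partial C$, rather than merely sharing the $2$-faces at $x$. I would handle this inductively. Each facet $F$ of $I^k$ containing $x$ is mapped by $f|_{\partial F}$ injectively, so by the induction hypothesis it extends uniquely to a $(k-1)$-cell. That cell contains $N_0$ together with the $k-1$ corresponding edges, and a cube in $X_n$ is determined by a vertex and its incident edges (the vertex and edge data determine $B$, $r$ and $T$ as above). So this cell is the corresponding facet of $C$, and the extension restricted to $\partial F$ agrees with $f|_{\partial F}$.

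For a facet $F'$ not containing $x$, I would apply the same argument at the antipodal vertex $x'$. This produces a $k$-cube $C'$ sharing with $C$ every facet that contains a vertex adjacent to both $x$ and $x'$ in a common facet. Every facet of $I^k$ contains $x$ or $x'$. Moreover, since $k\ge3$, a facet through $x$ and a facet through $x'$ intersect in a $(k-2)$-face. That face determines a shared $(k-2)$-cell of $C$ and $C'$, and comparing the realignment data then forces $C=C'$. Hence $f$ extends to the cube indexed by $N$.

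Uniqueness follows because two $k$-cells with the same boundary, or even sharing a vertex and its $k$ incident edges, are indexed by the same realignment. This holds since $N$ is recovered from $N_0$ and the slides $(v_i,u_i,w_i)$, exactly as in the final paragraph of the proof of \autoref{lem:FourCycles}.
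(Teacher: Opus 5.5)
Your route differs from the paper's, and one step in it is not justified as written.

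\textbf{Comparison with the paper.} The paper works directly with an injective map $(I^k)_1\to X_n$. It first shows, using adjacent transpositions of the geodesics from $\emptyset$ to $[k]$, that the leaf slid along an edge depends only on the coordinate direction. Then, by induction on $|S|$ and applying \autoref{lem:FourCycles} to every square, it shows that the edge $(u_i,w_i)$ along which $v_i$ slides is independent of $S$. This shows directly that every vertex $N_S$ is the corresponding vertex of the cell $N$.

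You instead pass to the boundary formulation and build $N$ from the $2$-cells at one vertex $x$. You then identify facets by the principle that a cell of $X_n$ is determined by one vertex together with its incident edges in the cell. That principle is correct: those edges record $B$ and $r$, and $T=T_0\setminus B$. Your check that $N$ is well defined is fine, and so is the identification of the facets through $x$. One small correction: injectivity alone does not give $v_i\neq v_j$, since a leaf can slide to two different neighbours of its parent. It is the $2$-cell that rules this out, which you also use. Also, in the boundary formulation $f(F)$ is already a $(k-1)$-cell, so the induction hypothesis is not really needed there.

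\textbf{The gap: the step $C=C'$.} The description of which facets $C$ and $C'$ share is not meaningful for $k\geq 3$, since no vertex is adjacent to both $x$ and $x'$. The decisive claim, that a shared $(k-2)$-cell plus ``comparing the realignment data'' forces $C=C'$, does not hold as stated. Two distinct $k$-cells of $X_n$ can share a $(k-2)$-face: cospecifying two different pairs of leaves of the same $(k-2)$-dimensional realignment gives such a pair.

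\textbf{A repair that avoids $C'$.} The facets $\{x_m=0\}$ through $x$ already contain every vertex except $x'$. For a facet $\{x_i=1\}$, look at its vertex $x_{\{i\}}$. Each of its $k-1$ edges in that facet, from $x_{\{i\}}$ to $x_{\{i,j\}}$, lies in some facet $\{x_m=0\}$ with $m\notin\{i,j\}$; this is where $k\geq 3$ is used. Hence its image is an edge of $C$. By your rigidity principle, $f(\{x_i=1\})$ is the facet of $C$ in which $v_i$ is specified to $w_i$, and $f$ on it matches the coordinates of $C$. So $f(\partial I^k)=\partial C$.

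\textbf{Repairs that keep $C'$.} Either of the following works.
\begin{itemize}
\item Compare the data on all faces $\{x_i=0\}\cap\{x_j=1\}$. For each $m$ there is such a face with $m\notin\{i,j\}$, so $v_m$ and $r(v_m)$ agree in $C$ and $C'$, and then so do the trees.
\item Use that $C\cap C'$ is a common face. It contains $2^k-2>2^{k-1}$ vertices, so $C=C'$.
\end{itemize}

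With this repair your argument is a valid and shorter alternative. It gets the $2$-cells and facets for free and reduces everything to rigidity of a cell at a single vertex. The cost is that it relies on the equivalence between the $1$-skeleton and boundary versions of the flag condition. The paper only sketches that equivalence, and it itself uses the case $k=2$ and the fact that cells meet in faces. The paper's proof needs only \autoref{lem:FourCycles} and works from the definition as stated.
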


		\begin{proof}
			Let $\iota:(I^k)_1\hookrightarrow X_n$ be an injective cellular map. We identify the vertices of $(I^k)_1$ with subsets of $[k]$, and write $N_S=\iota(S)$ for $S\subseteq [k]$.

			We first show that the leaf being slid depends only on the coordinate direction. Consider the geodesics from $\emptyset$ to $[k]$ in $(I^k)_1$. These geodesics are in bijection with permutations of $[k]$, by recording the order in which the coordinate directions are traversed. If $\sigma\in S_k$, let
			\[
			\emptyset=S_0^\sigma,S_1^\sigma,\ldots,S_k^\sigma=[k]
			\]
			be the corresponding geodesic, where $S_j^\sigma=\{\sigma(1),\ldots,\sigma(j)\}$. Let $v_j^\sigma$ be the leaf which is slid along the edge from $N_{S_{j-1}^\sigma}$ to $N_{S_j^\sigma}$.

			We claim that for any two permutations $\sigma_1,\sigma_2\in S_k$ and any $i\in[k]$ one has
			\[
			v_i^{\sigma_1}=v_{\sigma_2^{-1}\sigma_1(i)}^{\sigma_2}.
			\]
			We first prove this when $\sigma_2$ is obtained from $\sigma_1$ by interchanging two adjacent entries. Suppose that $\sigma_2$ is obtained from $\sigma_1$ by interchanging the entries in positions $j$ and $j+1$. Then the two geodesics agree except on the square face spanned by the coordinate directions $\sigma_1(j)$ and $\sigma_1(j+1)$. Since $\iota$ is injective, the image of this square is an injective $4$-cycle in $\mathcal{G}_n$. By \autoref{lem:FourCycles}, opposite edges in this square correspond to sliding the same leaf. Hence,
			\[
			v_j^{\sigma_2}=v_{j+1}^{\sigma_1},\qquad v_{j+1}^{\sigma_2}=v_j^{\sigma_1},\qquad v_m^{\sigma_2}=v_m^{\sigma_1}\quad\text{for }m\neq j,j+1.
			\]
			This is precisely the claimed formula in the case of one adjacent transposition. Since any two permutations are connected by a sequence of adjacent transpositions, the formula follows inductively for arbitrary $\sigma_1$ and $\sigma_2$.

			It follows that, for a fixed coordinate direction $\ell\in[k]$, the leaf slid when a geodesic $\sigma$ traverses the direction $\ell$ is
			\[
			v_{\sigma^{-1}(\ell)}^\sigma=v_\ell^{\operatorname{Id}_{[k]}},
			\]
			and hence is independent of the chosen geodesic $\sigma$. We denote this leaf by $v_\ell$. Consequently, for every $S\subseteq[k]$ and every $i\notin S$, the edge from $N_S$ to $N_{S\cup\{i\}}$ is obtained by sliding the leaf $v_i$.

			We now show that the edge along which $v_i$ is slid also only depends on the coordinate direction $i$. Let $T_0=N_\emptyset$. For each $i\in[k]$, the edge from $N_\emptyset$ to $N_{\{i\}}$ is obtained by sliding the leaf $v_i$. Define
			\[
			u_i=p_{T_0}(v_i),\qquad w_i=p_{N_{\{i\}}}(v_i).
			\]
			Thus the edge from $N_\emptyset$ to $N_{\{i\}}$ is obtained by sliding the leaf $v_i$ along the edge $(u_i,w_i)$.

			We claim that for every $S\subseteq[k]$ and every $i\notin S$, the edge from $N_S$ to $N_{S\cup\{i\}}$ is obtained by sliding the leaf $v_i$ along the same edge $(u_i,w_i)$. We prove this by induction on $|S|$. For $S=\emptyset$ this is the definition of $u_i$ and $w_i$. Now, let $S\neq\emptyset$, choose $j\in S$, and set $S'=S\setminus\{j\}$. Consider the square face with vertices
			\[
			N_{S'},\qquad N_{S'\cup\{i\}},\qquad N_{S'\cup\{j\}},\qquad N_{S'\cup\{i,j\}}.
			\]
			By the induction hypothesis, the edge from $N_{S'}$ to $N_{S'\cup\{i\}}$ is obtained by sliding $v_i$ along $(u_i,w_i)$. By \autoref{lem:FourCycles}, the image of this square is the boundary of a unique $2$-dimensional network realignment. In such a $2$-cell, opposite edges correspond to the same leaf slide. Therefore, the opposite edge from $N_{S'\cup\{j\}}=N_S$ to $N_{S'\cup\{i,j\}}=N_{S\cup\{i\}}$ is also obtained by sliding $v_i$ along $(u_i,w_i)$. This proves the claim.

			We next construct the desired $k$-cell. We claim that the data
			\[
			B=\{v_1,\ldots,v_k\},\qquad T=T_0\setminus B,\qquad r(v_i)=(u_i,w_i)
			\]
			defines a $k$-dimensional network realignment. Applying \autoref{lem:FourCycles} to the square with vertices $N_\emptyset, N_{\{i\}},N_{\{i,j\}},N_{\{j\}}$ shows that $v_i\neq v_j$ for $i\neq j$. The same $2$-dimensional realignment also shows that neither $v_i$ nor $v_j$ is an endpoint of the edge along which the other leaf is slid. Hence, $v_i\notin \{u_j,w_j\}$ for $i\neq j$. Since the $v_i$ are pairwise distinct leaves of $T_0$, deleting them from $T_0$ leaves a tree $T=T_0\setminus B$. Moreover, each edge $(u_i,w_i)$ lies in this tree. Since the base graph is $K_n$, the above data defines a cell $N$ of $X_n$.

			By construction, the realignment $N_S$ is obtained from $N$ by specifying $v_i$ to be $w_i$ if $i\in S$ and $u_i$ if $u\notin S$. Hence, $N$ extends the embedded copy of $(I^k)_1$. Since $X_n$ is a cubical complex, this extension is automatically unique.
		\end{proof}

		\begin{proposition} \label{prop:AutomorphismReduction}
			$\Aut(X_n) \cong \Aut(\mathcal{G}_n)$.
		\end{proposition}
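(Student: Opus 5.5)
The plan is to show that restriction to the $1$-skeleton, $\rho\colon \Aut(X_n)\to \Aut(\mathcal{G}_n)$, $\phi\mapsto \phi|_{\mathcal{G}_n}$, is a group isomorphism. Any cubical automorphism of $X_n$ maps vertices to vertices and edges to edges bijectively. It therefore restricts to a graph automorphism of $\mathcal{G}_n$, and restriction is clearly compatible with composition. So $\rho$ is a well-defined homomorphism.

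\emph{Injectivity.} Suppose $\phi|_{\mathcal{G}_n}=\mathrm{id}$ and let $N$ be a $k$-cell with $k\geq 2$. The characteristic map $I^k\to X_n$ of $N$ restricts to an injective cellular map $(I^k)_1\to X_n$. Composing with $\phi$ gives the same map on $(I^k)_1$. By \autoref{lem:FlagComplex}, the extension of this map to $I^k$ is unique, so $\phi(N)=N$. Hence $\phi$ fixes every cell, and therefore equals the identity on the face poset. Since we identify $X_n$ with its face poset, this means $\phi=\mathrm{id}$.

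\emph{Surjectivity.} Let $\alpha\in\Aut(\mathcal{G}_n)$. For every cell $N$ of dimension $k\geq 2$, fix a cubical identification $I^k\cong N$. This gives an injective map $(I^k)_1\hookrightarrow \mathcal{G}_n$, and composing with $\alpha$ yields another injective cellular map $(I^k)_1\hookrightarrow X_n$. By \autoref{lem:FlagComplex}, this extends uniquely to some $k$-cell, which we call $\alpha(N)$.

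This assignment does not depend on the chosen identification: changing the identification by a symmetry of the cube changes the map on $(I^k)_1$ but not its image, and a cell is determined by its vertex set. The assignment is order preserving, because for a face $F\le N$ the restriction of the boundary map of $\alpha(N)$ to the face $F$ is again an extension of $\alpha$ on the $1$-skeleton of $F$. By uniqueness, that restriction is $\alpha(F)$, so $\alpha(F)\le \alpha(N)$. Applying the same construction to $\alpha^{-1}$ gives the inverse poset map. Hence we obtain a cubical automorphism of $X_n$ whose restriction is $\alpha$.

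The main obstacle is the well-definedness step. One must check that a cell of $X_n$ is determined by its set of vertices, so that the extension is independent of how the cube is parametrised. This holds because in a cubical complex any two cells intersect in a common face, exactly as noted at the end of \autoref{lem:FourCycles}. It also requires the full strength of \autoref{lem:FlagComplex}, namely that every injective $(I^k)_1$, not just every square, bounds a cell.
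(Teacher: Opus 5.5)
Your proposal is correct and follows essentially the same route as the paper. In one direction you restrict to the $1$-skeleton; in the other you extend cube by cube using the flag property (\autoref{lem:FlagComplex}), with compatibility on faces coming from uniqueness of the extension — and, beyond the paper, you spell out injectivity, independence of the chosen cube parametrisation (since cells are determined by their vertex sets), and the inverse via $\alpha^{-1}$.
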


		\begin{proof}
			Clearly, any automorphism $\tilde{f}:X_n\to X_n$ restricts to an automorphism $f:\mathcal{G}_n\to \mathcal{G}_n$. On the other hand, any automorphism $f: \mathcal{G}_n\to \mathcal{G}_n$ can be extended uniquely to an automorphism $\tilde{f}:X_n\to X_n$ by applying \autoref{lem:FlagComplex} to the diagram 
			\begin{center}
				\begin{tikzcd}
					(I^k)_1 \arrow[rr, hook] \arrow[d, hook] && I^k \arrow[d, dashed, "\exists!"] \\
					\mathcal{G}_n \arrow[r, "f"] & \mathcal{G}_n \arrow[r, hook] & X_n
				\end{tikzcd}
			\end{center}
			for each cube $I^k \subseteq X_n$ with $k\geq 2$. We denote this extension by $\tilde{f}|_{I^k}$. If $I^{k-1}$ is a facet of $I^k\subseteq X_n$, then $f|_{(I^{k-1})_1} = f|_{(I^k)_1}$. Since the extension of the above diagram is unique, it follows that
			\[
				\tilde{f}|_{I^{k-1}} = (\tilde{f}|_{I^k})|_{I^{k-1}}.
			\]
			This property extends to all faces, and hence to any intersection of cells.
		\end{proof}

	\subsection{A Rigid Subcomplex} \label{sec:RigidSubcomplex}

		Next, we identify an $\Aut(\mathcal{G}_n)$-invariant subgraph $H\subseteq \mathcal{G}_n$ whose automorphism group is the symmetric group $S_n$. The key idea is that any automorphism of $X_n$ must preserve the dimension of the cubes, and thus restricts to an automorphism on $\Sigma_n = \Sigma_{K_n}$, which contains exactly the $(n-2)$-cubes $C_{i,j}$. The only vertices of a cube $C_{i,j}$ that lie in another $(n-2)$-cube are the star trees $s_i$ and $s_j$, which occur at opposite corners. The automorphisms of a $C_{i,j}$ fixing these two vertices form a group isomorphic to $S_{n-2}$, and such automorphisms may be chosen independently for each cube $C_{i,j}$. Together with the natural action of $S_n$, it follows that
		\[
			\Aut(\Sigma_n) = S_{n-2}^{\binom{n}{2}} \rtimes S_n.
		\]
		Hence, to obtain a rigid subgraph $H$, we must include additional cells linking distinct $(n-2)$-cubes. 

		\begin{definition}
			Let $i\in [n]$. Let
			\[
				H_i = \{ T\in V(\mathcal{G}_n) \mid d_{\mathcal{G}_n} (T,s_i) \leq 2 \}
			\]
			denote the induced subgraph of all network realignments of distance at most 2 from the star tree $s_i$, and let
			\[
				H = (\Sigma_n)_1 \cup \{ T\in V(\mathcal{G}_n) \mid \exists i\in [n]: d_{\mathcal{G}_n} (T,s_i) \leq 2 \}.
			\]
		\end{definition}

		Since automorphisms of $\mathcal{G}_n$ permute the star trees and preserve graph distances, the subgraph $H$ is $\Aut(\mathcal{G}_n)$-invariant. Hence, the restriction
		\[
			\Aut(\mathcal{G}_n) \to \Aut(H)
		\]
		is well-defined.

		\begin{definition}
			Let $T$ be a vertex of $\mathcal{G}_n$, and $v$ a leaf of $T$. Let $u\neq v$ be adjacent to $p_T(v)$. Define
			\[
				T^{v\rightsquigarrow u} = (T^{v\to (p_T(v),u)})^{v\to u}.
			\]
		\end{definition}

		\begin{lemma} \label{lem:FixStarTrees}
			Let $n\geq 5$, and $f: H\to H$ be an automorphism. If $f$ fixes each star tree, then $f = id_H$.
		\end{lemma}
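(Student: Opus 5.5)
Since $f$ preserves graph distances in $H$ and fixes every star tree, and $H$ is the union of $(\Sigma_n)_1$ with the balls of radius $2$ around the star trees, I would show that $f$ fixes every vertex of $H$. Because $H$ is an induced subgraph on its vertex set, this already gives $f=\mathrm{id}_H$. The main tool is \autoref{lem:FourCycles}: a $4$-cycle in $\mathcal{G}_n$ is determined by any three of its vertices. So fixing a few vertices of a square forces the fourth to be fixed. Distances in $H$ are not a priori distances in $\mathcal{G}_n$, so I would first note that for vertices in the balls $H_i$ the relevant geodesics to $s_i$ stay inside $H_i$. Hence $d_H(T,s_i)=d_{\mathcal{G}_n}(T,s_i)$ whenever this is at most $2$.

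\emph{Step 1: the neighbours of $s_i$.} These are the trees $T^{(x,y)}_i$ obtained from $s_i$ by sliding a leaf $x$ along $(i,y)$, i.e.\ $i$ and $y$ adjacent with $x$ hanging on $y$. Their number is $(n-1)(n-2)$, indexed by ordered pairs $(x,y)$. I would separate them by distances to the other star trees. The tree $T^{(x,y)}_i$ lies on the cube $C_{i,y}$ and has distance $n-3$ to $s_y$, while its distance to $s_z$ for $z\notin\{i,y\}$ is $n-1$ or similar (computed via $\Psi$ and $\Phi$, \autoref{boundineq}, \autoref{centraleq}). This identifies $y$, but the leaf $x$ is identified only up to the $S_{n-2}$-symmetry of $C_{i,y}$. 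To identify $x$, I would use a second star tree, namely $s_x$, through a path of length $2$. The tree $T^{(x,y)}_i$ has a common neighbour with $s_i$-neighbours in other cubes, and the vertex $T_i^{x\rightsquigarrow y}$-type trees (two steps from $s_i$) relate to $s_x$. Concretely, I expect the tree obtained from $s_i$ by the double slide $x\rightsquigarrow$ to lie at a distance from $s_x$ that distinguishes $x$. Alternatively, one can count common neighbours between $T^{(x,y)}_i$ and $T^{(x',y')}_i$: they span a square exactly when $x\neq x'$ and $x\notin\{y'\}$, $x'\notin\{y\}$. Such incidence data, which $f$ preserves, determines the labels $(x,y)$ once $y$ is known. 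Here $n\ge5$ is needed, so that there are enough leaves to break the symmetry, matching the failure at $n=4$.

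\emph{Step 2: distance-$2$ vertices of $s_i$.} Each such $T$ is either the fourth vertex of a square through $s_i$ and two of its neighbours (two independent slides), or a tree reached by two slides of the same leaf or a dependent slide, such as $s_i^{v\rightsquigarrow u}$-type configurations. In the first case, Step 1 and \autoref{lem:FourCycles} show that $T$ is fixed. In the second case, I would pin $T$ down as the unique common neighbour of suitable fixed vertices, for instance two fixed neighbours of $s_i$ together with a fixed vertex of some $C_{i,j}$.

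\emph{Step 3: the vertices of $(\Sigma_n)_1$.} A vertex of $C_{i,j}$ is a tree in which every other vertex hangs on $i$ or on $j$. By induction on the distance from $s_i$ along the cube, each such vertex is the fourth vertex of a square whose other three vertices are already fixed. This uses \autoref{lem:FourCycles} again, starting from the neighbours fixed in Step 1.

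I expect Step 1, the distinguishing of the moved leaf $x$, to be the main obstacle, since it is exactly where the $S_{n-2}$-ambiguity of $\Aut(\Sigma_n)$ must be killed by the extra radius-$2$ material.
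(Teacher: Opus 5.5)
Your route differs from the paper's and can be completed. As written, though, the decisive point of Step~1 is only asserted, and the first mechanism you offer for it does not work.

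\textbf{Distances to star trees cannot recover $x$.} The tree $T_i^{(x,y)}$ has distance $1$ to $s_i$, $n-3$ to $s_y$, and $n-1$ to every other $s_z$, including $z=x$. Indeed, at least $n-2$ edges must be replaced. A route of exactly that length would have to attach a vertex directly to $z$ at every step, and this deadlocks: some vertex can only be moved onto $z$ after its parent has been, while that parent can only move after the vertex has left it. So this profile does not depend on $x$.

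\textbf{Your second idea works; here is the missing argument.} Let $\sigma_y$ be the permutation of $[n]\setminus\{i,y\}$ with $f(T_i^{(x,y)})=T_i^{(\sigma_y(x),y)}$. It is well defined because $f$ preserves $C_{i,y}$, which is the union of all geodesics from $s_i$ to $s_y$.
\begin{enumerate}
\item Two neighbours of $s_i$ have a common neighbour other than $s_i$ in $H$ if and only if they do in $\mathcal{G}_n$, since any such vertex lies in the induced subgraph $H_i$. By \autoref{lem:FourCycles}, this happens exactly when the two slides are independent. Hence $f$ preserves independence.
\item For $y\neq y'$, the neighbour $T_i^{(y,y')}$ is independent of no $T_i^{(x,y)}$, because its slid leaf $y$ is an endpoint of the edge $(i,y)$ used by the latter.
\item Suppose $\sigma_{y'}(y)\neq y$. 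The images must still be dependent for every $x$, which forces $\sigma_y(x)\in\{\sigma_{y'}(y),y'\}$ for all $n-2\geq 3$ values of $x$. This contradicts injectivity.
\item Hence $\sigma_{y'}=\mathrm{id}$ for every $y'$, so every neighbour of every star tree is fixed. This is exactly where $n\geq 5$ enters.
\end{enumerate}
Steps~2 and~3 then go through as you describe, via \autoref{lem:FourCycles}. The second case of Step~2 is in fact empty. Every vertex at distance $2$ from $s_i$ is either a double star in some $C_{i,y}$ or a spider with centre $i$ and legs $i,y,x$ and $i,y',x'$. Both are obtained from $s_i$ by two independent slides.

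\textbf{Comparison with the paper.} The paper argues by contradiction.
\begin{enumerate}
\item It first shows that a non-fixed vertex can be taken in $\Sigma_n$. This uses the two geodesics between $\tilde T^{v_1\rightsquigarrow i}$ and $\tilde T^{v_2\rightsquigarrow i}$, one through $s_i$ and one through $\tilde T$.
\item It then uses $\Aut(\Sigma_n)\cong S_{n-2}^{\binom{n}{2}}\rtimes S_n$ to get a non-trivial coordinate permutation $\sigma$ of $C_{1,2}$, and a neighbour $T$ of $s_1$ with $\sigma(k)\neq k$.
\item Finally it exhibits the path $T, T^{j\rightsquigarrow\sigma(k)}, T_2$ into $C_{1,\sigma(k)}$, whose image is forced through $s_1$.
\end{enumerate}
Your proof is direct. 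Its only rigidity input is the independence relation between neighbours of $s_i$ lying in different cubes. Everything else is $4$-cycle completion, the mechanism the paper uses only later, for \autoref{prop:FourCycleCompletion}.

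Your version avoids the reduction step and the description of $\Aut(\Sigma_n)$. It also shows positively that all of $H$ is recovered from the star trees and their neighbours by square completion. The paper's version is shorter and needs only one uniqueness-of-geodesic check, but it relies on that description of $\Aut(\Sigma_n)$.
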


		\begin{proof}
			If $f$ fixes each star tree, then $f(C_{i,j}) = C_{i,j}$  and $f(H_i) = H_i$. Suppose $f\neq id_H$. 

			In this case, there always exist a vertex $T$ with $f(T) \neq T$. We claim that such a vertex can be chosen to lie in $\Sigma_n$. Let $\tilde{T}$ be a tree such that $f(\tilde{T}) \neq \tilde{T}$. If $\tilde{T}\in \Sigma_n$, set $T = \tilde{T}$. Otherwise $\tilde{T}\in H\setminus \Sigma_n$. Then $\tilde{T}$ contains a single path $v_1, j_1, i, j_2, v_2$ of length 4 and every other vertex is a leaf of $\tilde{T}$ with parent $i$. Further, $\tilde{T}^{v_1\rightsquigarrow i}\in C_{i,j_2}$ and $\tilde{T}^{v_2\rightsquigarrow i}\in C_{i, j_1}$. Note that there exist exactly two geodesics between $\tilde{T}^{v_1\rightsquigarrow i}$ and $\tilde{T}^{v_2\rightsquigarrow i}$, namely
			\[
				\tilde{T}^{v_1\rightsquigarrow i}, s_i, \tilde{T}^{v_2\rightsquigarrow i} \quad \text{and} \quad \tilde{T}^{v_1\rightsquigarrow i}, \tilde{T}, \tilde{T}^{v_2\rightsquigarrow i}.
			\]
			Since $f(s_i) = s_i$ and $f(\tilde{T}) \neq \tilde{T}$, we can not have both
			\[
				f(\tilde{T}^{v_1\rightsquigarrow i}) = \tilde{T}^{v_1\rightsquigarrow i} \quad \text{and} \quad f(\tilde{T}^{v_2\rightsquigarrow i}) = \tilde{T}^{v_2\rightsquigarrow i}.
			\]
			Hence, there exists $T\in \Sigma_n$ with $f(T) \neq T$.

			Without loss of generality, assume $T\in C_{1,2}$. Since $\Aut(\Sigma_n) = S_{n-2}^{\binom{n}{2}} \rtimes S_n$, it follows that $f|_{C_{1,2}}$ is induced by a permutation $\sigma$ of the vertices $3, \ldots, n$. Thus, $T$ can be chosen to satisfy $d_{\mathcal{G}_n} (T, s_1) = 1$. Then there exists a single leaf $k$ with $p_T(k) = 2$, and all other vertices are adjacent to $1$. Since $f(T) \neq T$ and $f(C_{1,2}) = C_{1,2}$, we have $\sigma(k) \neq k$. 
			
			Let $j\in [n]\setminus \{1,2,k,\sigma(k)\}$ and consider the path
			\[
				T, T_1 = T^{j\rightsquigarrow \sigma(k)}, T_2 = T_1^{k\rightsquigarrow 1}.
			\]
			\autoref{fig:IdentityContradiction} shows the network realignments $T$ and $T_2$ and their images under $f$. We have $d_{\mathcal{G}_n} (T, T_2) = 2$ and $T_2 \in C_{1,\sigma(k)} \cap H_1$. It follows that $f(T_2) \in C_{1,\sigma(k)}\cap H_1$. Clearly, the only path of length 2 from $f(T)$ to $f(T_2)$ passes through the star tree $s_1$. Therefore, $f(T_1) = s_1 = f(s_1)$, contradicting that $f$ is an automorphism. Hence, $f = id_H$.
		\end{proof}

		\begin{figure}[tbp]
			\centering
			\begin{tikzpicture}[dot/.style={circle, draw, fill, inner sep=0pt, minimum size=4pt}]

				\newcommand{\diagramScale}{0.7}

				\begin{scope}[scale=\diagramScale]
					% vertices
					\node[dot, label={below:{\scriptsize $1$}}] (u1) at (0,0) {};
					\node[dot, label={below:{\scriptsize $2$}}] (u2) at (1,0) {};
					\node[dot, label={right:{\scriptsize $k$}}] (u3) at (1.4,1) {};
					\node[dot, label={left:{\scriptsize $\sigma(k)$}}] (u4) at (-1,-.4) {};
					\node[dot, label={left:{\scriptsize $j$}}] (u5) at (-1,.4) {};
					\node[dot] (u6) at (-.4,1) {};
					\node[dot] (u7) at (.4,1) {};
					\node at (0,1) {$\cdot\!\cdot\!\cdot$};
					\node at (.5,-1.2) {$T$};
					% arrows
					\draw (u1) -- (u2) -- (u3);
					\draw (u4) -- (u1) -- (u5);
					\draw (u6) -- (u1) -- (u7);
				\end{scope}

				\begin{scope}[scale=\diagramScale, xshift = 5cm]
					% vertices
					\node[dot, label={below:{\scriptsize $1$}}] (v1) at (0,0) {};
					\node[dot, label={below:{\scriptsize $2$}}] (v2) at (1,0) {};
					\node[dot, label={right:{\scriptsize $k$}}] (v3) at (.8,1) {};
					\node[dot, label={below:{\scriptsize $\sigma(k)$}}] (v4) at (-1,0) {};
					\node[dot, label={left:{\scriptsize $j$}}] (v5) at (-1.6,1) {};
					\node[dot] (v6) at (-.8,1) {};
					\node[dot] (v7) at (0,1) {};
					\node at (-.4,1) {$\cdot\!\cdot\!\cdot$};
					\node at (0,-1.2) {$T_2$};
					% arrows
					\draw (v5) -- (v4) -- (v1) -- (v2);
					\draw (v1) -- (v3);
					\draw (v6) -- (v1) -- (v7);
				\end{scope}

				\begin{scope}[scale=\diagramScale, xshift = 8.5cm]
					% vertices
					\node[dot, label={below:{\scriptsize $1$}}] (w1) at (0,0) {};
					\node[dot, label={below:{\scriptsize $2$}}] (w2) at (1,0) {};
					%\node[dot, label={left:{\scriptsize $k$}}] (w3) at (-1,-.4) {};
					\node[dot, label={right:{\scriptsize $\sigma(k)$}}] (w4) at (1.4,1) {};
					%\node[dot, label={left:{\scriptsize $j$}}] (w5) at (-1,.4) {};
					\node[dot] (w6) at (-.4,1) {};
					\node[dot] (w7) at (.4,1) {};
					\node at (0,1) {$\cdot\!\cdot\!\cdot$};
					\node at (0.5,-1.2) {$f(T)$};
					% arrows
					\draw (w1) -- (w2) -- (w4);
					%\draw (w3) -- (w1) -- (w5);
					\draw (w6) -- (w1) -- (w7);
				\end{scope}

				\begin{scope}[scale=\diagramScale, xshift = 13.5cm]
					% vertices
					\node[dot, label={below:{\scriptsize $1$}}] (x1) at (0,0) {};
					\node[dot, label={below:{\scriptsize $2$}}] (x2) at (1,0) {};
					\node[dot] (x3) at (-1.4,1) {};
					\node[dot, label={below:{\scriptsize $\sigma(k)$}}] (x4) at (-1,0) {};
					\node[dot] (x5) at (-.4,1) {};
					\node[dot] (x6) at (.4,1) {};
					\node at (0,1) {$\cdot\!\cdot\!\cdot$};
					\node at (0,-1.2) {$f(T_2)$};
					% arrows
					\draw (x3) -- (x4) -- (x1) -- (x2);
					\draw (x5) -- (x1) -- (x6);
				\end{scope}
			\end{tikzpicture}
			\vspace{-3mm}
			\caption{The network realignments used in the proof of \autoref{lem:FixStarTrees}. While there are two paths of length 2 between $T$ and $T_2$, there is a single path of length 2 between $f(T)$ and $f(T_2)$. This path passes through the star tree $s_1$.}
			\label{fig:IdentityContradiction}
		\end{figure}

		\begin{proposition} \label{prop:RigidSubcomplex}
			$\Aut(H) \cong S_n$ for $n\geq 5$.
		\end{proposition}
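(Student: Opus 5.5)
We realise $\Aut(H)\cong S_n$ through a homomorphism $\Phi\colon \Aut(H)\to \operatorname{Sym}(\{s_1,\dots,s_n\})\cong S_n$, obtained by restricting an automorphism to the star trees. Three things are needed: $\Phi$ is well defined, $\Phi$ is injective, and $\Phi$ is surjective.

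\emph{Well-definedness.} We must show that every automorphism of $H$ permutes the star trees. We will characterise the star trees intrinsically inside $H$, using graph distances and degrees in $H$. Two candidate invariants:
\begin{itemize}
\item A star tree $s_i$ has degree $(n-1)(n-2)$ in $\mathcal{G}_n$: each of the $n-1$ leaves can slide along its edge to $i$ towards one of the $n-2$ other leaves. All these neighbours lie in $\Sigma_n\subseteq H$.
\item The star trees are exactly the vertices of $H$ lying in more than one of the $(n-2)$-cubes $C_{i,j}$.
\end{itemize}
Since we work in the graph $H$ and not in $X_n$, the most robust route is counting. We compare, for each vertex $T$ of $H$, its $H$-degree and the number of $4$-cycles through it, using \autoref{lem:FourCycles}. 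These quantities should single out the star trees for $n\geq 5$.

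Concretely, we tabulate the vertex types of $H$, namely trees at distance $0$, $1$ or $2$ from some star tree, together with the intermediate cube vertices. This is done via $\Psi$ and \autoref{centraleq}. We then check that only the star trees attain $H$-degree $(n-1)(n-2)$, or some comparably distinguishing count. This step is the main obstacle. Vertices deep inside $C_{i,j}$ have only slides of the form ``$x$ along $(i,j)$'' within $\Sigma_n$, but they may acquire extra neighbours from the $H_i$ parts, so the degree comparison needs care. If degree alone fails, I would instead use the number of common neighbours of pairs at distance $2$, or maximal cliques in the square graph.

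\emph{Injectivity.} Suppose $f,g\in\Aut(H)$ induce the same permutation of the star trees. Then $g^{-1}f$ fixes every star tree, so $g^{-1}f=\mathrm{id}_H$ by \autoref{lem:FixStarTrees}. Hence $\Phi$ is injective.

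\emph{Surjectivity.} Each $\pi\in S_n$ acts on $\mathcal{G}_n$ by relabelling. This action preserves distances and star trees, hence preserves $\Sigma_n$ and $H$, and it maps $s_i$ to $s_{\pi(i)}$. So every permutation of the star trees is realised by an automorphism of $H$, and therefore $\Aut(H)\cong S_n$.
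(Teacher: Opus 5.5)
Your proof has the same structure as the paper's: restrict to the star trees, get surjectivity from relabelling, and get injectivity from \autoref{lem:FixStarTrees}. The paper simply asserts that the restriction map $\Aut(H)\to S_n$ is defined (``Clearly''). You rightly see that one must show every automorphism of $H$ permutes the star trees, but you leave this step open as ``the main obstacle''. As written, your proposal is therefore incomplete at that point.

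The gap closes immediately with your first invariant, and no tabulation of vertex types is needed. The neighbours of a tree $T$ in $\mathcal{G}_n$ arise by choosing a leaf $v$ and a neighbour $u\neq v$ of $p_T(v)$. Hence $\deg_{\mathcal{G}_n}(T)\le\sum_{v}(\deg_T(p_T(v))-1)$, summed over the leaves $v$ of $T$.

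Suppose $T$ is not a star. Then no vertex of $T$ has degree $n-1$, so each summand is at most $n-3$. Also $T$ has at least two interior vertices, hence at most $n-2$ leaves. So $\deg_{\mathcal{G}_n}(T)\le(n-2)(n-3)$.

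A star tree $s_i$ has exactly $(n-1)(n-2)$ neighbours, since sliding a leaf $x$ onto a leaf $y$ gives a different tree for each ordered pair. All of these neighbours are at distance $1$ from $s_i$, so the corresponding edges lie in $H_i\subseteq H$.

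Because $H$ is a subgraph of $\mathcal{G}_n$, we have $\deg_H(T)\le\deg_{\mathcal{G}_n}(T)$ for every vertex $T$ of $H$. So the extra neighbours you worried about for vertices of $C_{i,j}$ can never raise their degree to $(n-1)(n-2)$. The star trees are exactly the vertices of maximal degree in $H$, so every automorphism of $H$ permutes them.

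With this inserted, your argument is complete, and it supplies a step the paper leaves implicit. The counts of $4$-cycles and common neighbours you proposed as fallbacks are unnecessary.
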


		\begin{proof}
			Clearly, the map
			\[
				\Aut(H) \to \Aut \mathcal{S}_{K_n} \cong S_n, \quad f\mapsto f|_{\mathcal{S}_{K_n}}.
			\]
			is surjective. Injectivity follows from \autoref{lem:FixStarTrees}.
		\end{proof}

	\subsection{Determining Automorphisms from Restrictions}

		It remains to show that the map
		\[
			\Aut(\mathcal{G}_n) \to \Aut(H)
		\]
		given by restriction is injective. To this end, we again make use of 4-cycles. As observed earlier, every 4-cycle in $\mathcal{G}_n$ is uniquely determined by any three of its vertices. Hence, if an automorphism fixes three vertices of a 4-cycle, then it must also fix the fourth.

		\begin{lemma} \label{lem:FourCycleExtension}
			Let $f: \mathcal{G}_n \to \mathcal{G}_n$ be an automorphism. Let $C = T_1,T_2,T_3,T_4$ be a 4-cycle in $\mathcal{G}_n$ such that $f(T_i) = T_i$ for $1\leq i\leq 3$, then $f(T_4) = T_4$. 
		\end{lemma}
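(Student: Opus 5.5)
The plan is to derive the statement directly from \autoref{lem:FourCycles}, using only that $f$ is a graph automorphism of $\mathcal{G}_n$.

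First, since $f$ is an automorphism, it maps edges to edges and is injective on vertices. Hence the image $f(T_1),f(T_2),f(T_3),f(T_4)$ is again a 4-cycle in $\mathcal{G}_n$. Because $f(T_i)=T_i$ for $1\le i\le 3$, this image cycle has the form $T_1,T_2,T_3,f(T_4)$. In particular, $f(T_4)$ is adjacent to both $T_1$ and $T_3$, and $f(T_4)\notin\{T_1,T_2,T_3\}$ by injectivity of $f$.

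Second, we have two 4-cycles, $T_1,T_2,T_3,T_4$ and $T_1,T_2,T_3,f(T_4)$, which share the three vertices $T_1,T_2,T_3$ in the same cyclic positions. By \autoref{lem:FourCycles}, any 4-cycle in $\mathcal{G}_n$ is uniquely determined by any three of its vertices, so the two cycles coincide. The remaining vertex of each must therefore be equal, giving $f(T_4)=T_4$.

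The only delicate point is making sure the uniqueness in \autoref{lem:FourCycles} applies in the form we need. Concretely, the fourth vertex is forced once the path $T_1,T_2,T_3$ is fixed. This follows from the proof of that lemma. Both cycles are injective maps $\partial I^2\to X_n$, each bounds a unique 2-cell, and that 2-cell is determined by $N_1,N_2,N_3$ through the data $v_1,v_2,u_1,u_2,w_1,w_2$. The fourth vertex of that 2-cell is then determined as well. Since the argument there was carried out for an arbitrary injective 4-cycle with three vertices given in order, no further work beyond citing the lemma should be needed.
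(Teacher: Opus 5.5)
Your proposal is correct and matches the paper's argument: $f$ maps $C$ to a 4-cycle through $T_1,T_2,T_3$, and since \autoref{lem:FourCycles} says a 4-cycle in $\mathcal{G}_n$ is determined by any three of its vertices, this image is $C$ itself, so $f(T_4)=T_4$. Your extra discussion of the ``delicate point'' is harmless but unnecessary, because the uniqueness statement of \autoref{lem:FourCycles} already covers this case.
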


		\begin{proof}
			Any automorphism must map $C$ to a 4-cycle. By \autoref{lem:FourCycles}, the only 4-cycle that contains $T_1,T_2,T_3$ is $C$. Hence, $f(C) = C$ and $f(T_4) = T_4$. 
		\end{proof}

		More generally, automorphisms of $G_n$ are determined by their values on any subgraph from which all remaining vertices can be recovered by repeatedly adjoining the fourth vertex of a 4-cycle whenever the other three vertices are already present. This motivates the following closure property for subgraphs.

		\begin{definition} \label{def:FourCycleCompletion}
			Let $H \subseteq \mathcal{G}_n$ be a subgraph. We say that $H\subseteq \mathcal{G}_n$ is \textit{closed under 4-cycle completion} if for every 4-cycle $C$ in $\mathcal{G}_n$ 
			\begin{equation}
				|V(C) \cap V(H)| \geq 3 \text{ implies } C\subseteq H.
			\end{equation}
		\end{definition}

		From the following proposition we can deduce that $H$ is such an aforementioned subgraph whose values fully determine an automorphism of $G_n$. This will allow us to directly determine the automorphism group $\Aut(G_n)$.

		\begin{proposition} \label{prop:FourCycleCompletion}
			Let $H$ be a subgraph of $\mathcal{G}_n$ that is closed under 4-cycle completion. If $\Sigma_n \subseteq H$, then $H = \mathcal{G}_n$.
		\end{proposition}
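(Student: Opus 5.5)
We show that every vertex of $\mathcal{G}_n$ lies in $H$, by induction on $\Phi_c$ for a suitable centre; since $H$ is closed under 4-cycle completion and the statement is about the subgraph spanned by all vertices, it then suffices to get all vertices. (Edges follow: any edge of $\mathcal{G}_n$ lies on a 4-cycle whose other vertices are in $H$, or we simply read $H$ as an induced subgraph, as $H$ in the previous definition is.)

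The key tool is a commuting-square argument. Let $T$ be a tree. If $T$ admits two leaf slides of distinct leaves $v_1,v_2$ along edges $(u_1,w_1)$ and $(u_2,w_2)$ with $v_i\notin\{u_j,w_j\}$, then the 2-dimensional realignment obtained by unspecifying $v_1,v_2$ has four vertices. These are $T$, the two single slides $T_1,T_2$, and the double slide $T_{12}$, and together they form a 4-cycle. Hence, if $T_1,T_2,T_{12}\in H$, then $T\in H$.

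Induct on $m(T)=\min_c\Phi_c(T)$. The case $m(T)=0$ means $T$ is a star, hence in $\Sigma_n\subseteq H$. For $m(T)=1$, the tree is a star $s_c$ with one leaf $k$ moved to hang off another leaf $j$; this tree lies in $C_{c,j}\subseteq\Sigma_n$.

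For the inductive step, let $m(T)\geq 2$ and let $c$ attain the minimum. By \autoref{iter}, sliding any leaf $v\neq c$ at distance at least $2$ from $c$ one step towards $c$ decreases $\Phi_c$ by one.
\begin{itemize}
\item \emph{Two suitable leaves exist.} Suppose we can find two distinct such leaves $v_1,v_2$, each at distance at least $2$ from $c$, whose slide edges $(p(v_i),p(p(v_i)))$ avoid the other leaf. Leaves can never be interior endpoints of these edges, so this condition is automatic. Then $T_1,T_2,T_{12}$ have $\Phi_c$ values $m-1,m-1,m-2$, so they lie in $H$ by induction, and the square puts $T$ in $H$.
\item \emph{Only one leaf is at distance at least $2$ from $c$.} Then $T$ is a star at $c$ with a single pendant path $c,x_1,\dots,x_\ell$ where $\ell\geq 3$, since $m\geq 2$ forces $\ell\geq 3$. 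Here we use a different square. Slide $x_\ell$ towards $c$, lowering $\Phi_c$. Independently, slide some other leaf $y$ of $c$ along $(c,x_1)$ to hang on $x_1$; this requires $n\geq 5$ so that such a $y$ exists. This second slide raises $\Phi_c$ by one. Sliding $x_\ell$ back instead decreases $\Phi_{x_1}$ or keeps $m$ small. Alternatively, choose the square so that the centre is $x_1$ or $x_2$: for a long path the minimiser of $\Phi$ is then a centroid, and we redo the count with respect to that centroid.
\end{itemize}

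The main obstacle is making the second case rigorous. More generally, we need a uniform choice of two slides such that all three other vertices of the square have strictly smaller $m$. Our plan is to run the induction on $\Psi(T)$ instead, since by \autoref{centraleq} it equals $\Phi_c$ at a centroid $c$. We then look for two independent slides towards the centroid that each decrease $\Psi$ by one. Away from the centroid such slides decrease $\Psi$ exactly, as in \autoref{lem:matchingdist}-type counting. The one-path case is handled by explicitly treating trees at distance at most $2$ from stars, which lie in $H$ by hand or via $\Sigma_n$ plus a single square.
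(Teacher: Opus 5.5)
Your first case is correct and is exactly the paper's Case~1. Since $\min_c\Phi_c=\Psi$, attained at centroids, two leaves at distance at least $2$ from a centroid $c$ span a $2$-cell whose three other vertices have smaller $\Psi$.

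\textbf{The gap: the comet case.} Let $T$ be formed by $c$ with pendant leaves $v_1,\dots,v_k$ and one pendant path $c,x_1,\dots,x_\ell$, $\ell\geq 3$, where $c$ is the unique centroid. (If $\ell=n/2$, then $x_1$ is a second centroid and your first case applies with respect to $x_1$; your case split should allow that choice.) What you write for this case is not an argument. In fact no argument of the form ``complete a square at $T$ whose other three vertices have smaller $m$'' can work.
\begin{itemize}
\item By \autoref{lem:FourCycles}, every $4$-cycle through $T$ bounds a $2$-cell. So the two neighbours of $T$ on it are slides of two \emph{distinct} leaves of $T$.
\item The only $\Psi$-decreasing slide from $T$ is that of $x_\ell$.
\item Sliding $v_j$ along $(c,v_{j'})$ raises $\Psi$ by one.
\item Sliding $v_j$ along $(c,x_1)$ changes $\sigma(c,x_1)$ from $\ell$ to $\min(\ell+1,n-\ell-1)\geq\ell$.
\end{itemize}
Hence every $4$-cycle through $T$ contains a vertex with $\Psi\geq\Psi(T)$, and induction on $\Psi$ alone never reaches $T$. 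This first happens for $n=7$, $\ell=3$. Your fallback does not help either. Here $\Psi(T)=\ell(\ell-1)/2\geq 3$, so $T$ is not within distance $2$ of any star tree. Moreover, in this proposition $H$ is only assumed to contain $\Sigma_n$, not any ball around the star trees.

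\textbf{The missing idea.} You need to let $\Psi$ rise temporarily while a second quantity controls the induction, which is what the paper does.
\begin{itemize}
\item Choose $T\notin H$ minimising $\Psi-F_{\mathrm{cent}}$ first and then $\Psi$.
\item In the comet case, slide $v_1,\dots,v_k$ one at a time from $c_1=c$ to $c_2=x_1$, giving $T=T_0,T_1,\dots,T_k$.
\item While $c_1$ remains a centroid, $\Psi$ and $F_{c_1}$ both increase by one per step. Checking the transition step separately shows that $\Psi-F_{\mathrm{cent}}$ never increases along the chain.
\item Meanwhile $\Psi(T_k)=\Psi(T)-(\ell-1)<\Psi(T)$, so $T_k\in H$.
\item Pick an index with $T_i\notin H$ and $T_{i+1}\in H$. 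Set $S_j=T_j^{x_\ell\rightsquigarrow x_{\ell-2}}$ and consider the square $T_i,S_i,S_{i+1},T_{i+1}$.
\item Both $S_i$ and $S_{i+1}$ lie in $H$. For $\ell=3$ they are double stars in $\Sigma_n$; otherwise they have strictly smaller $\Psi-F_{\mathrm{cent}}$ than $T$.
\end{itemize}
This square has three vertices in $H$ but not $T_i$, contradicting closure under $4$-cycle completion. Without some device of this kind, your induction does not cover the comets.
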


		We begin by introducing additional tools needed for the proof.

		\begin{lemma} \label{lem:MaxComponentCentroid}
			Let $T$ be a tree, and let $c_1, c_2$ be two distinct centroids of $T$. Then $F_{c_1}(T) = F_{c_2}(T)$. 
		\end{lemma}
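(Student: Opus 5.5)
Since $c_1\neq c_2$ are both centroids, \autoref{existenceofcentral} tells us that $n$ is even and that $c_1$ and $c_2$ are adjacent in $T$. The plan is to compute both $F_{c_1}(T)$ and $F_{c_2}(T)$ explicitly using the edge $e=(c_1,c_2)$ and show that each equals $n/2$.

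Removing $e$ splits $T$ into two subtrees. Let $A$ be the one containing $c_1$ and $B$ the one containing $c_2$, so that $|V(A)|+|V(B)|=n$.

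We start with $c_1$. The vertex set $V(B)$ is a connected component of $T\setminus\{c_1\}$, since $c_2$ is the only neighbour of $c_1$ in $B$ and $B$ is connected. Because $c_1$ is a centroid, $|V(B)|\leq n/2$.

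The symmetric argument applies to $c_2$: the set $V(A)$ is a connected component of $T\setminus\{c_2\}$, so $|V(A)|\leq n/2$. Combined with $|V(A)|+|V(B)|=n$, this forces $|V(A)|=|V(B)|=n/2$.

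It follows that $F_{c_1}(T)\geq |V(B)|=n/2$. The centroid condition gives $F_{c_1}(T)\leq n/2$, hence $F_{c_1}(T)=n/2$. By the same reasoning $F_{c_2}(T)=n/2$, so the two values agree.

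The only point that needs care is checking that $V(B)$ really is an entire component of $T\setminus\{c_1\}$ and not merely part of one. This holds because any path from a vertex of $B$ to a vertex outside $B\cup\{c_1\}$ must pass through the edge $e$, and therefore through $c_1$. Everything else is immediate.
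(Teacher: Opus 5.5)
Your proof is correct and follows essentially the same route as the paper. Both use \autoref{existenceofcentral} to get the edge $(c_1,c_2)$, observe that each side of this edge is a component of $T$ with the opposite centroid removed, and deduce that both sides have exactly $n/2$ vertices. You also spell out the final step ($n/2=|V(B)|\leq F_{c_1}(T)\leq n/2$) and the component check, which the paper leaves implicit.
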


		\begin{proof}
			By \autoref{existenceofcentral}, $(c_1, c_2) \in E(T)$. Write $T\setminus \{(c_1,c_2)\} = A_{(c_1,c_2)} \cup B_{(c_1,c_2)}$ with $c_1 \in V(A_{(c_1,c_2)})$ and $c_2 \in V(B_{(c_1,c_2)})$. Then $|V(B_{(c_1,c_2)})| \leq \frac{n}{2}$, since it is a connected component of $T\setminus c_1$ and $c_1$ is a centroid. Also $|V(A_{(c_1,c_2)})| \leq \frac{n}{2}$, since it is a connected component of $T\setminus c_2$ and $c_2$ is a centroid. Further,
			\[
				|V(A_{(c_1,c_2)})| + |V(B_{(c_1,c_2)})| = |V(T)| = n.
			\] 
			Hence, $|V(A_{(c_1,c_2)})| = \frac{n}{2} = |V(B_{(c_1,c_2)})|$, and the equality follows.
		\end{proof}

		For a tree $T$, define $F_{\mathrm{cent}}(T) = F_c(T)$ for any centroid $c$ of $T$. By \autoref{lem:MaxComponentCentroid}, this is well-defined.

		\begin{lemma} \label{lem:MaxComponentLeafSlide}
			Let $T$ be a tree and $c$ a centroid of $T$. Let $v$ be a leaf of $T$ with $p_T(v) \neq c$. Denote by $u_v$ the vertex adjacent to $p_T(v)$ along the unique path from $p_T(v)$ to $c$. Then
			\[
				F_\mathrm{cent}(T) - 1 \leq F_\mathrm{cent}(T^{v\rightsquigarrow u_v}) \leq F_\mathrm{cent}(T).
			\]
			Moreover, $F_\mathrm{cent}(T^{v\rightsquigarrow u_v}) = F_\mathrm{cent}(T)$, whenever $u_v \neq c$.
		\end{lemma}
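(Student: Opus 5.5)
The plan is to show that the chosen centroid $c$ of $T$ is still a centroid of $T' = T^{v\rightsquigarrow u_v}$. Then \autoref{lem:MaxComponentCentroid} lets us compute both sides with the same vertex: $F_\mathrm{cent}(T) = F_c(T)$ and $F_\mathrm{cent}(T') = F_c(T')$. The lemma then reduces to comparing the component sizes of $T\setminus\{c\}$ and $T'\setminus\{c\}$.

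First we fix the setup. Write $p = p_T(v)$. We have $v\neq c$: otherwise $p$ is adjacent to $c = v$, so $u_v = v$ and the slide is not defined. Since $p\neq c$, the vertices $p$ and $v$ lie in one component $C$ of $T\setminus\{c\}$. By definition of $T^{v\rightsquigarrow u_v}$, the tree $T'$ is obtained by deleting the edge $(v,p)$ and adding $(v,u_v)$, where $(p,u_v)\in E(T)$. Since $v$ is a leaf, $T\setminus\{v\} = T'\setminus\{v\}$. So every component of $T\setminus\{c\}$ other than $C$ is also a component of $T'\setminus\{c\}$, with the same vertex set.

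Case $u_v\neq c$. Then $u_v$ lies on the path from $p$ to $c$ but differs from $c$, so $u_v\in C$. Reattaching $v$ to $u_v$ leaves $v$ in the component $C\setminus\{v\}\cup\{v\} = C$. Hence $T'\setminus\{c\}$ has exactly the same component vertex sets as $T\setminus\{c\}$. It follows that $F_c(T') = F_c(T)\leq n/2$, so $c$ is a centroid of $T'$ and $F_\mathrm{cent}(T') = F_\mathrm{cent}(T)$. This gives both the inequalities and the equality statement.

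Case $u_v = c$. Now $v$ becomes a leaf attached directly to $c$. The components of $T'\setminus\{c\}$ are the unchanged components other than $C$, the component $C\setminus\{v\}$ (possibly empty, when $C=\{p,v\}$ fails to shrink to a nonempty set only if $|C|=1$, which is impossible since $p,v\in C$), and the singleton $\{v\}$. Every size is at most the corresponding old size or equal to $1\leq n/2$, so $c$ remains a centroid and $F_c(T')\leq F_c(T)$. For the lower bound, the old maximum is attained either by a component other than $C$, which survives unchanged, or by $C$, whose trace $C\setminus\{v\}$ has size $|C|-1$. In both cases $F_c(T')\geq F_c(T)-1$.

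The only point needing care is to justify, via \autoref{lem:MaxComponentCentroid}, that evaluating $F_\mathrm{cent}$ at the specific centroid $c$ is legitimate for both trees. This is exactly why we first establish that $c$ stays a centroid of $T'$. The remaining steps are bookkeeping of component vertex sets.
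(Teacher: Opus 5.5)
Your proof is correct and essentially the same as the paper's: in both cases you compare the components of $T\setminus\{c\}$ and $T^{v\rightsquigarrow u_v}\setminus\{c\}$ using $T\setminus\{v\} = T^{v\rightsquigarrow u_v}\setminus\{v\}$, conclude that $c$ remains a centroid, and read off the bounds on $F_c$. The parenthetical about $C\setminus\{v\}$ being ``possibly empty'' is garbled; it should simply say that $C\setminus\{v\}$ is nonempty (it contains $p$) and connected (since $v$ is a leaf).
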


		\begin{proof}
			If $u_v \neq c$, then $p_T(v)$ and $u_v$ lie in the same connected component of $T\setminus \{c\}$. Hence, $v$ remains in its connected component and the sizes of the connected components do not change, since $T\setminus \{v\} = T^{v\rightsquigarrow u_v}\setminus \{v\}$. Thus, $F_c(T) = F_c(T^{v\rightsquigarrow u_v})$ and  $c$ is a centroid of $T^{v\rightsquigarrow u_v}$.

			If $u_v = c$, let $C$ be the connected component of $T\setminus \{c\}$ containing $v$. Then $C\setminus \{v \}$ and $\{v\}$ are connected components of $T^{v\rightsquigarrow c}\setminus \{c\}$. All other connected components remain unchanged. Hence, $F_c(T)-1 \leq F_c(T^{v\rightsquigarrow u_v}) \leq F_c(T)$, and thus $c$ is a centroid of $T^{v\rightsquigarrow u_v}$. 
		\end{proof}

		Now, we are ready to prove \autoref{prop:FourCycleCompletion}. The strategy is as follows. Given a proper subgraph $H\subset \mathcal{G}_n$ with $\Sigma_n \subseteq H$, we seek a vertex $T\notin H$ minimising a suitable quantity and a 4-cycle containing $T$ whose other three vertices take strictly lower values in this quantity. Such a 4-cycle would show that $H$ is not closed under 4-cycle completion. 

		A natural candidate is the quantity $\Psi$. However, for the tree $T$ shown in \autoref{fig:FourCycleCompletion}, no such 4-cycle exists. In this case, the quantity $\Psi - F_\mathrm{cent}$ allows us to find a different tree $\tilde{T}$ with $\Psi(\tilde{T}) - F_\mathrm{cent}(\tilde{T}) = \Psi(T) - F_\mathrm{cent}(T)$ together with a 4-cycle containing $\tilde{T}$ whose other three vertices take strictly lower values in $\Psi - F_\mathrm{cent}$. We combine these two quantities in the following.

		\begin{proof} [Proof of \autoref{prop:FourCycleCompletion}]
			It suffices to show that for any proper subgraph $H\subset \mathcal{G}_n$ with $\Sigma_n \subseteq H$ there exists a 4-cycle $C$ with $|V(C) \cap V(H)| = 3$. This proves that $H$ is not closed under 4-cycle completion.

			Let $T\in \mathcal{G}_n\setminus H$ that minimises $\Psi(T) - F_\mathrm{cent}(T)$, and among all such trees, minimises $\Psi(T)$. 

			\textbf{Case 1:} $T$ has a centroid $c$ and two leaves $v,w$ with $p_T(v) \neq c \neq p_T(w)$.\\
			Let $u_v$ and $u_w$ denote the vertices adjacent to $p_T(v)$ and $p_T(w)$ on the unique paths from $p_T(v)$ to $c$ and from $p_T(w)$ to $c$, respectively. Set
			\[
				T_1 = T^{v\rightsquigarrow u_v}, \quad T_3 = T^{w\rightsquigarrow u_w}, \quad \text{and} \quad T_2 = T_1^{w\rightsquigarrow u_w} = T_3^{v\rightsquigarrow u_v}.
			\]
			By Lemmas \ref{iter} and \ref{lem:MaxComponentLeafSlide}, $\Psi(T_i) - F_\mathrm{cent}(T_i) \leq \Psi(T) - F_\mathrm{cent}(T)$ and $\Psi(T_i) < \Psi(T)$ for $1\leq i \leq 3$. Hence, $T_1, T_2, T_3\in H$. Therefore, the 4-cycle $C = T, T_1, T_2, T_3, T$ satisfies $|V(C) \cap V(H)| = 3$.

			\textbf{Case 2:} For every centroid $c$ of $T$, at most one leaf $v$ satisfies $p_T(v) \neq c$.\\
			Suppose $T$ has two centroids. Then $T$ has at most two leaves each connected to a centroid. Thus, $|V(T)| \leq 4$, contradicting $T\notin \Sigma_n\subseteq H$. 
			
			Let $c_1$ denote the single centroid of $T$. Since $T\notin \Sigma_n$, there exists a leaf $v$ such that $d_T(v,c_1) \geq 3$. Let $c_2$ denote the vertex adjacent to $c_1$ along the unique path from $c_1$ to $v$. Let $c_2, v_1,\ldots, v_k$ denote the neighbours of $c_1$. Set
			\[
				T_0 = T \quad \text{and} \quad T_i = T_{i-1}^{v_i\rightsquigarrow c_2} \quad \text{for } 1\leq i\leq k.
			\]
			\autoref{fig:FourCycleCompletion} shows the trees $T, T_i$ and $T_k$ of the above sequence for a path from $c_1$ to $v$ of length 3. 
			% Figure depicts the involved trees in Case 2.2
			%-----------------------------------------------------------------
			\begin{figure}[tbp]
				\centering
				\begin{tikzpicture}[dot/.style={circle, draw, fill, inner sep=0pt, minimum size=4pt}]

					\newcommand{\diagramScale}{0.7}

					\begin{scope}[scale=\diagramScale]
						% vertices
						\node[dot, label={below:{\scriptsize $c_1$}}] (u1) at (0,0) {};
						\node[dot, label={below:{\scriptsize $c_2$}}] (u2) at (1,0) {};
						\node[dot, label={below:{\scriptsize $u_v$}}] (u3) at (2,0) {};
						\node[dot, label={below:{\scriptsize $v$}}] (u4) at (3,0) {};
						\node[dot, label={above:{\scriptsize $v_1$}}] (u5) at (-.4,1) {};
						\node[dot, label={above:{\scriptsize $v_k$}}] (u6) at (.4,1) {};
						\node at (0,1) {$\cdot\!\cdot\!\cdot$};
						\node at (1,-1.2) {$T$};
						% arrows
						\draw (u1) -- (u2) -- (u3) -- (u4);
						\draw (u5) -- (u1) -- (u6);
					\end{scope}

					\begin{scope}[scale=\diagramScale, xshift = 5.25cm]
						% vertices
						\node[dot, label={below:{\scriptsize $c_1$}}] (v1) at (0,0) {};
						\node[dot, label={below:{\scriptsize $c_2$}}] (v2) at (1,0) {};
						\node[dot, label={below:{\scriptsize $u_v$}}] (v3) at (2,0) {};
						\node[dot, label={below:{\scriptsize $v$}}] (v4) at (3,0) {};
						\node[dot, label={[yshift=-.8pt] above:{\scriptsize $v_{i+1}$}}] (v5) at (-.7,1) {};
						\node[dot, label={above:{\scriptsize $v_k$}}] (v6) at (.1,1) {};
						\node[dot, label={above:{\scriptsize $v_1$}}] (v7) at (.9,1) {};
						\node[dot, label={above:{\scriptsize $v_i$}}] (v8) at (1.7,1) {};
						\node at (-.3,1) {$\cdot\!\cdot\!\cdot$};
						\node at (1.3,1) {$\cdot\!\cdot\!\cdot$};
						\node at (1,-1.2) {$T_i$};
						% arrows
						\draw (v1) -- (v2) -- (v3) -- (v4);
						\draw (v5) -- (v1) -- (v6);
						\draw (v7) -- (v2) -- (v8);
					\end{scope}

					\begin{scope}[scale=\diagramScale, xshift = 10cm]
						% vertices
						\node[dot, label={below:{\scriptsize $c_1$}}] (w1) at (0,0) {};
						\node[dot, label={below:{\scriptsize $c_2$}}] (w2) at (1,0) {};
						\node[dot, label={below:{\scriptsize $u_v$}}] (w3) at (2,0) {};
						\node[dot, label={below:{\scriptsize $v$}}] (w4) at (3,0) {};
						\node[dot, label={above:{\scriptsize $v_1$}}] (w5) at (.6,1) {};
						\node[dot, label={above:{\scriptsize $v_k$}}] (w6) at (1.4,1) {};
						\node at (1,1) {$\cdot\!\cdot\!\cdot$};
						\node at (1,-1.2) {$T_k$};
						% arrows
						\draw (w1) -- (w2) -- (w3) -- (w4);
						\draw (w5) -- (w2) -- (w6);
					\end{scope}
				\end{tikzpicture}
				\vspace{-3mm}
				\caption{If the tree $T$ minimises the quantity $\Psi$, then there is no 4-cycle containing $T$ whose other three vertices take strictly lower value in $\Psi$. In this situation, we can construct a sequence $T,T_1,\ldots, T_k$ to find a different tree $T_i$ and a desired 4-cycle. Along this sequence, $\Psi - F_\mathrm{cent}$ never increases.}
				\label{fig:FourCycleCompletion}
			\end{figure}
			%-----------------------------------------------------------------
			
			We claim that $\Psi(T_{i+1}) - F_\mathrm{cent} (T_{i+1}) \leq \Psi(T_i) - F_\mathrm{cent} (T_i)$ for $0\leq i \leq k-1$. By construction, the maximal connected component of $T_i\setminus \{c_1\}$ always contains all vertices except $c_1$ and $v_{i+1},\ldots, v_k$. Hence,
			\[
				F_{c_1} (T_i) = n-k+i-1.
			\]
			Meanwhile, the maximal connected component of $T_i\setminus c_2$ contains either $c_1$ together with $v_{i+1},\ldots, v_k$ or the unique path from $c_2$ to $v$ with $c_2$ removed. Therefore, 
			\[
				F_{c_2} (T_i) = \max \{ k-i+1, \, n-k-2 \}
			\]
			for all $i$. Further, for every $w \neq c_1,c_2$, the sizes of the connected components of $T_i\setminus \{w\}$ and $T\setminus \{w\}$ agree. Since $w$ is not a centroid of $T$, it is not a centroid of $T_i$ either. Hence, we distinguish the following three cases.

			If $c_1$ is a centroid of both $T_i$ and $T_{i+1}$, then 
			\[
				F_{c_1} (T_{i+1}) = n - k + i = F_{c_1} (T_i) + 1,
			\]
			and by \autoref{iter} we have
			\[
				\Psi(T_{i+1}) = \Phi_{c_1}(T_{i+1}) = \Phi_{c_1} (T_i) + 1 = \Psi(T_i) + 1.
			\]
			Therefore, 
			\[
				\Psi(T_{i+1}) - F_\mathrm{cent} (T_{i+1}) = \Psi(T_i) - F_\mathrm{cent} (T_i).
			\]
			If $c_2$ is a centroid of both $T_i$ and $T_{i+1}$, then 
			\[
				\Psi(T_{i+1}) = \Phi_{c_2}(T_{i+1}) = \Phi_{c_2} (T_i) - 1 = \Psi(T_i) - 1
			\]
			by \autoref{iter}. Further, the value of $F_\mathrm{cent}$ changes along a leaf slide by at most one. Hence, 
			\[
				\Psi(T_{i+1}) - F_\mathrm{cent} (T_{i+1}) \leq \Psi(T_i) - F_\mathrm{cent} (T_i).
			\]
			It remains to assume that $c_1$ is only a centroid of $T_i$ and $c_2$ is only a centroid of $T_{i+1}$. Since $c_1$ is not a centroid of $T_{i+1}$, it follows that $\frac{n}{2} < F_{c_1}(T_{i+1}) = n-k+i$. Further, $\frac{n}{2} < F_{c_2}(T_i) = k-i+1$, since $c_2$ is not a centroid of $T_i$ and $n-k-2 \leq \frac{n}{2}$, as $c_2$ is not a centroid of $T$. Combining these two inequalities yields
			\[
				\frac{n}{2} - 1 < k-i < \frac{n}{2}.
			\]
			It follows that $n$ is odd and $k-i = \frac{n-1}{2}$. Hence, $F_{c_1}(T_i) = \frac{n-1}{2} = F_{c_2}(T_{i+1})$ and $\Psi(T_i)-\Psi(T_{i+1}) = \sigma_{T_i}(c_1,c_2) - \sigma_{T_{i+1}}(c_1,c_2) = 0$. Therefore,
			\[
				\Psi(T_{i+1}) - F_\mathrm{cent} (T_{i+1}) = \Psi(T_i) - F_\mathrm{cent} (T_i).
			\]

			Now, it suffices to show that $\Psi(T) > \Psi(T_k)$ to prove that $T_k\in H$. Since each leaf $v_1,\ldots,v_k$ was realigned along the edge $e = (c_1, c_2)$, we have $\sigma_T (\tilde{e}) = \sigma_{T_k} (\tilde{e})$ for all edges $\tilde{e}\neq e$. Hence,
			\[
				\Psi(T) - \Psi(T_k) = \sigma_T (e) - \sigma_{T_k} (e) = d_T(v,c_1) - 1 > 0,
			\]
			Therefore, since $T\notin H$ and $T_k\in H$, there must exist $0\leq i \leq k-1$ such that $T_i\notin H$ and $T_{i+1}\in H$. Let $u_v$ denote the vertex adjacent to $p_T(v)$ on the unique path from $p_T(v)$ to $c_1$. Set
			\[
				S_i = T_i^{v\rightsquigarrow u_v} \quad \text{and} \quad S_{i+1} = T_{i+1}^{v\rightsquigarrow u_v}.
			\]
			If $u_v = c_2$, then $S_i$ and $S_{i+1}$ are weak double star trees, and hence $S_i, S_{i+1} \in \Sigma_n \subseteq H$. Otherwise, $u_v$ is not a centroid, and thus we get 
			\[
				\Psi(T) - F_\mathrm{cent} (T) \geq \Psi(T_j) - F_\mathrm{cent} (T_j) > \Psi(S_j) - F_\mathrm{cent} (S_j)
			\] 
			for $j = i, i+1$ using Lemmas \ref{iter} and \ref{lem:MaxComponentLeafSlide}. Again, the trees $S_i, S_{i+1}$ are contained in $H$. Therefore, $C = T_i, S_i, S_{i+1}, T_{i+1},, T_i$ is a 4-cycle with $|V(C) \cap V(H)| = 3$.
		\end{proof}

		\begin{theorem}
			$\Aut(X_n) \cong S_n$ for $n\geq 5$.
		\end{theorem}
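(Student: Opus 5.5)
The plan is to assemble the three reductions already established in this section. By \autoref{prop:AutomorphismReduction} we have $\Aut(X_n)\cong\Aut(\mathcal{G}_n)$, so it suffices to show $\Aut(\mathcal{G}_n)\cong S_n$. We consider the natural homomorphism $\rho\colon S_n\to\Aut(\mathcal{G}_n)$ given by relabelling vertices. We also consider the restriction map $\operatorname{res}\colon\Aut(\mathcal{G}_n)\to\Aut(H)$, which is well defined because $H$ is $\Aut(\mathcal{G}_n)$-invariant. The goal is to show that $\operatorname{res}$ is injective and that $\operatorname{res}\circ\rho$ is an isomorphism.

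\textbf{Injectivity of $\operatorname{res}$.} Let $f\in\Aut(\mathcal{G}_n)$ restrict to the identity on $H$. Let $H'\subseteq\mathcal{G}_n$ be the induced subgraph on the fixed vertices $\{T\mid f(T)=T\}$.

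By \autoref{lem:FourCycleExtension}, $H'$ is closed under 4-cycle completion. Indeed, if a 4-cycle has three vertices fixed by $f$, then the fourth is fixed as well. Since the fixed vertex set is closed, the induced subgraph contains the whole cycle.

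Since $f$ is the identity on $H\supseteq(\Sigma_n)_1$, every vertex of $\Sigma_n$ lies in $H'$. Moreover, \autoref{prop:FourCycleCompletion} only uses the vertices of $\Sigma_n$, since its proof only refers to membership of trees. Hence \autoref{prop:FourCycleCompletion} gives $H'=\mathcal{G}_n$, i.e.\ $f=\mathrm{id}$.

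\textbf{Surjectivity and the final count.} By \autoref{prop:RigidSubcomplex}, $\Aut(H)\cong S_n$ via restriction to the star trees. The composite $S_n\xrightarrow{\rho}\Aut(\mathcal{G}_n)\xrightarrow{\operatorname{res}}\Aut(H)\to\operatorname{Sym}(\mathcal{S}_{K_n})$ sends $\sigma$ to the permutation $s_i\mapsto s_{\sigma(i)}$, which is the standard isomorphism. Hence $\operatorname{res}\circ\rho$ is surjective onto $\Aut(H)$. Since $\operatorname{res}$ is injective, $\operatorname{res}$ is an isomorphism and so is $\rho$. Concretely, $\rho$ is injective because $n\ge5$ and a nontrivial $\sigma$ moves some star tree. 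Therefore $\Aut(X_n)\cong\Aut(\mathcal{G}_n)\cong S_n$.

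\textbf{Main obstacle.} The delicate step is the application of \autoref{prop:FourCycleCompletion}. That proposition is phrased for subgraphs containing $\Sigma_n$, whereas here we need it for the fixed-point set of $f$. Two points must be checked: that the fixed set is closed under 4-cycle completion in the sense of \autoref{def:FourCycleCompletion}, and that it contains every vertex of $\Sigma_n$, not only its 1-skeleton edges. Both follow because $f$ fixes all of $H$, and $H$ contains the 1-skeleton of every cube $C_{i,j}$ and thus all vertices of $\Sigma_n$. If the edge-versus-vertex subtlety causes trouble, the fallback is to pass to the induced subgraph on fixed vertices, as done above.
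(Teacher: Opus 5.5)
Your proof is correct and is essentially the paper's argument. You reduce to $\Aut(\mathcal{G}_n)$, use \autoref{prop:RigidSubcomplex} to see that the composite $S_n\to\Aut(\mathcal{G}_n)\to\Aut(H)$ is an isomorphism, and obtain injectivity of the restriction map from \autoref{lem:FourCycleExtension} together with \autoref{prop:FourCycleCompletion}. Your passage to the induced subgraph on the $f$-fixed vertices only spells out the step the paper compresses into ``combining'' those two results; the paper phrases it as injectivity of $\Aut(\mathcal{G}_n)\to\Aut(\Sigma_n)$, which is the same fact.
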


		\begin{proof}
			By \autoref{prop:AutomorphismReduction}, it is enough to show that $\Aut(\mathcal{G}_n) \cong S_n$. Consider the following sequence of maps:
			\begin{center}
				\begin{tikzcd}
					S_n \arrow[r]
						& \Aut(\mathcal{G}_n) \arrow[r]
						& \Aut(H) \arrow[r]
						& \Aut(\Sigma_n)
				\end{tikzcd}
			\end{center}
			Along the first horizontal map, an element of $S_n$ is mapped to the induced automorphism of $\mathcal{G}_n$, which relabels the vertices of each network realignment. The second and third horizontal maps are given by restriction. 
			
			By \autoref{prop:RigidSubcomplex}, the composition of the first two maps is an isomorphism. Injectivity of $\Aut(\mathcal{G}_n) \to \Aut(\Sigma_n)$ follows by combining \autoref{lem:FourCycleExtension} and \autoref{prop:FourCycleCompletion}. Therefore, the map $\Aut(\mathcal{G}_n) \to \Aut(H)$ is bijective. By \autoref{prop:RigidSubcomplex}, $\Aut (H) \cong S_n$, and the result follows.
		\end{proof}

    \newpage

	\newpage 

	% richtiges Literaturverzeichnis
	\printbibliography

	\newpage
\appendix
\section{Equivariant Generalised Morse Theory}\label{AppA}

Now, we prove the main theorem of generalised Morse theory. The key step is to show that an interval $[\sigma, \tau]$ collapses onto its complement in the cube $\tau$. By ordering the nontrivial intervals $\Lambda$-equivariantly, we can inductively perform these collapses to obtain a $\Lambda$-equivariant strong deformation retraction of the space onto the subcomplex formed by the critical cells.

\begin{definition}
	Let $\tau$ be an $n$-cube, $\sigma$ a proper face of $\tau$. Define 
	\begin{equation*}
		\mathcal{W}_{\tau}(\sigma) = \vert\tau\vert \setminus \bigcup\limits_{\sigma\leq\mu\leq\tau} 		\textup{Int}(\mu).
	\end{equation*}
\end{definition}

Note that $\mathcal{W}_{\tau}(\sigma)$ contains all faces of the cube $\tau$ that do not contain the face $\sigma$. Hence, $\mathcal{W}_{\tau}(\sigma)\subseteq \partial\tau$.

\begin{proposition} \label{prop:EquivariantDeformationRetract}
	Let $\tau$ be an $n$-cube, $\sigma$ a proper $k$-dimensional face of $\tau$, and let $\Lambda$ denote the subgroup of $\Aut(\tau)$ that fixes $\sigma$, i.e. $g\sigma = \sigma$ for $g\in \Lambda$. Then, $\mathcal{W}_{\tau}(\sigma)$ is a $\Lambda$-equivariant strong deformation retract of the cube $\tau$.
\end{proposition}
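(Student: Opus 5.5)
Model $\tau$ as $[0,1]^n$ and $\sigma$ as the face where the first $n-k$ coordinates are fixed and the last $k$ are free. After reflecting coordinates, we may take $\sigma=\{x_1=\dots=x_{n-k}=0\}$. A face $\mu$ of $\tau$ contains $\sigma$ exactly when none of the constraints defining $\mu$ involves the last $k$ coordinates and every constraint on the first $n-k$ coordinates is of the form $x_i=0$. It follows that $\mathcal{W}_\tau(\sigma)$ is the union of the facets $\{x_i=1\}$ for $1\le i\le n-k$ and $\{x_j\in\{0,1\}\}$ for $j>n-k$. In other words,
\[
\mathcal{W}_\tau(\sigma)=\partial\tau\setminus\bigl(\text{relative interior of the star of }\sigma\bigr).
\]
The stabiliser $\Lambda$ of $\sigma$ in $\Aut(\tau)\cong C_2^n\rtimes S_n$ consists of the maps that permute the first $n-k$ coordinates among themselves, and permute and reflect the last $k$ coordinates, since these maps must preserve $\sigma$ and hence the center of $\sigma$.

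The plan is to retract by radial projection away from a $\Lambda$-fixed point outside $\tau$ near $\sigma$. Let $b_\sigma$ be the barycenter of $\sigma$, which is $\Lambda$-fixed. Let $p=b_\sigma-\varepsilon(1,\dots,1,0,\dots,0)$, with the nonzero entries in the first $n-k$ coordinates, so $p$ lies just beyond $\sigma$ in the outward normal direction. This point is $\Lambda$-fixed as well, because $\Lambda$ acts linearly about the center of $\tau$, permutes the first $n-k$ coordinates, and fixes the direction $(1,\dots,1,0,\dots,0)$. For $x\in\tau$, define $r(x)$ as the last point where the ray from $p$ through $x$ meets $\tau$. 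Then define $H(x,t)=(1-t)x+t\,r(x)$. Since $\tau$ is convex and $p\notin\tau$, each such ray meets $\tau$ in a segment, and $r(x)$ is its far endpoint. Because $\Lambda$ acts by affine isometries of $\tau$ fixing $p$, it maps rays from $p$ to rays from $p$. This gives $r(gx)=g\,r(x)$, so $H$ is $\Lambda$-equivariant.

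It remains to check that the image of $r$ is exactly $\mathcal{W}_\tau(\sigma)$, and that $r$ fixes $\mathcal{W}_\tau(\sigma)$ pointwise. Then $H$ is a strong deformation retraction. This identification is the main obstacle. A point $y\in\partial\tau$ is a far endpoint exactly when every facet containing $y$ has $p$ on its inner side or on the facet itself. A facet is seen from $p$ as an exit facet precisely when $p$ lies in the closed inner half-space of that facet. For $\{x_i=1\}$ with $i\le n-k$, this holds since $p_i=-\varepsilon<1$. For $\{x_j=0\}$ and $\{x_j=1\}$ with $j>n-k$, it holds with strict inequality since $p_j=1/2$. For $\{x_i=0\}$ with $i\le n-k$, $p$ lies strictly outside, so these are entrance facets. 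A point of $\partial\tau$ is an exit point iff it lies on some exit facet and on no entrance facet that cuts the ray transversally. I will verify this carefully: the set of far endpoints is the union of the closed facets whose outward normal $\nu$ satisfies $\langle \nu, y-p\rangle\ge 0$ in the appropriate sense, and this union is exactly $\mathcal{W}_\tau(\sigma)$.

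Continuity of $r$ also needs checking. Since $p$ lies strictly outside every facet of the second kind and strictly inside the exit ones, no ray from $p$ runs along an exit facet and hits it tangentially. The exit time is then the minimum of finitely many continuous affine-ratio functions, $t(x)=\min_\nu t_\nu(x)$, taken over exit facets with $\langle\nu,x-p\rangle>0$. This is continuous, because the directions $x-p$ with $x\in\tau$ have positive component along $(1,\dots,1,0,\dots,0)$, which makes each relevant denominator bounded away from zero. Finally, points already in $\mathcal{W}_\tau(\sigma)$ are their own far endpoints, so $r$ is the identity there, and the straight-line homotopy stays inside $\tau$ by convexity.
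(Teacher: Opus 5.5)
Your proposal is correct and is essentially the paper's proof. The paper identifies $|\tau|$ with $[-1,1]^n$, takes the $\Lambda$-fixed point $y=(1+\varepsilon)b_\sigma$ just beyond $\sigma$ (your $p$ in centred coordinates), sends $x$ to the unique point of the line through $y$ and $x$ that lies in $\mathcal{W}_\tau(\sigma)$ (your far endpoint), and uses the straight-line homotopy, with equivariance coming from $gy=y$.

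Two details in your write-up should be tightened; the paper omits both.

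First, the far-endpoint identification. Since $p$ lies strictly inside every exit half-space, every point of an exit facet is a far endpoint. Conversely, the forward direction has $i$-th component $x_i+\varepsilon>0$ for $i\le n-k$, so a ray can only leave $\tau$ through an exit facet. Your transversality criterion is wrong as stated: for $n=2$, $k=1$ it would exclude the corner $(0,1)$, which is in $\mathcal{W}_\tau(\sigma)$.

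Second, continuity. The denominators $|x_j-\tfrac12|$ for the free-coordinate exit facets can vanish, so they are not bounded away from zero. Treat those exit times as continuous $(0,\infty]$-valued functions instead. The minimum stays finite because the facets $\{x_i=1\}$ with $i\le n-k$ have denominators $x_i+\varepsilon\ge\varepsilon$.
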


\begin{proof}
	We identify $\vert\tau\vert$ with $[-1,1]^n$ such that 
	\begin{equation*}
		\sigma = \{(v_1,\dots,v_k,1,\dots,1) \mid v_1,\dots,v_k\in [-1,1]\}.
	\end{equation*}
	Then, the barycenter of $\sigma$ is $b_{\sigma}=(\underbrace{0,\dots,0}_{k-times},1,\dots,1)$. Choose some $\varepsilon>0$. We define $y=(1+\varepsilon)b_{\sigma}$. For each point $x\in\vert\tau\vert$, we denote the line going through $y$ and $x$ by
	\begin{equation*}
		L_x = \{tx+(1-t)y\mid t\in \mathbb{R}\},
	\end{equation*}
	and by $r_x$ the unique point in the intersection of $L_x$ and $\mathcal{W}_{\tau}(\sigma)$. Now, we can define the following homotopy
	\[
		H\colon\vert\tau\vert\times [0,1] \rightarrow \vert\tau\vert, \quad (x,t) \mapsto (1-t)x + tr_x,
	\]
	where $H(x,0)=x$ and $H(x,1)=r_x\in\mathcal{W}_{\tau}(\sigma)$. If $x\in \mathcal{W}_{\tau}(\sigma)$, then $r_x = x$, and thus $H(x,t) = x$. This proves that $\mathcal{W}_{\tau}(\sigma)$ is a strong deformation retract of $\vert\tau\vert$.

	It remains to show that $H$ is $\Lambda$-equivariant. For $g\in \Lambda$, we have $gy=y$, since $g\sigma=\sigma$ implies $gb_{\sigma}=b_{\sigma}$, and hence
	\begin{equation}\label{G-equivLine}
		L_{gx} = \{tgx+(1-t)y\mid t\in\mathbb{R}\} =gL_x.
	\end{equation}
	Now, $g$ preserves both $\partial\tau$ and Int($\sigma$), and hence also $\mathcal{W}_{\tau}(\sigma)$, which implies
	\begin{equation*}
		gr_x = gL_x\cap \mathcal{W}_{\tau}(\sigma) \overset{(\ref{G-equivLine})}{=} L_{gx}\cap \mathcal{W}_{\tau}(\sigma) = r_{gx}.
	\end{equation*}
	This proves that
	\[
		H(gx,t) = (1-t)gx+tr_{gx} = (1-t)gx+tgr_{x} = gH(x,t),
	\]
	concluding the proof.
\end{proof}

Given a generalised Morse matching on a cubical $\Lambda$-complex $X$, let $X/\sim$ denote the set of equivalence classes of $\sim$, equipped with the relation  given by the transitive closure of
\[
	[\sigma_1, \tau_1] \leq [\sigma_2, \tau_2] \quad \text{if} \quad \sigma_1 \leq \tau_2.
\]
Since $\sim$ is acyclic, $X/\sim$ is a poset. If $\sim$ is $\Lambda$-equivariant, then $\Lambda$ acts naturally on $X/\sim$. We analyse this action in the next lemma.

\begin{lemma} \label{lem:horizontal}
	Let $X$ be a cubical $\Lambda$-complex, and let $\sim$ be a generalised Morse matching that is $\Lambda$-equivariant. Then the induced $\Lambda$-action on $X/\sim$ is horizontal, i.e. $[\sigma, \tau]\leq g[\sigma,\tau]$ implies $[\sigma, \tau] = g[\sigma,\tau]$ for all $[\sigma,\tau]$ in $\sim$ and $g\in G$.
\end{lemma}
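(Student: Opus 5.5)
We need to prove that if $[\sigma,\tau]\leq g[\sigma,\tau]$ in $X/\sim$, then $g[\sigma,\tau]=[\sigma,\tau]$. The plan is to argue by finite-order iteration combined with antisymmetry of the partial order on $X/\sim$.

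First we record the preliminaries. Because $\sim$ is $\Lambda$-equivariant, $g$ maps equivalence classes to equivalence classes; since $g$ is an order-preserving bijection of $X$, we have $g[\sigma,\tau]=[g\sigma,g\tau]$. Next we check that the generating relation is preserved by $g$: if $\sigma_1\leq\tau_2$, then $g\sigma_1\leq g\tau_2$, so $[\sigma_1,\tau_1]\leq[\sigma_2,\tau_2]$ implies $g[\sigma_1,\tau_1]\leq g[\sigma_2,\tau_2]$. This then extends to the transitive closure. Hence $\Lambda$ acts on $X/\sim$ by order automorphisms.

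Now suppose $I\leq gI$ with $I=[\sigma,\tau]$. Applying $g$ repeatedly gives the chain
\[
I\leq gI\leq g^2I\leq\cdots\leq g^mI,
\]
by transitivity. Since $X$ is finite, the image of $g$ in the permutation group of the finitely many classes has finite order $m$, so $g^mI=I$. Thus $I\leq gI\leq I$.

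The conclusion then rests on antisymmetry of $X/\sim$, which the paper asserts follows from acyclicity of $\sim$. This is the one point needing care. If $I\leq J\leq I$ with $I\neq J$, unwinding the transitive closure yields a closed chain of classes $I_1\leq I_2\leq\cdots\leq I_k=I_1$ in which consecutive generating relations hold with distinct classes. That gives a cycle $\sigma_1\leq\tau_2,\ \sigma_2\leq\tau_3,\dots$, and each $\sigma_i\leq\tau_i$ lies inside its own interval. Collapsing repeated consecutive classes, this is exactly a forbidden cycle $\sigma_1<\tau_1>\sigma_2<\cdots$ with $k\geq2$ in the definition of a generalised Morse matching. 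Here one needs to check that the relations $\sigma_i\leq\tau_{i+1}$ between different classes are strict and that the alternating pattern matches the definition; in cubical complexes, choosing $\sigma_i$ and $\tau_i$ as the endpoints of each interval gives precisely the required form. Hence $I=gI$, so the action is horizontal.

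The main obstacle I expect is this last verification, namely translating the poset antisymmetry into the cycle condition. If the paper's remark that $X/\sim$ is a poset is taken as given, the proof reduces to the short finite-order argument above.
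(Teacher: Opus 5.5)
Your argument is correct and is essentially the paper's. The paper also iterates $g$ on the finite poset $X/\sim$, deriving a contradiction from an infinite strict chain $p<gp<g^2p<\cdots$. You instead use that $g$ has finite order on the classes to get $I\le gI\le I$ and then apply antisymmetry. Both versions rely on the action being order-preserving (which you verify and the paper takes for granted) and on $X/\sim$ being a poset (which the paper simply asserts from acyclicity). One detail in your optional antisymmetry sketch: a critical class has $\sigma_i=\tau_i$, so it breaks the strict pattern. Such classes must first be spliced out of the cycle, using $\sigma_{i-1}<\rho<\tau_{i+1}$, before the cycle takes the forbidden form $\sigma_1<\tau_1>\sigma_2<\cdots$.
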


\begin{proof}
	Any order-preserving group action of a group $\Lambda$ on a finite poset $P$ is horizontal. Indeed, suppose there exist $p\in P$ and $g\in \Lambda$ such that $p < gp$. Since the action is order-preserving, it follows inductively that
	\[
		p < gp < \ldots < g^ip < g^{i+1}p < \ldots,
	\]
	which contradicts that $P$ is finite. In particular, the induced $\Lambda$-action on $X/\sim$ is horizontal.
\end{proof}

\GeneralizedMorse*

\begin{proof}
	Let $\bigl(X\setminus U\bigr)/\sim$ denote the poset of non-critical equivalence classes of $\sim$. Since $\sim$ is $\Lambda$-equivariant, the set of critical cells is $\Lambda$-invariant, and hence $\Lambda$ acts on $\bigl(X\setminus U\bigr)/\sim$. By \autoref{lem:horizontal}, the induced $\Lambda$-action on $\bigl(X\setminus U\bigr)/\sim$ is  horizontal, and thus $\Lambda\backslash\bigl(X\setminus U\bigr)/\sim$ is a poset \cite{Babson05}.
	
	Let $\Lambda[\sigma_1, \tau_1] < \ldots < \Lambda[\sigma_k, \tau_k]$ be a linear extension of $\Lambda \backslash \bigl(X\setminus U\bigr)/\sim$. We proceed by induction on $k$. The case $k = 0$ is immediate, since then $X=U$. For $k>0$, set 
	\[
		\widetilde{X} = X\setminus \Bigl(\bigcup_{\sigma\in \Lambda[\sigma_k,\tau_k]} \Int(\sigma) \Bigr).
	\]
	Note that $\mathcal{W}_{g\tau_k}(g\sigma_k) \subset \tilde{X}$ for each $g\in \Lambda$. Since $\sim$ is $\Lambda$-equivariant, any group element that fixes $\tau_k$ also fixes $\sigma_k$. Hence, \autoref{prop:EquivariantDeformationRetract} can be applied simultaneously to all intervals in the orbit $\Lambda[\sigma_k, \tau_k]$. Choosing the identifications of the cells $g\tau_k$ with $[-1,1]^n$ $\Lambda$-equivariantly ensures that the resulting strong deformation retractions are compatible with the $\Lambda$-action. It follows that there exists a $\Lambda$-equivariant strong deformation retraction of $X$ onto $\widetilde{X}$. Further, by induction, $U$ is a $\Lambda$-equivariant strong deformation retract of $\widetilde{X}$. Combining these two strong deformation retractions yields a $\Lambda$-equivariant strong deformation retraction of $X$ onto $U$.
\end{proof}

\end{document}